\newtheorem{teo}{Theorem}[section] 
\newtheorem*{teo*}{Theorem}
\newtheorem{lem}[teo]{Lemma} 
\newtheorem{prop}[teo]{Proposition}
\newtheorem{rem}{Remark}[section]
\newtheorem{definicion}{Definition}[section]
\newcommand{\R}{\mathbb{R}}\newcommand{\Rn}{\R^n}
\newcommand{\N}{\mathbb{N}}
\DeclareMathOperator{\supp}{supp}
\DeclareMathOperator{\diver}{div}
\DeclareMathOperator{\pv}{pv}
\DeclareMathOperator{\dist}{dist}
\DeclareMathOperator{\loc}{loc}
\DeclareMathOperator{\diam}{diam}
\newcommand{\weakc}{\rightharpoonup}
\renewcommand{\O}{\Omega}
\renewcommand{\a}{\alpha}
\renewcommand{\b}{\beta}
\renewcommand{\d}{\delta}
\renewcommand{\l}{\lambda}
\newcommand{\s}{\sigma}
\newcommand{\g}{\gamma}
\newcommand{\p}{\partial}
\newcommand{\F}{\mathcal{F}}
\newcommand{\e}{\varepsilon}
\newcommand{\mc}{\mathcal}
\newcommand{\f}{\varphi}
\def\XXint#1#2#3{{\setbox0=\hbox{$#1{#2#3}{\int}$}
		\vcenter{\hbox{$#2#3$}}\kern-.5\wd0}}
\title{Nonlocal gradients in bounded domains motivated by Continuum Mechanics: Fundamental Theorem of Calculus and embeddings}
\author{J. C. Bellido \\
{\small E.T.S.I.\ Industriales, Department of Mathematics, Universidad de Castilla-La Mancha,} \\
{\small 13071-Ciudad Real, Spain. Email \url{JoseCarlos.Bellido@uclm.es}},\\
J. Cueto \\
{\small E.T.S.I.\ Industriales, Department of Mathematics, Universidad de Castilla-La Mancha,} \\
{\small 13071-Ciudad Real, Spain. Email \url{Javier.Cueto@uclm.es}}, \\
C. Mora-Corral \\
{\small Departamento de Matem\'aticas, Universidad Aut\'onoma de Madrid,} \\
{\small 28049 Madrid, Spain. Email \url{Carlos.Mora@uam.es}}}
\date{}
\begin{document}
\maketitle

\pagestyle{empty}

\begin{abstract}

In this paper we develop a new set of results based on a nonlocal gradient jointly inspired by the Riesz $s$-fractional gradient and Peridynamics, in the sense that its integration domain depends on a ball of radius $\d>0$ (horizon of interaction among particles, in the terminology of Peridynamics), while keeping at the same time the singularity of the Riesz potential in its integration kernel. Accordingly, we define a functional space suitable for nonlocal models in Calculus of Variations and partial differential equations. Our motivation is to develop the proper functional analysis framework in order to tackle nonlocal models in \textit{Continuum Mechanics}, which requires working with bounded domains, while retaining the good mathematical properties of Riesz $s$-fractional gradients. This functional space is defined consistently with Sobolev and Bessel fractional ones:  we consider the closure of smooth functions under the natural norm obtained as the sum of the $L^p$ norms of the function and its nonlocal gradient. Among the results showed in this investigation we highlight a nonlocal version of the Fundamental Theorem of Calculus (namely, a representation formula where a function can be recovered from its nonlocal gradient), which allows us to prove inequalities in the spirit of Poincar\'e, Morrey, Trudinger and Hardy as well as the corresponding compact embeddings. These results are enough to show the existence of minimizers of general energy functionals under the assumption of convexity. Equilibrium conditions in this nonlocal situation are also established, and those can be viewed as a new class of nonlocal partial differential equations 
in bounded domains. 
\end{abstract}

\noindent{\bf Keywords: } Riesz fractional gradient, Nonlocal gradient, Nonlocal fundamental theorem of Calculus, Nonlocal Poincar\'e inequality, Nonlocal embeddings, Nonlocal Calculus of Variations, Peridynamics

\noindent{\bf 2020 MSC: } Primary:
26A33, 
35R11, 
46E35, 
49J45, 
74A70. 
Secondary:
35Q74, 
42B20, 
49K21, 
74B20, 
74G65. 



\pagestyle{plain}

\section[Introduction]{Introduction}

In the last decades, models based on differential equations are increasingly sharing their prominence with those based on integral or integro-differential equations. This is due to the fact that they can catch some information that the local ones cannot, such as long range interactions or multiscale behaviour; they usually require less regularity of the functions, allowing for more general admissible solutions. In fact, they can overcome some drawbacks of the local models since a typical feature is to be able to provide an effective modelling for discontinuities or singularities.

One such example is Peridynamics, a nonlocal alternative model in Solid Mechanics proposed by Silling \cite{Silling2000}; see also \cite{SiEpWeXuAs07,LeSi08,SiLe10}.
One of its goals was to unify elastic and singularity phenomena, such as fracture or cavitation.
The development of this theory in the last years has been impressive.
As general expositions, we can mention the review paper \cite{JaMoOtOt}, the two books \cite{Gerstle,madenci_oterkus} and the collaborative handbooks \cite{BoFoGeSi17,Voyiadjis19}.
Several aspects of these models have been studied such as localization \cite{MeD,MeS,FossRaduYu}, existence and regularity \cite{FoRaWr20}, computational issues \cite{DeDuGlGuTiZh20,DeDuGun}, function spaces involved \cite{Mengesha12,MeD16}, or linear theories \cite{EmWe07,ZhDu10,DuGuLeZh13,TiQi13,MeDu14,DuTi18,ScMe19}.

Accordingly, nonlocal models are gaining attention in the modelling of various phenomena in physics, biology, geometry and more.
As a consequence, it is required a more thorough mathematical analysis of the new objects and operators involved. Some of those objects are of diffusion type, where the fractional Laplacian stands out: this is an operator that generalizes the standard Laplacian to a degree of differentiability beyond derivatives of integer order (see, e.g., \cite{CaSi07,DeGu13,RoSe14,Pozrikidis16,MoRaSe16,Ros-Oton,AbVa19,ChLiMa20} among hundreds of possible references). Others, on the other hand, are of gradient type, which may provide a better scope for nonlocal vector calculus \cite{DeGuOlKa21,MeD,MeS,DeGuMeSc21,BeCuMC,ShS2015,ShS2018,COMI2019,DGLZ}. One of the features of both kind of operators is that they require less regularity of the functions than their classical counterparts. 
In this paper we focus on the latter type, in particular, those called one-point gradients, or weighted nonlocal gradients in the terminology of \cite{DeGuOlKa21}. These nonlocal gradients are usually written in terms of a kernel $\rho$, typically with a singularity at the origin.
In general, for a function $u:\Omega \to \R$ they are defined as
\begin{equation} \label{eq: general nonlocal gradient}
	\mathcal{G}_\rho u(x)= \int_\O \frac{u(x)-u(y)}{|x-y|} \frac{x-y}{|x-y|} \rho(x-y)\, dy.
\end{equation}
A particular case of nonlocal gradient where this analysis has experienced a great interest since the works of Shieh and Spector \cite{ShS2015,ShS2018} is Riesz' $s$-fractional gradient. 
%
For $0 < s <1$ and $u: \Rn \rightarrow \R$ a smooth enough function, its $s$-fractional gradient is defined as
\begin{equation} \label{def: fractional gradient}
	D^s u(x)= c_{n,s}  \int_{\Rn} \frac{u(x)-u(y)}{|x-y|^{n+s}}\frac{x-y}{|x-y|} \, dy ,
\end{equation}
where $c_{n,s}$ is a suitable normalizing constant. It follows the same formula as in \eqref{eq: general nonlocal gradient} where the integration domain is considered to be $\Rn$ and the kernel is $\rho(x) = c_{n,s} \frac{1}{|x|^{n-1+s}}$. This object has attracted a great interest in the last years, as will be reported below, mainly due to its nice properties from a functional analysis perspective. However, the fact of being defined over the whole space is a remarkable drawback for applications in realistic physical models; furthermore, this feature obviously leads to some extra difficulties from a computational or numerical point of view.

Thus, given the interest of working with bounded domains, and inspired by \eqref{eq: general nonlocal gradient} and \eqref{def: fractional gradient} we propose the operator
\begin{equation} \label{introduction non local gradient}
	D_\delta^s u (x)= c_{n,s} \int_{B(x,\delta)} \frac{u(x)-u(y)}{|x-y|}\frac{x-y}{|x-y|}\frac{w_\d(x-y)}{|x-y|^{n-1+s}} \, dy ,
\end{equation}
as a particular case of \eqref{eq: general nonlocal gradient}, (with $\rho(x) =c_{n,s} \frac{1}{|x|^{n-1+s}}w_\d(x)$ and $w_\d \in C^\infty_c(B(0,\d))$ a cut-off function), except for the fact that in this case, $u$ is assumed to be defined, at least, in $\O_\d= \O + B(0,\d)$ so that the integral in \eqref{introduction non local gradient} is completely defined in the whole ball. This entails dealing with the `collar' $\O_\d \setminus \O$ as a nonlocal boundary. The nonlocal gradient \eqref{introduction non local gradient} is defined so that it keeps the fractional index $s$ of differentiability, while at the same time the interaction of particles is restricted to a distance no larger than $\delta$ (the {\it horizon} parameter in Peridynamics).

To put it into context, the fractional gradient \eqref{def: fractional gradient}, as recently addressed by several authors \cite{Schikorra2017,COMI2019,RoSa19,BeCuMC,BeCuMC21,KrSc22}, seems to be the suitable notion, from a merely mathematical perspective, for such a differential object. In particular, it has been proved in \cite{Silhavy2019} that formula \eqref{def: fractional gradient} determines up to a multiplicative
constant the unique object fulfilling some minimal consistency requirements from the physical and mathematical point of view, such as invariance under rotations and translations, $s$-homogeneity  under dilations and some weak continuity properties. Moreover, the classical gradient can be recovered when $s$ goes to $1$ in \eqref{def: fractional gradient}. This operator is closely related to the Riesz potential, $I_{1-s} (x)= \frac{c_{n,s}}{n-1+s} \left| x \right|^{-(n-1+s)}$, and particularly in the case of smooth functions $u \in C_c^{\infty}(\Rn)$, it can be written as a convolution of this kernel with the classical gradient: $D^s u = I_{1-s} * \nabla u$. This implies that its Fourier transform can be computed as $\widehat{D^s u}(\xi)=\frac{2\pi i \xi}{|2\pi \xi|} |2\pi \xi|^s \hat{u}(\xi)$, which gives another insight to the unfamiliar reader.

In the case of the nonlocal gradient $D^s_\d u$, among all the properties mentioned earlier and systematized in \cite{Silhavy2019} characterising the fractional gradient \eqref{def: fractional gradient}, the one that is not fulfilled by \eqref{introduction non local gradient} is the $s$-homogeneity under dilations, in favour of considering bounded domains (equivalently, a compactly supported kernel).
Similar operators have been studied in works like \cite{MeS,MeD}, where \eqref{introduction non local gradient} could fit after normalizing its kernel.
In particular, it was shown in \cite{MeS} that these operators converge to the classical gradient when the nonlocality vanishes.
Furthermore, we will see that a function $Q_\d^s$ plays the role of the Riesz potential in this framework, sharing the same singularity at zero than the Riesz potential, but with compact support, making $Q_\d^s$ integrable. Thus, the nonlocal gradient can be written as a convolution with the classical one: $D_\d^s u = Q_\d^s *\nabla u$ for sufficiently smooth functions. As it will be seen later, this shows that the nonlocal gradient (defined or extended to every point in $\Rn$ if necessary) of test functions such as $C^\infty_c(\Rn)$ or $\mathcal{S}$ (the Schwartz space) remains in such spaces, as opposed to the fractional gradient, making it more suitable for defining the nonlocal gradient of a distribution. 

Regarding functional spaces, essential for Calculus of Variations, the one associated to Riesz fractional gradients is the Bessel space $H^{s,p} (\Rn)$.
Among the several equivalent definitions, the most intuitive in this context is that based on the completion of $C^{\infty}_c (\Rn)$ functions under the norm
\[
\| u \|_{H^{s,p} (\Rn)} = \left( \| u \|_{L^p (\Rn)}^p + \| D^s u \|_{L^p (\Rn)}^p \right)^{\frac{1}{p}} .
\]
There are, of course, many spaces between $L^p (\Rn)$ and the Sobolev space $W^{1,p} (\Rn)$ that possess a degree of differentiability of order $s$.
The most familiar one is possibly the Gagliardo space $W^{s,p} (\Rn)$, which is equipped with the seminorm
\[
[u]_{W^{s,p} (\Rn)} = \left( \int_{\Rn} \int_{\Rn}  \frac{|u(x)-u(y)|^p}{|x-y|^{n+s p}} \, dx \, dy\right)^{\frac{1}{p}}
\]
and the norm
\[
\| u\|_{W^{s,p} (\Rn)} = \left( \| u \|_{L^p (\Rn)}^p + [u]_{W^{s,p} (\Rn)}^p \right)^{\frac{1}{p}} .
\]
A great difference between $H^{s,p} (\Rn)$ and $W^{s,p} (\Rn)$ is that in the latter there is no suitable concept of fractional gradient, even though it possibly is the natural space to define the fractional Laplacian.
Moreover, despite the analogy of the seminorms in those spaces,
\[
\| D^s u \|_{L^p (\Rn)} = c_{n,s} \left( \int_{\Rn} \left| \int_{\Rn} \frac{u(x)-u(y)}{|x-y|^{n+s}}\frac{x-y}{|x-y|} \, dy \right|^p dx \right)^{\frac{1}{p}} \quad \text{and} \quad [u]_{W^{s,p} (\Rn)} ,
\]
the fact that in $\| D^s u \|_{L^p (\Rn)}$ the absolute value affects the inner integral, while in $[u]_{W^{s,p} (\Rn)}$ the absolute value affects the integrand, reveals that the inclusions between these spaces are not obvious.
We mention, in passing, that in \cite{Adams,ShS2015} it is shown the embeddings $H^{s_2, p} (\Rn) \subset W^{s,p} (\Rn) \subset H^{s_1, p} (\Rn)$ for $0 < s_1 < s < s_2 <1$, as well as the equality $H^{s, 2} (\Rn) = W^{s,2} (\Rn)$.
This feature of the absolute value affecting the inner integral in $\| D^s u \|_{L^p (\Rn)}$ has several consequences in the proofs of properties of $H^{s,p} (\Rn)$, since, in general, it hampers a direct application of the elementary inequality that the absolute value of the integral is less than the integral of the absolute value, since that inequality cannot be reversed. 
For example, one cannot apply directly the techniques of \cite{Pon,Ponce2004}, which are suitable for seminorms in the style of $W^{s,p}$, but with general kernels.
Although the definitions of the Riesz gradient and the Bessel spaces are rather old, it was the study \cite{ShS2015} that initiated the attention in the community of nonlocal problems in partial differential equations and Calculus of Variations.
In fact, in \cite{ShS2015,ShS2018} it was shown the relationship between Riesz gradients and Bessel spaces, as well as a series of inequalities and embeddings mimicking those of Sobolev spaces, which constitute the basis for an analysis of the equations and minimization problems naturally related to the fractional gradient.

While \cite{ShS2018} treated the existence of minimizers for convex scalar problems using the direct method of the Calculus of Variations, the following two papers make extensions and applications for vectorial problems: in \cite{BeCuMC} we showed that the concept of polyconvexity is also suitable in these problems, while in \cite{KrSc22} it was shown the analogue for the concept of quasiconvexity.
These notions, polyconvexity and quasiconvexity, are classical in the Calculus of Variations (see, e.g., \cite{dacorogna}).
In these three works, the functional to minimize is of the form
\begin{equation}\label{eq:IntroDsu}
	\int_{\Rn} W(x, u(x), D^su(x)) \, dx ,
\end{equation}
with the integrand $W$ satisfying similar assumptions as in local problems.


This article can be seen as a follow-up to other works in the search of a nonlocal model suitable for hyperelasticity but also introducing a theory applicable to other phenomena. Following previous works \cite{BeMC14,BeMC18,BeMCPe} by some of the authors of this paper, we showed in \cite{BeCuMC20} that (bond-based) Peridynamics models based on energy functionals of the form 
\begin{equation*}
	\int_{\O} \int_{\O\cap B(x,\d)} w(x-y,u(x)-u(y)) \, dy \, dx,
\end{equation*}
although defined for bounded domains, do not fit in nonlinear Solid Mechanics, since very few local nonlinear models are limit of  nonlocal ones when $\d \to 0$.
As mentioned earlier, $\d$ is the \emph{horizon}: the interaction distance between the particles.
On the other hand, going back to the analysis in $H^{s,p} (\Rn)$ we showed in \cite{BeCuMC21} that the limit when $s \to 1$ of integral \eqref{eq:IntroDsu} based on the Riesz gradient is the local model
\[
\int_{\Rn} W(x, u(x), \nabla u(x)) \, dx .
\]

In this paper we propose a model that combines the good properties of the Riesz gradient and the space $H^{s,p}$ with the requirement that the energy is defined in a bounded domain of $\Rn$, since it is in this case where its interpretation of an elastic energy is physically meaningful, and so we think it could fit in state-based Peridynamics.

In order to reproduce similar existence results for energy functionals like 
\begin{equation*}
	\int_{\O} W(x, u(x), D_\d^su(x)) \, dx ,
\end{equation*}
it is necessary the study of the functional space associated to the nonlocal gradient \eqref{introduction non local gradient}. To be precise, we define the space $H^{s,p,\d}(\O)$ in concordance with Bessel and Sobolev spaces, as the completion of $C^{\infty}_c (\Rn)$ under the norm
\[
\lVert u\rVert_{H^{s,p,\d}(\O)} = \left( \left\| u \right\|_{L^p (\O_{\d})}^p + \left\| D_\delta^s u \right\|_{L^p (\O)}^p \right)^{\frac{1}{p}} ,
\]
where $\O_{\d}$ is the union of $\O$ with a tubular neighbourhood of the boundary of radius $\d$.
A subspace $H^{s,p,\d}_0 (\O)$ representing roughly $H^{s,p,\d} (\O)$ functions with zero `boundary' conditions (in truth, with zero values in another tubular neighbourhood of the boundary) is also studied.

We highlight as one of the main contributions of this work a nonlocal version of the Fundamental Theorem of Calculus, which is obtained despite the lack of homogeneity and semi-group properties of the kernels involved. As in other frameworks, given that it allows us to recover a function from its nonlocal gradient, we see it as a versatile tool that may prove itself useful in other situations. In particular, it helps in overcoming the aforementioned problem of an absolute value affecting the inner integral in the seminorms we are considering, and therefore, it is a key ingredient in the process of obtaining several inequalities and embeddings.

Thus, this article can be regarded as a first step to explore properties in $H^{s,p,\d}$ that are known in $W^{1,p}$ and $H^{s,p}$, such as integration by parts, Fundamental Theorem of Calculus, Poincar\'e inequalities and compact embeddings.
In fact, it is illustrative to compare those definitions and properties in the three contexts: classical, fractional and nonlocal. 
In what follows, \emph{classical} will typically refer to properties for Sobolev $W^{1,p}$ or even smooth functions involving the (classical or distributional) gradient $\nabla$, \emph{fractional} to properties in $H^{s,p}$ involving Riesz' $s$-fractional gradient $D^s$, and \emph{nonlocal} to properties in $H^{s,p,\d}$ involving the nonlocal gradient $D^s_{\d}$.
A partial list of this comparison is as follows:
\begin{itemize}
	\item Gradient. The classical gradient is just the pointwise or distributional gradient.
	The fractional gradient is \eqref{def: fractional gradient}, while the nonlocal gradient is \eqref{introduction non local gradient}.
	
	\item Divergence. The classical divergence is the pointwise divergence.
	The fractional divergence is
	\[
	\diver^s \phi (x) 
	= c_{n,s} \int_{\Rn} \frac{\phi(x) - \phi(y)}{|x-y|^{n+s}}\cdot\frac{x-y}{|x-y|} dy ,
	\]
	while the nonlocal divergence is
	\begin{align*}
		\diver_{\delta}^s \phi(x) 
		= c_{n,s} \int_{B(x, \d)} \frac{\phi(x)-\phi(y)}{|x-y|}\cdot\frac{x-y}{|x-y|}\frac{w_\d(x-y)}{|x-y|^{n-1+s}} \, dy .
	\end{align*}
	
	\item Integration by parts. For $u \in C^1_c (\O)$ and $\phi \in C^1_c (\O, \Rn)$,
	\begin{align*}
		\text{Classical: } & \int_{\O} \nabla u \cdot \phi = - \int_{\O} u \diver \phi . \\
		\text{Fractional: } & \int_{\Rn} D^s u \cdot \phi = -\int_{\Rn} u \diver^s \phi . \\
		\text{Nonlocal: } & \int_{\O} D^s_{\d} u \cdot \phi = - \int_{\O} u \diver^s_{\d} \phi .
	\end{align*}
	
	\item Fundamental Theorem of Calculus.
	\begin{align*}
		\text{Classical: } & u(x) = \frac{1}{\sigma_{n-1}} \int_{\Rn} \nabla u(y) \cdot \frac{x-y}{|x-y|^n} \, dy . \\
		\text{Fractional: } & u (x) = c_{n,-s} \int_{\Rn} D^s u (y) \cdot \frac{x-y}{|x-y|^{n-s+1}} \, dy . \\
		\text{Nonlocal: } & u(x)=  \int_{\Rn} D_\d^s u (y) \cdot V_\d^s(x-y) \, dy .
	\end{align*}
\end{itemize}

At this point we mention the attempt to unify fractional and nonlocal theories recently explored in \cite{DeGuOlKa21}, in the context of a general vector calculus (following the earlier works \cite{GuLe10,DGLZ}) and in \cite{DeGuMeSc21}, which focuses on nonlocal gradients.
We also point out the work \cite{FeSu20}, where a different approach to the fractional fundamental theorem of calculus in dimension one is addessed, as well as a study of the function spaces involved.

The role of the Fourier transform in this analysis deserves a special mention.
It was pointed out in \cite{ShS2015} that the fractional gradient behaves nicely under Fourier transform: $\widehat{D^s u}(\xi) = 2\pi i\xi \left| 2\pi\xi \right|^{s-1}\hat{u} (\xi)$.
This fact was used in \cite{BeCuMC21} to obtain some properties that would otherwise require a much longer argument.
In this paper we also use Fourier transform, which is of no surprise having in mind the fundamental theorems of Calculus above expressing $u$ as a convolution, and, in fact, the constant presence of convolutions in this work.
Again in \cite{ShS2015} the properties of the Riesz potential and its Fourier transform were used in connection with the Riesz gradient.
In this paper we also use a potential playing the role of Riesz'.
In our case, this potential will no longer have the semigroup property, but yet we will succeed in capturing its main features to prove the nonlocal fundamental theorem of Calculus.

The similarities with the fractional case show that a parallel theory could potentially be developed. Actually, it turns out that when restricted to $\Omega_\d$, functions from Bessel spaces belong to $H^{s,p,\d}(\O)$ (Proposition \ref{prop: simple inclussions}). This leads to wonder if functions in Bessel spaces $H^{s,p}(\Rn)$ are always an extension to $\Rn$ of functions in $H^{s,p,\d}(\O)$. This inclusion also implies that functions exhibiting fracture or cavitation phenomena are admitted in $H^{s,p,\d}(\O)$ \cite[Sect.\ 2.1]{BeCuMC}. 

The outline of the paper is the following.
Section \ref{se:notation} fixes some notation used throughout the article.
In Section \ref{se: functional analysis} the new versions of nonlocal gradient, divergence and integration by parts are established.
We also define the associated function space $H^{s, p, \d} (\O)$ and state its basic properties.
Section \ref{se: Non Local FTC} proves the nonlocal version of the fundamental theorem of Calculus.
Its proof, nevertheless, depends on the existence of the kernel $V^s_{\d}$, which is addressed in Section \ref{se: Inverse Kernel}.
Then, in Section \ref{se: Poincare} we first define the space $H^{s, p, \d}_0 (\O)$ and then use the nonlocal Fundamental Theorem of Calculus to prove versions in this context of the inequalities by Poincar\'e, Morrey, Trudinger and Hardy.
In Section \ref{se: Compact Embedding} we establish the compact embeddings from $H^{s, p, \d}_0 (\O)$ to $L^q (\O)$.
Section \ref{se: existence of minimizers} shows the existence of minimizers of scalar convex variational problems involving $D^s_{\d}$, as well as the corresponding Euler--Lagrange equation.
The article finishes with two appendices: in Appendix \ref{se:1D} we point out the necessary changes needed in Section \ref{se: Inverse Kernel} for the case $n=1$, while in Appendix \ref{se:Fourier} we state some Fourier analysis facts used throughout the paper for which we have not found a reference.

\section{Notation}\label{se:notation}

\subsection{General notation}

In all this work, we fix the dimension $n \in \N$ of the space ($n \geq 1$), an open bounded set $\O$ of $\Rn$ representing the body, a number $0< s < 1$ quantifying the degree of differentiability, a $\d>0$ indicating the horizon (the interaction distance between the particles of the body), and an exponent $1 \leq p < \infty$ of integrability.
Sometimes we will additionally require $p > 1$.
The H\"older conjugate exponent of $p$ is $p' = \frac{p}{p-1}$.

The notation for Sobolev $W^{1,p}$ and Lebesgue $L^p$ spaces is standard.
So is the notation for functions that are continuous $C$, and of class $C^k$ for $k$ an integer or infinity.
Their version of compact support are $C^k_c$.
The set of continuous functions vanishing at infinity is $C_0$.
We will indicate the domain of the functions, as in $C^1 (\O)$; the target is indicated only if it is not $\R$.
When using the norm in those spaces, the target is omitted, as in $\left\| \cdot \right\|_{L^p (\O)}$.

We write $B(x,r)$ for the open ball centred at $x \in \Rn$ of radius $r >0$.
The complement of a subset $A \subset \Rn$ is denoted by $A^c$, its closure by $\bar{A}$ and its boundary by $\p A$.

We denote by $\sigma_{n-1}$ the area of the unit sphere, while the surface area in integrals is indicated by $\mathcal{H}^{n-1}$.

We will use the multiindex notation: for $\a \in \N^n$, we give the standard meaning to the partial derivative $\p^{\a}$, the size $|\a|$, the monomial $x^{\a}$ for $x \in \Rn$, the ordering $\b \leq \a$ and the combinatorial number $\binom{\a}{\b}$; see, e.g., \cite[Sect.\ 2.2]{Grafakos08a}.

The vectors of the canonical basis of $\Rn$ are $e_j$, $j = 1, \ldots, n$.

The operation of convolution is denoted by $*$.
We indicate the duality product between tempered distributions and Schwartz functions as $\langle \cdot , \cdot \rangle$.


\subsection{Fourier transform}\label{subse:Fourier}

The convention for the Fourier transform of a function $f$ is
\begin{equation*}
\hat{f}(\xi)=\int_{\Rn} f(x) \, e^{-2\pi i x \cdot \xi} \, dx
\end{equation*}
for $f \in L^1 (\Rn)$.
This definition is extended by continuity and duality to other function and distribution spaces, notably, as isomorphisms in the Schwartz space $\mathcal{S}$ and in the space of tempered distributions $\mathcal{S}'$.
Sometimes we will also use the alternative notation $\F (f)$ for $\hat{f}$.
The variable in the Fourier space is generically designed by $\xi$.
The reflection of the function $f : \Rn \to \R$ is $\tilde{f} (x) = f (-x)$, and one has $\tilde{f}= \mathcal{F} (\hat{f})$, in principle, for functions $f \in L^1 (\Rn)$ for which  $\hat{f} \in L^1 (\Rn)$, but then by continuity and duality this property is extended to a larger class of functions and distributions.
Classical texts in Fourier analysis are \cite{duon2000,Grafakos08a}.

\subsection{Radial functions}

We recall the following definitions regarding radial functions.
\begin{definicion}\label{de:radial}
	We will say that
	\begin{enumerate}[a)]
		\item a function $f:\Rn \rightarrow \R$ is \emph{radial} if there exists $\bar{f}:[0,\infty) \to  \R$ such that $f(x)=\bar{f}(|x|)$ for every $x \in \Rn$. In such a case, $\bar{f}$ is the radial representation of $f$. 
		\item a radial function  $f:\Rn \rightarrow \R$ is \emph{radially decreasing} if its radial representation $\bar{f}:[0,\infty) \to  \R$ is a decreasing function.
		\item a function $\phi:\Rn \rightarrow \Rn$ is \emph{vector radial} if there exists a radial function $\bar{\phi}:[0,\infty)\to  \R$ such that $\phi(x)=\bar{\phi}(|x|)x$ for every $x \in \Rn$.
	\end{enumerate}
\end{definicion}

It is known (see, e.g., \cite[App.\ B.5]{Grafakos08a}) that the Fourier transform of a radial (respectively, vector radial) function is radial (respectively, vector radial).

\section{Function space: nonlocal gradient, divergence and integration by parts} \label{se: functional analysis}

In this section we define the nonlocal gradient and divergence, and state their basic properties, notably, the integration by parts.
We also set the natural functional space associated to the nonlocal gradient. The framework is the following. As typical in nonlocal models \cite{AnMaRoTo09,GuLe10,AkMe10,AkPa11,DGLZ,HiRa12}, `boundary' conditions are usually of volumetric type.
In our case, we fix a distance $\d>0$ and consider a bounded, open domain $\O \subset \Rn$.
The set $\O$ itself is regarded as a nonlocal interior domain, while $\O_{\d} := \Omega + B(0,\delta)$ is considered as its nonlocal closure. Accordingly, the set $\O_{B, \d} := \O_{\d} \setminus \O$ plays the role of nonlocal boundary; see Figure \ref{fi:Omega}.
The set $\O_{-\d} = \{ x \in \O : \dist(x, \p \O) > \d \}$ will also be relevant along this work. Thus, we consider $\d$ small enough so that $\O_{-\d}$ is not empty.
\begin{figure}[hb]
\begin{center}
	\begin{tikzpicture}
	\node (a) at (1.9,1) {$\O$};
	\node (b) at (1.9,-0.3) {{\footnotesize $\O_{B,\d}$}};
	\node (c) at (0.6,1.8) {$\O_{\d}$};
	\draw[double distance=1.4mm] plot [smooth cycle] coordinates {(0,0) (0.5,1) (1,1) (1.5,2) (2,2) (2.5,1.2) (3,0.7) (2.6, -0.1) (1.8, 0.1) (0.5,-0.3)};
	\draw (1.8,0.17) -- (1.8,0.03);
	\node[right] (d) at (1.73,0.07) {{\tiny $\d$}};
	\draw[densely dotted] (1.53,0.15) circle(0.14);
	\end{tikzpicture}
\end{center}
\small \caption{The sets $\O$, $\O_{\d}$ and $\O_{B,\d}$, together with the distance $\d$.\label{fi:Omega}}
\end{figure}
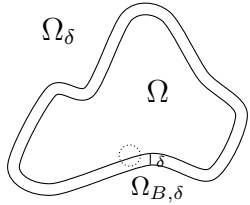

Let $w_\d: \Rn\to [0,+\infty)$ be a cut-off function, and  $\rho_\d: \Rn\to [0,+\infty)$ defined as 
\[\rho_\delta(x)=\frac{1}{\gamma (1-s)|x|^{n-1+s}}w_{\d}(x),\] 
where the constant $\gamma(s)$ is given by
\begin{equation} \label{eq: Riesz Potential constant}
	\gamma(s)=\frac{\pi^{\frac{n}{2}}2^s\Gamma\left(\frac{s}{2}\right)}{\Gamma\left(\frac{n-s}{2}\right)}
\end{equation}
and $\Gamma$ is Euler's gamma function. We assume the following conditions over $w_{\d}$:
\begin{enumerate}[a)]
	\item $w_\d$ is radial and nonnegative; $\bar{w}_\d$ is its radial representation.
	\item $w_\d \in C_c^\infty(B(0,\delta))$.
	\item There are constants $a_0>0$ and $0<b_0<1$ such that $0\leq w_\d \leq a_0$, with $w_\d =a_0$ in $B(0, b_0\delta)$.
	\item\label{item:wdecreasing} $\bar{w}_\d$ is decreasing.
\end{enumerate}

In fact, it will be apparent in the proof of Lemma \ref{le:radialsin} that condition \ref{item:wdecreasing}) can be considerably weakened.
Note, in addition, that $\rho_\delta \in L^1 (\Rn)$.

Given a function $f:\Omega \rightarrow \R$ and $x \in \O$ such that $f \in L^1(\O \setminus B(x,r))$ for every $r>0$, the principal value centred at $x$ of $\int_{\O} f $, denoted by
\[
\pv_x \int_{\O} f
\]
is defined as 
\[
\lim_{r\rightarrow 0} \int_{\O \setminus{B(x,r)} } f,
\]
whenever this limit exists. 

The definitions of the nonlocal gradient and divergence for smooth functions are the following.
\begin{definicion} \label{def: nonlocal gradient}
Set
	\begin{equation*}
		c_{n,s}:= \frac{n-1+s}{\gamma(1-s)}.
	\end{equation*}
	\begin{enumerate}[a)]
		\item \label{item:Dsdu} Let $u\in C_c^{\infty} (\Rn)$. The nonlocal gradient $D_\delta^s u$ is defined as
		\begin{equation} \label{eq: definition of nonlocal gradient}
			D_\delta^s u(x)= c_{n,s} \int_{B(x,\delta)} \frac{u(x)-u(y)}{|x-y|}\frac{x-y}{|x-y|}\frac{w_\d(x-y)}{|x-y|^{n-1+s}} \, dy , \qquad x \in \Rn.
		\end{equation}
		\item \label{item:divsdu} Let $\phi \in C^1_c (\Rn,\Rn)$. The nonlocal divergence is defined as
		\begin{equation*}
			\diver_{\delta}^s \phi(x)= -\pv_x c_{n,s} \int_{B(x, \d)} \frac{\phi(x)+\phi(y)}{|x-y|}\cdot\frac{x-y}{|x-y|}\frac{w_\d(x-y)}{|x-y|^{n-1+s}} \, dy , \qquad x \in \Rn .
		\end{equation*}
	\end{enumerate}
\end{definicion}
Notice that the integral in \eqref{eq: definition of nonlocal gradient} is absolutely convergent because $u$ is Lipschitz and $\rho_{\d} \in L^1 (\Rn)$.
It is also immediate from the definition that $\supp D_\delta^s u \subset \supp u + B (0, \d)$.
On the other hand, by odd symmetry,
\begin{equation} \label{eq: equiv integrals NL divergence}
	-\pv_x \int_{B(x,\delta)} \frac{\phi(x)+\phi(y)}{|x-y|}\cdot\frac{x-y}{|x-y|}\frac{w_\d(x-y)}{|x-y|^{n-1+s}} \, dy=
	\int_{B(x,\delta)} \frac{\phi(x)-\phi(y)}{|x-y|}\cdot\frac{x-y}{|x-y|}\frac{w_\d(x-y)}{|x-y|^{n-1+s}} \, dy
\end{equation}
and this last integral is absolutely convergent.


Note also that, for each $x \in \O$,
\[
 \int_{B(x,\delta)} \frac{u(x)-u(y)}{|x-y|}\frac{x-y}{|x-y|}\frac{w_\d(x-y)}{|x-y|^{n-1+s}} \, dy = \int_{\O_{\d}} \frac{u(x)-u(y)}{|x-y|}\frac{x-y}{|x-y|}\frac{w_\d(x-y)}{|x-y|^{n-1+s}} \, dy ,
\]
and similarly for the integral in \eqref{eq: equiv integrals NL divergence}, since $B(x, \d) \subset \O_{\d}$ and $\supp w_\d \subset B(0, \d)$.



The operators of Definition \ref{def: nonlocal gradient} are dual operators in the sense of integration by parts.
Actually, several versions of integration by parts formulas for related fractional or nonlocal operators have already appeared in the literature \cite{DGLZ,MeS,COMI2019,Silhavy2019}. For the purposes of this work, we will use a particular case of \cite[Th.\ 1.4]{MeS}, which, for convenience, we restate here in our context.

\begin{prop}\label{th:PartsMS}
Assume $u \in C^{\infty}_c (\Rn)$ and $\phi \in C^1_c (\O, \Rn)$.
Then
\[
 \int_{\O} \int_{\O} \frac{u(x)-u(y)}{|x-y|}\frac{x-y}{|x-y|} \cdot \phi (x) \rho_\d(x-y) \, dy \, dx = \int_{\O} u (x) \pv_x \int_{\O} \frac{\phi(x) + \phi(y)}{|x-y|} \cdot \frac{x-y}{|x-y|}\rho_\delta(x-y) \, dy \, dx .
\]
\end{prop}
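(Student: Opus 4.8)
The plan is to reduce the statement to Fubini's theorem on the product domain $\O \times \O$, after handling the principal-value integral on the right-hand side carefully. First I would note that, writing $K(x,y) := \frac{x-y}{|x-y|^2}\,\rho_\d(x-y)$ for the (vector-valued, odd in $x-y$) kernel, the left-hand side is $\int_\O \int_\O \big(u(x)-u(y)\big)\, K(x,y)\cdot \phi(x)\, dy\, dx$, and since $u$ is Lipschitz and $\rho_\d \in L^1$, the integrand is absolutely integrable on $\O \times \O$; hence no principal value is needed on the left and Fubini applies freely. By the change of variables $(x,y) \mapsto (y,x)$ and the oddness $K(y,x) = -K(x,y)$, I would rewrite
\[
\int_\O \int_\O \big(u(x)-u(y)\big)\, K(x,y)\cdot \phi(x)\, dy\, dx
= \int_\O \int_\O u(x)\, K(x,y)\cdot \big(\phi(x)+\phi(y)\big)\, dy\, dx,
\]
where in the symmetrized expression the $u(y)$ term has been turned into a $u(x)$ term with $\phi(y)$ attached, at the cost of the sign absorbed by $K$.

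The remaining point is to justify that, for fixed $x \in \O$, the inner integral $\int_\O K(x,y)\cdot(\phi(x)+\phi(y))\, dy$ equals the principal value $\pv_x \int_\O \frac{\phi(x)+\phi(y)}{|x-y|}\cdot\frac{x-y}{|x-y|}\rho_\d(x-y)\, dy$ appearing in the statement, and that the resulting function of $x$ is integrable so that the outer integral makes sense. For the principal-value identity I would use that $\phi \in C^1_c$, so $|\phi(x)+\phi(y)| \le |\phi(x)-\phi(y)| + 2|\phi(x)| \le \mathrm{Lip}(\phi)|x-y| + 2\|\phi\|_\infty$; the first term against $K$ gives an $L^1$ integrand near $y=x$ (the singularity of $\rho_\d$ is $|x-y|^{-(n-1+s)}$, improved by one power of $|x-y|$ to $|x-y|^{-(n-2+s)}$, which is locally integrable), while the second, constant-in-$y$ term $2\phi(x)$ against the odd kernel $K$ integrates to zero over any ball $B(x,\e)$ symmetric about $x$ — this is exactly the cancellation that makes the principal value exist and coincide with the symmetrized integral $\int_\O K(x,y)\cdot(\phi(x)-\phi(y))\, dy$, which is the already-absolutely-convergent object from \eqref{eq: equiv integrals NL divergence}. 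This also shows the inner integral is bounded uniformly in $x$ (with support contained in $\O$, since $\phi$ is compactly supported in $\O$ and $w_\d$ has support in $B(0,\d)$), hence integrable over the bounded set $\O$.

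Putting these together: the left-hand side equals the symmetrized double integral, which by Fubini (now legitimate, the integrand being absolutely integrable on $\O\times\O$ by the bound just described, since $u$ is bounded) equals $\int_\O u(x) \left(\int_\O K(x,y)\cdot(\phi(x)+\phi(y))\, dy\right) dx$, and the inner integral is the claimed principal value. The main obstacle is purely the bookkeeping around the principal value: one must be careful that the oddness cancellation is used on balls centred at $x$ (which is why the notation $\pv_x$ matters), and that the domain of integration can be taken as $\O$ throughout — this is legitimate because $B(x,\d) \cap \O_\d \supset B(x,\d) \cap \O$ and $w_\d$ vanishes outside $B(0,\d)$, combined with $\supp\phi \Subset \O$. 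Alternatively, one may simply invoke \cite[Th.\ 1.4]{MeS} directly, of which this is the stated special case; the argument above is the self-contained route.
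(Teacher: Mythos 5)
Your fallback option (invoking \cite[Th.\ 1.4]{MeS} directly) is exactly what the paper does: Proposition \ref{th:PartsMS} is stated there as a particular case of that theorem and is not proved. Judged as a self-contained proof, however, your argument has two genuine gaps. First, the symmetrization display is obtained by splitting the absolutely convergent integral of $(u(x)-u(y))K(x,y)\cdot\phi(x)$ into the pieces $u(x)K(x,y)\cdot\phi(x)$ and $-u(y)K(x,y)\cdot\phi(x)$ and swapping variables in the second; each piece separately is \emph{not} absolutely integrable (the difference $u(x)-u(y)$ is precisely what tames the $|x-y|^{-(n+s)}$ singularity of $K$), so the rearrangement is only formal. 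For the same reason, the claim in your last paragraph that the symmetrized integrand $u(x)K(x,y)\cdot(\phi(x)+\phi(y))$ is absolutely integrable on $\O\times\O$ is false: near the diagonal, at points where $\phi(x)\neq 0$, it is of size $|x-y|^{-(n+s)}$, so ``Fubini (now legitimate)'' is not legitimate and the inner integral exists only as a principal value. The correct route is to truncate to the symmetric set $\{|x-y|>\e\}$, perform the swap there, and let $\e\to 0$: the left side converges by dominated convergence, the truncated inner integrals converge pointwise to $\pv_x\int_\O$, and one then needs a bound on the truncated inner integrals, uniform in $\e$ and $x$, to pass the limit through the outer integral.

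Second, that uniform control cannot come from your claimed identification of $\pv_x\int_\O K(x,y)\cdot(\phi(x)+\phi(y))\,dy$ with the absolutely convergent integral $\int_\O K(x,y)\cdot(\phi(x)-\phi(y))\,dy$: besides the sign (by \eqref{eq: equiv integrals NL divergence} the pv equals \emph{minus} the difference form), the cancellation of the constant term $2\phi(x)$ against the odd kernel requires the integration domain to be symmetric about $x$. This holds for $B(x,\d)$, as in \eqref{eq: equiv integrals NL divergence}, but not for $\O\cap B(x,\d)$ when $\dist(x,\p\O)<\d$; since $\supp\phi$ is only compactly contained in $\O$, there may well be $x\in\supp\phi$ with $\dist(x,\p\O)<\d$, and then an extra term $2\phi(x)\cdot\pv_x\int_\O K(x,y)\,dy$ survives. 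The statement is still fine: the pv exists for every $x\in\O$ (small balls $B(x,\e)\subset\O$ give the cancellation near the singularity), and on $\supp\phi$ the extra term is uniformly bounded because $\dist(\supp\phi,\p\O)>0$, which also yields the uniform bound needed in the truncation argument. So your overall strategy is salvageable, but as written the two key analytic steps --- the justification of the splitting/swap and the uniform control of the inner integrals --- are either missing or rest on an identity that is false near the nonlocal boundary.
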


The integration by parts formula of interest in this investigation is the following. Notice the presence of a boundary term, which is due to the fact that $u$ is not assumed to have compact support in $\O$. Note that the minus sign in the boundary term makes sense since the vector $x-y$ points inwards.

\begin{teo}\label{th:Nl parts}
Suppose that $u \in C^{\infty}_c (\Rn)$ and $\phi \in C^1_c (\O, \Rn)$.
Then $D^s_{\d} u \in L^{\infty} (\Rn, \Rn)$ and $\diver^s_{\d} \phi \in L^{\infty} (\Rn)$.
Moreover,
	\begin{align*}
		\int_{\O} D_\delta^s u(x) \cdot \phi(x) \, dx = - \int_{\O} u(x) \diver_\delta^s \phi (x) \, dx - (n-1+s) \int_{\O} \int_{\O_{B, \d}} \frac{u(y)\phi(x)}{|x-y|} \cdot \frac{x-y}{|x-y|}\rho_\delta(x-y) \, dy \, dx
	\end{align*}
and these three integrals are absolutely convergent.
\end{teo}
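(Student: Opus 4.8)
The plan is to reduce the identity to the symmetric integration‑by‑parts formula of Proposition~\ref{th:PartsMS} by decomposing $\O_\d=\O\cup\O_{B,\d}$ in every integral that appears and then tracking carefully where each piece lands: one piece of the inner integral will reconstruct the nonlocal divergence $\diver^s_\d\phi$, two pieces will cancel, and the leftover piece will be exactly the asserted boundary term.

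\emph{Boundedness and convergence.} First I would dispose of the $L^\infty$ claims. Since $u$ is Lipschitz with constant $\|\nabla u\|_{L^\infty(\Rn)}$, the integrand in \eqref{eq: definition of nonlocal gradient} is dominated pointwise by $(n-1+s)\,\|\nabla u\|_{L^\infty(\Rn)}\,\rho_\d(x-y)$, so $\rho_\d\in L^1(\Rn)$ forces $D^s_\d u\in L^\infty(\Rn,\Rn)$ (with compact support inside $\supp u+B(0,\d)$); running the same estimate on the symmetrized form \eqref{eq: equiv integrals NL divergence} of the divergence gives $\diver^s_\d\phi\in L^\infty(\Rn)$, again with compact support. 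Hence $\int_\O D^s_\d u\cdot\phi$ and $\int_\O u\,\diver^s_\d\phi$ converge absolutely. For the boundary term, the key observation is that $\O_{B,\d}=\O_\d\setminus\O\subset\O^c$ while $\supp\phi$ is a compact subset of the open set $\O$, so $\eta:=\dist(\supp\phi,\O^c)>0$; on the region $\{x\in\supp\phi,\ y\in\O_{B,\d}\}$ one has $|x-y|\ge\eta$, the integrand is bounded, and $\O$, $\O_{B,\d}$ are bounded, so that double integral converges absolutely too.

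\emph{The identity.} Fix $x\in\O$. Because $B(x,\d)\subset\O_\d$ and $\supp w_\d\subset B(0,\d)$, the integration domain in \eqref{eq: definition of nonlocal gradient} may be enlarged to $\O_\d$; splitting $\O_\d=\O\cup\O_{B,\d}$ and using $c_{n,s}w_\d(z)/|z|^{n-1+s}=(n-1+s)\rho_\d(z)$,
\begin{align*}
 D^s_\d u(x)\cdot\phi(x)
 &=(n-1+s)\int_{\O}\frac{u(x)-u(y)}{|x-y|}\frac{x-y}{|x-y|}\cdot\phi(x)\,\rho_\d(x-y)\,dy\\
 &\quad+(n-1+s)\int_{\O_{B,\d}}\frac{u(x)-u(y)}{|x-y|}\frac{x-y}{|x-y|}\cdot\phi(x)\,\rho_\d(x-y)\,dy.
\end{align*}
Integrating in $x$ over $\O$, Proposition~\ref{th:PartsMS} turns the first term into $(n-1+s)\int_\O u(x)\,\pv_x\!\int_\O\frac{\phi(x)+\phi(y)}{|x-y|}\cdot\frac{x-y}{|x-y|}\rho_\d(x-y)\,dy\,dx$. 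On the other hand, the definition of $\diver^s_\d$ together with the same enlargement of $B(x,\d)$ to $\O_\d$ gives, for $x\in\O$,
\[
 \pv_x\!\int_{\O_\d}\frac{\phi(x)+\phi(y)}{|x-y|}\cdot\frac{x-y}{|x-y|}\rho_\d(x-y)\,dy=-\tfrac{1}{n-1+s}\,\diver^s_\d\phi(x);
\]
and since $\phi\equiv0$ on $\O_{B,\d}$ and, whenever $\phi(x)\ne0$, the point $x$ sits at distance $\ge\eta$ from $\O_{B,\d}$ (so that the $\O_{B,\d}$‑portion carries no singularity and needs no principal value), subtracting yields
\[
 \pv_x\!\int_\O\frac{\phi(x)+\phi(y)}{|x-y|}\cdot\frac{x-y}{|x-y|}\rho_\d(x-y)\,dy=-\tfrac{1}{n-1+s}\,\diver^s_\d\phi(x)-\int_{\O_{B,\d}}\frac{\phi(x)}{|x-y|}\cdot\frac{x-y}{|x-y|}\rho_\d(x-y)\,dy.
\]
Thus the first term equals $-\int_\O u\,\diver^s_\d\phi-(n-1+s)\int_\O\int_{\O_{B,\d}}\frac{u(x)\phi(x)}{|x-y|}\cdot\frac{x-y}{|x-y|}\rho_\d(x-y)\,dy\,dx$. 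In the second term I would split $u(x)-u(y)$ (legitimate by the absolute convergence above, so that Fubini applies on $\O\times\O_{B,\d}$), getting $(n-1+s)\int_\O\int_{\O_{B,\d}}\frac{u(x)\phi(x)}{|x-y|}\cdot\frac{x-y}{|x-y|}\rho_\d(x-y)\,dy\,dx-(n-1+s)\int_\O\int_{\O_{B,\d}}\frac{u(y)\phi(x)}{|x-y|}\cdot\frac{x-y}{|x-y|}\rho_\d(x-y)\,dy\,dx$. Adding the two terms, the two contributions with factor $u(x)\phi(x)$ over $\O_{B,\d}$ cancel, and what is left is precisely the claimed formula.

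\emph{Main obstacle.} The only genuinely delicate point is the bookkeeping of the principal values while passing between the integration domains $B(x,\d)$, $\O_\d$ and $\O$: one must recognize that the $\pv_x\int_\O$ produced by Proposition~\ref{th:PartsMS} differs from the expression defining $\diver^s_\d\phi$ exactly by an $\O_{B,\d}$‑integral which is non‑singular — because $\supp\phi$ is a compact subset of $\O$ and therefore stays away from $\O^c\supset\O_{B,\d}$ — so that no principal value is needed there, and then to spot the cancellation of the two $u(x)\phi(x)$ boundary contributions. Everything else (domination, compact supports, the Fubini exchanges on the $\O\times\O_{B,\d}$ pieces) is routine once $\rho_\d\in L^1(\Rn)$ and $\eta>0$ are available.
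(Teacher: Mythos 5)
Your proposal is correct and follows essentially the same route as the paper: enlarge $B(x,\d)$ to $\O_\d$, split the inner domain into $\O$ and $\O_{B,\d}$, apply Proposition \ref{th:PartsMS} to the $\O\times\O$ piece, identify the $\pv_x$-integral with $-\tfrac{1}{n-1+s}\diver^s_\d\phi$ plus an $\O_{B,\d}$-remainder, and cancel the boundary contributions using $\phi\equiv 0$ on $\O_{B,\d}$. The only (harmless) difference is bookkeeping: you split $u(x)-u(y)$ into separately convergent integrals, justified by $\dist(\supp\phi,\O_{B,\d})>0$, whereas the paper keeps the difference quotients together (bounded by Lipschitz constants against $\rho_\d\in L^1$) and performs the cancellation pointwise in the integrand.
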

\begin{proof}
Denoting by $L>0$ the Lipschitz constant of $u$, we have, for each $x \in \Rn$,
\[
 \left| D_\delta^s u (x) \right| \leq c_{n,s} L \int_{B(x,\delta)} \frac{w_\d(x-y)}{|x-y|^{n-1+s}} \, dy = (n-1+s) L ,
\]
so $D^s_{\d} u \in L^{\infty} (\Rn, \Rn)$.
Analogously, the integral of the right-hand side of \eqref{eq: equiv integrals NL divergence} is absolutely convergent and  $\diver^s_{\d} \phi \in L^{\infty} (\Rn)$.

We have
\[
 \int_{\O} D_\delta^s u(x) \cdot \phi(x) \, dx = (n-1+s) \int_{\O} \int_{\O_\d} \frac{u(x)-u(y)}{|x-y|}\frac{x-y}{|x-y|}\cdot \phi(x) \rho_\delta(x-y) \, dy \,dx
\]
with
\begin{align*}
 \int_{\O} \int_{\O_\d} \frac{u(x)-u(y)}{|x-y|}\frac{x-y}{|x-y|}\rho_\delta(x-y) \cdot \phi(x) \, dy \,dx & = \int_{\O} \int_{\O} \frac{u(x)-u(y)}{|x-y|}\frac{x-y}{|x-y|} \cdot \phi(x) \rho_\delta(x-y)\, dy \, dx \\
& + \int_{\O} \int_{\O_{B, \d}} \frac{u(x)-u(y)}{|x-y|}\frac{x-y}{|x-y|} \cdot \phi(x) \rho_\delta(x-y) \, dy \, dx.
\end{align*}
By Proposition \ref{th:PartsMS},
\[
 \int_{\O} \int_{\O} \frac{u(x)-u(y)}{|x-y|}\frac{x-y}{|x-y|} \cdot \phi (x) \rho_\d(x-y) \, dy \, dx = \int_{\O} u (x) \pv_x \int_{\O} \frac{\phi(x) + \phi(y)}{|x-y|} \cdot \frac{x-y}{|x-y|}\rho_\delta(x-y) \, dy \, dx .
\]

On the other hand,
\begin{align*}
 - \int_{\O} u(x) \diver_\delta^s \phi (x) \, dx = (n-1+s) \int_{\O} u(x) \pv_x \int_{B(x, \d)} \frac{\phi(x)+\phi(y)}{|x-y|}\cdot\frac{x-y}{|x-y|} \rho_\d(x-y) \, dy \, dx .
\end{align*}
Now, for each $x \in \O$,
\begin{align*}
 \pv_x \int_{B(x, \d)} \frac{\phi(x)+\phi(y)}{|x-y|}\cdot\frac{x-y}{|x-y|} \rho_\d(x-y) \, dy = &\pv_x \int_{\O} \frac{\phi(x)+\phi(y)}{|x-y|}\cdot\frac{x-y}{|x-y|} \rho_\d(x-y) \, dy \\
 & + \pv_x \int_{\O_{B,\d}} \frac{\phi(x)+\phi(y)}{|x-y|}\cdot\frac{x-y}{|x-y|} \rho_\d(x-y) \, dy
\end{align*}
and, since $\phi$ vanishes in $\O_{B, \d}$,
\begin{align*}
 \pv_x \int_{\O_{B, \d}} \frac{\phi(x)+\phi(y)}{|x-y|}\cdot\frac{x-y}{|x-y|} \rho_\d(x-y) \, dy & = \int_{\O_{B, \d}} \frac{\phi(x)-\phi(y)}{|x-y|}\cdot\frac{x-y}{|x-y|} \rho_\d(x-y) \, dy
 \end{align*}
and this last integral is absolutely convergent, as explained in \eqref{eq: equiv integrals NL divergence}.

Putting together the formulas above, we have obtained that
\begin{align*}
 & \int_{\O} D_\delta^s u(x) \cdot \phi(x) \, dx + \int_{\O} u(x) \diver_\delta^s \phi (x) \, dx \\
 & \qquad \qquad = (n-1+s) \int_{\O} \int_{\O_{B, \d}} \left[ \frac{u(x)-u(y)}{|x-y|} \, \phi(x) - u(x) \, \frac{\phi(x)-\phi(y)}{|x-y|} \right]  \cdot \frac{x-y}{|x-y|} \, \rho_\d(x-y) \, dy \, dx ,
\end{align*}
being the three integrals absolutely convergent.
Finally, for each $x \in \O$ and $y \in \O_{B,\d}$,
\begin{equation}\label{eq:identityboundary}
 \frac{u(x)-u(y)}{|x-y|} \, \phi(x) - u(x) \, \frac{\phi(x)-\phi(y)}{|x-y|} = \frac{-u(y)}{|x-y|} \, \phi(x) + u(x) \, \frac{\phi(y)}{|x-y|} = \frac{-u(y)}{|x-y|} \, \phi(x) 
\end{equation}
since $\phi \in C^1_c (\O, \Rn)$.
This concludes the proof.
\end{proof}
Although we have proved $L^{\infty}$ regularity for $D^s_{\d} u$, much more is true, since Proposition \ref{Lemma: convolución con gradiente clásico} will show that $D_\delta^s u  \in C_c^{\infty} (\Rn)$.


We now extend Definition \ref{def: nonlocal gradient}\,\emph{\ref{item:Dsdu})} to a broader class of functions.

\begin{definicion}\label{def: nonlocal gradient 2}
	\begin{enumerate}[a)]
		\item
		Let $u \in L^1 (\O_\d)$ be such that there exists a sequence of $\{ u_j \}_{j \in \N} \subset C^{\infty}_c (\Rn)$ converging to $u$ in $L^1 (\O_\d)$ and for which $\{ D_\d^s u_j \}_{j \in \N}$ converges to some $U$ in $L^1 (\O, \Rn)$.
		We define $D_\d^s u$ as $U$.
		
		\item\label{item:divs delta u}
		Let $\phi \in L^1 (\O_\d, \Rn)$ be such that there exists a sequence of $\{ \phi_j \}_{j \in \N} \subset C^{\infty}_c (\Rn, \Rn)$ converging to $\phi$ in $L^1 (\O_\d, \Rn)$ and for which $\{ \diver_\d^s \phi_j \}_{j \in \N}$ converges to some $\Phi$ in $L^1 (\O)$.
		We define $\diver_\d^s \phi$ as $\Phi$.
	\end{enumerate}
\end{definicion}

The following result shows that the above definitions are independent of the sequence chosen.

\begin{lem}
	\begin{enumerate}[a)]
		\item\label{item:Dsdeltau independent}
		Let $u \in L^1 (\O_\d)$ be such that there exist sequences $\{ u_j \}_{j \in \N} $ and $\{ v_j \}_{j \in \N}$ in $C^{\infty}_c (\Rn)$ such that $u_j \to u$ and $v_j \to u$ in $L^1 (\O_\d)$, and for which $D_\d^s u_j \to U$ and $D^s v_j \to V$ in $L^1 (\O, \Rn)$ as $j \to \infty$.
Then $U=V$ a.e.\ in $\O$.
		
		\item\label{item:divsdeltaphi independent}
		Let $\phi \in L^1 (\O_\d, \Rn)$ be such that there exist sequences $\{ \phi_j \}_{j \in \N} $ and $\{ \theta_j \}_{j \in \N}$ in $C^{\infty}_c (\Rn, \Rn)$ such that $\phi_j \to \phi$ and $\theta_j \to \phi$ in $L^1 (\O_\d, \Rn)$, and for which $\diver^s \phi_j \to \Phi$ and $\diver^s \theta_j  \to \Theta$ in $L^1 (\O)$ as $j \to \infty$. Then $\Phi = \Theta$ a.e.\ in $\O$.
	\end{enumerate}
\end{lem}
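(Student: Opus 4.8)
The plan is to reduce the claim to a statement about the nonlocal gradient (resp. divergence) of smooth functions, and then exploit the integration-by-parts identity of Theorem \ref{th:Nl parts} together with a density/duality argument. I will carry out the details for part \emph{\ref{item:Dsdeltau independent})}; part \emph{\ref{item:divsdeltaphi independent})} is entirely analogous (with the roles of $D^s_\d$ and $\diver^s_\d$, and their corresponding boundary terms, interchanged). First, by linearity, setting $w_j := u_j - v_j \in C^\infty_c(\Rn)$, we have $w_j \to 0$ in $L^1(\O_\d)$ and $D^s_\d w_j \to U - V$ in $L^1(\O, \Rn)$. So it suffices to show: if $\{ w_j \}_{j \in \N} \subset C^\infty_c(\Rn)$ satisfies $w_j \to 0$ in $L^1(\O_\d)$ and $D^s_\d w_j \to W$ in $L^1(\O, \Rn)$, then $W = 0$ a.e.\ in $\O$.

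The key step is to test against $\phi \in C^1_c(\O, \Rn)$ and pass to the limit in the integration-by-parts formula of Theorem \ref{th:Nl parts}. Applied to $w_j$ and $\phi$, that formula reads
\[
\int_{\O} D_\delta^s w_j(x) \cdot \phi(x) \, dx = - \int_{\O} w_j(x) \diver_\delta^s \phi (x) \, dx - (n-1+s) \int_{\O} \int_{\O_{B, \d}} \frac{w_j(y)\phi(x)}{|x-y|} \cdot \frac{x-y}{|x-y|}\rho_\delta(x-y) \, dy \, dx .
\]
On the left-hand side, since $D^s_\d w_j \to W$ in $L^1(\O, \Rn)$ and $\phi \in L^\infty(\O, \Rn)$, the integral converges to $\int_\O W \cdot \phi$. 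On the right-hand side, $\diver^s_\d \phi \in L^\infty(\O)$ by Theorem \ref{th:Nl parts}, and $w_j \to 0$ in $L^1(\O) \subset L^1(\O_\d)$, so the first term tends to $0$. For the boundary term, note that the inner integration is over $y \in \O_{B,\d}$, where $|x-y|$ stays bounded below and above away from degeneracy relative to $\supp \phi \subset \O$ is not automatic — but since $\phi$ vanishes near $\p\O$ while $y$ ranges over $\O_{B,\d} = \O_\d \setminus \O$, for $x \in \supp\phi$ and $y \in \O_{B,\d}$ the distance $|x-y|$ is bounded below by $\dist(\supp\phi, \p\O) > 0$; hence the kernel $\frac{1}{|x-y|^{n+s}} w_\d(x-y)|\phi(x)|$ is uniformly bounded and supported in a bounded set, so by Fubini the boundary term is bounded by a constant times $\|w_j\|_{L^1(\O_{B,\d})} \le \|w_j\|_{L^1(\O_\d)} \to 0$. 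Therefore $\int_\O W \cdot \phi = 0$ for every $\phi \in C^1_c(\O, \Rn)$, which forces $W = 0$ a.e.\ in $\O$ by the fundamental lemma of the calculus of variations.

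The main (and only real) obstacle is the boundary term: one must confirm that the kernel appearing there is genuinely integrable and uniformly controlled, which hinges on the separation $\dist(\supp\phi, \p\O) > 0$ ensuring no singularity is encountered, combined with the compact support of $w_\d$; once this is in place, the passage to the limit is routine dominated-type reasoning. For part \emph{\ref{item:divsdeltaphi independent})}, one instead tests $\diver^s_\d(\phi_j - \theta_j)$ against $u \in C^\infty_c(\Rn)$ using the same identity (now with the smooth function in the role of $u$ and the vanishing sequence in the role of $\phi$, which additionally kills the boundary term since the test vector field can be taken compactly supported in $\O$); since such $u$ are dense and $\diver^s_\d(\phi_j - \theta_j) \to \Phi - \Theta$ in $L^1(\O)$, we again conclude $\Phi = \Theta$ a.e.\ in $\O$.
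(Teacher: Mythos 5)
Your argument for part \emph{\ref{item:Dsdeltau independent})} is correct and is essentially the paper's own proof: test against $\phi \in C^1_c (\O, \Rn)$, apply Theorem \ref{th:Nl parts} to the smooth approximants, and pass to the limit. The only (harmless) difference is how the boundary term is controlled: you use the separation $\dist(\supp\phi, \p \O)>0$ to bound the kernel pointwise, whereas the paper uses that $\phi(y)=0$ for $y \in \O_{B,\d}$ together with the Lipschitz bound $|\phi(x)|=|\phi(x)-\phi(y)|\leq L|x-y|$; both routes give a bound of the form $C\|w_j\|_{L^1(\O_{B,\d})}\to 0$.

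The closing sketch for part \emph{\ref{item:divsdeltaphi independent})}, however, does not work as written. There the vanishing sequence $\psi_j=\phi_j-\theta_j$ lies in $C^{\infty}_c(\Rn,\Rn)$, not in $C^1_c(\O,\Rn)$, so you may not place it in the $\phi$-slot of Theorem \ref{th:Nl parts} (nor of Proposition \ref{th:PartsMS}): that support hypothesis is exactly what was used to discard the term $u(x)\,\phi(y)/|x-y|$ in \eqref{eq:identityboundary}. Moreover, in part \emph{\ref{item:divsdeltaphi independent})} there is no ``test vector field'' you are free to choose; what you can choose compactly supported in $\O$ is the scalar test function. The correct analogue is to rerun the proof of Theorem \ref{th:Nl parts} with the roles reversed (scalar $\varphi \in C^{\infty}_c(\O)$, vector field only in $C^1_c(\Rn,\Rn)$): the boundary term involving the scalar on $\O_{B,\d}$ then disappears, but a new boundary term involving $\psi_j(y)$ for $y \in \O_{B,\d}$ remains, and it is not zero --- it tends to zero exactly as in your part \emph{\ref{item:Dsdeltau independent})}, being bounded by $C\|\psi_j\|_{L^1(\O_{B,\d})}$. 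With that correction (which is in fact what your opening remark about ``corresponding boundary terms interchanged'' suggests) part \emph{\ref{item:divsdeltaphi independent})} follows; as stated, though, the step ``the same identity \ldots kills the boundary term'' is not justified.
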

\begin{proof}
	We prove \emph{\ref{item:Dsdeltau independent})}, the proof of \emph{\ref{item:divsdeltaphi independent})} being analogous.
	Let $\phi \in C^1_c (\O, \Rn)$.
By Theorem \ref{th:Nl parts},
	\begin{align*}
		\int_\O U \cdot \phi &= \lim_{j \to \infty} \int_\O D^s_{\d} u_j \cdot \phi \\
		&= - \lim_{j \to \infty} \left(\int_\O u_j \diver^s_{\d} \phi + (n-1+s) \int_{\O} \int_{\O_{B, \d}} \frac{u_j(y) \, \phi(x)}{|x-y|} \cdot \frac{x-y}{|x-y|} \, \rho_\d(x-y) \, dy \, dx\right) \\
		&= - \left(\int_\O u \diver^s_{\d} \phi + (n-1+s) \int_{\O} \int_{\O_{B, \d}} \frac{u(y) \, \phi(x)}{|x-y|} \cdot \frac{x-y}{|x-y|} \, \rho_\d(x-y) \, dy \, dx\right) .
	\end{align*}
Among the previous limits, only the equality
\begin{equation}\label{eq:limitboundary}
 \lim_{j \to \infty} \int_{\O} \int_{\O_{B, \d}} \frac{u_j(y) \, \phi(x)}{|x-y|} \cdot \frac{x-y}{|x-y|} \, \rho_\d(x-y) \, dy \, dx = \int_{\O} \int_{\O_{B, \d}} \frac{u(y) \, \phi(x)}{|x-y|} \cdot \frac{x-y}{|x-y|} \, \rho_\d(x-y) \, dy \, dx
\end{equation}
requires some explanation, but, assuming its validity, since the same reasoning can be done for $V$, we conclude that
	\[
	\int_\O U \cdot \phi = \int_\O V \cdot \phi
	\]
	for all $\phi \in C^1_c (\O, \Rn)$, whence $U=V$ a.e.\ in $\O$.

It remains to justify limit \eqref{eq:limitboundary}.
For this, it is enough to show that the function
\[
 F(y) := \int_{\O} \frac{\phi(x)}{|x-y|} \cdot \frac{x-y}{|x-y|} \, \rho_\d(x-y) \, dx , \qquad y \in \O_{B,\d}
\]
is in $L^{\infty}(\O_{B,\d})$, and this is the case, since, denoting by $L$ the Lipschitz constant of $\phi$ and using that $\phi$ vanishes in $\O_{B,\d}$, we have
\[
 \left| F(y) \right| \leq \int_{\O} \frac{|\phi(x)- \phi(y)|}{|x-y|} \rho_\d(x-y) \, dx \leq L .
\]
This concludes the proof.
\end{proof}

It is quite natural to consider the space of $L^p$ functions whose nonlocal gradient is also an $L^p$ function. Taking into account the previous definitions, it is also natural to define such a space as the closure of smooth, compactly supported functions. Notice that this is analogous to the definition of the Bessel space $H^{s,p}(\Rn)$ \cite{BeCuMC21,COMI2019}.

\begin{definicion}\label{de:Hspd}
We define the space $H^{s,p,\d}(\O)$ as
	\begin{equation*}
		H^{s,p,\d}(\O):=\overline{C^{\infty}_c(\Rn)}^{\|\cdot\|_{H^{s,p,\d}(\O)}}
	\end{equation*}
	equipped with the norm 
	\begin{displaymath}
		\lVert u\rVert_{H^{s,p,\d}(\O)} = \left( \left\| u \right\|_{L^p (\O_{\d})}^p + \left\| D_\delta^s u \right\|_{L^p (\O)}^p \right)^{\frac{1}{p}} .
	\end{displaymath}
\end{definicion}

Similarly to Sobolev spaces, this space satisfies reflexivity and separability properties.  
\begin{prop} \label{prop: espacio separable y reflexivo}
Let $1\leq p<\infty$. Then the space $H^{s,p,\d}(\Omega)$ is a separable Banach space.
Moreover, when $p>1$, it is reflexive.
\end{prop}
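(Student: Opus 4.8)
The plan is to realize $H^{s,p,\d}(\O)$ as a closed subspace of a product of Lebesgue spaces, and then transfer the classical Banach-space properties of $L^p$ through that identification. Concretely, consider the map
\[
T \colon H^{s,p,\d}(\O) \longrightarrow L^p(\O_\d) \times L^p(\O,\Rn), \qquad T u = (u|_{\O_\d}, D^s_\d u),
\]
which by the very definition of the norm $\|\cdot\|_{H^{s,p,\d}(\O)}$ is a linear isometry onto its image when $L^p(\O_\d)\times L^p(\O,\Rn)$ is given the norm $\|(f,g)\| = (\|f\|_{L^p(\O_\d)}^p + \|g\|_{L^p(\O,\Rn)}^p)^{1/p}$. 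First I would check that $H^{s,p,\d}(\O)$ is a Banach space: this is automatic, since by Definition \ref{de:Hspd} it is the completion of $C^\infty_c(\Rn)$ under a norm, and a completion of a normed space is by construction a Banach space. (One should remark that $\|\cdot\|_{H^{s,p,\d}(\O)}$ is genuinely a norm on $C^\infty_c(\Rn)$ — in particular positive definite — which holds because already $\|u\|_{L^p(\O_\d)}$ controls $u$ on the open set $\O_\d$, so the $L^p(\O_\d)$-term alone separates smooth functions.)

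Next I would establish that $T(H^{s,p,\d}(\O))$ is a \emph{closed} subspace of $L^p(\O_\d)\times L^p(\O,\Rn)$. Let $(u_j)\subset C^\infty_c(\Rn)$ be such that $T u_j = (u_j|_{\O_\d}, D^s_\d u_j)$ converges in $L^p(\O_\d)\times L^p(\O,\Rn)$ to some $(u,U)$; since $\O_\d$ and $\O$ are bounded, $L^p$-convergence implies $L^1$-convergence there, so by Definition \ref{def: nonlocal gradient 2} and the subsequent Lemma we get $D^s_\d u = U$, hence $(u,U)$ lies in the image; more generally the image of the full space $H^{s,p,\d}(\O)$ is closed because an isometric image of a complete space inside a metric space is always closed. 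Now separability and reflexivity descend: $L^p(\O_\d)\times L^p(\O,\Rn)$ is separable for every $1\le p<\infty$ (finite product of separable spaces, each $L^p$ of a $\sigma$-finite measure being separable), and any subset of a separable metric space is separable, so $H^{s,p,\d}(\O)$ is separable; for $p>1$ the product is reflexive (finite product of reflexive spaces), a closed subspace of a reflexive Banach space is reflexive, and reflexivity is preserved under isometric isomorphism, giving reflexivity of $H^{s,p,\d}(\O)$.

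The only genuinely non-formal point — what I would flag as the main obstacle — is the closedness of the image $T(H^{s,p,\d}(\O))$ as a \emph{subspace identified with functions and their nonlocal gradients}, i.e. the consistency step that the $L^p$-limit pair $(u,U)$ really satisfies $U = D^s_\d u$ in the sense of Definition \ref{def: nonlocal gradient 2}. This is exactly where the earlier integration-by-parts machinery (Theorem \ref{th:Nl parts}) and the well-definedness Lemma are used: they guarantee that the nonlocal gradient is a well-defined, sequence-independent operator on the completion, so that $T$ is not merely an isometry of $C^\infty_c(\Rn)$ but extends to an isometry of the whole space whose range consists precisely of pairs $(u, D^s_\d u)$. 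Once this identification is in hand, separability and reflexivity are purely soft consequences of the corresponding properties of $L^p$ spaces, requiring no further computation.
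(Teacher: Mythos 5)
Your proposal is correct and follows essentially the same route as the paper: the isometry $T u = (u, D^s_\d u)$ into $L^p(\O_\d)\times L^p(\O,\Rn)$, closedness of its range, and transfer of separability and reflexivity to $H^{s,p,\d}(\O)$ from the product space. One small caveat: your parenthetical claim that the $L^p(\O_\d)$-term separates elements of $C^\infty_c(\Rn)$ is not literally true (a nonzero smooth function supported outside $\overline{\O_\d}$ has vanishing norm, since $D^s_\d u$ on $\O$ only sees values of $u$ in $\O_\d$), so the quantity is only a seminorm on $C^\infty_c(\Rn)$ and the closure/completion implicitly identifies functions agreeing a.e.\ on $\O_\d$ --- a point the paper also leaves implicit and which does not affect the argument.
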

\begin{proof}

That $H^{s,p,\d}(\O)$ is a Banach space is immediate since its has been defined as a closure.

For the rest of the proof, we apply a standard argument; see, for example, \cite[Th.\ 2.1]{MeD} for the nonlocal case and \cite[Prop.\ 8.1]{Brezis} for the local case.

We have that the space $F_p = L^p(\O_{\d}) \times L^p(\Omega, \mathbb{R}^n)$ is separable and, if $p>1$, it is reflexive. Now we define the map $T:H^{s,p,\d}(\O)\rightarrow F_p$ by $T(u) = \left( u , D_\delta^s u \right)$.
Then $T$ is an isometry since
	\begin{displaymath}
	\lVert T(u)\rVert_{F_p}^p = \lVert u\rVert_{L^p(\O_{\d})}^p + \lVert D_\delta^s u\rVert_{L^p(\O)}^p = \lVert u\rVert_{H^{s,p,\d}(\O)}^p.
	\end{displaymath}
By Definitions \ref{def: nonlocal gradient 2} and \ref{de:Hspd}, it is clear that $T(H^{s,p,\d}(\O))$ is a closed subspace of $F_p$.
Since every closed subspace of a reflexive space is reflexive (see, e.g., \cite[Prop.\ 3.20]{Brezis}) and every subset of a separable space is separable (e.g., \cite[Prop.\ 3.25]{Brezis}), it follows that $T(H^{s,p,\d}(\O))$ is separable and, if $p>1$, it is reflexive.
The conclusion follows since $T$ is an isometry.
\end{proof}
In the next result we compare the spaces $H^{s,p,\d}(\O)$ for different exponents $p$, as well as with the better-known Bessel space $H^{s,p}(\Rn)$.
\begin{prop} \label{prop: simple inclussions}
	Let $1\leq q \leq p <\infty$. Then:
	\begin{enumerate}[a)]
		\item $H^{s,p,\d}(\O) \subset H^{s,q,\d}(\O)$. \label{simple inclussion 1}
		\item $H^{s,p}(\Rn) \subset H^{s,p,\d}(\O)$, with continuous embedding. \label{simple inclussion 2}
	\end{enumerate}
\end{prop}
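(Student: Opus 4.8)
The plan is to reduce both assertions to a single estimate on $C^\infty_c(\Rn)$ and then pass to the completions. It is convenient to use the concrete realization of these spaces already exploited in the proof of Proposition \ref{prop: espacio separable y reflexivo}: identify $H^{s,p,\d}(\O)$ with the closure of $\{(u|_{\O_\d}, D^s_\d u) : u \in C^\infty_c(\Rn)\}$ inside $L^p(\O_\d) \times L^p(\O,\Rn)$, and analogously $H^{s,q,\d}(\O)$ inside $L^q(\O_\d)\times L^q(\O,\Rn)$ and the Bessel space $H^{s,p}(\Rn)$ inside $L^p(\Rn)\times L^p(\Rn,\Rn)$ (with the Riesz gradient in the second slot). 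The uniqueness of the nonlocal gradient established after Definition \ref{def: nonlocal gradient 2} guarantees that $D^s_\d$ is unambiguous across these realizations.

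For the first inclusion: since $\O \subset \O_\d$ are bounded and $q \le p$, H\"older's inequality yields continuous inclusions $L^p(\O_\d)\hookrightarrow L^q(\O_\d)$ and $L^p(\O,\Rn)\hookrightarrow L^q(\O,\Rn)$, so that $\|u\|_{H^{s,q,\d}(\O)} \le C\,\|u\|_{H^{s,p,\d}(\O)}$ for every $u \in C^\infty_c(\Rn)$, with $C = C(n,p,q,\O,\d)$. This continuous, injective inclusion maps the defining set for exponent $p$ into that for exponent $q$, hence maps the corresponding closures accordingly, giving $H^{s,p,\d}(\O) \subset H^{s,q,\d}(\O)$ with a continuous embedding.

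The second inclusion rests on the pointwise identity, valid for every $u \in C^\infty_c(\Rn)$,
\[
D^s_\d u = a_0\, D^s u - \mu_\d * u \quad\text{on } \Rn, \qquad \mu_\d(z) := c_{n,s}\, \frac{z}{|z|^{n+1+s}}\bigl(w_\d(z)-a_0\bigr).
\]
To derive it, write both $D^s_\d u$ and $D^s u$ using the combined kernel $c_{n,s}\,(x-y)\,|x-y|^{-(n+1+s)}$ (the first carrying the extra factor $w_\d(x-y)$, and both integrated over $\Rn$ since $\supp w_\d \subset B(0,\d)$); then $D^s_\d u(x) - a_0 D^s u(x) = \int_{\Rn}\bigl(u(x)-u(y)\bigr)\,\mu_\d(x-y)\,dy$. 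Because $w_\d \equiv a_0$ on $B(0,b_0\d)$, the kernel $\mu_\d$ has no singularity at the origin, while the bound $|\mu_\d(z)| \le c_{n,s}\, a_0\, |z|^{-(n+s)}$ shows $\mu_\d \in L^1(\Rn,\Rn)$; moreover $\mu_\d$ is odd (as $w_\d$ is radial), hence $\int_{\Rn}\mu_\d = 0$. Splitting the last integral as $u(x)\int_{\Rn}\mu_\d - \int_{\Rn} u(y)\,\mu_\d(x-y)\,dy$, both pieces being absolutely convergent, gives the identity. From it, Young's convolution inequality yields $\|D^s_\d u\|_{L^p(\O)} \le \|D^s_\d u\|_{L^p(\Rn)} \le a_0\,\|D^s u\|_{L^p(\Rn)} + \|\mu_\d\|_{L^1}\,\|u\|_{L^p(\Rn)}$, and together with $\|u\|_{L^p(\O_\d)}\le\|u\|_{L^p(\Rn)}$ we obtain $\|u\|_{H^{s,p,\d}(\O)} \le C\,\|u\|_{H^{s,p}(\Rn)}$ for all $u \in C^\infty_c(\Rn)$, with $C = C(n,s,\d,w_\d)$. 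Therefore $u \mapsto (u|_{\O_\d}, D^s_\d u)$ extends to a continuous linear map from $H^{s,p}(\Rn)$ into $H^{s,p,\d}(\O)$; passing to the limit in the identity along a sequence $u_j \in C^\infty_c(\Rn)$ with $u_j \to u$ in $H^{s,p}(\Rn)$ shows this map is precisely restriction to $\O_\d$, with $D^s_\d$ of the restriction equal to $a_0 D^s u - \mu_\d * u$ on $\O$ (in the sense of Definition \ref{def: nonlocal gradient 2}). This is the asserted continuous embedding.

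I expect the only genuine difficulty to be the kernel identity in the second part: recognizing that the difference of the two singular kernels splits into a multiple of the Riesz kernel plus an $L^1$ kernel of zero mean, and carefully justifying the rearrangements of integrals that individually converge (thanks to $u$ being smooth and compactly supported) but whose kernels are not integrable at infinity on their own. A secondary, bookkeeping point is fixing the precise meaning of ``$\subset$'', since $\|\cdot\|_{H^{s,p,\d}(\O)}$ disregards the values of $u$ outside $\O_\d$; this is handled, as above, through the realizations inside Lebesgue spaces used in Proposition \ref{prop: espacio separable y reflexivo}.
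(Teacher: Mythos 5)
Your proof is correct and follows essentially the same route as the paper: part \emph{a)} is the same H\"older/inclusion-of-Lebesgue-spaces argument, and for part \emph{b)} your identity $D^s_\d u = a_0 D^s u - \mu_\d * u$ is exactly the paper's decomposition of $D^s_\d u$ into $a_0 D^s u$ plus a correction with kernel supported off $B(0,b_0\d)$ (the paper discards the $u(x)$ term by the same odd-symmetry cancellation you make explicit), followed by the same density argument. The only cosmetic difference is that you bound the correction term by Young's convolution inequality on $\Rn$, whereas the paper uses pointwise H\"older and then integrates over the bounded set $\O$; both yield the same estimate $\|u\|_{H^{s,p,\d}(\O)} \leq C\|u\|_{H^{s,p}(\Rn)}$ on $C^\infty_c(\Rn)$.
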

\begin{proof}
	The proof of \emph{\ref{simple inclussion 1})} is obtained in a straightforward manner applying the known inclusions $L^p(\O_{\d}) \subset L^q(\O_{\d})$ and $L^p(\O) \subset L^q(\O)$ to the norms of $u$ and $D_\d^su$.
	
	Regarding \emph{\ref{simple inclussion 2})}, we first prove the corresponding inequality for smooth functions. Thus, let $u \in C^{\infty}_c(\Rn)$. We have that, for $x \in \O$,
	\begin{align*}
			D_\delta^s & u(x)= c_{n,s} \int_{B(x,\delta)} \frac{u(x)-u(y)}{|x-y|}\frac{x-y}{|x-y|}\frac{w_\d(x-y)}{|x-y|^{n-1+s}} \, dy \\
			&=c_{n,s} a_0\int_{\Rn} \frac{u(x)-u(y)}{|x-y|}\frac{x-y}{|x-y|}\frac{1}{|x-y|^{n-1+s} }\, dy- c_{n,s} \int_{\Rn} \frac{u(x)-u(y)}{|x-y|}\frac{x-y}{|x-y|}\frac{a_0-w_\d(x-y)}{|x-y|^{n-1+s}} \, dy \\
			& = a_0 D^s u (x) - c_{n,s} \int_{B(x, b_0 \d)^c} \frac{u(x)-u(y)}{|x-y|}\frac{x-y}{|x-y|}\frac{a_0-w_\d(x-y)}{|x-y|^{n-1+s}} \, dy \\
 & = a_0 D^s u (x) + c_{n,s} \int_{B(x, b_0 \d)^c} \frac{u(y)}{|x-y|}\frac{x-y}{|x-y|}\frac{a_0-w_\d(x-y)}{|x-y|^{n-1+s}} \, dy .
	\end{align*}
We recall that $a_0$ and $b_0$ are the constants from the definition of $w_\d$ and that $w_\d = a_0$ in $B(0,b_0 \d)$. Therefore, applying H\"older inequality, we have that
\[
 \left| D_\delta^s u(x) \right| \leq a_0 \left| D^s u (x) \right| + a_0 \, c_{n,s} \int_{B(x, b_0 \d)^c} \frac{|u(y)|}{|x-y|^{n+s}} \, dy  \leq a_0 \left| D^s u (x) \right| + c_1 \left\| u \right\|_{L^p (B(x, b_0 \d)^c)}
\]
for some constant $c_1>0$.
Consequently,
\[
  \left\| D_\delta^s u \right\|_{L^p (\O)} \leq a_0 \left\| D^s u \right\|_{L^p (\O)} + c_1 |\O|^{\frac{1}{p}} \left\| u \right\|_{L^p (B(x, b_0 \d)^c)}\leq c_2 \left\| u \right\|_{H^{s,p}(\Rn)}
\]
for some constant $c_2>0$.
Since we also have that $\|u\|_{L^p(\O_\d)} \leq \| u \|_{L^p(\Rn)}$, we obtain that there exists $C>0$ such that for every $u \in C^{\infty}_c(\Rn)$,
\[\|u\|_{H^{s,p,\d}(\O)} \leq C \|u\|_{H^{s,p}(\Rn)} .\]
Being the spaces $H^{s,p,\d}(\O)$ and $H^{s,p}(\Rn)$ defined as the closure of $C^{\infty}_c(\Rn)$ with their respective norms, the result follows. 
\end{proof}

The inclusion of Proposition \ref{prop: simple inclussions}\,\emph{\ref{simple inclussion 2})} is one of the main motivations for the definition of our space, since it roughly suggests that $H^{s,p,\d}(\O)$ consists of Bessel $H^{s,p}$ functions defined only on $\O$ and without any integrability requirement at infinity.
As a matter of fact, the examples of \cite[Sect.\ 2.1]{BeCuMC} in the context of Solid Mechanics, together with the inclusion $H^{s,p}(\Rn) \subset H^{s,p,\d}(\O)$ show that $H^{s,p,\d}(\O, \Rn)$ contains deformations exhibiting fracture or cavitation, for some range of  $s$ and $p$.
One of the advantages of the space $H^{s,p,\d}(\O)$ is that, contrary to $H^{s,p}(\Rn)$, it contains linear functions, such as the identity, which would be relevant in a future linearization process.

\section{Nonlocal Fundamental Theorem of Calculus} \label{se: Non Local FTC}

We start by recalling the following classical representation theorem, which can be seen in \cite[Lemma 7.14]{GiTr01} or \cite[Prop.\ 4.14]{ponce_book}.
\begin{prop} \label{prop: classical representation result}
	For every $\varphi \in C_c^{\infty}(\Rn)$ and every $x \in \Rn$, we have 
	\begin{equation*}
	\varphi(x)=\frac{1}{\sigma_{n-1}} \int_{\Rn} \nabla \varphi(y) \cdot \frac{x-y}{|x-y|^n} \, dy .
	\end{equation*}
\end{prop}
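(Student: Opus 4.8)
The plan is to reduce the statement to the one–dimensional Fundamental Theorem of Calculus along rays emanating from $x$, and then average over all directions. First I would fix $x \in \Rn$ and, for a unit vector $\omega \in S^{n-1}$, consider the smooth function $r \mapsto \varphi(x + r\omega)$ on $[0,\infty)$. Since $\varphi \in C^\infty_c(\Rn)$, this function vanishes for $r$ large, so
\[
\varphi(x) = -\int_0^{\infty} \frac{d}{dr}\varphi(x+r\omega)\,dr = -\int_0^{\infty} \nabla\varphi(x+r\omega)\cdot\omega\,dr .
\]
Integrating this identity over $\omega \in S^{n-1}$ with respect to $\mathcal{H}^{n-1}$ and dividing by $\sigma_{n-1}$ gives
\[
\varphi(x) = -\frac{1}{\sigma_{n-1}}\int_{S^{n-1}}\int_0^{\infty} \nabla\varphi(x+r\omega)\cdot\omega\,dr\,d\mathcal{H}^{n-1}(\omega) .
\]

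The second step is the passage to Cartesian coordinates. Setting $y = x + r\omega$, so that $r = |y-x|$ and $\omega = (y-x)/|y-x|$, the polar coordinate formula $dy = |y-x|^{n-1}\,dr\,d\mathcal{H}^{n-1}(\omega)$ yields $dr\,d\mathcal{H}^{n-1}(\omega) = |y-x|^{-(n-1)}\,dy$. Substituting,
\[
\varphi(x) = -\frac{1}{\sigma_{n-1}}\int_{\Rn}\nabla\varphi(y)\cdot\frac{y-x}{|y-x|}\,\frac{dy}{|y-x|^{n-1}} = \frac{1}{\sigma_{n-1}}\int_{\Rn}\nabla\varphi(y)\cdot\frac{x-y}{|x-y|^n}\,dy ,
\]
which is the claimed representation.

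The only delicate point — and the main (minor) obstacle — is the integrability of the kernel near $y = x$: the integrand is bounded by $\|\nabla\varphi\|_{L^\infty(\Rn)}\,|x-y|^{1-n}$, which is locally integrable on $\Rn$ since $1-n > -n$, and the compact support of $\varphi$ makes the integral over all of $\Rn$ absolutely convergent. This local integrability is also what legitimizes the use of Fubini's theorem when rewriting the iterated polar integral as an integral over $\Rn$. Alternatively, one may simply invoke the cited references \cite[Lemma 7.14]{GiTr01} or \cite[Prop.\ 4.14]{ponce_book}, where precisely this identity is proved.
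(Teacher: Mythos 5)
Your proof is correct. The paper does not prove this proposition itself but simply cites \cite[Lemma 7.14]{GiTr01} and \cite[Prop.\ 4.14]{ponce_book}, and your argument—applying the one-dimensional Fundamental Theorem of Calculus along rays from $x$, averaging over the sphere, and converting to Cartesian coordinates via polar integration, with absolute convergence guaranteed by the local integrability of $|x-y|^{1-n}$ and the compact support of $\nabla\varphi$—is precisely the standard proof given in those references, so nothing further is needed.
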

This result may be understood as a fundamental theorem of Calculus, in the sense that we recover a function from its gradient by integration; more precisely, by convolution. A fractional version of it, involving the Riesz fractional gradient is also known \cite{ponce_book,ShS2015}.
This section is devoted to a novel nonlocal version of Proposition \ref{prop: classical representation result}, where a  function can be recovered from its nonlocal gradient $D^s_{\d}$ through a convolution with a suitable kernel $V^s_{\d}$.

Our approach is inspired by the proofs of the fractional fundamental theorem of Calculus previously referred in  \cite{ponce_book,ShS2015}.
However, those partly rely on the semigroup properties of Riesz potentials, which our kernels do not enjoy. Therefore, our procedure is much more involved. 

To begin with, we show that the kernel in the definition of $D_{\d}^su$ (see formula \eqref{eq: definition of nonlocal gradient}) can be seen, in a certain sense, as the gradient of a certain function. In order to do so, we introduce the following kernels.

\begin{definicion}\label{de:q}
Define
\[
 \bar{q}_\d : [0, \infty) \to \R, \qquad q_\d : \Rn \to \R \quad \text{and} \quad Q_\d^s : \Rn \setminus \{0\} \to \R
\]
as
	\begin{equation*}
	\bar{q}_\d (t)=(n-1+s)t^{n-1+s} \int_{t}^{\d}\frac{\bar{w}_\d(r)}{r^{n+s}} \, dr , \qquad q_{\d} (x) = \bar{q}_\d (|x|) \ \quad \text{and} \ \quad Q_\d^s(x)= \frac{1}{\gamma(1-s)|x|^{n-1+s}} q_\d(x).
	\end{equation*}
\end{definicion}

In the following result, $a_0$ and $b_0$ are the constants from the definition of $w_\d$.

\begin{lem} \label{lem: kernel primitive}
\begin{enumerate}[a)]
\item\label{item:qa}
$\bar{q}_{\d}$ is $C^{\infty} ((0, \infty))$, $C^{n-1} ([0, \infty))$, $\supp \bar{q}_{\d} \subset [0, \d)$ and there exists a constant $z_0 \in \R$ such that for every $t \in [0,b_0 \d]$,
\begin{equation*}
	\bar{q}_\d(t)= a_0 +z_0 \, t^{n-1+s}.
\end{equation*}

\item\label{item:qb}
$Q_\d^s$ is radially decreasing, $Q_\d^s \in L^1 (\Rn)$, $\supp Q_\d^s \subset B(0, \d)$ and
	\begin{equation} \label{eq: kernel primitive}
		\frac{-1}{n-1+s}\nabla Q_\d^s(x)=\frac{\rho_\delta{(x)}}{|x|}\frac{x}{|x|}.
\end{equation}

\end{enumerate}
\end{lem}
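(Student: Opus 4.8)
The plan is to establish part \emph{\ref{item:qa})} by direct analysis of the integral defining $\bar q_\d$, and then to deduce part \emph{\ref{item:qb})} by a computation of $\nabla Q_\d^s$ together with elementary monotonicity and integrability arguments.

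First I would examine $\bar q_\d(t) = (n-1+s)\, t^{n-1+s} \int_t^\d \frac{\bar w_\d(r)}{r^{n+s}}\, dr$. On $(0,\d)$ the integrand $r \mapsto \bar w_\d(r)/r^{n+s}$ is $C^\infty$ and the prefactor $t^{n-1+s}$ is $C^\infty$ on $(0,\infty)$, so $\bar q_\d \in C^\infty((0,\infty))$; for $t \geq \d$ the integral vanishes because $\supp \bar w_\d \subset [0,\d)$, hence $\supp \bar q_\d \subset [0,\d)$. For the behaviour near $0$: since $\bar w_\d \equiv a_0$ on $[0,b_0\d]$, for $t \in [0,b_0\d]$ I split
\[
 \int_t^\d \frac{\bar w_\d(r)}{r^{n+s}}\, dr = a_0 \int_t^{b_0\d} \frac{dr}{r^{n+s}} + \int_{b_0\d}^\d \frac{\bar w_\d(r)}{r^{n+s}}\, dr .
\]
The first integral is $a_0\big(\frac{t^{-(n-1+s)}}{n-1+s} - \frac{(b_0\d)^{-(n-1+s)}}{n-1+s}\big)$ and the second is a constant, call it $C_1$. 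Multiplying by $(n-1+s)t^{n-1+s}$ gives $\bar q_\d(t) = a_0 + z_0\, t^{n-1+s}$ with $z_0 = (n-1+s)\big(C_1 - \frac{a_0 (b_0\d)^{-(n-1+s)}}{n-1+s}\big)$, a real constant independent of $t$. This formula shows in particular $\bar q_\d(0) = a_0$, and since $t^{n-1+s}$ is $C^{n-1}$ (but in general not $C^n$) at $0$ because $n-1 < n-1+s < n$, we get $\bar q_\d \in C^{n-1}([0,\infty))$; one must also check matching of derivatives at $t = b_0\d$ and at $t=\d$, but on $(0,\d)$ we already have $C^\infty$ and at $t=\d$ the function and all its derivatives vanish (as $\bar w_\d$ vanishes to infinite order there), so the global regularity is exactly $C^{n-1}$.

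For part \emph{\ref{item:qb})}, I would compute $\nabla Q_\d^s$. Writing $Q_\d^s(x) = \frac{\bar q_\d(|x|)}{\gamma(1-s)|x|^{n-1+s}}$, set $\bar g(t) = \frac{\bar q_\d(t)}{\gamma(1-s) t^{n-1+s}} = \frac{n-1+s}{\gamma(1-s)} \int_t^\d \frac{\bar w_\d(r)}{r^{n+s}}\, dr$, so that $Q_\d^s(x) = \bar g(|x|)$ and $\bar g'(t) = -\frac{n-1+s}{\gamma(1-s)}\frac{\bar w_\d(t)}{t^{n+s}}$. Then $\nabla Q_\d^s(x) = \bar g'(|x|)\frac{x}{|x|} = -\frac{n-1+s}{\gamma(1-s)}\frac{\bar w_\d(|x|)}{|x|^{n+s}}\frac{x}{|x|} = -(n-1+s)\,\rho_\d(x)\,\frac{1}{|x|}\frac{x}{|x|}$, using $\rho_\d(x) = \frac{w_\d(x)}{\gamma(1-s)|x|^{n-1+s}}$; this is exactly \eqref{eq: kernel primitive}. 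That $Q_\d^s$ is radially decreasing follows since $\bar g' \leq 0$ (as $\bar w_\d \geq 0$); that $\supp Q_\d^s \subset B(0,\d)$ follows from $\supp \bar q_\d \subset [0,\d)$. Finally $Q_\d^s \in L^1(\Rn)$: near the origin $Q_\d^s(x) = \frac{\bar q_\d(|x|)}{\gamma(1-s)|x|^{n-1+s}}$ behaves like $\frac{a_0}{\gamma(1-s)|x|^{n-1+s}}$ by part \emph{\ref{item:qa})}, and since $n-1+s < n$ this is integrable near $0$; away from the origin $Q_\d^s$ is bounded with compact support, hence integrable.

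I expect the only genuinely delicate point to be pinning down the exact global regularity $C^{n-1}([0,\infty))$ in part \emph{\ref{item:qa})}: one must argue that the $t^{n-1+s}$ term prevents $\bar q_\d$ from being $C^n$ at $0$ (unless $z_0 = 0$), while carefully checking that no loss of regularity occurs at the interface $t = b_0\d$ and at $t = \d$, where the flatness of $\bar w_\d$ guarantees smooth matching. The rest of the proof is routine differentiation under the integral sign and elementary estimates.
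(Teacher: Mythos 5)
Your proposal is correct and follows essentially the same route as the paper: the explicit formula $\bar q_\d(t)=a_0+z_0t^{n-1+s}$ on $[0,b_0\d]$ (you obtain it by directly integrating $a_0 r^{-(n+s)}$, the paper equivalently via the identity $\bigl(\bar q_\d(t)/t^{n-1+s}\bigr)'=-(n-1+s)\bar w_\d(t)/t^{n+s}$), smoothness on $(0,\infty)$ and the support statement from $\supp \bar w_\d$, and then part \emph{b)} by differentiating the radial representation, with monotonicity from $\bar w_\d\geq 0$ and $L^1$ from the $|x|^{-(n-1+s)}$ singularity being integrable. The extra worry about matching derivatives at $t=b_0\d$ and $t=\d$ is superfluous, since smoothness on all of $(0,\infty)$ is already guaranteed by the integral formula, as you yourself note.
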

\begin{proof}
We start with \emph{\ref{item:qa})}.
The function $\bar{q}_{\d}$ is clearly $C^{\infty}$ in $(0,\infty)$ as a product of $C^{\infty}$ functions in $(0,\infty)$.
We have that
	\begin{equation} \label{eq: derivative of q}
		\left(\frac{\bar{q}_\d(t)}{t^{n-1+s}}\right)'=-(n-1+s)\frac{\bar{w}_\d(t)}{t^{n+s}}, \qquad t > 0 .
	\end{equation}
Since $\bar{q}_{\d} (\d) = 0$ and $\supp \bar{w}_\d \subset [0, \d)$, we obtain that $\supp \bar{q}_{\d} \subset [0, \d)$.
Now, for $0 < t < \d b_0$ we have that
\[
 \left(\frac{\bar{q}_\d(t)}{t^{n-1+s}}\right)'=-(n-1+s)\frac{a_0}{t^{n+s}} = \left(\frac{a_0}{t^{n-1+s}}\right)' ,
\]
so the existence of $z_0$ in the statement follows.
In particular, $\bar{q}_{\d}$ is $C^{n-1} ([0, \infty))$.

We now show \emph{\ref{item:qb})}.
We get immediately from \eqref{eq: derivative of q} that $Q_\d^s$ is radially decreasing and
\begin{equation*}
	\nabla Q_\d^s(x)=-\frac{n-1+s}{\gamma(1-s)} \frac{\bar{w}_\d(|x|)}{|x|^{n+s}} \frac{x}{|x|} , \qquad x \in \Rn \setminus \{ 0 \} ,
\end{equation*}
so \eqref{eq: kernel primitive} holds.
As $\supp \bar{q}_{\d} \subset [0, \d)$ we obtain that $\supp Q_\d^s \subset B(0, \d)$.
Consequently, $Q_\d^s \in L^1 (\Rn)$ because of the boundedness of $\bar{q}_{\d}$.
\end{proof}

In the following proposition we write the nonlocal gradient as a convolution of the classical one with the kernel $Q_\d^s$.
Its fractional version can be found in \cite[Lemma 15.9]{ponce_book} and \cite[Th.\ 1.2]{ShS2015}.
In fact, our proof is inspired by that of \cite{ponce_book}: its idea is based on an integration by parts starting with Definition \ref{def: nonlocal gradient} and \eqref{eq: kernel primitive}.

\begin{prop} \label{Lemma: convolución con gradiente clásico}
	For every $u \in C_c^{\infty}(\Rn)$ and $x \in \Rn$ we have 
	\begin{equation}\label{eq:Dsd=-n}
	D_\delta^s u (x) = \int_{\Rn} \nabla u (y) \, Q_\d^s(x-y) dy 
	\end{equation}
and $D_\delta^s u  \in C_c^{\infty} (\Rn)$.
\end{prop}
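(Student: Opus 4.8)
The plan is to start from the definition of $D^s_\delta u$ in \eqref{eq: definition of nonlocal gradient} and perform an integration by parts in the $y$-variable, using the identity \eqref{eq: kernel primitive} from Lemma \ref{lem: kernel primitive}, which expresses the singular kernel $\frac{\rho_\delta(x-y)}{|x-y|}\frac{x-y}{|x-y|}$ as $\frac{-1}{n-1+s}\nabla_y Q^s_\delta(x-y)$ (up to the sign coming from differentiating in $y$ rather than in the argument). Writing $D^s_\delta u(x) = (n-1+s)\int_{\Rn} \big(u(x)-u(y)\big)\,\frac{\rho_\delta(x-y)}{|x-y|}\frac{x-y}{|x-y|}\,dy$ and noting that the constant term $u(x)$ contributes zero by odd symmetry (the kernel is vector radial and odd), we are left with $-(n-1+s)\int_{\Rn} u(y)\,\frac{\rho_\delta(x-y)}{|x-y|}\frac{x-y}{|x-y|}\,dy = \int_{\Rn} u(y)\,\nabla_y\big(Q^s_\delta(x-y)\big)\,dy$ (the chain rule produces a sign that cancels the one in \eqref{eq: kernel primitive}). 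An integration by parts then moves the derivative onto $u$, giving $\int_{\Rn} \nabla u(y)\, Q^s_\delta(x-y)\,dy$, which is \eqref{eq:Dsd=-n}.

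The main technical obstacle is that $Q^s_\delta$ is only Lipschitz-type near the origin — more precisely, $\bar q_\delta(t) = a_0 + z_0 t^{n-1+s}$ near $0$ by Lemma \ref{lem: kernel primitive}\,\emph{\ref{item:qa})}, so $Q^s_\delta(x) \sim \frac{a_0}{\gamma(1-s)|x|^{n-1+s}}$ has a non-integrable-gradient singularity at $0$ — hence the integration by parts cannot be carried out naively over all of $\Rn$. The standard remedy, which I would follow, is to excise a small ball $B(x,\varepsilon)$: work on $\Rn\setminus B(x,\varepsilon)$, where everything is smooth, apply the divergence theorem, and control the boundary term on $\partial B(x,\varepsilon)$. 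That boundary term is of order $\varepsilon^{n-1}\cdot \|u\|_\infty \cdot \sup_{\partial B(x,\varepsilon)}|Q^s_\delta| \sim \varepsilon^{n-1}\cdot\varepsilon^{-(n-1+s)} = \varepsilon^{-s} \cdot \varepsilon^{n-1}/\varepsilon^{n-1}$... one checks it is $O(\varepsilon^{n-1-(n-1+s)})=O(\varepsilon^{-s})$, which does \emph{not} vanish; so instead one keeps the $u(x)$ term in the integrand (it is needed precisely to tame this), i.e. one integrates by parts the full expression $(u(x)-u(y))\frac{\rho_\delta(x-y)}{|x-y|}\frac{x-y}{|x-y|}$ over $\Rn\setminus B(x,\varepsilon)$, using that $u(x)-u(y)=O(\varepsilon)$ on $\partial B(x,\varepsilon)$, so the boundary term is $O(\varepsilon)\cdot O(\varepsilon^{-s}) = O(\varepsilon^{1-s}) \to 0$ since $s<1$. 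The outer boundary (at infinity, or rather the boundary of a large ball containing $\supp u$) contributes nothing because $u$ has compact support and $Q^s_\delta$ is bounded.

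After establishing \eqref{eq:Dsd=-n} pointwise, the remaining claim $D^s_\delta u \in C^\infty_c(\Rn)$ follows from standard properties of convolution: since $Q^s_\delta \in L^1(\Rn)$ with $\supp Q^s_\delta \subset B(0,\delta)$ (Lemma \ref{lem: kernel primitive}\,\emph{\ref{item:qb})}) and $\nabla u \in C^\infty_c(\Rn,\Rn)$, the convolution $Q^s_\delta * \nabla u$ is $C^\infty$ (one differentiates under the integral sign, putting all derivatives on the smooth factor $\nabla u$) and has support contained in $\supp \nabla u + \overline{B(0,\delta)}$, hence compact; this also matches the earlier observation that $\supp D^s_\delta u \subset \supp u + B(0,\delta)$. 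I would state the convolution regularity as a one-line consequence of, e.g., \cite[standard results]{Grafakos08a} rather than reprove it. The only care needed is to double-check signs in the chain rule $\nabla_y[Q^s_\delta(x-y)] = -(\nabla Q^s_\delta)(x-y)$ against \eqref{eq: kernel primitive}, so that the two sign changes (one from this chain rule, one from integration by parts) combine correctly to yield the stated formula with a plus sign.
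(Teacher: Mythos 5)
Your proposal is correct and follows essentially the same route as the paper: the paper likewise applies the divergence theorem to the vector field $(u(x)-u(y))\,Q^s_\delta(x-y)\,e$ on a bounded domain with the small ball $B(x,\varepsilon)$ removed, kills the inner boundary term as $O(\varepsilon^{1-s})$ via the Lipschitz bound on $u$, and obtains $D^s_\delta u\in C^\infty_c(\Rn)$ from $D^s_\delta u=\nabla u * Q^s_\delta$ with $Q^s_\delta\in L^1$ compactly supported. One small fix: the outer boundary term vanishes because either $u$ vanishes there or $Q^s_\delta(x-y)=0$ (compact support of $Q^s_\delta$ in $B(0,\delta)$), not because $Q^s_\delta$ is bounded --- it is in fact unbounded at the origin.
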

\begin{proof}
Let $K$ be a ball containing $\supp u$ and let $K_{\d}=K+ B(0,\d)$. If $x \in K_{\d}^c$ then both terms of \eqref{eq:Dsd=-n} are zero since $\supp D_\d^s u \subset \supp u + B (0, \d) \subset K_{\d}$ and $\supp Q^s_{\d} \subset B (0, \d)$.
Thus, we consider $x\in K_{\d}$, $e \in \Rn$ with $|e| = 1$ and the vector field
	\[
	\beta : K_{\d} \setminus \{ x\} \rightarrow \Rn
	\] 
	defined by 
	\[
	\beta(y)=(u(x)-u(y)) \, Q_\d^s(x-y) \, e.
	\]
Let $\varepsilon > 0$ be such that $\bar{B} (x, \varepsilon) \subset K_{\d}$.
From Lemma \ref{lem: kernel primitive} we have that
	\begin{equation} \label{eq: divergence of the integrand}
	\diver \beta(y)=(n-1+s)\frac{u(x)-u(y)}{|x-y|} \rho_\delta(x-y) \frac{x-y}{|x-y|}\cdot e - Q_\d^s(x-y) \, \nabla u(y) \cdot e , \qquad y \in K_{\d} \setminus B(x, \varepsilon) 
	\end{equation}
and notice that $\diver \beta$ is integrable in $K_{\d} \setminus B(x, \varepsilon)$.
Applying the divergence theorem we obtain
\[
 \int_{K_{\d} \setminus B(x, \varepsilon)} \diver \beta (y) \, dy = \int_{\partial K_{\d}} \beta (y) \cdot \nu_y \, d \mathcal{H}^{n-1} (y) + \int_{\partial B(x, \varepsilon)} \beta (y) \cdot \frac{x-y}{|x-y|} \, d \mathcal{H}^{n-1} (y) ,
\]
where $\nu_y$ is the outer normal vector to $K_{\d}$.
Now we show that $\beta (y) = 0$ for all $y \in \partial K_{\d}$.
Indeed, if $x \in K_{\d}\setminus K$ then $u(x) = u(y) = 0$ for all $y \in \partial K_{\d}$, whereas if $x \in K$, then $Q_\d^s(x-y)=0$ for every $y \in \partial K_{\d}$.
Thus,
\[
 \int_{K_{\d} \setminus B(x, \varepsilon)} \diver \beta (y) \, dy = \int_{\partial B(x, \varepsilon)} \beta (y) \cdot \frac{x-y}{|x-y|} \, d \mathcal{H}^{n-1} (y) .
\]
We estimate the integrand in the right-hand side.
As $u$ is Lipschitz, using the mean value theorem and the definition of $Q_\d^s$ (see Definition \ref{de:q} and Lemma \ref{lem: kernel primitive}) we find that, for all $y \in \partial B(x, \varepsilon)$,
\begin{equation*}
 \left| \beta (y) \cdot \frac{x-y}{|x-y|}\right| \leq \left| \beta (y) \right| \leq \left\| \nabla u \right\|_{\infty} |x-y| \left| Q_\d^s(x-y) \right|  \leq \left\| \nabla u \right\|_{\infty}\frac{c}{|x-y|^{n+s-2}}  = \left\| \nabla u \right\|_{\infty}\frac{c}{\e^{n+s-2}} 
\end{equation*}
for some constant $c>0$, so
\begin{align*}
	\left|\int_{\partial B(x,\varepsilon)} \beta (y) \cdot \frac{x-y}{|x-y|} \, d \mathcal{H}^{n-1}(y)\right| \leq \left\| \nabla u \right\|_{\infty}  c \, \s_{n-1} \, \varepsilon^{1-s} ,
	\end{align*}
which goes to $0$ when $\varepsilon$ goes to $0$.
	Therefore, 
	\[
	\lim_{\varepsilon \rightarrow 0} \int_{K_{\d} \setminus B(x, \varepsilon)} \diver \beta (y) dy =0.
	\]
As a result, using \eqref{eq: divergence of the integrand} we obtain that
	\begin{equation*}
	\lim_{\varepsilon \rightarrow 0} \int_{K_{\d} \setminus B(x, \varepsilon)}(n-1+s)\frac{u(x)-u(y)}{|x-y|} \rho_\delta(x-y) \frac{x-y}{|x-y|}\cdot e \, dy = \lim_{\varepsilon \rightarrow 0} \int_{K_{\d} \setminus B(x, \varepsilon)} Q_\d^s(x-y) \, \nabla u(y) \cdot e \, dy,
	\end{equation*}
provided that both limits exists, which is actually true as both integrals are absolutely convergent in $K_{\d}$;
see the comment after Definition \ref{def: nonlocal gradient} for the left integral and notice that $Q_\d^s \in L^1 (\Rn)$ (see Lemma \ref{lem: kernel primitive}) for the right integral.
Thus,
\[
 \int_{K_{\d}}(n-1+s)\frac{u(x)-u(y)}{|x-y|} \rho_\delta(x-y) \frac{x-y}{|x-y|} \cdot e \, dy =  \int_{K_{\d}} Q_\d^s(x-y) \, \nabla u(y) \cdot e \, dy .
\]
As this is true for every $e \in \Rn$ with $|e|=1$, we conclude that
\[
 \int_{K_{\d}} (n-1+s)\frac{u(x)-u(y)}{|x-y|} \frac{x-y}{|x-y|}\rho_\delta(x-y) \, dy =  \int_{K_{\d}} \nabla u(y) \, Q_\d^s(x-y) \, dy ,
\]
and formula \eqref{eq:Dsd=-n} is proved.

We have thus shown that $D_\delta^s u  =  \nabla u * Q_\d^s$. As $Q_\d^s \in L^1 (\Rn)$, $\nabla u \in C^{\infty} (\Rn)$ and both have compact support, we conclude that $D_\delta^s u  \in C_c^{\infty} (\Rn)$.
\end{proof}
Proposition \ref{Lemma: convolución con gradiente clásico} shows that for $u \in C^{\infty}_c (\Rn)$, its nonlocal gradient $D^s_{\d} u$ is also in $u \in C^{\infty}_c (\Rn)$, unlike its fractional gradient $D^s u$, which is $C^{\infty}$ (see \cite[Prop.\ 5.2]{Silhavy2019} or \cite{MaSaPe99}) but not of compact support (an easy counterexample can be found in \cite[Sect.\ 2.2]{KrSc22}).
The reason for this difference is that, while Proposition \ref{Lemma: convolución con gradiente clásico} shows that $D_\delta^s u  =  \nabla u * Q_\d^s$ with $Q_\d^s$ of compact support, the fractional analogue states that $D^s u  =  \nabla u * I_{1-s}$, with $I_{1-s}$ the Riesz potential (see \eqref{eq: Riesz Potential} below for the definition), which is not of compact support.

We continue by restating the main result of Section \ref{se: Inverse Kernel} (Theorem \ref{Th: inverse Fourier trasnform as a function}).
\begin{prop} \label{pr:inversekernelbrief}
There exists a function $V_\d^s \in C^{\infty}(\Rn \setminus \{0\}, \Rn)$ such that
	\begin{equation} \label{eq: convolution of kernels}
	\int_{\Rn}V_\d^s(z) \, Q_\d^s(y-z) \, dz = \frac{1}{\sigma_{n-1}}\frac{y}{|y|^n}, \qquad y \in \Rn \setminus \{ 0 \} .
	\end{equation}
Moreover, $V_\d^s \in L^1_{\loc} (\Rn, \Rn)$, and for every $R>0$ there exists $M >0$ such that 
\begin{equation*}
	|V_\d^s(x)| \leq \frac{M}{|x|^{n-s}} , \qquad x \in B(0,R) \setminus \{ 0 \}.
\end{equation*}
\end{prop}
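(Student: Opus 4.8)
The plan is to construct $V_\d^s$ via the Fourier transform, since the convolution identity \eqref{eq: convolution of kernels} becomes a pointwise multiplicative relation on the Fourier side. First I would take Fourier transforms in \eqref{eq: convolution of kernels}: writing $\widehat{Q_\d^s}$ for the transform of $Q_\d^s$ (which exists as a smooth function since $Q_\d^s \in L^1(\Rn)$ with compact support, by Lemma \ref{lem: kernel primitive}), and recalling that $\mathcal{F}\!\left(\frac{1}{\sigma_{n-1}} \frac{y}{|y|^n}\right)$ is (up to constant) the vector-radial distribution $\frac{\xi}{i|\xi|^2}$ — more precisely the Riesz transform kernel — the identity forces
\[
\widehat{V_\d^s}(\xi) = \frac{1}{\widehat{Q_\d^s}(\xi)} \cdot \frac{1}{\sigma_{n-1}} \, \mathcal{F}\!\left(\frac{y}{|y|^n}\right)(\xi).
\]
So the definition is $V_\d^s := \mathcal{F}^{-1}\!\left(\big(\widehat{Q_\d^s}\big)^{-1} \, \widehat{K}\right)$ where $K(y) = \frac{1}{\sigma_{n-1}}\frac{y}{|y|^n}$. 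For this to make sense I must first establish that $\widehat{Q_\d^s}$ never vanishes, and control its decay and that of its reciprocal; this is exactly the content of Section \ref{se: Inverse Kernel}, which I am allowed to invoke. Since $Q_\d^s$ is radial, $\widehat{Q_\d^s}$ is radial, and since the kernel $Q_\d^s$ behaves near the origin like $\frac{a_0}{\gamma(1-s)|x|^{n-1+s}}$ (same singularity as the Riesz potential $I_{1-s}$, by Lemma \ref{lem: kernel primitive}\,\emph{\ref{item:qa})}), one expects $\widehat{Q_\d^s}(\xi) \sim C|\xi|^{-(1-s)}$ for large $|\xi|$, hence $\big(\widehat{Q_\d^s}\big)^{-1}(\xi) \sim C' |\xi|^{1-s}$, which combined with $\widehat{K}(\xi) \sim c\,\xi/|\xi|^2$ gives a symbol of order $|\xi|^{-1-s}\cdot|\xi|$... wait, one must be careful: $\widehat{V_\d^s}(\xi)$ should have growth like $|\xi|^{s-1}$ at infinity (mirroring $D^s_\d u = V_\d^s * (\text{something})$ inverting $Q_\d^s$), which is why $V_\d^s$ itself will only be locally integrable with the stated singularity $|x|^{-(n-s)}$ at the origin, not globally smooth.

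The key steps, in order: (1) Record that $\widehat{Q_\d^s}$ is a nowhere-vanishing, smooth, radial function with the asymptotics $\widehat{Q_\d^s}(\xi) \asymp (1+|\xi|)^{-(1-s)}$ and appropriate bounds on all its derivatives — this is Theorem \ref{Th: inverse Fourier trasnform as a function} / the work of Section \ref{se: Inverse Kernel}. (2) Define $V_\d^s$ as the inverse Fourier transform (in the tempered-distribution sense) of $\big(\widehat{Q_\d^s}\big)^{-1}\widehat{K}$; verify this is a well-defined tempered distribution because the symbol grows polynomially. (3) Show $V_\d^s$ agrees with a $C^\infty$ function on $\Rn\setminus\{0\}$: away from the origin, smoothness follows from the fact that the symbol $\big(\widehat{Q_\d^s}\big)^{-1}\widehat{K}$ is smooth and each derivative decays (or grows) in a controlled polynomial fashion, so that differentiation under the inverse transform produces convergent integrals off the origin — a standard stationary-phase / paraproduct-type argument, exploiting that singularities of $\mathcal{F}^{-1}$ of a classical symbol are concentrated at $0$. (4) Establish the pointwise bound $|V_\d^s(x)| \le M|x|^{-(n-s)}$ on $B(0,R)\setminus\{0\}$ by splitting the symbol into a main part matching the pure homogeneous symbol $c\,\xi|\xi|^{s-1}/|\xi|^2 \cdot |\xi|^{?}$ — concretely, comparing $\big(\widehat{Q_\d^s}\big)^{-1}\widehat{K}$ with $a_0^{-1}\gamma(1-s)\,\mathcal{F}(I_{1-s})^{-1}\widehat{K}$, whose inverse transform is the known fractional kernel $c_{n,-s}\,x/|x|^{n-s+1}$ (the fractional FTC kernel) of exactly the homogeneity $-(n-s)$, and showing the remainder symbol is better-behaved, giving a bounded or milder contribution near $0$. (5) Finally verify \eqref{eq: convolution of kernels} itself: by construction $\widehat{V_\d^s}\cdot\widehat{Q_\d^s} = \widehat{K}$ as distributions, and since $Q_\d^s \in L^1$ with compact support the convolution $V_\d^s * Q_\d^s$ is a well-defined locally integrable function whose transform is the product, hence equals $K$ pointwise a.e., and both sides being continuous on $\Rn\setminus\{0\}$ the identity holds there everywhere. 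And $V_\d^s \in L^1_{\loc}$ is immediate from the bound in step (4) since $n-s < n$.

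The main obstacle will be step (3)–(4): proving that $V_\d^s$, defined only through its Fourier symbol, is genuinely a smooth function away from the origin with the sharp singularity $|x|^{-(n-s)}$ at the origin — this requires precise asymptotic expansions of $\widehat{Q_\d^s}$ and of its reciprocal (including derivative bounds of every order, uniformly), which is precisely why the authors defer it to the separate Section \ref{se: Inverse Kernel}. The absence of the semigroup property of $Q_\d^s$ (noted in the text) means one cannot simply write $V_\d^s$ as a Riesz-type potential of $Q_\d^s$; instead the comparison with the genuine Riesz potential $I_{1-s}$ must be done quantitatively, treating $Q_\d^s - \frac{a_0}{\gamma(1-s)}I_{1-s}\mathbf{1}_{B(0,b_0\d)}$ (or a suitable smooth correction) as a perturbation whose Fourier transform is smoother, so that the singular part of $V_\d^s$ is inherited verbatim from the fractional theory while the correction contributes only lower-order terms. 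Verifying that this perturbation is controlled — in particular that dividing by $\widehat{Q_\d^s}$ does not destroy the decay estimates — is the delicate analytic heart, handled in Section \ref{se: Inverse Kernel}; here I would simply cite it.
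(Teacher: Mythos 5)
Your plan follows essentially the same route as the paper's own proof: pass to the Fourier side, define $V_\d^s$ through $\hat V_\d^s(\xi) = -i\,\frac{\xi}{|\xi|}\,\frac{1}{|2\pi\xi|\,\hat Q_\d^s(\xi)}$, compare this symbol with that of the fractional kernel $c_{n,-s}\,x/|x|^{n+1-s}$ to isolate the $|x|^{-(n-s)}$ singularity near the origin, get smoothness away from $0$ from decay estimates on the derivatives of the symbol, and recover \eqref{eq: convolution of kernels} from the product formula for the Fourier transform of a convolution. Two caveats: the non-vanishing of $\hat Q_\d^s$ cannot simply be ``invoked'' from Section \ref{se: Inverse Kernel}, since that section \emph{is} the proof of this proposition, and its positivity is the one genuinely non-routine ingredient there (Lemma \ref{le:radialsin} and Proposition \ref{prop: Fourier transform well defined}, which rely on $\bar w_\d$ being radially decreasing); also, $\hat V_\d^s$ decays like $|\xi|^{-s}$ at infinity rather than ``growing like $|\xi|^{s-1}$'' as you state, a harmless slip since your step (4) comparison uses the correct Riesz-type symbol.
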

We will study further properties of $V_\d^s$ in Theorem \ref{Th: inverse Fourier trasnform as a function}.
The proof of Proposition \ref{pr:inversekernelbrief} is long and comprises the whole of Section \ref{se: Inverse Kernel}.
With this, the main result of this section (a nonlocal version of the fundamental theorem of Calculus) reads as follows. Its proof follows the lines from \cite[Prop.\ 15.8]{ponce_book}, whereas the main differences are gathered in Proposition \ref{pr:inversekernelbrief}.

\begin{teo} \label{Theo: nonlocal fundamental theorem of calculus}
Let $V_{\d}^s$ be the function of Proposition \ref{pr:inversekernelbrief}.
Then, for every $u \in C_c^\infty(\Rn)$ and $x \in \Rn$,
	\begin{equation} \label{eq: representation theorem}
	 u(x)=  \int_{\Rn} D_\d^s u (y) \cdot V_\d^s(x-y) \, dy .
	\end{equation}
\end{teo}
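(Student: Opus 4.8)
The plan is to combine the two convolution identities we already have: Proposition \ref{Lemma: convolución con gradiente clásico}, which says $D_\d^s u = \nabla u * Q_\d^s$ for $u \in C_c^\infty(\Rn)$, and the classical representation of Proposition \ref{prop: classical representation result}, which recovers $u$ from $\nabla u$ by convolution against the Newtonian-type kernel $y \mapsto \frac{1}{\sigma_{n-1}}\frac{y}{|y|^n}$. The kernel $V_\d^s$ of Proposition \ref{pr:inversekernelbrief} is, by \eqref{eq: convolution of kernels}, precisely a ``deconvolution'' of that classical kernel by $Q_\d^s$, i.e.\ $V_\d^s * Q_\d^s = \frac{1}{\sigma_{n-1}} \frac{\cdot}{|\cdot|^n}$ pointwise away from the origin. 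So formally,
\[
D_\d^s u * V_\d^s = (\nabla u * Q_\d^s) * V_\d^s = \nabla u * (Q_\d^s * V_\d^s) = \nabla u * \Bigl(\tfrac{1}{\sigma_{n-1}}\tfrac{\cdot}{|\cdot|^n}\Bigr) = u,
\]
where the last equality is Proposition \ref{prop: classical representation result} (read as a convolution, using that the kernel is even up to sign so the dot product $\nabla u(y)\cdot \frac{x-y}{|x-y|^n}$ matches the convolution $(\nabla u * \frac{\cdot}{|\cdot|^n})(x)$). The whole content of the proof is therefore justifying the associativity of this triple convolution, given that none of the three kernels is globally integrable in the naive way ($\nabla u$ is nice and compactly supported, $Q_\d^s \in L^1$ with compact support, but $V_\d^s$ is only in $L^1_{\loc}$ with the growth bound $|V_\d^s(x)| \le M|x|^{-(n-s)}$ near $0$ and is merely smooth, not integrable, away from $0$).

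Concretely, the steps I would carry out: (1) Fix $u \in C_c^\infty(\Rn)$ and $x \in \Rn$. Write $u(x) = \int_{\Rn} \nabla u(z)\cdot \frac{x-z}{|x-z|^n}\,\frac{dz}{\sigma_{n-1}}$ by Proposition \ref{prop: classical representation result}. (2) Substitute the identity \eqref{eq: convolution of kernels} for $\frac{1}{\sigma_{n-1}}\frac{x-z}{|x-z|^n} = \int_{\Rn} V_\d^s(y)\, Q_\d^s(x-z-y)\,dy$, valid since $x - z \neq 0$ for all but a null set of $z$ (or, more carefully, I would first restrict the $z$-integral to $z \ne x$, which does not change its value). (3) This gives a double integral $u(x) = \int_{\Rn} \nabla u(z) \cdot \Bigl( \int_{\Rn} V_\d^s(y) Q_\d^s(x-z-y)\,dy\Bigr) dz$; apply Fubini to get $u(x) = \int_{\Rn} V_\d^s(y) \cdot \Bigl( \int_{\Rn} \nabla u(z)\, Q_\d^s(x-z-y)\,dz \Bigr) dy$. (4) Recognize the inner integral: $\int_{\Rn} \nabla u(z)\, Q_\d^s((x-y)-z)\,dz = (\nabla u * Q_\d^s)(x-y) = D_\d^s u(x-y)$ by Proposition \ref{Lemma: convolución con gradiente clásico}. (5) Hence $u(x) = \int_{\Rn} V_\d^s(y) \cdot D_\d^s u(x-y)\, dy$, and changing variables $y \mapsto x - y$ in this last integral (noting $D_\d^s u$ has compact support so there is no issue) yields \eqref{eq: representation theorem}.

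The main obstacle is justifying the application of Fubini's theorem in step (3), which requires an absolute-integrability estimate for the function $(y,z) \mapsto |\nabla u(z)|\, |V_\d^s(y)|\, |Q_\d^s(x-z-y)|$ over $\Rn \times \Rn$. Here I would use: $\nabla u$ is bounded with support in some ball $K$; $Q_\d^s$ is bounded with support in $B(0,\d)$, so for fixed $z \in K$ the factor $Q_\d^s(x-z-y)$ confines $y$ to $y \in x - z + B(0,\d)$, a bounded set contained in a fixed large ball $B(0,R)$ independent of $z$ (with $R$ depending on $x$, $K$, $\d$); on that ball $V_\d^s$ is integrable because of the bound $|V_\d^s(y)| \le M|y|^{-(n-s)}$ near the origin (which is integrable since $n - s < n$) together with local boundedness away from $0$ — more precisely $V_\d^s \in L^1_{\loc}(\Rn,\Rn)$ as asserted in Proposition \ref{pr:inversekernelbrief}. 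So $\int_K \int_{\Rn} |\nabla u(z)||V_\d^s(y)||Q_\d^s(x-z-y)|\,dy\,dz \le \|\nabla u\|_\infty \|Q_\d^s\|_\infty \, |K| \, \|V_\d^s\|_{L^1(B(0,R))} < \infty$. A small additional care point is that the classical kernel $\frac{x-z}{|x-z|^n}$ and the identity \eqref{eq: convolution of kernels} both have a singularity at the diagonal $z = x$, but this is a single point (measure zero) in the $z$-integration, so it causes no trouble; alternatively one invokes that \eqref{eq: convolution of kernels} holds for every $y \ne 0$ and the substitution is legitimate for a.e.\ $z$. With Fubini justified, the remaining manipulations are the elementary change-of-variables and the two cited convolution identities.
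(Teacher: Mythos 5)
Your argument is essentially the paper's proof run in the opposite direction: the paper starts from the right-hand side of \eqref{eq: representation theorem}, inserts $D_\d^s u = \nabla u * Q_\d^s$ (Proposition \ref{Lemma: convolución con gradiente clásico}), exchanges the order of integration via a change of variables, and then uses \eqref{eq: convolution of kernels} followed by Proposition \ref{prop: classical representation result}; you start from Proposition \ref{prop: classical representation result} and fold the same three ingredients together with Fubini. The structure, the ingredients and the integrability issues are the same, so there is no genuinely different route here.

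There is, however, one concrete error in your justification of Fubini: you assert that $Q_\d^s$ is \emph{bounded} with support in $B(0,\d)$ and use $\|Q_\d^s\|_\infty$ in the final estimate. This is false: by Definition \ref{de:q} and Lemma \ref{lem: kernel primitive}, near the origin $Q_\d^s(x) = \frac{a_0}{\gamma(1-s)|x|^{n-1+s}} + \frac{z_0}{\gamma(1-s)}$, so $Q_\d^s$ blows up like $|x|^{-(n-1+s)}$ and your displayed bound $\|\nabla u\|_\infty \|Q_\d^s\|_\infty |K| \|V_\d^s\|_{L^1(B(0,R))}$ is vacuous. The repair is immediate and uses exactly the facts you already quoted: what is true is $Q_\d^s \in L^1(\Rn)$ with compact support. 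Integrate first in $z$ for fixed $y$: the integrand vanishes unless $z \in K$ and $x-z-y \in B(0,\d)$, which confines $y$ to the bounded set $x - K + B(0,\d) \subset B(0,R)$, and
\[
\int_{\Rn}\int_{\Rn} |\nabla u(z)|\,|V_\d^s(y)|\,|Q_\d^s(x-z-y)|\,dz\,dy
\leq \|\nabla u\|_\infty \, \|Q_\d^s\|_{L^1(\Rn)} \int_{B(0,R)} |V_\d^s(y)|\,dy < \infty ,
\]
since $V_\d^s \in L^1_{\loc}(\Rn,\Rn)$ by Proposition \ref{pr:inversekernelbrief}. With this corrected estimate the rest of your steps (substitution of \eqref{eq: convolution of kernels} for a.e.\ $z$, Fubini, recognition of $\nabla u * Q_\d^s = D_\d^s u$, and the final change of variables) go through and yield \eqref{eq: representation theorem}.
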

\begin{proof}
Let $F(x)$ denote the right hand side of \eqref{eq: representation theorem}.
This integral is absolutely convergent since $V_\d^s \in L^1_{\loc} (\Rn, \Rn)$ (Proposition \ref{pr:inversekernelbrief}) and $D_\d^s u$ is bounded with compact support (Proposition \ref{Lemma: convolución con gradiente clásico}).
In fact, Proposition \ref{Lemma: convolución con gradiente clásico} allows us to write the equality
	\begin{equation*}
	F(x)= \int_{\Rn} \int_{\Rn} \nabla u (z) \, Q_\d^s(y-z)\cdot V_\d^s(x-y)\,dz\,dy.
	\end{equation*}
Next we make the changes of variables $\eta=x-y$ and $\xi=x-z$ to obtain
	\begin{equation*}
	F(x)= \int_{\Rn}  \nabla u (x-\xi) \cdot \int_{\Rn}V_\d^s(\eta) \, Q_\d^s(\xi-\eta)\,d\eta\, d\xi .
	\end{equation*}
By Proposition \ref{pr:inversekernelbrief}, 
\[
	\int_{\Rn}V_\d^s(\eta) \, Q_\delta^s(\xi-\eta) \, d\eta = \frac{1}{\sigma_{n-1}}\frac{\xi}{|\xi|^n}.
\]
Thus, thanks to  Proposition \ref{prop: classical representation result},
	\begin{equation*}
	F(x)=\int_{\Rn}  \nabla u (x-\xi) \cdot \frac{\xi}{\sigma_{n-1}|\xi|^n} \, d\xi = u(x)
	\end{equation*}
and the proof is complete.
\end{proof}

\section{Existence of $V_\d^s$}\label{se: Inverse Kernel}

This section is devoted to the proof of Proposition \ref{pr:inversekernelbrief}, as well as to the derivation of further properties of $V_\d^s$.
The idea of the proof is to convert equation \eqref{eq: convolution of kernels} into
\[
 \hat{V}_{\d}^s (\xi) \, \hat{Q}_{\d}^s(\xi) = -i \frac{\xi}{|\xi|}\frac{1}{|2\pi \xi|}
\]
through Fourier transform (see Lemma \ref{lemma: Fourier transform vector Riesz potential}\,\emph{\ref{item:Fourier transform of x/|x|^n})} for the Fourier transform of the right-hand side of \eqref{eq: convolution of kernels}).
Thus, the candidate for $V_\d^s$ should satisfy
\[
 \hat{V}_{\d}^s (\xi)  = -i \frac{\xi}{|\xi|}\frac{1}{|2\pi \xi|} \frac{1}{\hat{Q}_{\d}^s(\xi)} .
\]
In the first half of the section, we show that $\hat{Q}_\d^s$ is positive and, consequently, the formula above is well defined. 
Taking inverse Fourier transform, we then conclude that $V_\d^s$ is at least a tempered distribution.
In the second half, we see that $V_\d^s$ is actually a function.

As seen in the introduction, it is illustrative to compare formula \eqref{eq: representation theorem} with the fractional fundamental theorem of Calculus in $H^{s,p}$ (see \cite[Th.\ 3.11]{COMI2019}, \cite[Prop.\ 15.8]{ponce_book} or \cite[Th.\ 1.12]{ShS2015}):
\begin{equation}\label{eq:FFTC}
 u (x) = c_{n,-s} \int_{\Rn} D^s u (y) \cdot \frac{x-y}{|x-y|^{n-s+1}} \, dy .
\end{equation}
Here $D^s u$ is Riesz' $s$-fractional gradient.
Thus, $D^s_{\d} u$ and $V^s_{\d}$ in our context play the role of $D^s u$ and $\frac{x}{|x|^{n-s+1}}$ in $H^{s,p}$, respectively.
In fact, in the analysis in $H^{s,p}$ an essential part is performed by the \emph{Riesz potential}.
We recall (see \cite{Stein70,ShS2015}) that given $0<s<n$, the Riesz potential $I_s:\Rn \setminus \{ 0 \} \rightarrow \R$ and its Fourier transform are 
\begin{equation}\label{eq: Riesz Potential}
I_{s}(x)=\frac{1}{\gamma(s)} \frac{1}{|x|^{n-s}} \quad \text{ and } \quad \hat{I_s}(\xi)= |2\pi \xi|^{-s} , 
\end{equation}
where $\gamma(s)$ is defined in \eqref{eq: Riesz Potential constant}.
A key study in a great part of this and the following section will be the comparison, first, of $\hat{I_s}$ with $\hat{V}^s_{\d}$, and, then, of $I_s$ with $V^s_{\d}$.
In fact, in the comment after Proposition \ref{Lemma: convolución con gradiente clásico} it was hinted that here $Q^s_{\d}$ plays the role of $I_{1-s}$ in the fractional case, so it becomes natural that in this section we also compare $\hat{Q}^s_{\d}$ with $\hat{I}_{1-s}$.

\subsection{Positivity of $\hat{Q}_\d^s$ and existence of $V_\d^s$ as a distribution}

We start with an analysis of the Fourier transform of the function $q_{\d}$ of Definition \ref{de:q}.

\begin{lem} \label{lem: Fourier transform of q delta}
The function $\hat{q}_{\d}$ is analytic, $C_0(\Rn)$ and $L^1 (\Rn)$.
\end{lem}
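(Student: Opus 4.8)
We need to prove three things about $\hat{q}_\delta$: analyticity, decay at infinity (it's in $C_0$), and integrability. The key structural fact from Lemma \ref{lem: kernel primitive} is that $q_\delta$ is radial, compactly supported in $B(0,\delta)$, and — crucially — that near the origin $\bar{q}_\delta(t) = a_0 + z_0 t^{n-1+s}$, so it is bounded (indeed $C^{n-1}$ on $[0,\infty)$ and $C^\infty$ away from $0$) but not smooth at the origin because of the $t^{n-1+s}$ term.

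First I would deal with analyticity. Since $q_\delta \in L^1(\Rn)$ with compact support (Lemma \ref{lem: kernel primitive}\,\emph{\ref{item:qb})}), the Fourier transform $\hat{q}_\delta(\xi) = \int_{B(0,\delta)} q_\delta(x) e^{-2\pi i x\cdot\xi}\,dx$ extends to an entire function of $\xi \in \C^n$ by the standard Paley–Wiener argument: the integrand is holomorphic in $\xi$ for each fixed $x$, and one may differentiate under the integral sign since the domain is bounded and $q_\delta$ is bounded. Hence $\hat{q}_\delta$ is (real-)analytic on $\Rn$.

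Next, $C_0(\Rn)$: since $q_\delta \in L^1(\Rn)$, the Riemann–Lebesgue lemma immediately gives $\hat{q}_\delta \in C_0(\Rn)$. So the only substantive point is $L^1(\Rn)$ — and this is the step I expect to be the main obstacle, because Riemann–Lebesgue alone gives no rate of decay, and $q_\delta$ is not smooth at the origin, so we cannot simply integrate by parts $n+1$ times to get $O(|\xi|^{-n-1})$ decay. The plan is to split $q_\delta = a_0\,\chi_{B(0,b_0\delta)} + g_\delta$ where, by Lemma \ref{lem: kernel primitive}\,\emph{\ref{item:qa})}, on $B(0,b_0\delta)$ one has $q_\delta(x) - a_0 = z_0|x|^{n-1+s}$, and $g_\delta := q_\delta - a_0\chi_{B(0,b_0\delta)}$ is compactly supported, is $C^{n-1}$ globally (the jump of $q_\delta$ at $|x| = b_0\delta$ is cancelled... actually one must be a bit careful: $g_\delta$ has a corner/jump structure at $|x|=b_0\delta$, but it is at least Hölder and piecewise smooth). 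A cleaner route: write $q_\delta(x) = a_0\chi_{B(0,\delta)}(x) + (q_\delta(x) - a_0\chi_{B(0,\delta)}(x))$. The Fourier transform of the indicator $\chi_{B(0,\delta)}$ of a ball is, up to constants, $|\xi|^{-n/2}J_{n/2}(2\pi\delta|\xi|)$ (a Bessel function), which decays like $|\xi|^{-(n+1)/2}$ — this is in $L^1(\Rn)$ precisely when $(n+1)/2 > n$, i.e. $n<1$, which fails. So the radial indicator is NOT the right object to isolate; instead I would use the precise near-origin expansion to isolate the genuinely singular radial profile $t \mapsto \bar q_\delta(t)$, which is continuous (not jumping) at the origin and whose only obstruction to smoothness is the $t^{n-1+s}$ term with $n-1+s$ non-integer. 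The strategy then is: pick a smooth radial cutoff $\psi$ equal to $1$ near $0$ and supported in $B(0,b_0\delta)$; write $q_\delta = \psi(x)(a_0 + z_0|x|^{n-1+s}) + (1-\psi(x))q_\delta(x) + \psi(x)\cdot 0$, i.e. $q_\delta = z_0\,\psi(x)|x|^{n-1+s} + h_\delta$ with $h_\delta := q_\delta - z_0\psi(x)|x|^{n-1+s} \in C_c^\infty(\Rn)$ (since on $\operatorname{supp}\psi$ the difference is $a_0\psi$, smooth, and off a neighborhood of $0$ everything is $C^\infty$ — one checks the pieces match in $C^\infty$). Then $\hat{h}_\delta$ is Schwartz, hence in $L^1$, and it remains to show that the Fourier transform of the compactly supported function $\psi(x)|x|^{n-1+s}$ is in $L^1(\Rn)$. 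For this I would invoke the known asymptotics of Fourier transforms of functions with an isolated conic/homogeneous singularity: $|x|^{n-1+s}\psi(x)$ behaves near $0$ like a homogeneous distribution of degree $n-1+s$ (which is $>-n$), whose Fourier transform is homogeneous of degree $-(2n-1+s) = -n - (n-1+s)$ modulo smooth terms; since $n-1+s > 0$, this gives decay $O(|\xi|^{-n-(n-1+s)})$ at infinity, and $n+(n-1+s) > n$, so it is $L^1$ near infinity; near the origin $\widehat{\psi|x|^{n-1+s}}$ is bounded (it's the FT of an $L^1$ compactly supported function), so it is locally integrable. Combining the two pieces yields $\hat{q}_\delta \in L^1(\Rn)$. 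Alternatively, and perhaps more self-containedly given the paper's Appendix \ref{se:Fourier}, one can compute the radial Fourier transform via the Hankel transform / the formula $\widehat{q_\delta}(\xi) = $ const $\cdot |\xi|^{1-n/2}\int_0^\delta \bar q_\delta(t)\, t^{n/2}J_{n/2-1}(2\pi t|\xi|)\,dt$, split the $t$-integral at $t = 1/|\xi|$, use $|\bar q_\delta(t) - a_0| \le C t^{n-1+s}$ near $0$ together with Bessel asymptotics $J_\nu(r) = O(r^{-1/2})$ for large $r$, and integrate by parts once in $t$ on the region $t > 1/|\xi|$ using that $\bar q_\delta$ is $C^1$ there — this produces the gain $|\xi|^{-n/2 - 1/2 - (n-1+s)/2}$-type decay that makes the tail integrable. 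I expect the careful bookkeeping of these Bessel-asymptotic estimates — especially making the constants uniform and handling the transition at $t|\xi|\sim 1$ — to be the technical heart of the argument; the analyticity and $C_0$ claims are routine.
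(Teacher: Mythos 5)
Your proposal is correct in substance but takes a genuinely different route for the only nontrivial claim, the integrability of $\hat{q}_{\d}$. You handle analyticity and $C_0$ exactly as the paper does (compact support plus $L^1$, Riemann--Lebesgue). For $L^1$, the paper argues by Sobolev regularity: using $\bar{q}_{\d}(t)=a_0+z_0t^{n-1+s}$ near $0$ it shows $q_{\d}\in W^{2n-1,1}(\Rn)$, hence $|\hat{q}_{\d}(\xi)|\lesssim|\xi|^{-(2n-1)}$, which suffices only for $n\geq 2$; the case $n=1$ is then treated separately by subtracting a multiple of the Riesz kernel $|x|^{s-1}$ from $q_{\d}'$ with a smooth cutoff and invoking the rapid decay of $\mathcal{F}\bigl((1-\f)|x|^{s-1}\bigr)$ (Grafakos, Ex.\ 2.4.9), yielding $|\hat{q}_{\d}(\xi)|\lesssim|\xi|^{-1-s}$. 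You instead subtract the exact singular profile at the level of $q_{\d}$ itself, writing $q_{\d}=z_0\,\psi|x|^{n-1+s}+h_{\d}$ with $h_{\d}\in C_c^{\infty}(\Rn)$ (your verification that the pieces glue smoothly is right), and then use the asymptotics of $\mathcal{F}\bigl(\psi|x|^{n-1+s}\bigr)$ to get decay $|\xi|^{-(2n-1+s)}$, which beats $n$ in every dimension, so no case split is needed; this is in fact the same cutoff-and-compare idea the paper uses only for $n=1$, promoted to all $n$. What each buys: the paper's $W^{2n-1,1}$ argument is elementary and self-contained but loses the extra $s$ of decay and forces the one-dimensional appendix-style detour; yours is uniform in $n$ and sharper, but rests on the asymptotics for the Fourier transform of a compactly supported function with an isolated power singularity, which you cite rather than prove. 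That fact is standard and provable within the paper's toolkit (e.g.\ write $\psi|x|^{\a}=|x|^{\a}-(1-\psi)|x|^{\a}$ with $\a=n-1+s$ non-integer, use the homogeneous-distribution formula for $\mathcal{F}(|x|^{\a})$ away from the origin, and bound $\mathcal{F}\bigl((1-\psi)|x|^{\a}\bigr)$ for $|\xi|\geq1$ by the identity $|2\pi\xi|^{2m}\hat f=\mathcal{F}((-\Delta)^m f)$ with $m$ large so that $\Delta^m\bigl((1-\psi)|x|^{\a}\bigr)\in L^1$), but two imprecisions should be fixed: "modulo smooth terms" must be "modulo rapidly decaying terms", and for $\a>0$ the Fourier transform of $|x|^{\a}$ is a finite-part distribution that coincides with $c\,|\xi|^{-n-\a}$ only on $\Rn\setminus\{0\}$ -- harmless here since, as you note, near $\xi=0$ one uses boundedness of the transform of a compactly supported $L^1$ function. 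Your Bessel/Hankel alternative is plausible but left unexecuted, so the homogeneous-singularity route is the one to carry out.
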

\begin{proof}
Given that $\bar{q}_\d \in C^{n-1}_c([0, \infty))$ (see Lemma \ref{lem: kernel primitive}), we have that $q_{\d} \in L^1(\Rn)$ and has compact support.
Therefore, by known facts in Fourier analysis, $\hat{q}_{\d}$ belongs to $C_0(\Rn)$ and is analytic.

It remains to show that $\hat{q}_{\d} \in L^1 (\Rn)$, and for this we will previously check that $q_{\d} \in W^{2n-1,1}(\Rn)$. Indeed, as a consequence of Lemma \ref{lem: kernel primitive}, for $1 \leq j \leq 2n-1$ there exists $z_j \in \R$ such that
	\[
	\bar{q}_{\d}^{j)} (t) = z_j \, t^{n-1+s- j} , \qquad t \in (0, b_0 \d) ,
	\]
	where the superindex $j$ indicates the $j$-th derivative.
	On the other hand, $\bar{q}_{\d}^{j)}$ is bounded in $[b_0 \d, \d]$ and vanishes in $[\d, \infty)$.
	This implies that the a.e.\ and weak derivative of order $j$ of $q_{\d}$ coincide and they satisfy, for some constant $C_j>0$,
	\[
	\left| D^{j)} q_{\d} (x) \right| \leq C_j \left| x \right|^{n-1+s- j} , \qquad x \in B (0, b_0 \d) \setminus \{ 0 \} ,
	\]
while $\left| D^{j)} q_{\d} \right|$ is bounded in $B (0, \d) \setminus B (0, b_0 \d)$ and vanishes in $B (0, \d)^c$.
	This implies that $q_{\d} \in W^{2n-1,1}(\Rn)$.
	In particular, the Fourier transform of any partial derivative of order $2n-1$ of $q_{\d}$ is bounded, so there exists $C>0$ such that for any multiindex $\a$ of order $2n-1$ we have
	\[
	|(2\pi i \xi)^\a \hat{q}_{\d}(\xi)|=|\widehat{\partial^\a q_{\d}}(\xi) |\leq C ,
	\]
	and, hence,
	\[
	|\hat{q}_{\d}(\xi)|\leq \frac{C}{|2\pi \xi|^{2n-1}} .
	\]
	This decay at infinity of $\hat{q}_{\d}$, together with the fact that $\hat{q}_{\d}$ is continuous, implies that $\hat{q}_{\d} \in L^1(\Rn)$ for $n \geq 2$.

	In the rest of the proof, we assume that $n=1$.
	In this case, $q_{\d}$ is the even extension of $\bar{q}_{\d}$.
	As shown before, there exists $z_1\in \R$ such that $q_{\d}'(x)= \frac{z_1}{|x|^{1-s}}$ for $x \in B(0, b_0 \d)$.
If $z_1 = 0$ then $q_{\d}$ is $C^{\infty}_c (\Rn)$, so $\hat{q}_{\d}$ is in $\mathcal{S}$ and, in particular, in $L^1 (\Rn)$.
We assume from now on that $z_1 \neq 0$.
	
Consider a $\f \in C_c^{\infty}(\R)$ with $\f |_{B(0, \frac{1}{4})} = 1$  and  $\f |_{B(0, \frac{1}{2})^c} = 0$.
	Then,
	\begin{align*}
		|2\pi \xi|^{-s} - \frac{1}{z_1 \gamma(s)}\widehat{q'}_\d(\xi) & = \mathcal{F}\left(\frac{1}{\gamma(s)|x|^{1-s}} - \frac{1}{z_1 \gamma(s)}q_{\d}'(x) \right)  \\
		& = \mathcal{F}\left(\frac{\f}{\gamma(s)|x|^{1-s}} - \frac{1}{z_1 \gamma(s)}q_{\d}'(x) \right) +\mathcal{F}\left(\frac{1-\f}{\gamma(s)|x|^{1-s}} \right) .
	\end{align*}
	Looking at the expression of $q_{\d}'$, we notice that the functions $\frac{ \f}{\gamma(s)|x|^{1-s}}$ and $\frac{1}{z_1 \gamma(s)} q'_\d(x)$ coincide in $B(0, \min\{b_0 \d, \frac{1}{4} \})$, and both have compact support.
	Therefore, its difference is a smooth function of compact support.
	In particular, it is in the Schwartz space, as well as its Fourier transform:
	\[
	\mathcal{F}\left(\frac{ \f}{\gamma(s)|x|^{1-s}} - \frac{1}{z_1 \gamma(s)}q'_\d(x) \right) \in \mathcal{S} .
	\]
	On the other hand, the function $\mathcal{F}\left( \frac{1-\f}{\gamma(s)|x|^{1-s}} \right)$ is treated in \cite[Ex.\ 2.4.9]{Grafakos08a}, and it is concluded that its decay at infinity is faster than any negative power of $|\xi|$. Consequently, the decay at infinity of
	\[
	|2\pi \xi|^{-s} - \frac{1}{z_1 \gamma(s)}\widehat{q'}_\d(\xi)
	\]
	is also faster than any negative power of $|\xi|$.
	In particular, there exists $C_1'>0$ such that
	\[
	\left| \frac{2\pi \xi}{z_1 \gamma(s)}\hat{q}_{\d}(\xi) \right| = \left| \frac{1}{z_1 \gamma(s)}\widehat{q'}_\d(\xi) \right| \leq \frac{C_1'}{|2 \pi \xi|^s} ,
	\]
	which allows us to conclude that $\hat{q}_{\d} \in L^1(\R)$.
\end{proof}

In the following result we obtain relevant properties about $\hat{Q}_\d^s$.
Recall that $a_0$ is the constant from the definition of $w_\d$.

\begin{prop} \label{Prop: properties of the Fourier transform of Q}
	\begin{enumerate}[a)]
		\item $\hat{Q}_{\d}^s$ is analytic, bounded, radial, and $\hat{Q}_{\d}^s(0)=\| Q_\d^s \|_{L^1(\Rn)}$. \label{property a)}
		
		\item $\partial^{\alpha} \hat{Q}_\d^s$ is bounded for every multiindex $\alpha$. \label{property b)}

		\item $\displaystyle \lim_{|\xi|\rightarrow \infty} \frac{\hat{Q}_{\d}^s(\xi)}{|2\pi \xi|^{-(1-s)}}=a_0$. \label{property c)}
	\end{enumerate}
\end{prop}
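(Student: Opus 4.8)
\emph{Parts a) and b).} These should follow quickly from what is already available. By Lemma~\ref{lem: kernel primitive}\,\emph{\ref{item:qb})} we have $Q_\d^s \in L^1(\Rn)$ with $\supp Q_\d^s \subset B(0,\d)$; hence $\hat Q_\d^s$ is bounded, with $\|\hat Q_\d^s\|_{L^\infty(\Rn)} \le \|Q_\d^s\|_{L^1(\Rn)}$, and, by the same Paley--Wiener/analytic-continuation fact invoked in the proof of Lemma~\ref{lem: Fourier transform of q delta}, it extends to an entire function, so it is analytic. Radiality of $\hat Q_\d^s$ is inherited from that of $Q_\d^s$ (see the remark after Definition~\ref{de:radial}), and since $Q_\d^s \ge 0$ (because $\bar q_\d \ge 0$, as $\bar w_\d \ge 0$, and $\gamma(1-s)>0$) one gets $\hat Q_\d^s(0) = \int_{\Rn} Q_\d^s = \|Q_\d^s\|_{L^1(\Rn)}$. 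For \emph{b)} I would note that $x^\beta Q_\d^s \in L^1(\Rn)$ for every multiindex $\beta$: the function is compactly supported and near the origin $|x^\beta Q_\d^s(x)| \lesssim |x|^{|\beta|-n+1-s}$, which is integrable since $|\beta|+1-s>0$. Differentiation under the integral sign is therefore legitimate and $\partial^\alpha \hat Q_\d^s = \mathcal{F}\!\left((-2\pi i x)^\alpha Q_\d^s\right)$, which is bounded by $\|(2\pi x)^\alpha Q_\d^s\|_{L^1(\Rn)}$.

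\emph{Part c), setup.} The plan is to peel off the exact singularity of $Q_\d^s$ at the origin. By Lemma~\ref{lem: kernel primitive}\,\emph{\ref{item:qa})}, $q_\d(x) = a_0 + z_0|x|^{n-1+s}$ for $|x| \le b_0\d$, and since $I_{1-s}(x) = \frac{1}{\gamma(1-s)|x|^{n-1+s}}$ by \eqref{eq: Riesz Potential}, picking $\psi \in C^\infty_c(\Rn)$ with $\psi\equiv 1$ on a small ball and $\supp\psi \subset B(0, b_0\d)$ we may decompose
\[
 Q_\d^s = a_0\,\psi\, I_{1-s} + h, \qquad h := Q_\d^s - a_0\,\psi\, I_{1-s}.
\]
The key observation is that $h \in C^\infty_c(\Rn)$: on the ball where $\psi\equiv 1$ one computes $h \equiv z_0/\gamma(1-s)$ (a constant), away from the origin $h$ is built from products and quotients of $C^\infty$ functions, and $\supp h \subset B(0,\d)$. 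Consequently $\hat h \in \mathcal{S}$, so $\hat h(\xi)\,|2\pi\xi|^{1-s} \to 0$ as $|\xi|\to\infty$, and everything reduces to proving $\widehat{\psi I_{1-s}}(\xi)\, |2\pi\xi|^{1-s} \to 1$.

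\emph{Part c), the asymptotics.} Since $\psi \in \mathcal{S}$ and $\widehat{I_{1-s}} = |2\pi\cdot|^{-(1-s)}$ by \eqref{eq: Riesz Potential}, the convolution theorem gives $\widehat{\psi I_{1-s}}(\xi) = \int_{\Rn}\hat\psi(\xi-\eta)\,|2\pi\eta|^{-(1-s)}\,d\eta$, whence
\[
 \widehat{\psi I_{1-s}}(\xi)\,|2\pi\xi|^{1-s} = \int_{\Rn} \hat\psi(\xi - \eta)\left(\frac{|\xi|}{|\eta|}\right)^{1-s} d\eta .
\]
I would split this over $\{|\eta| \le |\xi|/2\}$, $\{|\eta| \ge 2|\xi|\}$ and $\{|\xi|/2 < |\eta| < 2|\xi|\}$. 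On the first region $|\xi-\eta| \ge |\xi|/2$, so the Schwartz decay of $\hat\psi$ dominates the polynomial growth $|\xi|^{1-s}\int_{|\eta|\le|\xi|/2}|\eta|^{-(1-s)}\,d\eta \sim |\xi|^{n}$ and the contribution tends to $0$; on the second region $(|\xi|/|\eta|)^{1-s} \le 1$ and again the decay of $\hat\psi$ forces the contribution to $0$; on the third region $(|\xi|/|\eta|)^{1-s} \le 2^{1-s}$, so after the substitution $\eta = \xi + \zeta$ the dominated convergence theorem (with dominating function $2^{1-s}|\hat\psi(-\zeta)| \in L^1(\Rn)$, the ratio tending to $1$ pointwise while the region exhausts $\Rn$) yields the limit $\int_{\Rn}\hat\psi = \psi(0) = 1$ by Fourier inversion. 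Adding the three pieces gives the claim, hence $\hat Q_\d^s(\xi)\,|2\pi\xi|^{1-s} = a_0\,\widehat{\psi I_{1-s}}(\xi)\,|2\pi\xi|^{1-s} + \hat h(\xi)\,|2\pi\xi|^{1-s} \to a_0$.

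\emph{Main obstacle.} The delicate point is the asymptotic analysis in c): first checking carefully that $h$ is genuinely smooth at the origin (this uses the precise near-origin expansion of $q_\d$ from Lemma~\ref{lem: kernel primitive}), and above all splitting the convolution integral so as to control simultaneously the region near $\eta = \xi$, where $(|\xi|/|\eta|)^{1-s}$ is $O(1)$ and an honest dominated-convergence argument is required, and the region near $\eta = 0$, where the kernel $|2\pi\eta|^{-(1-s)}$ is singular but $\hat\psi$ is negligible at the relevant large frequencies. A variant would instead use $\hat Q_\d^s = \widehat{I_{1-s}} * \hat q_\d$ together with $\hat q_\d \in L^1(\Rn)$ (Lemma~\ref{lem: Fourier transform of q delta}) and the decay bounds on $\hat q_\d$ obtained in its proof, leading to the same computation with $\hat q_\d$ in place of $a_0\hat\psi$ and $\int_{\Rn}\hat q_\d = q_\d(0) = a_0$.
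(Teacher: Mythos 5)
Your parts \emph{a)} and \emph{b)} are correct and essentially identical to the paper's argument (boundedness and analyticity from $Q_\d^s\in L^1$ with compact support, radiality, $\hat Q_\d^s(0)=\|Q_\d^s\|_{L^1}$ using $Q_\d^s\ge 0$, and $\partial^\a\hat Q_\d^s=\mathcal F((-2\pi i x)^\a Q_\d^s)$ with $x^\a Q_\d^s\in L^1$). Your part \emph{c)}, however, takes a genuinely different and valid route. The paper proves \emph{c)} by writing $\hat Q_\d^s=\hat I_{1-s}*\hat q_\d$ a.e.\ (via Lemma \ref{lem: Fourier transform of convolution of distributions} and Young's inequality), using that $\hat q_\d\in L^1(\Rn)$ with $\int_{\Rn}\hat q_\d=q_\d(0)=a_0$ (Lemma \ref{lem: Fourier transform of q delta}, whose proof needs $W^{2n-1,1}$ estimates on $q_\d$ and a separate argument when $n=1$), and then rescaling so that $g_\l(x)=a_0^{-1}\hat q_\d(\l x)\l^n$ acts as an approximate identity against the function $|2\pi\cdot|^{-(1-s)}$, which is uniformly continuous away from the origin. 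You instead subtract the exact singularity in physical space, $Q_\d^s=a_0\psi I_{1-s}+h$ with $h\in C^\infty_c(\Rn)$ (this uses the expansion of $\bar q_\d$ near $0$ from Lemma \ref{lem: kernel primitive}), so $\hat h$ is Schwartz and the problem reduces to $\widehat{\psi I_{1-s}}(\xi)\,|2\pi\xi|^{1-s}\to 1$, which you obtain from the three-region splitting of $\hat\psi*|2\pi\cdot|^{-(1-s)}$, dominated convergence and $\int\hat\psi=\psi(0)=1$; your estimates in the three regions are correct. What each approach buys: yours bypasses Lemma \ref{lem: Fourier transform of q delta} entirely (including its delicate $n=1$ case) at the price of a hands-on asymptotic computation, and in fact your cutoff decomposition is very close in spirit to the argument the paper uses later, in the proof of Theorem \ref{Th: inverse Fourier trasnform as a function}, to show the stronger statement that $a_0|2\pi\xi|^{-(1-s)}-\hat Q_\d^s(\xi)$ decays faster than any negative power of $|\xi|$ (there the tail term $\mathcal F\bigl((1-\f)I_{1-s}\bigr)$ is handled by citing \cite[Ex.\ 2.4.9]{Grafakos08a} instead of your explicit splitting); the paper's route for \emph{c)} is softer once $\hat q_\d\in L^1$ is available, since the approximate-identity argument avoids any asymptotic expansion of the convolution. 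The only point you state without proof is the identity $\widehat{\psi I_{1-s}}=\hat\psi*\hat I_{1-s}$ as an absolutely convergent integral, which deserves the same brief distributional justification the paper gives for its analogous identity; this is a routine fix, not a gap in the method.
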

\begin{proof}	
The proof of part \emph{\ref{property a)})} comes directly from known facts in Fourier analysis.
Indeed, as $Q_\d^s \in L^1(\Rn)$ we have $\hat{Q}_{\d}^s \in L^\infty(\Rn)$.
As $Q_\d^s$ has compact support, $\hat{Q}_{\d}^s$ is analytic.
Since $Q_\d^s$ is radial, so is $\hat{Q}_{\d}^s$.
Finally, the equality $\hat{Q}_{\d}^s(0)=\| Q_\d^s \|_{L^1(\Rn)}$ is a straightforward consequence of the formula of the Fourier transform.

Regarding \emph{\ref{property b)})}, we have that $\partial^{\alpha} \hat{Q}_\d^s = \mathcal{F}((-2\pi i \xi)^{\alpha} Q_\d^s)$. Thus, $\partial^{\alpha} \hat{Q}_\d^s$ is the Fourier transform of an $L^1 (\Rn)$ function (since $Q_\d^s \in L^1 (\Rn)$ has compact support).
Therefore, $\partial^{\alpha} \hat{Q}_\d^s$ is bounded.
	
In order to show \emph{\ref{property c)})}, we apply the Fourier transform to the expression $Q_\d^s = I_{1-s} \, q_{\d}$ (see Definition \ref{de:q}). Since the Riesz potential $I_{1-s}$ is not an $L^1(\Rn)$ function and $q_{\d}$ is not Schwartz, the Fourier transform is, in principle, in the sense of tempered distributions.
To wit, as $I_{1-s} \in L^1(B(0,1))+L^\infty(B(0,1)^c)$, both factors $I_{1-s}$ and $q_{\d}$ can be seen as distributions; in addition, $q_{\d}$ has compact support, so we can use Lemma \ref{lem: Fourier transform of convolution of distributions} and obtain that
\begin{equation} \label{eq: conv hat Q}
	\hat{Q}_\d^s(\xi)=|2\pi \xi|^{-(1-s)}*\hat{q}_{\d}(\xi)
\end{equation}
in the sense of distributions.
Actually, by Young's inequality for the convolution we have that
\[
 \| \hat{I}_{1-s}*\hat{q}_{\d} \|_{L^{\infty} (\Rn)} \leq \| \hat{I}_{1-s} \|_{L^1 (B(0,1))} \left\| \hat{q}_{\d}\right\|_{L^{\infty} (\Rn)} + \| \hat{I}_{1-s} \|_{L^{\infty} (B(0,1)^c)} \left\| \hat{q}_{\d} \right\|_{L^1 (\Rn)} .
\]
Therefore, the integral defining $(\hat{I}_{1-s}*\hat{q}_{\d}) (\xi)$ is absolutely convergent for a.e.\ $\xi \in \Rn$. Consequently, equality \eqref{eq: conv hat Q} holds a.e.
 
 Then, we consider $\xi=\lambda \xi_0$ with $\xi_0 \in B(0, 1)^c$ fixed and $\lambda > 0$. 
 Using the change of variables $x=\lambda x'$ we have
\begin{align*}
\hat{Q}_\d^s(\lambda \xi_0)&= \int_{\Rn} |2 \pi (x - \lambda \xi_0)|^{-(1-s)} \hat{q}_{\d} (x) \, dx =  \int_{\Rn} |2 \pi (\lambda x - \lambda \xi_0)|^{-(1-s)} \hat{q}_{\d} ( \lambda x) \lambda^n \, dx \\
&= \lambda^{-(1-s)} \int_{\Rn} |2 \pi ( \xi_0 - x)|^{-(1-s)} \hat{q}_{\d} ( \lambda x) \lambda^n \, dx.
\end{align*}
As the function $\xi \mapsto \frac{\hat{Q}_{\d}^s(\xi)}{|2\pi \xi|^{-(1-s)}}$ is radial, in order for \emph{\ref{property c)})} to hold, it is enough that
\[
 \lim_{\l \to \infty} \frac{\hat{Q}_{\d}^s(\l \xi_0)}{|2\pi \l \xi_0|^{-(1-s)}}=a_0, 
\]
equivalently,
\[
 \lim_{\l \to \infty} \frac{\int_{\Rn} |2 \pi ( \xi_0 - x)|^{-(1-s)} \hat{q}_{\d} ( \lambda x) \lambda^n \, dx}{|2\pi \xi_0|^{-(1-s)}}=a_0. 
\]
Define now $g_\lambda(x)= \frac{1}{a_0} \hat{q}_{\d}(\lambda x) \lambda^n$ and $f(\xi)=|2\pi \xi|^{-(1-s)}$.
The limit above is equivalent to
\[
 \lim_{\l \to \infty} \frac{\int_{\Rn} f (\xi_0 - x) g_{\l} (x) \, dx}{f(\xi_0)}=1, 
\]
and, in turn, equivalent to
\[
 \lim_{\l \to \infty} \int_{\Rn} f (\xi_0 - x) g_{\l} (x) \, dx = f(\xi_0), 
\]
in other words,
\begin{equation}\label{eq:contconv}
  \lim_{\l \to \infty} f*g_\lambda(\xi_0) = f(\xi_0). 
\end{equation}

We recall from Lemma \ref{lem: kernel primitive} that $a_0 = \bar{q}_{\d} (0) = q_{\d} (0) = \int_{\Rn} \hat{q}_{\d}$; note that $\hat{q}_{\d} \in L^1 (\Rn)$ thanks to Lemma \ref{lem: Fourier transform of q delta}.
Thus, $\int_{\Rn} g_\lambda =1$ for each $\l >0$.
Then, by construction, $g_\lambda$ is a mollifier family tending to the Dirac delta at $0$, when $\lambda \rightarrow \infty$ in the sense of distributions.
Thus,
\begin{equation*}
\left| f*g_\lambda(\xi_0) - f(\xi_0) \right|= \left|\int_{\Rn} \left[ f(\xi_0 -x) - f(\xi_0) \right] g_\lambda(x) \, dx \right|\leq\int_{\Rn} \left|f(\xi_0 -x) - f(\xi_0)\right| |g_\lambda(x)| \, dx .
\end{equation*}
Let $\e>0$.
Since $f$ is uniformly continuous in $B(0,1/2)^c$, there exists $0 < r < \frac{1}{2}$ such that
\[
|f(\xi_0-x)- f(\xi_0)| < \e , \qquad \text{ for all } \xi_0 \in B(0,1)^c \text{ and } x \in B(0,r).
\]
Therefore, as $f \in L^\infty(B(0,1/2)^c)$,
\begin{align*}
\left| f*g_\lambda(\xi_0) - f(\xi_0) \right| &\leq\int_{B(0,r)} \left|f(\xi_0 -x) - f(\xi_0)\right| |g_\lambda(x)| \, dx  + \int_{B(0, r)^c} \left|f(\xi_0 -x) - f(\xi_0)\right| |g_\lambda(x)| \, dx  \\
&\leq \e \int_{B(0,r)}|g_\lambda(x)| \, dx + 2 \|f\|_{L^\infty(B(0,1/2)^c)} \int_{B(0, r)^c} |g_\lambda(x)| \, dx .
\end{align*}
Finally, we use that $\lim_{\lambda \rightarrow \infty} \int_{B(0, r)^c} |g_\lambda(x)| \, dx =0$. As a result, there exists $\lambda_0>0$ such that for every $\lambda> \lambda_0$, the inequality $\int_{B(0, r)^c} |g_\lambda(x)| \, dx<\e$ holds.
Consequently,
\[
\left| f*g_\lambda(\xi_0) - f(\xi_0) \right| \leq \left( \| g_\lambda \|_{L^1(\Rn)} +   2 \|f\|_{L^\infty(B(0,1/2)^c)} \right) \e . 
\]
As $\| g_\lambda \|_{L^1(\Rn)} = \| g_1 \|_{L^1(\Rn)}$, this proves convergence \eqref{eq:contconv}, and, hence, statement \emph{\ref{property c)})}.
\end{proof}

The following calculation is useful for showing the positivity of $\hat{Q}_{\d}^s$.

\begin{lem}\label{le:radialsin}
For all $j \in \{ 1, \ldots, n\}$ and $r >0$,
\[
\int_{\Rn} \frac{x_j}{|x|^{n+s+1}} w_\delta(x) \sin{(2\pi r x_j)} \, dx>0.
\]
\end{lem}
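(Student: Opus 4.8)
The plan is to exploit the radial symmetry of the weight $w_\delta$ to reduce the $n$-dimensional integral to a one-dimensional one, and then to recognize the resulting one-dimensional integral as (a positive multiple of) a classical positive quantity. Fix $j$; by the rotational invariance of $w_\delta$ and the integrand structure, the value of the integral is independent of $j$, so we may as well take $j=1$. First I would integrate out the $n-1$ variables $x' = (x_2,\dots,x_n)$ at a fixed value of $x_1 = t$. Writing $|x|^2 = t^2 + |x'|^2$ and using polar coordinates in $\R^{n-1}$, the integral becomes
\[
\int_{\R} t\,\sin(2\pi r t) \left( \int_{\R^{n-1}} \frac{\bar w_\delta\big(\sqrt{t^2+|x'|^2}\big)}{(t^2+|x'|^2)^{\frac{n+s+1}{2}}} \, dx' \right) dt .
\]
The inner integral is an even function of $t$; call it $h(t)$. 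Since the weight $\bar w_\delta$ is nonnegative, $h(t) \geq 0$ for all $t$, and since $\bar w_\delta$ is decreasing (this is condition \ref{item:wdecreasing}) on $w_\delta$, with $\bar w_\delta = a_0 > 0$ near the origin), $h$ is positive on a neighbourhood of $0$ and, crucially, $h$ is decreasing in $|t|$: increasing $|t|$ both decreases the numerator pointwise (by monotonicity of $\bar w_\delta$) and increases the denominator. Hence $t \mapsto h(t)$ on $(0,\infty)$ is a nonnegative, decreasing, not-identically-zero function.

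The integral is then $\int_{\R} t\, h(t) \sin(2\pi r t)\, dt = 2\int_0^\infty t\, h(t)\sin(2\pi r t)\, dt$ (the integrand is even). Now I would invoke the classical fact that if $g:(0,\infty)\to[0,\infty)$ is nonincreasing (here $g(t) = t\,h(t)$ — wait, this product is not monotone, so instead apply the second mean value theorem / the standard alternating-tail argument to $h$ directly after an integration by parts, or better: write $\int_0^\infty t\,h(t)\sin(2\pi r t)\,dt$ and split $[0,\infty)$ into the intervals where $\sin$ has constant sign, $I_k = [\frac{k}{2r}, \frac{k+1}{2r}]$, $k \geq 0$). On $I_k$ the sign of $\sin(2\pi r t)$ is $(-1)^k$, and $\int_{I_k} |\sin(2\pi r t)|\, dt$ is the same for all $k$. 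Since $t\mapsto t\,h(t)$ need not be monotone I would instead use that $h$ is decreasing: set $a_k = \int_{I_k} h(t)\,|\sin(2\pi r t)|\,dt \geq 0$; because $h$ is decreasing and each $I_k$ has equal length with $\sin$ translating periodically, $a_k$ is a decreasing sequence with $a_0 > 0$, so $\sum_k (-1)^k a_k > 0$ by the alternating series estimate, giving $\int_0^\infty h(t)\sin(2\pi r t)\,dt > 0$. To bring in the factor $t$, I would first integrate by parts once in the original one-dimensional integral, transferring the $t$ onto a primitive of $\sin$, or alternatively observe that the argument above applies verbatim with $h$ replaced by $t\,h(t)$ provided one checks $t\,h(t)$ is eventually decreasing and handles the initial increasing portion as a separate, manifestly positive, contribution — the cleanest route is integration by parts: $\int_0^\infty t h(t)\sin(2\pi rt)\,dt = \frac{1}{2\pi r}\int_0^\infty (t h(t))' \cos(2\pi r t)\, dt$ after noting the boundary terms vanish (compact support of $w_\delta$), and then a cosine version of the alternating argument, or — most robustly — differentiate the Fourier-sine transform of $h$ with respect to $r$.

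Actually the slickest approach, which I would adopt to avoid fussing over monotonicity of $t\,h(t)$, is this: observe that $\int_{\R} t\,h(t)\sin(2\pi r t)\,dt = -\frac{1}{2\pi}\frac{d}{dr}\int_{\R} h(t)\cos(2\pi r t)\,dt = -\frac{1}{2\pi}\frac{d}{dr}\,\widehat{h}(-r)$ is not obviously signed, so I will stick with the direct partition argument applied carefully. The one genuine obstacle is justifying the \emph{strict} positivity rigorously, i.e. controlling the alternating tail; everything else (Fubini, reduction to one dimension, evenness, nonnegativity of $h$) is routine. The monotonicity condition \ref{item:wdecreasing}) on $\bar w_\delta$ is exactly what makes $h$ decreasing, hence the alternating-series bound applicable — and, as the authors themselves remark after stating the hypotheses on $w_\delta$, this is the only place the full strength of \ref{item:wdecreasing}) is used and it can be substantially weakened (one really only needs $h$, or $t\,h(t)$ after integration by parts, to have a bounded-variation-type structure compatible with the second mean value theorem). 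I expect the write-up to carry out the Fubini reduction, establish monotonicity of $h$, and then close with the second mean value theorem for integrals applied to $\int_0^\infty h(t)\sin(2\pi r t)\,dt$ together with an integration by parts to account for the leading factor $t$.
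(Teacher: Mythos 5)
Your reduction is genuinely different from the paper's (you slice by hyperplanes $x_1=t$ and integrate out $x'$, whereas the paper first rescales in $r$ and then uses polar coordinates, obtaining for each direction $z$ on the half-sphere the amplitude $f(t)=\bar{w}_\d(t/r)\,t^{-(s+1)}$, which is manifestly nonnegative and decreasing, and then closes with the half-period pairing argument), but as written your proposal has a genuine gap exactly at the point you flag: the positivity of $\int_0^\infty t\,h(t)\sin(2\pi r t)\,dt$ is never established. The alternating/pairing argument you justify applies to $\int_0^\infty h(t)\sin(2\pi r t)\,dt$, which is not the integral you need; and the workaround you finally settle on — integration by parts to ``account for the leading factor $t$'' — fails, because the boundary term at $t=0$ does not vanish: near the origin $h(t)\sim c\,t^{-(s+2)}$ (the inner $(n-1)$-dimensional integral diverges as $t\to 0$ at rate $t^{-(s+2)}$ by scaling), so $t\,h(t)\sim c\,t^{-(1+s)}\to\infty$ and $(t\,h(t))'$ is not even integrable near $0$; compact support of $w_\d$ only controls the boundary term at infinity. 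The Fourier-derivative route you mention is, as you yourself note, not obviously signed, so no variant of the argument is actually carried to completion, and the strict (rather than nonnegative) inequality is likewise left hanging.

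The good news is that your approach can be completed with one extra observation that you missed: the amplitude $t\,h(t)$ \emph{is} decreasing on $(0,\infty)$. Indeed, the change of variables $x'=t\,u$ gives
\begin{equation*}
t\,h(t)\;=\;t^{-(1+s)}\int_{\R^{n-1}}\frac{\bar{w}_\d\bigl(t\sqrt{1+|u|^2}\,\bigr)}{(1+|u|^2)^{\frac{n+s+1}{2}}}\,du ,
\end{equation*}
a product of the strictly decreasing factor $t^{-(1+s)}$ and an integral that is nonincreasing in $t$ because $\bar{w}_\d$ is decreasing (this is where hypothesis d) on $w_\d$ enters, just as in the paper). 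With this monotonicity in hand, your half-period pairing/alternating argument applies verbatim to $t\,h(t)$ in place of $h$, and strict positivity follows from the first half-period, on part of which $t\,h(t)$ is strictly decreasing (using $\bar{w}_\d=a_0>0$ on $[0,b_0\d]$) — precisely the mechanism the paper uses for its radial amplitude $f$. So the idea is salvageable and genuinely parallel to the paper's proof, but the step that makes it work is missing from your write-up, and the substitute steps you propose do not hold as stated.
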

\begin{proof}
The integral is absolutely convergent since
\begin{equation}\label{eq:dominatedsin}
 \left| \frac{x_j}{|x|^{n+s+1}}w_\d(x) \sin{(2\pi r x_j)} \right| \leq  \frac{1}{|x|^{n-1+s}} \, w_\d(x) \, 2\pi r 
\end{equation}
and $w_\d$ as compact support.
By a change of variables we have
\begin{equation*}
 \int_{\Rn} \frac{x_j}{|x|^{n+s+1}}w_\d(x) \sin{(2\pi r x_j)} \, dx = r^s \int_{\Rn} \frac{x_j}{|x|^{n+s+1}}w_\d\left(\frac{x}{r} \right) \sin{(2 \pi x_j)} \, dx.
\end{equation*}
Recall that $\bar{w}_\d$ is the radial representation of $w_\d$.
By symmetry, the co-area formula and Fubini's theorem we have
\begin{align*}
 \int_{\Rn} \frac{x_j}{|x|^{n+s+1}}w_\d\left(\frac{x}{r} \right) \sin{(2\pi  x_j)} \, dx &=2 \int_{\{ x_j > 0\}} \frac{x_j}{|x|^{n+s+1}}w_\d\left(\frac{x}{r} \right) \sin{(2\pi  x_j)} \, dx \\
 & = 2 \int_{0}^{\infty}\frac{\bar{w}_\d \left(\frac{t}{r}\right)}{t^{n+s+1}} t^{n-1} \int_{\mathbb{S}_j^+} t z_j \sin(2 \pi t z_j) \, d \mathcal{H}^{n-1}(z) \, dt \\
 &=2 \int_{\mathbb{S}_j^+} z_j \int_{0}^{\infty} \frac{\bar{w}_\d \left(\frac{t}{r}\right)}{t^{s+1}} \sin(2 \pi t z_j) \, dt \, d \mathcal{H}^{n-1}(z) ,
 \end{align*}
where $\mathbb{S}_j^+ = \{ z \in \Rn : \, |z| = 1 , \, z_j > 0\}$. 
Finally, let us show that
\begin{equation}\label{eq:intpos}
 \int_{0}^{\infty} \frac{\bar{w}_\d \left(\frac{t}{r}\right)}{t^{s+1}} \sin(2 \pi t z_j) \, dt > 0
\end{equation}
for each $z \in \mathbb{S}_j^+$.
For this, consider the function $f(t) =  \frac{\bar{w}_\d \left(\frac{t}{r}\right)}{t^{s+1}}$ and express
\[
 \int_{0}^{\infty} \frac{\bar{w}_\d \left(\frac{t}{r}\right)}{t^{s+1}} \sin(2 \pi t z_j) \, dt = \sum_{k=0}^{\infty} \int_{\frac{k}{z_j}}^{\frac{k+1}{z_j}} f(t) \sin(2 \pi t z_j) \, dt .
\]
We have that each term in the sum is positive; indeed,
by splitting the integral in two through point $\frac{k+\frac{1}{2}}{z_j}$ and making the change of variables $t=t'+ \frac{1}{2z_j}$ in one of them, it is easy to obtain
\[
 \int_{\frac{k}{z_j}}^{\frac{k+1}{z_j}} f(t) \sin(2 \pi t z_j) \, dt = \int_{\frac{k}{z_j}}^{\frac{k+\frac{1}{2}}{z_j}} \left[ f(t) - f(t + \frac{1}{2z_j}) \right] \sin(2 \pi t z_j) \, dt \geq 0 ,
\]
since $\sin(2 \pi t z_j) > 0$ and $f$ is decreasing (as so is $\bar{w}_\d$).
In fact,
\[
 \int_{0}^{\frac{1}{2 z_j}} \left[ f(t) - f(t + \frac{1}{2z_j}) \right] \sin(2 \pi t z_j) \, dt >0 ,
\]
as $f$ is strictly decreasing in $[0, r b_0 \d]$, so \eqref{eq:intpos} holds, which concludes the proof.
\end{proof}

As can be seen from the proof above, the assumption that $\bar{w}_\d$ is decreasing can be weakened to the following: the function $f$ of the proof is decreasing in $t$ for all $r>0$.
This is equivalent to $f'(t) \leq 0$ for all $t >0$ and $r>0$, which in turn, is equivalent to the differential inequality
\[
 \bar{w}'_\d (t) \leq (s+1) \frac{\bar{w}_\d (t)}{t} , \qquad t \geq b_0 \d .
\]
Therefore, assumption \ref{item:wdecreasing}) on $\bar{w}_\d$ (see Section \ref{se: functional analysis}) could have been replaced with the above inequality, but, for ease of reading, we prefered to state that $\bar{w}_\d$ is decreasing.

The following result shows the convergence of the truncations of $\nabla Q_\d^s$.

\begin{lem} \label{lem: convdist pv dQ}
The function $\nabla Q_\d^s$ can be identified with the tempered distribution defined componentwise as
\begin{equation}\label{eq:nablaQ}
 \langle \partial_j Q_\d^s , \f \rangle = - c_{n,s} \int_{\{ x_j>0\}} \frac{x_j}{|x|^{n+s+1}} w_\d(x) (\f(x)-\f(-x) )\, dx , \qquad j \in \{1, \ldots, n\} 
\end{equation}
and we have the convergence
\[
 \nabla Q_\d^s \chi_{B(0, \e)^c} \to \nabla Q_\d^s \qquad \text{in } \mathcal{S}' \quad \text{as } \e \to 0 .
\]
\end{lem}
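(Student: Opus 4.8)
The statement has two parts: (i) identifying $\nabla Q_\d^s$ as the tempered distribution given componentwise by \eqref{eq:nablaQ}, and (ii) the $\mathcal{S}'$-convergence of the truncations $\nabla Q_\d^s \chi_{B(0,\e)^c}$ to that distribution. The plan is to treat both simultaneously by working with the truncations and passing to the limit.

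First I would fix $j \in \{1,\dots,n\}$ and $\f \in \mathcal{S}$. For $\e>0$, the function $\partial_j Q_\d^s \chi_{B(0,\e)^c}$ is genuinely integrable (it is smooth away from the origin, bounded on $B(0,\d)\setminus B(0,\e)$, and compactly supported), so it defines a tempered distribution by integration against $\f$. From Lemma \ref{lem: kernel primitive}\,\emph{\ref{item:qb})} we have the pointwise formula $\partial_j Q_\d^s(x) = -c_{n,s}\,\frac{x_j}{|x|^{n+s+1}}\,w_\d(x)$ for $x\neq 0$ (rewriting $\frac{\rho_\d(x)}{|x|}\frac{x}{|x|}$ in coordinates and using $c_{n,s} = (n-1+s)/\gamma(1-s)$ together with the expression for $\rho_\d$). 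Hence
\[
 \langle \partial_j Q_\d^s \chi_{B(0,\e)^c}, \f\rangle = -c_{n,s}\int_{B(0,\e)^c} \frac{x_j}{|x|^{n+s+1}}\,w_\d(x)\,\f(x)\,dx.
\]
Now I would exploit the odd symmetry of the kernel $x \mapsto \frac{x_j}{|x|^{n+s+1}}w_\d(x)$ (note $w_\d$ is radial, hence even): splitting the integral over $\{x_j>0\}$ and $\{x_j<0\}$ and changing $x\mapsto -x$ in the second piece, the contribution over $B(0,\e)^c$ becomes
\[
 -c_{n,s}\int_{\{x_j>0\}\setminus B(0,\e)} \frac{x_j}{|x|^{n+s+1}}\,w_\d(x)\,(\f(x)-\f(-x))\,dx.
\]

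The remaining task is to let $\e\to 0$ in this last expression and recognize the integral as absolutely convergent over all of $\{x_j>0\}$. The key estimate is that near the origin $|\f(x)-\f(-x)| \le 2\|\nabla\f\|_\infty |x|$ (mean value theorem), so the integrand is bounded by $2c_{n,s}\|\nabla\f\|_\infty\, |x_j|\,|x|^{-(n+s)}\,w_\d(x) \le 2c_{n,s}\|\nabla\f\|_\infty\, |x|^{-(n-1+s)}\,w_\d(x)$, which is integrable since $n-1+s < n$ and $w_\d$ is bounded with compact support (this is essentially the bound \eqref{eq:dominatedsin} used in Lemma \ref{le:radialsin}). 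By dominated convergence the $\e$-truncated integrals converge to the full integral over $\{x_j>0\}$, which shows both that the right-hand side of \eqref{eq:nablaQ} defines a tempered distribution and that $\langle \partial_j Q_\d^s\chi_{B(0,\e)^c},\f\rangle \to \langle \partial_j Q_\d^s,\f\rangle$ for every $\f\in\mathcal{S}$; since $j$ and $\f$ were arbitrary, this is exactly the asserted convergence $\nabla Q_\d^s\chi_{B(0,\e)^c}\to\nabla Q_\d^s$ in $\mathcal{S}'$. Finally, to justify that the distribution $\partial_j Q_\d^s$ defined by \eqref{eq:nablaQ} really is the distributional derivative of $Q_\d^s$, I would note that $Q_\d^s \in L^1(\Rn)$ (Lemma \ref{lem: kernel primitive}) so it defines a tempered distribution, and that for $\f\in C_c^\infty$ one has $\langle \partial_j Q_\d^s,\f\rangle = -\langle Q_\d^s, \partial_j\f\rangle$ by an integration by parts on $B(0,\e)^c$ (the boundary term on $\partial B(0,\e)$ is $O(\e^{1-s})\to 0$, using $|Q_\d^s(x)|\le c|x|^{-(n-1+s)}$) followed by the same $\e\to 0$ passage.

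The main obstacle is purely bookkeeping: correctly handling the odd symmetry to pass from the full-space integral to the integral over $\{x_j>0\}$ of the difference $\f(x)-\f(-x)$, and making sure the dominating function is integrable uniformly in $\e$ — but this is exactly the same type of estimate already carried out in Lemma \ref{le:radialsin}, so no genuinely new difficulty arises. One minor point worth stating carefully is that the integrability of the dominating function $|x|^{-(n-1+s)}w_\d(x)$ near $0$ is what forces us to use the cancellation $\f(x)-\f(-x)$ rather than $\f(x)$ alone, since $|x|^{-(n+s)}w_\d(x)$ is not integrable near the origin.
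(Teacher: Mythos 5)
Your argument is essentially the paper's own proof: the same pointwise formula $\partial_j Q_\d^s(x)=-c_{n,s}\,x_j|x|^{-(n+s+1)}w_\d(x)$ from Lemma \ref{lem: kernel primitive}, the same odd-symmetry reduction of the truncated integral to $\{x_j>0\}$ against $\f(x)-\f(-x)$, and the same mean-value-theorem domination by $C|x|^{-(n-1+s)}\chi_{B(0,\d)}$ followed by dominated convergence, which yields both the identification \eqref{eq:nablaQ} and the $\mathcal{S}'$-convergence. Your extra check that \eqref{eq:nablaQ} is the distributional derivative of $Q_\d^s$ (which the paper does not carry out) is fine in spirit, but note that the bound $|Q_\d^s(x)|\leq c|x|^{-(n-1+s)}$ alone gives only $O(\e^{-s})$ for the spherical boundary term; to obtain $O(\e^{1-s})$ you must first use that $\int_{\partial B(0,\e)}Q_\d^s\,\nu_j\,d\mathcal{H}^{n-1}=0$ by radial symmetry, so that $\f$ can be replaced by $\f-\f(0)$ there.
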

\begin{proof}
	We recall from Lemma \ref{lem: kernel primitive} that 
\[
	 \nabla Q_\d^s (x) = -(n-1+s) \frac{\rho_\delta (x) }{|x|} \frac{x}{|x|} = - c_{n,s} \frac{x}{|x|^{n+s}} w_{\d} (x) .
\]
Thus, $\nabla Q_\d^s \chi_{B(0, \e)^c}$ is in $L^1 (\Rn)$ for each $\e>0$, so it can be identified with a tempered distribution.
Let $j \in \{1, \ldots, n\}$; we shall prove the desired convergence for the $j$-th component of $\nabla Q_\d^s$.

Using the notation $B^{\pm}_j (0,\e)^c=\{ x \in B(0,\e)^c : \pm x_j>0 \}$, we have 
	\begin{align*}
	\int_{B(0,\e)^c} \frac{x_j}{|x|^{n+s+1}} w_\d(x) \f(x) \, dx &=\int_{B^-_j (0,\e)^c} \frac{x_j}{|x|^{n+s+1}} w_\d(x) \f(x) \, dx +\int_{B^+_j (0,\e)^c} \frac{x_j}{|x|^{n+s+1}} w_\d(x) \f(x) \, dx \\
	&= \int_{B^+_j (0,\e)^c} \frac{x_j}{|x|^{n+s+1}} w_\d(x) (\f(x)-\f(-x) )\, dx .
	\end{align*}
	By the mean value theorem, 
	\[
	\left|\frac{x_j}{|x|^{n+s+1}} w_\d(x) (\f(x)-\f(-x) ) \chi_{B^+_j (0,\e)^c} (x) \right|\leq
	\frac{2 \| \nabla \f\|_{\infty} \|  w_\d\|_{\infty}}{|x|^{n-1+s}} \chi_{B (0,\d)} (x) .
	\]
This shows that formula \eqref{eq:nablaQ} defines a tempered distribution; moreover, by dominated convergence we obtain that
	\[
	\int_{B^+_j (0,\e)^c} \frac{x_j}{|x|^{n+s+1}} w_\d(x) (\f(x)-\f(-x) )\, dx  \to \int_{\{ x_j>0\}} \frac{x_j}{|x|^{n+s+1}} w_\d(x) (\f(x)-\f(-x) )\, dx  
	\]
as $\e \to 0$. This proves the desired convergence.
\end{proof}

We now show the positivity of $\hat{Q}_{\d}^s$.
\begin{prop} \label{prop: Fourier transform well defined}
$\hat{Q}_{\d}^s (\xi) > 0$ for all $\xi \in \Rn$.
\end{prop}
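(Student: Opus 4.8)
The plan is to reduce the positivity of $\hat Q_\d^s$ to the oscillatory‑integral positivity of Lemma \ref{le:radialsin}, via the identification of $\nabla Q_\d^s$ with the tempered distribution of Lemma \ref{lem: convdist pv dQ} and the rule that Fourier transform turns differentiation into multiplication by $2\pi i\xi_j$.

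First I would handle $\xi=0$ separately: by Proposition \ref{Prop: properties of the Fourier transform of Q}\,\emph{\ref{property a)})}, $\hat Q_\d^s(0)=\|Q_\d^s\|_{L^1(\Rn)}$, which is strictly positive since $Q_\d^s$ is nonnegative (it is radially decreasing with $\bar q_\d(0)=a_0>0$ by Lemma \ref{lem: kernel primitive}) and not identically zero. Then I would fix $\xi\neq 0$, and, using that $\hat Q_\d^s$ is radial, reduce to showing $\hat Q_\d^s(r e_j)>0$ for $r=|\xi|>0$ and, say, $j=1$. Since $\partial_j Q_\d^s$ has compact support, $\widehat{\partial_j Q_\d^s}$ is a continuous (indeed analytic) function, and $2\pi i\xi_j\hat Q_\d^s(\xi)=\widehat{\partial_j Q_\d^s}(\xi)$ holds in $\mathcal S'$ and hence pointwise; evaluating at $\xi=r e_j$ reduces the claim to computing $\widehat{\partial_j Q_\d^s}(r e_j)$ and checking it equals $i$ times a strictly positive real number.

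To evaluate it, I would compute $\widehat{\partial_j Q_\d^s}(r e_j)$ directly from \eqref{eq:nablaQ}: choosing a cut‑off $\psi\in C^\infty_c(\Rn)$ equal to $1$ on a neighbourhood of $\supp w_\d$, one has $\widehat{\partial_j Q_\d^s}(r e_j)=\langle \partial_j Q_\d^s,\psi\,e^{-2\pi i\,\cdot\,(r e_j)}\rangle$, and plugging $\f=\psi\,e^{-2\pi i\,\cdot\,(r e_j)}$ into \eqref{eq:nablaQ} (whose integral converges absolutely for such $\f$) gives, after using $\psi\equiv1$ on $\supp w_\d$ and $e^{-2\pi i r x_j}-e^{2\pi i r x_j}=-2i\sin(2\pi r x_j)$,
\[
 \widehat{\partial_j Q_\d^s}(r e_j)=2i\,c_{n,s}\int_{\{x_j>0\}}\frac{x_j}{|x|^{n+s+1}}\,w_\d(x)\,\sin(2\pi r x_j)\,dx
 = i\,c_{n,s}\int_{\Rn}\frac{x_j}{|x|^{n+s+1}}\,w_\d(x)\,\sin(2\pi r x_j)\,dx,
\]
the last equality because the integrand is even in $x$. (Alternatively one can reach the same identity through the $\mathcal S'$‑convergence $\nabla Q_\d^s\chi_{B(0,\e)^c}\to\nabla Q_\d^s$ of Lemma \ref{lem: convdist pv dQ}, computing the Fourier transform of the truncation, which is an $L^1$ function, and letting $\e\to0$ by dominated convergence, the integrand being dominated by $\frac{2\pi r}{|x|^{n-1+s}}w_\d(x)$.) By Lemma \ref{le:radialsin} the integral on the right is strictly positive, so $\hat Q_\d^s(r e_j)=\frac{c_{n,s}}{2\pi r}\int_{\Rn}\frac{x_j}{|x|^{n+s+1}}w_\d(x)\sin(2\pi r x_j)\,dx>0$, which finishes the proof.

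The main obstacle I anticipate is not any computation but the bookkeeping around the distributional Fourier transform: one must justify cleanly that $\widehat{\partial_j Q_\d^s}$ really is the continuous function produced by the oscillatory integral — using either the description of the Fourier transform of a compactly supported distribution as pairing with $\psi\,e^{-2\pi i\,\cdot\,\xi}$, or the identification of the pointwise $\e\to0$ limit of the truncations with the $\mathcal S'$‑limit (both limiting objects being continuous). Once that is settled, the evenness/antipodal pairing, the reduction by radiality, and the $\xi=0$ case are routine, and all the genuine positivity content is already packaged in Lemma \ref{le:radialsin}.
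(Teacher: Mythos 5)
Your proposal is correct and follows essentially the same strategy as the paper's proof: treat $\xi=0$ via $\hat Q_\d^s(0)=\|Q_\d^s\|_{L^1(\Rn)}>0$, reduce to a ray by radiality, show $\widehat{\partial_j Q_\d^s}(re_j)=i\,c_{n,s}\int_{\Rn}\frac{x_j}{|x|^{n+s+1}}w_\d(x)\sin(2\pi r x_j)\,dx$ and invoke Lemma \ref{le:radialsin}, then upgrade the $\mathcal S'$ identity $2\pi i\xi_j\hat Q_\d^s=\widehat{\partial_j Q_\d^s}$ to a pointwise one by continuity of both sides. The only (harmless) variation is that you evaluate $\widehat{\partial_j Q_\d^s}$ through the compact-support pairing $\langle \partial_j Q_\d^s,\psi\,e^{-2\pi i\,\cdot\,\xi}\rangle$ applied to \eqref{eq:nablaQ}, whereas the paper computes it as the limit of the truncations $\nabla Q_\d^s\chi_{B(0,\e)^c}$ from Lemma \ref{lem: convdist pv dQ} — a route you yourself note as an alternative.
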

\begin{proof}
Since $\hat{Q}_\d^s(0)=\| Q_\d^s\|_{L^1(\Rn)} > 0$, we have to show that $\hat{Q}_\d^s(\xi)>0$ for every $\xi \in \Rn \setminus \{ 0\}$. For this, we fix any $j \in \{1, \ldots, n \}$ and claim that, despite $\frac{\partial Q_\d^s}{\partial x_j} \notin L^1 (\Rn)$,
\begin{equation} \label{eq: Fourier transform of derivative}
 \widehat{ \frac{\partial Q_\d^s}{\partial x_j} } (\xi) = \frac{(n-1+s)}{\gamma(1-s)}i \int_{\Rn} \frac{x_j}{|x|^{n+s+1}} w_\delta(x) \sin{(2\pi  \xi \cdot x)} \, dx.
\end{equation}
This is shown at the end of the proof.
Assuming the validity of \eqref{eq: Fourier transform of derivative}, by Lemma \ref{le:radialsin} we obtain
\[
\frac{1}{i} \widehat{ \frac{\partial Q_\d^s}{\partial x_j} } (\xi_j e_j) > 0 , \qquad \xi_j > 0.
\]
Now, the formula
\[
2\pi i \xi_j \hat{Q}_{\d}^s (\xi_j e_j)= \widehat{\frac{\partial Q_\d^s}{\partial x_j}}(\xi_j e_j)
\]
holds in the sense of tempered distributions.
Since both terms are actually functions, the equality holds as functions for almost every point.
Moreover, since both functions are continuous, the equality holds everywhere.
We then conclude that
\[
\xi_j \hat{Q}_{\d}^s (\xi_je_j) > 0  , \qquad \xi_j > 0 .
\]
Consequently, since $\hat{Q}_\d^s$ is radial, $\hat{Q}_\d^s(\xi)>0$ for all $\xi \in \Rn$.

It remains to prove \eqref{eq: Fourier transform of derivative}.
By Lemma \ref{lem: kernel primitive},
\[
 \frac{\partial Q_\d^s}{\partial x_j} (x) = -(n-1+s) \frac{\rho_\delta (x) }{|x|} \frac{x_j}{|x|} .
\]
We have $\frac{\partial Q_\d^s}{\partial x_j} \chi_{B(0, \e)^c} \in L^1 (\Rn)$ for all $\e>0$, and by Lemma \ref{lem: convdist pv dQ},
\[
 \frac{\partial Q_\d^s}{\partial x_j} \chi_{B(0, \e)^c} \to \frac{\partial Q_\d^s}{\partial x_j} \quad \text{in } \mathcal{S}' \quad \text{as } \e \to 0,
\]
so
\[
 \F \left( \frac{\partial Q_\d^s}{\partial x_j} \chi_{B(0, \e)^c} \right) \to \F \left( \frac{\partial Q_\d^s}{\partial x_j} \right) \quad \text{in } \mathcal{S}' \quad \text{as } \e \to 0 .
\]
We now compute
\begin{align*}
 \F \left( \frac{\partial Q_\d^s}{\partial x_j} \chi_{B(0, \e)^c} \right) (\xi) & = -\frac{(n-1+s)}{\gamma(1-s)} \int_{B(0, \e)^c} \frac{x_j}{|x|^{n+s+1}} w_\delta (x) e^{-2\pi i \xi \cdot x} \, dx \\
 & = \frac{(n-1+s)}{\gamma(1-s)}i \int_{B(0,\e)^c} \frac{x_j}{|x|^{n+s+1}} w_\delta(x) \sin{(2\pi  \xi \cdot x)} \, dx ,
\end{align*}
where we have used the odd symmetry.
Now, by dominated convergence,
\[
 \int_{B(0,\e)^c} \frac{x_j}{|x|^{n+s+1}} w_\delta(x) \sin{(2\pi  \xi \cdot x)} \, dx \to \int_{\Rn} \frac{x_j}{|x|^{n+s+1}} w_\delta(x) \sin{(2\pi  \xi \cdot x)} \, dx
\]
because of the same argument as in \eqref{eq:dominatedsin}.
This proves \eqref{eq: Fourier transform of derivative}.
\end{proof}

With this, we can conclude the existence of $V_{\d}^s$ as a distribution.

\begin{prop} \label{pr: inverse tempered distribution}
There exists a tempered distribution $V_{\d}^s$ whose Fourier transform is given by 
	\begin{equation}\label{eq:hatV}
	\hat{V}_{\d}^s(\xi)= -\frac{i \xi}{2\pi |\xi|^2} \frac{1}{\hat{Q}_{\d}^s(\xi)}.
	\end{equation}
\end{prop}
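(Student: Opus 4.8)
The plan is to verify that the right-hand side of \eqref{eq:hatV} genuinely defines a tempered distribution; once that is settled, $V_\d^s$ is defined simply as its inverse Fourier transform, which lies in $\mathcal{S}'$ because $\F$ is an isomorphism of $\mathcal{S}'$ onto itself, and then automatically $\widehat{V_\d^s}$ equals \eqref{eq:hatV}. So the whole content is to show that the function
\[
 \xi \longmapsto -\frac{i\xi}{2\pi|\xi|^2}\,\frac{1}{\hat{Q}_\d^s(\xi)}
\]
belongs to $\mathcal{S}'$. First I would record that, by Proposition \ref{Prop: properties of the Fourier transform of Q}, $\hat{Q}_\d^s$ is analytic, and by Proposition \ref{prop: Fourier transform well defined} it is strictly positive everywhere; hence $1/\hat{Q}_\d^s \in C^{\infty}(\Rn)$ and $\hat{V}_\d^s \in C^{\infty}(\Rn\setminus\{0\},\Rn)$ as a product of smooth functions on $\Rn\setminus\{0\}$. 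Only the behaviour at the origin and at infinity needs attention.

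For the behaviour at infinity I would use the asymptotics in Proposition \ref{Prop: properties of the Fourier transform of Q}: the map $\xi\mapsto(1+|\xi|)^{1-s}\hat{Q}_\d^s(\xi)$ is continuous, strictly positive on $\Rn$, and converges to the strictly positive limit $a_0(2\pi)^{-(1-s)}$ as $|\xi|\to\infty$, so it is bounded below by some constant $c_0>0$. Hence $1/\hat{Q}_\d^s(\xi)\leq c_0^{-1}(1+|\xi|)^{1-s}$, and since $\left|\xi/|\xi|^2\right|=|\xi|^{-1}$ this yields $|\hat{V}_\d^s(\xi)|\leq C'|\xi|^{-s}$ for $|\xi|\geq 1$, in particular $\hat{V}_\d^s$ is bounded outside the unit ball. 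Near the origin $1/\hat{Q}_\d^s$ is bounded (being continuous), so $|\hat{V}_\d^s(\xi)|\leq C|\xi|^{-1}$ there.

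At this point the argument splits according to the dimension. When $n\geq 2$, $|\xi|^{-1}\in L^1_{\loc}(\Rn)$, so $\hat{V}_\d^s$ is a locally integrable function of at most polynomial (indeed bounded) growth, hence an element of $\mathcal{S}'$. When $n=1$, one has to be more careful because $\xi/|\xi|^2=1/\xi$ is not locally integrable; here I would exploit that $\hat{Q}_\d^s$ is even and analytic with $\hat{Q}_\d^s(0)>0$, write $1/\hat{Q}_\d^s(\xi)=1/\hat{Q}_\d^s(0)+h(\xi)$ with $h$ even, analytic and $h(0)=0$ (so $h(\xi)=O(\xi^2)$ near $0$), and split
\[
 \hat{V}_\d^s(\xi)=-\frac{i}{2\pi}\left(\frac{1}{\hat{Q}_\d^s(0)}\,\frac{1}{\xi}+\frac{h(\xi)}{\xi}\right),
\]
identifying the first summand with the tempered distribution $-\frac{i}{2\pi\hat{Q}_\d^s(0)}\,\pv\frac{1}{\xi}$ and noting that the second summand, thanks to the bound on $1/\hat{Q}_\d^s$ at infinity, is a continuous function of at most polynomial growth. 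In every case $\hat{V}_\d^s\in\mathcal{S}'$, and setting $V_\d^s:=\F^{-1}(\hat{V}_\d^s)$ finishes the proof.

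The step I expect to be the main (though still mild) obstacle is precisely this control of the singularity of $\xi/|\xi|^2$ at the origin together with the polynomial-growth bound at infinity: the origin is trivial for $n\geq 2$ but forces the principal-value interpretation when $n=1$ (which is why the one-dimensional case is ultimately treated separately in Appendix \ref{se:1D}), while the bound at infinity is what makes essential use of the sharp asymptotics $\hat{Q}_\d^s(\xi)\sim a_0|2\pi\xi|^{-(1-s)}$ rather than merely the positivity of $\hat{Q}_\d^s$.
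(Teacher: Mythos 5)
Your proposal is correct: reducing the proposition to showing that the right-hand side of \eqref{eq:hatV} lies in $\mathcal{S}'$ and then taking $\F^{-1}$ is exactly the logical skeleton, and your estimates (boundedness of $1/\hat{Q}_{\d}^s$ near $0$ by continuity and positivity, the bound $1/\hat{Q}_{\d}^s(\xi)\leq C(1+|\xi|)^{1-s}$ from Proposition \ref{Prop: properties of the Fourier transform of Q}\,\emph{\ref{property c)})} at infinity, local integrability of $|\xi|^{-1}$ for $n\geq 2$, and the $\pv\frac{1}{\xi}$ plus even-analytic Taylor argument for $n=1$) are all valid and rely only on results already available at this point. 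The paper, however, avoids your dimension split by a single uniform trick: it subtracts from the candidate the tempered distribution $\frac{-i\xi}{2\pi|\xi|^2}\frac{1}{\hat{Q}_{\d}^s(0)}$, which is a multiple of $\mathcal{F}\bigl(\frac{x}{\s_{n-1}|x|^n}\bigr)$ (Lemma \ref{lemma: Fourier transform vector Riesz potential}\,\emph{\ref{item:Fourier transform of x/|x|^n})}), and shows the difference $-\frac{i\xi}{2\pi|\xi|^2}\bigl(\frac{1}{\hat{Q}_{\d}^s(\xi)}-\frac{1}{\hat{Q}_{\d}^s(0)}\bigr)$ is in $L^{\infty}(\Rn)$, using the mean value theorem near the origin (the $O(|\xi|)$ cancellation, a weaker but sufficient version of your $O(\xi^2)$ evenness argument) and the same asymptotics of $\hat{Q}_{\d}^s$ at infinity. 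This handles $n=1$ automatically, since the singular part is already a known distribution (which in dimension one is precisely your principal value, so the two constructions define the same $V_{\d}^s$), and the resulting bound \eqref{eq:W2} is reused later in Appendix \ref{se:1D}; your route buys a more elementary argument for $n\geq 2$ (no subtraction at all) at the price of the case distinction, and your pv identification should be stated as the definition of the distribution in \eqref{eq:hatV} when $n=1$, exactly as the paper's choice implicitly does.
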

\begin{proof}
Denote by $W^s_{\d}$ the right-hand side of \eqref{eq:hatV}, which is well defined for $\xi \in \Rn \setminus \{0\}$ since $\hat{Q}_\d^s$ is positive (Proposition \ref{prop: Fourier transform well defined}).
Next, we subtract from  $W_{\d}^s$ a multiple of the function (and distribution) $\frac{-i\xi}{|\xi|}\frac{1}{|2\pi \xi|}$ of Lemma \ref{lemma: Fourier transform vector Riesz potential}\,\emph{\ref{item:Fourier transform of x/|x|^n})}:
\begin{equation} \label{eq: difference of V in 0}
	W_{\d}^s(\xi)- \frac{-i\xi}{2\pi|\xi|^2}\frac{1}{\hat{Q}_\d^s(0)} = -\frac{i \xi}{2\pi |\xi|^2} \left( \frac{1}{\hat{Q}_{\d}^s (\xi)} - \frac{1}{\hat{Q}_{\d}^s (0)} \right) .
\end{equation}
This function is in $L^\infty(B(0,1)^c)$, as a difference of functions in $L^\infty(B(0,1)^c)$ (see Proposition \ref{Prop: properties of the Fourier transform of Q}).
Let us see that it is also in $L^{\infty}(B(0,1))$.
By the mean value theorem, there exists $c>0$ such that for all $\xi \in B (0, 1)$,
\[
 \left| \frac{1}{\hat{Q}_{\d}^s (\xi)} - \frac{1}{\hat{Q}_{\d}^s (0)} \right| \leq c |\xi| . 
\]
As a result,
\begin{equation} \label{eq:W2}
 \left| W_{\d}^s(\xi)- \frac{-i\xi}{2\pi|\xi|^2}\frac{1}{\hat{Q}_\d^s(0)} \right| \leq \frac{c}{2\pi} ,
\end{equation}
so the function in \eqref{eq: difference of V in 0} is in $L^{\infty} (B(0, 1))$, and, hence, in $L^{\infty} (\Rn)$.
In particular, this function is a tempered distribution, and, by Lemma \ref{lemma: Fourier transform vector Riesz potential}\,\emph{\ref{item:Fourier transform of x/|x|^n})}, so is $W_{\d}^s$.
As the Fourier transform is an isomorphism from $\mathcal{S}'$ into itself, there exists $V_\d^s \in \mathcal{S}'$ such that \eqref{eq:hatV} holds.
\end{proof}

\subsection{Existence of $V_\d^s$ as a function}

In this subsection we prove that the distribution $V_\d^s$ of Proposition \ref{pr: inverse tempered distribution} is actually a function. First we notice that $\hat{V}_{\d}^s$ does not belong to any space where we can conclude directly that its Fourier transform is a function. The main drawback comes from the fact that the tail of $\hat{V}_\d^s$ is not integrable enough.
So as to tackle this, we exploit the fact that, at infinity, $\hat{V}_\d^s$ behaves like a homogeneous function with a known Fourier transform (namely, like $\hat{I_s}$).
Thus, we adapt the proof of \cite[Prop.\ 2.4.8]{Grafakos08a} (homogeneous function) to the non-homogeneous function $\hat{V}_\d^s$.

We first need the following decay estimate for the derivatives of $\hat{V}_\d^s$.

\begin{lem}\label{le:decaypartialV}
For every $\a \in \N^n$ there exists $C_{\a}>0$ such that for any $|\xi| \geq 1$,
\[
 \left| \p^{\a} \hat{V}^s_{\d} (\xi) \right| \leq \frac{C_{\a}}{|\xi|^{s (|\a| +1)}} .
\]
\end{lem}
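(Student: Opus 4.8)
The plan is to start from the explicit formula $\hat V^s_\d(\xi)=-\frac{i\xi}{2\pi|\xi|^2}\,\frac1{\hat Q^s_\d(\xi)}$ of Proposition \ref{pr: inverse tempered distribution} and differentiate it by the Leibniz rule. The first factor, $\xi/|\xi|^2$, is homogeneous of degree $-1$ and smooth on $\Rn\setminus\{0\}$, so $|\p^\b(\xi_k/|\xi|^2)|\le C_\b|\xi|^{-1-|\b|}$ for $|\xi|\ge1$; hence the whole matter reduces to controlling the derivatives of $1/\hat Q^s_\d$ on $\{|\xi|\ge1\}$. For that I would first isolate two facts about $\hat Q^s_\d$ there: a lower bound $\hat Q^s_\d(\xi)\ge c\,|\xi|^{s-1}$, and the sharp decay of its derivatives, $|\p^\a\hat Q^s_\d(\xi)|\le C_\a|\xi|^{s-1-|\a|}$ --- i.e.\ each differentiation of $\hat Q^s_\d$ genuinely gains one power of $|\xi|^{-1}$, rather than merely staying bounded as in Proposition \ref{Prop: properties of the Fourier transform of Q}.

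To obtain both facts I would produce an asymptotic expansion of $\hat Q^s_\d$ with a remainder that is rapidly decreasing together with all its derivatives. By Lemma \ref{lem: kernel primitive}\,\emph{\ref{item:qa})}, $q_\d(x)=a_0+z_0|x|^{n-1+s}$ on $B(0,b_0\d)$, hence $Q^s_\d=a_0 I_{1-s}+\frac{z_0}{\g(1-s)}$ on $B(0,b_0\d)\setminus\{0\}$. Fixing $\f\in C^\infty_c(B(0,b_0\d))$ with $\f\equiv1$ near the origin, one splits $Q^s_\d=a_0\,\f\,I_{1-s}+\psi$, where $\psi:=\frac{z_0}{\g(1-s)}\f+(1-\f)Q^s_\d\in C^\infty_c(\Rn)$ (the term $(1-\f)Q^s_\d$ being smooth since $Q^s_\d$ is smooth off the origin, by Lemma \ref{lem: kernel primitive}, and $1-\f$ vanishes near $0$). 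Then $\widehat{\f I_{1-s}}=|2\pi\xi|^{-(1-s)}-\widehat{(1-\f)I_{1-s}}$, and the point is that $(1-\f)I_{1-s}$ is a smooth function all of whose $x^\a$-multiples have derivatives of sufficiently high order in $L^1(\Rn)$ (since $\p^\b[(1-\f)I_{1-s}](x)=O(|x|^{-(n-1+s)-|\b|})$ as $|x|\to\infty$); arguing as in \cite[Ex.\ 2.4.9]{Grafakos08a} this forces $\widehat{(1-\f)I_{1-s}}$ and every one of its derivatives to decay faster than any power of $|\xi|$ on $\{|\xi|\ge1\}$. Together with $\hat\psi\in\mathcal S$ this yields $\hat Q^s_\d(\xi)=a_0|2\pi\xi|^{-(1-s)}+R(\xi)$ with $|\p^\a R(\xi)|\le C_{\a,N}|\xi|^{-N}$ for every $N$ when $|\xi|\ge1$. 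The derivative decay $|\p^\a\hat Q^s_\d(\xi)|\le C_\a|\xi|^{s-1-|\a|}$ is then immediate, because $\p^\a(|2\pi\xi|^{-(1-s)})$ is homogeneous of degree $s-1-|\a|$ and smooth away from $0$; and the lower bound follows by combining the case $\a=0$ (which gives $\hat Q^s_\d(\xi)\ge\tfrac12 a_0|2\pi\xi|^{-(1-s)}$ for $|\xi|\ge R_0$) with the continuity and strict positivity of $\hat Q^s_\d$ (Proposition \ref{prop: Fourier transform well defined}) on the compact annulus $\{1\le|\xi|\le R_0\}$.

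To conclude, by the Fa\`a di Bruno formula $\p^\b(1/\hat Q^s_\d)$ is a finite sum of terms $\hat Q^s_\d(\xi)^{-(k+1)}\prod_{i=1}^k\p^{\g_i}\hat Q^s_\d(\xi)$ with $\g_1+\dots+\g_k=\b$ and $|\g_i|\ge1$; by the lower bound and the derivative decay above, each such term is $\le C|\xi|^{(k+1)(1-s)}\prod_i|\xi|^{s-1-|\g_i|}=C|\xi|^{(1-s)-|\b|}$ on $\{|\xi|\ge1\}$, so $|\p^\b(1/\hat Q^s_\d)(\xi)|\le C_\b|\xi|^{(1-s)-|\b|}$. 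Inserting this and the estimate for $\p^\b(\xi/|\xi|^2)$ into the Leibniz expansion of $\p^\a\hat V^s_\d$ gives, for $|\xi|\ge1$, $|\p^\a\hat V^s_\d(\xi)|\le C\sum_{\b\le\a}|\xi|^{-1-|\b|}\,|\xi|^{(1-s)-|\a-\b|}=C_\a|\xi|^{-s-|\a|}$. Since $0<s<1$ one has $s(|\a|+1)=s+s|\a|\le s+|\a|$, hence $|\xi|^{-s-|\a|}\le|\xi|^{-s(|\a|+1)}$ on $\{|\xi|\ge1\}$, which is the asserted bound.

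The main obstacle is the second paragraph. Using only the boundedness of the derivatives $\p^\a\hat Q^s_\d$ (all that Proposition \ref{Prop: properties of the Fourier transform of Q} provides), the Fa\`a di Bruno/Leibniz estimate would yield merely $|\p^\a\hat V^s_\d(\xi)|\le C_\a|\xi|^{|\a|(1-s)-s}$, whose exponent \emph{grows} with $|\a|$; so one genuinely needs the sharp fact that each derivative of $\hat Q^s_\d$ improves its decay by a full power, which is exactly where the near-origin identity $q_\d(x)=a_0+z_0|x|^{n-1+s}$ and the Fourier analysis of the homogeneous-plus-Schwartz splitting do the work. An essentially equivalent route is to write $\p^\a\hat Q^s_\d=\mathcal F((-2\pi ix)^\a Q^s_\d)$ and perform the same cut-off decomposition directly on $(-2\pi ix)^\a Q^s_\d$, whose singular part is a constant multiple of the homogeneous function $x^\a/|x|^{n-1+s}$.
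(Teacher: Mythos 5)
Your proposal is correct, but it follows a genuinely different route from the paper's proof. The paper works with the factorization $\hat V^s_\d=\frac{-i}{2\pi}\,\frac{\xi}{|\xi|}\cdot\frac{1}{|\xi|\,\hat Q^s_\d(\xi)}$ and applies Fa\`a di Bruno to $t\mapsto t^{-1}$ composed with $g_1(\xi)=|\xi|\hat Q^s_\d(\xi)$; the only estimate it proves on the derivatives of $\hat Q^s_\d$ is $|\p^\b \hat Q^s_\d(\xi)|\le C_\b|\xi|^{-|\b|}$, obtained very economically by observing that $\p^{\b}\bigl(x^{\b}Q^s_\d\bigr)\in L^1(\Rn)$ (using the explicit form $Q^s_\d=\l_0+\l_1|x|^{-(n-1+s)}$ near the origin) and reading off the bound from $\mathcal F\bigl(\p^\b(x^\b Q^s_\d)\bigr)=(2\pi i\xi)^\b\p^\b\hat Q^s_\d$; the factor $|\xi|$ inside $g_1$ is what compensates for the missing gain of $|\xi|^{-(1-s)}$ per derivative and still produces the (non-optimal) exponent $s(|\a|+1)$. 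You instead prove the sharp estimate $|\p^\b\hat Q^s_\d(\xi)|\le C_\b|\xi|^{s-1-|\b|}$ together with the lower bound $\hat Q^s_\d(\xi)\ge c|\xi|^{s-1}$, via the exact splitting $Q^s_\d=a_0\f I_{1-s}+\psi$ with $\psi\in C^\infty_c$ (legitimate by Lemma \ref{lem: kernel primitive}\,\emph{\ref{item:qa})}) and the rapid decay of $\widehat{(1-\f)I_{1-s}}$ together with all its derivatives; feeding this into Leibniz plus Fa\`a di Bruno for $1/\hat Q^s_\d$ gives the stronger bound $|\p^\a\hat V^s_\d(\xi)|\le C_\a|\xi|^{-(|\a|+s)}$, which is precisely the refinement the authors mention (but do not carry out) in the remark following the lemma, and your final observation that $s(|\a|+1)\le |\a|+s$ correctly recovers the stated inequality for $|\xi|\ge1$. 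The trade-off: the paper's argument is shorter and needs only $L^1$-boundedness facts, while yours requires extending the cited \cite[Ex.\ 2.4.9]{Grafakos08a}-type statement to all derivatives of $\widehat{(1-\f)I_{1-s}}$ — since $x^\a(1-\f)I_{1-s}$ is no longer integrable for large $|\a|$, this step genuinely needs the integration-by-parts (high-order-derivatives-in-$L^1$) argument you allude to, so if you write this up you should spell that out rather than rely on the exercise as stated; in exchange you get the optimal decay rate, which the paper's method cannot reach.
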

\begin{proof}
In this proof we use the letter $C$ with some subindices to denote a generic postive constant independent of $\xi$; the relevant dependence is included in the subindices.
The value of the constant may vary from line to line.

Express $\hat{V}^s_{\d} = \frac{-i}{2\pi} \, g \, f$ with
\[
 g(\xi) = \frac{\xi}{|\xi|}, \qquad f = f_1 \circ g_1 , \qquad f_1 (t)= t^{-1}, \qquad g_1 (\xi) = |\xi| \, \hat{Q}^s_{\d} (\xi) .
\]
By Leibniz' formula,
\[
 \p^{\a} (g f) = \sum_{\b \leq \a} \binom{\a}{\b} \p^{\b} g \, \p^{\a - \b} f .
\]
Let $\b \in \N^n$.
By induction, it is easy to see that $\p^{\b} g (\xi)$ can be expressed as
\[
 \frac{P (\xi)}{|\xi|^{2|\b|+1}}
\]
for some $\Rn$-valued polynomial $P$, all of which components are of degree $|\b|+1$.
Therefore,
\begin{equation}\label{eq:partialg}
 \left| \p^{\b} g (\xi) \right| \leq \frac{C_{\b}}{|\xi|^{|\b|}} , \qquad \xi \in \Rn \setminus \{ 0 \} .
\end{equation}
We apply Fa\`a di Bruno's formula for the higher-order derivatives of a composition, and obtain that
\[
 \p^{\g} f = \sum_{k=1}^{|\g|} f_1^{k)} \circ g_1 \, G_k  
\]
where $G_k$ is a linear combination of products of $k$ partial derivatives of $g_1$, the order of which adds up $|\g|$.

We estimate the partial derivatives of $g_1$.
We express $g_1 = h \, \hat{Q}^s_{\d}$ with $h(\xi) = |\xi|$.
Since $\nabla h = g$, we have, by \eqref{eq:partialg}, that
\begin{equation}\label{eq:partialh}
 \left| \p^{\b} h (\xi) \right| \leq \frac{C_{\b}}{|\xi|^{|\b|-1}} , \qquad \xi \in \Rn \setminus \{ 0 \} .
\end{equation}
Now we show that
\begin{equation}\label{eq:partialQ}
 | \p^{\b} \hat{Q}_\d^s (\xi)| \leq \frac{C_{\b} }{|\xi|^{|\b|}} , \qquad \xi \in \Rn \setminus \{0\} .
\end{equation}

From Definition \ref{de:q} and Lemma \ref{lem: kernel primitive}, we have that $Q_\d^s$ is an $L^1$ function of compact support, smooth outside the origin, and that in a ball $B$ centred at the origin, one has
\[
 Q^s_{\d}(x) = \l_0 + \frac{\l_1}{|x|^{n-1+s}} , \qquad x \in B \setminus \{0\} 
\]
for some $\l_0, \l_1 \in \R$.
With this expression it is easy to see that
\[
 \left| \partial^{\b} (x^{\b} Q_\d^s (x) ) \right| = \left| \sum_{\g \leq \b} \binom{\b}{\g} \p^{\g} \left( x^{\b} \right) \p^{\b - \g} Q_\d^s (x) \right| \leq \frac{C_{\b}}{|x|^{n-1+s}} , \qquad x \in B \setminus \{0\} 
\]
for some $C_{\b} >0$.
Moreover, since $\partial^{\b}( x^{\b} Q_\d^s)$ is smooth outside the origin and has compact support, we conclude  that it is in $L^1 (\Rn)$.
Consequently, $\mc{F} \left( \partial^{\b}(x^{\b} Q_\d^s) \right)$ is bounded.
But
\[
 \mc{F} \left( \partial^{\b}((-2\pi i x)^{\b} Q_\d^s) \right) = (2 \pi i \xi)^{\b} \mc{F} ((-2\pi i x)^{\b} Q_\d^s) = (2\pi i \xi)^{\b} \partial^{\b} \hat{Q}_\d^s(\xi) ,
\]
which shows \eqref{eq:partialQ}.

Now, by Leibniz' formula, \eqref{eq:partialh} and \eqref{eq:partialQ},
\[
 \left| \p^{\a} g_1 (\xi) \right| \leq C_{\a} \sum_{\b \leq \a} \left| \p^{\b} h (\xi )\right| \left| \p^{\a - \b} \hat{Q}^s_{\d} (\xi) \right| \leq \frac{C_{\a}}{|\xi|^{|\a|-1}} , \qquad \xi \in \Rn \setminus \{0\} ,
\]
for some constant $C_{\a} >0$.
Hence, if we multiply $k$ partial derivatives of $g_1$, the order of which adds up $|\g|$, we obtain that
\[
 \left| G_k (\xi) \right| \leq \frac{C_{\g,k}}{|\xi|^{|\g| -k}} , \qquad \xi \in \Rn \setminus \{0\} ,
\]
for some constants $C_{\g,k} >0$.
On the other hand, by induction,
\[
 \left| f_1^{k)} (t) \right| = \frac{C_k}{t^{k+1}} , \qquad k \in \N , \quad t >0 ,
\]
for some constants $C_k >0$, and, hence,
\[
 \left| f_1^{k)} \circ g_1 (\xi) \right| \leq \frac{C_k}{\left( |\xi| \, \hat{Q}^s_{\d} (\xi) \right)^{k+1}} , \qquad k \in \N  , \quad  \xi \in \Rn \setminus \{0\}  .
\]
From Proposition \ref{Prop: properties of the Fourier transform of Q} we know that, for $|\xi| \geq 1$,
\[
 \left| \frac{1}{\hat{Q}^s_{\d} (\xi)} \right| \leq C \left| \xi \right|^{1-s} ,
\]
so
\[
 \frac{1}{\left( |\xi| \, \hat{Q}^s_{\d} (\xi) \right)^{k+1}} \leq \frac{C}{|\xi|^{s (k+1)}} .
\]
Thus,
\[
 \left| \p^{\g} f (\xi) \right| \leq \sum_{k=1}^{|\g|} \left| f_1^{k)} \circ g_1 (\xi) \right| \left| G_k (\xi) \right| \leq C_{\g} \sum_{k=1}^{|\g|} \frac{1}{|\xi|^{s (k+1) + |\g| -k}} \leq\frac{C_{\g}}{|\xi|^{s (|\g|+1)}} .
\]
We conclude that, for $|\xi| \geq 1$,
\[
 \left| \p^{\a} \hat{V}^s_{\d} (\xi) \right| \leq C_{\a} \sum_{\b \leq \a} \left| \p^{\b} g (\xi) \right| \left| \p^{\a - \b} f (\xi) \right| \leq C_{\a} \sum_{\b \leq \a} \frac{1}{|\xi|^{s (|\a| +1) + |\b| (1-s)}} \leq \frac{C_{\a}}{|\xi|^{s (|\a| +1)}} ,
\]
as desired.
\end{proof}

The decay estimate of Lemma \ref{le:decaypartialV} is not optimal.
In fact, a more refined argument can possibly improve decay \eqref{eq:partialQ} and show that the bound 
\[
 | \p^{\b} \hat{Q}^s_{\d} (\xi)| \leq \frac{C_{\a}}{|\xi|^{|\b|+1-s}} , \qquad \xi \in \Rn \setminus \{ 0 \}
\]
holds.
With that estimate, an adaptation of the proof of Lemma \ref{le:decaypartialV} would yield 
\[
 \left| \partial^{\alpha} \hat{V}_\d^s (\xi) \right| \leq \frac{C_{\alpha}}{|\xi|^{|\alpha| +s}} , \qquad |\xi| \geq 1 .
\]
Nevertheless, the bound of Lemma \ref{le:decaypartialV} is enough for our purposes in Theorem \ref{Th: inverse Fourier trasnform as a function}.
Before that, we need the following inverse Lipschitz estimate of the function $\frac{x}{|x|^{n-s+1}}$.

\begin{lem}\label{le:inverseLipchitz}
For every $R_1, R_2>0$ there exists $m>0$ such that for all $x \in B(0,R_1) \setminus \{ 0 \}$ and $h \in B(0,R_2) \setminus \{ x \}$,
\begin{equation}\label{eq:ineqm}
 m |h| \leq \left| \frac{x}{|x|^{n+1-s}} - \frac{x-h}{|x-h|^{n+1-s}} \right| .
\end{equation}
\end{lem}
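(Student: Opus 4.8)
The plan is to pass to the inverse of the map $\Psi(x) := \frac{x}{|x|^{n+1-s}}$ and to show it is Lipschitz on the relevant region. First I would record that, since $|\Psi(x)| = |x|^{s-n} = |x|^{-(n-s)}$ with $n-s>0$, the map $\Psi : \Rn \setminus \{0\} \to \Rn \setminus \{0\}$ is a bijection, and a direct computation shows that its inverse is $\Psi^{-1}(w) = \frac{w}{|w|^{b}}$ with $b := 1 + \frac{1}{n-s} > 1$. Writing $y := x-h$, $w := \Psi(x)$, $v := \Psi(y)$, the claimed inequality \eqref{eq:ineqm} reads $|x-y| = |\Psi^{-1}(w) - \Psi^{-1}(v)| \le \tfrac{1}{m}|w-v|$, so it suffices to bound the Lipschitz constant of $\Psi^{-1}$ on the set where $w$ and $v$ range.

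Next I would localize that set. Put $R := R_1 + R_2$ and $\rho := R^{-(n-s)}$. The hypotheses $0 < |x| < R_1 \le R$ and $|h| < R_2$ give $0 < |y| \le |x|+|h| < R$, and since $t \mapsto t^{-(n-s)}$ is decreasing we get $|w| = |x|^{-(n-s)} > \rho$ and $|v| = |y|^{-(n-s)} > \rho$. Hence it is enough to prove that $\Psi^{-1}$ is Lipschitz on $\{\, |w| \ge \rho \,\}$.

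To do this, write $\Psi^{-1}(w) = |w|^{-c}\,\frac{w}{|w|}$ with $c := \frac{1}{n-s}$, and for $|w|,|v|\ge\rho$ decompose
\[
\Psi^{-1}(w) - \Psi^{-1}(v) = |w|^{-c}\Bigl( \tfrac{w}{|w|} - \tfrac{v}{|v|} \Bigr) + \bigl( |w|^{-c} - |v|^{-c} \bigr)\, \tfrac{v}{|v|} .
\]
The first term is controlled by the elementary estimate $\bigl| \tfrac{w}{|w|} - \tfrac{v}{|v|}\bigr| \le \tfrac{2|w-v|}{\max(|w|,|v|)}$ (which follows by adding and subtracting $\tfrac{v}{|w|}$), yielding the bound $2\rho^{-(c+1)}|w-v|$; the second term is controlled by the mean value theorem applied to $t\mapsto t^{-c}$ on $[\rho,\infty)$ together with $\bigl||w|-|v|\bigr|\le|w-v|$, yielding the bound $c\,\rho^{-(c+1)}|w-v|$. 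Adding these shows $\Psi^{-1}$ is Lipschitz on $\{|w|\ge\rho\}$ with constant $L := (2+c)\,\rho^{-(c+1)}$, so \eqref{eq:ineqm} holds with $m := 1/L = \frac{n-s}{2(n-s)+1}\,(R_1+R_2)^{-(n+1-s)}$.

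The only genuine obstacle is the non-convexity of the region $\{|w|\ge\rho\}$ (a complement of a ball): a naive mean-value estimate for $\Psi^{-1}$ along the segment joining $w$ and $v$ fails, since that segment may pass near the origin where $\nabla\Psi^{-1}$ is unbounded. The radial/angular splitting above sidesteps this completely, because the radial factor $|w|^{-c}$ and the angular factor $w/|w|$ are each separately Lipschitz on $\{|w|\ge\rho\}$. For $n=1$ one has $n-s = 1-s \in (0,1)$, so $c>1$ and $b>1$, and every step above goes through unchanged.
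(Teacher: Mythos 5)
Your proof is correct, and it takes a genuinely different route from the paper's. The paper also works with the inverse map $G^{-1}(y)=y/|y|^{\frac{n+1-s}{n-s}}$ (your $\Psi^{-1}$, since $\frac{n+1-s}{n-s}=1+\frac{1}{n-s}$), but it controls $|h|=|G^{-1}(G(x))-G^{-1}(G(x-h))|$ by integrating $DG^{-1}$ along paths joining $G(x)$ and $G(x-h)$; this forces a four-case analysis (points far apart; images with non-positive inner product; segment whose closest point to the origin is an endpoint; and a remaining case where a path consisting of a circular arc plus a segment is constructed so as to stay at distance at least $|G(x)|$ from the origin, its length compared to that of the segment via $t\le\frac{\pi}{2}\sin t$). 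Your observation that the image of $B(0,R_1+R_2)\setminus\{0\}$ under $\Psi$ lies in the exterior region $\{|w|\ge\rho\}$ with $\rho=(R_1+R_2)^{-(n-s)}$, and that $\Psi^{-1}$ is globally Lipschitz there because its radial part $|w|^{-c}$ and angular part $w/|w|$ are separately Lipschitz on that set, sidesteps the non-convexity of $\{|w|\ge\rho\}$ that drives the paper's case analysis; it is shorter, works uniformly in all dimensions (the paper must note separately that its Case 4 cannot occur when $n=1$), and yields an explicit constant $m=\frac{n-s}{2(n-s)+1}(R_1+R_2)^{-(n+1-s)}$. The individual estimates you invoke — the bound $\bigl|\frac{w}{|w|}-\frac{v}{|v|}\bigr|\le \frac{2|w-v|}{\max(|w|,|v|)}$, the one-dimensional mean value theorem for $t\mapsto t^{-c}$ on $[\rho,\infty)$, and the reverse triangle inequality — all check out, and the reduction $|h|\le L\,|\Psi(x)-\Psi(x-h)|$ with $m=1/L$ is exactly what the statement requires, so the argument is complete.
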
 
\begin{proof}
We divide the proof into four cases, according to the position of the points $x$ and $h$.
Let us define $G(x) = \frac{x}{|x|^{n+1-s}}$.

\smallskip

\emph{Case 1:} $2 |x| \leq |x-h|$.
Taking
\[
 m \leq \frac{1 - \frac{1}{2^{n-s}}}{R_1^{n-s} R_2}
\]
we have
\[
 \left| G(x) - G(x-h) \right| \geq \frac{1}{|x|^{n-s}} - \frac{1}{|x-h|^{n-s}}  \geq \left( 1 - \frac{1}{2^{n-s}} \right) \frac{1}{|x|^{n-s}} \geq \frac{1 - \frac{1}{2^{n-s}}}{R_1^{n-s}} \geq m R_2 \geq m |h| .
\]

\smallskip

\emph{Case 2:} $G(x) \cdot G(x-h) \leq 0$.
Taking
\[
 m \leq \frac{1}{R_1^{n-s} R_2}
\]
we have
\begin{align*}
 & \left| G(x) - G(x-h) \right| = \left( \left| G(x) \right|^2 + \left| G(x-h) \right|^2 - 2 G(x) \cdot G(x-h) \right)^{\frac{1}{2}}\geq \left| G(x) \right|\\
  & = \frac{1}{|x|^{n-s}} \geq  \frac{1}{R_1^{n-s}} \geq m R_2 \geq m |h| .
\end{align*}

\smallskip

\emph{Case 3:} $|x-h| \leq 2 |x|$ and
\begin{equation}\label{eq:case3}
 \min \left\{ \left| G(x) \right|^2 , \left| G(x-h) \right|^2 \right\} \leq G(x) \cdot G(x-h) .
\end{equation}
We observe that the inverse of $G$ is $G^{-1} (y) = \frac{y}{|y|^{\frac{n+1-s}{n-s}}}$, with derivative
\[
 D G^{-1} (y) = |y|^{- \frac{n-s+1}{n-s}} I - \frac{n+1-s}{n-s} |y|^{- \frac{n-s+1}{n-s} -2} y \otimes y  ,
\]
where $\otimes$ denotes the tensor product, so
\begin{equation}\label{eq:DG-1}
 | D G^{-1} (y) | \leq d_{n,s} \left| y \right|^{- \frac{n-s+1}{n-s}}
\end{equation}
for some constant $d_{n,s} >0$.
By the mean value theorem,
\begin{equation}\label{eq:hestimate}
 |h| = |G^{-1} (G (x)) - G^{-1} (G (x-h)) | \leq \left\| D G^{-1} \right\|_{L^{\infty} ([G(x), G (x-h)])} \left| G(x) - G(x-h) \right| .
\end{equation}
Now, using \eqref{eq:DG-1},
\[
 \| D G^{-1} \|_{L^{\infty} ([G(x), G (x-h)])} \leq d_{n,s} \max_{y \in [G(x), G (x-h)]} |y|^{- \frac{n-s+1}{n-s}} = d_{n,s} \left( \min_{y \in [G(x), G (x-h)]} |y| \right) ^{- \frac{n-s+1}{n-s}} .
\]

Elementary geometry (see Figure \ref{fi:GO}) shows that
\begin{equation}\label{eq:puntorecta}
 \min_{y \in [G(x), G (x-h)]} |y| = \begin{cases}
 \left| G(x-h) \right| & \text{if } \left( G(x-h) - G(x) \right) \cdot G (x-h) \leq 0 , \\
 \left| G(x) \right| & \text{if } \left( G(x-h) - G(x) \right) \cdot G (x) \geq 0 .
 \end{cases}
\end{equation}
\begin{figure}[hb]
\begin{center}
\begin{overpic}[width=0.5\textwidth,tics=10]{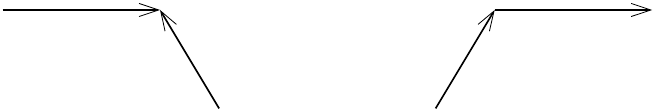}
\put (-9, 15) {\footnotesize $G(x)$}
\put (25, 16) {\footnotesize $G(x-h)$}
\put (34, 0) {\footnotesize $O$}
\put (65, 15) {\footnotesize $G(x)$}
\put (100, 15) {\footnotesize $G(x-h)$}
\put (63, 0) {\footnotesize $O$}
\end{overpic}
\small \caption{\label{fi:GO}
Position of the points $G(x)$, $G(x-h)$ and origin $O$ when $\left( G(x-h) - G(x) \right) \cdot G (x-h) \leq 0$ (left) and when $\left( G(x-h) - G(x) \right) \cdot G (x) \geq 0$ (right).
}
\end{center}
\end{figure}

Assumption \eqref{eq:case3} asserts that one of the two options of \eqref{eq:puntorecta} occurs, so
\[
  \min_{y \in [G(x), G (x-h)]} |y| \geq \min \left\{ \left| G(x-h) \right|, \left| G(x) \right| \right\}
\]
and, hence,
\[
  \left( \min_{y \in [G(x), G (x-h)]} |y| \right) ^{- \frac{n-s+1}{n-s}} \leq \left(  \min \left\{ \left| G(x-h) \right|, \left| G(x) \right| \right\} \right) ^{- \frac{n-s+1}{n-s}} = \max \{ |x|^{n-s+1} , |x-h|^{n-s+1} \} .
\]
Finally, since $|x-h| \leq 2 |x|$,
\begin{equation}\label{eq:maxx}
 \max \{ |x|^{n-s+1} , |x-h|^{n-s+1} \} \leq 2^{n-s+1} |x|^{n-s+1} \leq 2^{n-s+1} R_1^{n-s+1} .
\end{equation}
Going back to \eqref{eq:hestimate}, we find that  $|h| \leq 2^{n-s+1} d_{n,s} R_1^{n-s+1} | G(x) - G(x-h) |$, so inequality \eqref{eq:ineqm} holds for
\[
 m \leq \frac{1}{2^{n-s+1} d_{n,s} R_1^{n-s+1}} .
\]

\smallskip

\emph{Case 4:} $|x-h| \leq 2 |x|$ and
\begin{equation}\label{eq:case4}
 0 < G(x) \cdot G(x-h) < \min \left\{ \left| G(x) \right|^2 , \left| G(x-h) \right|^2 \right\} .
\end{equation}
Note first that inequality \eqref{eq:case4} cannot occur in dimension $n=1$.

Let $\g : [0,1] \to \Rn$ be any piecewise $C^1$ curve such that $\g(0) = G(x)$ and $\g(1) = G(x-h)$.
By the fundamental theorem of Calculus,
\begin{equation}\label{eq:curveg}
 |h| = \left| G^{-1} (\g(0)) - G^{-1} (\g(1)) \right| = \left| \int_0^1 \left( G^{-1} \circ \g \right)' (t) \, dt \right| \leq \max_{\g ([0,1])} \left| D G^{-1} \right| \ell (\g) ,
\end{equation}
where $\ell$ denotes the length of the curve.

Assumption \eqref{eq:case4} implies that none of the cases of \eqref{eq:puntorecta} occurs (hence none of the situations depicted in Figure \ref{fi:GO}), but the distance from the origin to the segment $[G(x) , G(x-h)]$ is attained at a point $P$ in the interior of the segment.
Assume that $|G(x) - P| \leq |G(x-h) - P|$, although the construction is totally analogous in the symmetric case $|G(x) - P| \geq |G(x-h) - P|$.
Let $Q$ be the point in the segment $[G(x) , G(x-h)]$ such that $P$ is the middle point between $G(x)$ and $Q$.
We define the curve $\g$ as follows.
The curve $\g$ starts at $G(x)$ and describes the arc of circumference of center the origin $O$ and radius $|G(x)|$ joining $G(x)$ with $Q$; among the two possible arcs, we choose that which subtends an angle of less than $\pi$ radians.
Then, $\g$ continues joining $Q$ and $G(x-h)$ with a straight line.
See Figure \ref{fi:gamma}.

\begin{figure}[hb]
\begin{center}
\begin{overpic}[width=0.25\textwidth,tics=10]{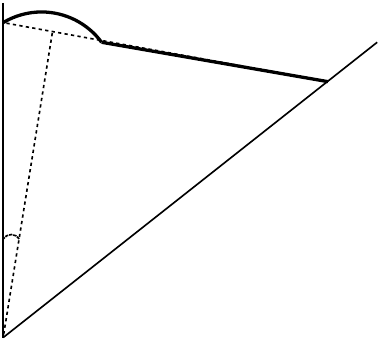}
\put (-4, -6) {\footnotesize $O$}
\put (1.5, 30) {\footnotesize $\theta$}
\put (-17, 82) {\footnotesize $G(x)$}
\put (13, 74) {\footnotesize $P$}
\put (23, 71) {\footnotesize $Q$}
\put (34, 81) {\footnotesize $\gamma$}
\put (86, 63) {\footnotesize $G(x-h)$}
\end{overpic}
\small \caption{The curve $\g$ (in thick line), the points $G(x), P, Q, G(x-h)$ (aligned, in dotted line), the origin $O$ and the angle $\theta$.\label{fi:gamma}}
\end{center}
\end{figure}
For this particular $\g$ we estimate the right hand-side of \eqref{eq:curveg}.
First, using \eqref{eq:DG-1},
\begin{equation}\label{eq:maxDg-1}
 \max_{\g ([0,1])} \left| D G^{-1} \right| \leq d_{n,s} \max_{y \in \g ([0,1])} |y|^{- \frac{n-s+1}{n-s}} = d_{n,s} \left| G(x) \right|^{- \frac{n-s+1}{n-s}} ,
\end{equation}
since, by construction of $\g$, the shortest distance of $\g([0,1])$ to the origin is $|G(x)|$.
In order to estimate $\ell (\g)$, let $\theta$ be the angle $\widehat{G(x) \, O \, P}$ if it is positive, or else the opposite angle $\widehat{P \, O \, G(x)}$, so that 
\[
 \sin \theta = \frac{\ell ([G(x),P])}{|G(x)|}
\]
and $\theta \in [0, \frac{\pi}{2}]$ because $0 \leq G(x) \cdot G(x-h)$.
Then
\[
 \ell (\g) = 2 \theta \left| G(x) \right| + \ell ([ Q, G(x-h) ]) .
\]
Now we use the elementary inequality
\[
 t \leq \frac{\pi}{2} \sin t , \qquad t \in [0, \frac{\pi}{2}]
\]
to obtain that
\[
  2 \theta \left| G(x) \right| \leq \pi \sin \theta \left| G(x) \right| = \pi \, \ell ([G(x),P]) = \frac{\pi}{2} \, \ell ([G(x),Q]) ,
\]
so
\begin{equation}\label{eq:ellg}
 \ell (\g) \leq \frac{\pi}{2} \, \ell ([G(x),Q]) +  \ell ([ Q, G(x-h) ]) \leq \frac{\pi}{2} \, \ell ([ G(x), G(x-h) ]) .
\end{equation}
Using \eqref{eq:maxDg-1} and \eqref{eq:ellg}, inequality \eqref{eq:curveg} becomes
\[
 |h| \leq \frac{\pi}{2} d_{n,s} \left| G(x) \right|^{- \frac{n-s+1}{n-s}} \, \ell ([ G(x), G(x-h) ]) .
\]
If we had assumed $|G(x) - P| \geq |G(x-h) - P|$ instead of $|G(x) - P| \leq |G(x-h) - P|$ we would have obtained
\[
 |h| \leq \frac{\pi}{2} d_{n,s} \left| G(x-h) \right|^{- \frac{n-s+1}{n-s}} \, \ell ([ G(x), G(x-h) ]) ,
\]
so, in either case,
\begin{align*}
 |h| & \leq \frac{\pi}{2} d_{n,s} \max \left\{ |G(x)|^{- \frac{n-s+1}{n-s}} , |G(x-h)|^{- \frac{n-s+1}{n-s}} \right\} \ell ([ G(x), G(x-h) ]) \\
 & = \frac{\pi}{2} d_{n,s} \max \left\{ |x|^{n-s+1} , |x-h|^{n-s+1} \right\} \left|  G(x) - G(x-h) \right| .
\end{align*}
Now we use \eqref{eq:maxx} and find that inequality \eqref{eq:ineqm} holds for
\[
 m \leq \frac{1}{2^{n-s} \pi \, d_{n,s} R_1^{n-s+1}} .
\]
%

\end{proof}

Finally, we present the main result of this section: its statement includes that of Proposition \ref{pr:inversekernelbrief}, shows that $V_{\d}^s$ is actually a function and exhibits its main properties.

\begin{teo} \label{Th: inverse Fourier trasnform as a function} 
There exists a vector radial function $V_{\d}^s \in C^{\infty}(\Rn \setminus \{ 0\}, \Rn)$ such that
	\begin{equation} \label{eq: Fourier transform of V 2}
		\hat{V}_{\d}^s(\xi)=- i \frac{\xi}{|\xi|} \frac{1}{\left| 2\pi \xi \right| \hat{Q}_{\d}^s(\xi)}.
	\end{equation}
Furthermore, the following properties hold:
\begin{enumerate}[a)]
\item\label{item:Vunique} $V_\d^s$ is the only $L^1_{\loc}$ function that satisfies
\begin{equation}\label{eq:VQconvolution}
 \int_{\Rn}V_\d^s(z) \, Q_\d^s(y-z) \, dz = \frac{1}{\sigma_{n-1}}\frac{y}{|y|^n}, \qquad y \in \Rn \setminus \{ 0 \} .
\end{equation}

\item\label{item:Va} There exists a Lipschitz bounded $W: \Rn \to \Rn$ (actually, $W \in C_0 (\Rn, \Rn)$ when $n\geq 2
$) such that
\begin{equation}\label{eq:VWR}
 V_{\d}^s (x) = W(x) + \frac{c_{n,-s}}{a_0} \frac{x}{|x|^{n+1-s}} .
\end{equation}


\item\label{item:Vc} For any $R >0$ there exists $M > 0$ such that for all $x \in B(0,R) \setminus \{ 0 \}$,
\[
 \left| V_\d^s(x)\right| \leq \frac{M}{|x|^{n-s}} .
\]
\item\label{item:Vd} For any $R_1, R_2 >0$ there exists $M > 0$ such that for all $x \in B(0,R_1) \setminus \{ 0 \}$ and $h \in B(0,R_2) \setminus \{ x \}$, \label{property of V d)}
\[
 \left|V_\d^s(x)-V_\d^s(x-h)\right| \leq M \left|\frac{x}{|x|^{n+1-s}}-\frac{x-h}{|x-h|^{n+1-s}} \right|.
\]

\end{enumerate}
\end{teo}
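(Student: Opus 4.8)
The plan is to reduce the whole statement to showing that the tempered distribution $V_\d^s$ supplied by Proposition \ref{pr: inverse tempered distribution} is in fact a function, and then to read off \emph{\ref{item:Vunique})}--\emph{\ref{item:Vd})} from the splitting \eqref{eq:VWR}. To begin, Proposition \ref{pr: inverse tempered distribution} gives $V_\d^s\in\mathcal{S}'$ with $\hat{V}_\d^s(\xi)=-\frac{i\xi}{2\pi|\xi|^2\hat{Q}_\d^s(\xi)}$, and since $\hat{Q}_\d^s>0$ everywhere (Proposition \ref{prop: Fourier transform well defined}) this coincides with \eqref{eq: Fourier transform of V 2} for $\xi\neq0$; as $\hat{Q}_\d^s$ is radial (Proposition \ref{Prop: properties of the Fourier transform of Q}\,\emph{\ref{property a)})}), the right-hand side of \eqref{eq: Fourier transform of V 2} is vector radial, hence so will be $V_\d^s$ once it is known to be a function, because the Fourier transform preserves vector radiality.

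To prove $V_\d^s$ is a function I would adapt the argument of \cite[Prop.\ 2.4.8]{Grafakos08a}. Fix $\f\in C^\infty_c(\Rn)$ with $\f\equiv1$ near the origin and split $\hat{V}_\d^s=\f\hat{V}_\d^s+(1-\f)\hat{V}_\d^s$. The low-frequency part $\f\hat{V}_\d^s$ has compact support and is of size $|\xi|^{-1}$ near the origin (because $\hat{Q}_\d^s(0)=\|Q_\d^s\|_{L^1(\Rn)}>0$), so it lies in $L^1(\Rn)$ when $n\geq2$; moreover $\xi^\a\f\hat{V}_\d^s\in L^1(\Rn)$ for every multiindex $\a$, hence $\mathcal{F}^{-1}(\f\hat{V}_\d^s)$ is a bounded $C^\infty$ function with all derivatives vanishing at infinity. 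For the high-frequency part, Lemma \ref{le:decaypartialV} (together with the bounded cut-off derivatives on the annulus $\{1\leq|\xi|\leq2\}$) gives $|\partial^\a[(1-\f)\hat{V}_\d^s](\xi)|\leq C_\a|\xi|^{-s(|\a|+1)}$ for $|\xi|\geq1$; using the identity $x^\a\partial^\g\mathcal{F}^{-1}(m)=c\,\mathcal{F}^{-1}(\partial^\a[(2\pi i\xi)^\g m])$ and choosing $|\a|$ large (depending on $|\g|$) so that $\partial^\a[(2\pi i\xi)^\g(1-\f)\hat{V}_\d^s]\in L^1(\Rn)$, one obtains that $\mathcal{F}^{-1}((1-\f)\hat{V}_\d^s)\in C^\infty(\Rn\setminus\{0\})$ with every derivative decaying faster than any negative power of $|x|$. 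Thus $V_\d^s$ agrees with a function in $C^\infty(\Rn\setminus\{0\})\cap L^1_{\loc}(\Rn)$ that vanishes at infinity.

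To pin down the behaviour at the origin I would compare $\hat{V}_\d^s$ with the Fourier transform of the vectorial Riesz kernel $K_s(x):=\frac{c_{n,-s}}{a_0}\frac{x}{|x|^{n+1-s}}$, which by Lemma \ref{lemma: Fourier transform vector Riesz potential} equals (up to the explicit constant) $\hat{K}_s(\xi)=-\frac{i\xi}{a_0|\xi||2\pi\xi|^s}$; by Proposition \ref{Prop: properties of the Fourier transform of Q}\,\emph{\ref{property c)})} this is precisely the leading term of $\hat{V}_\d^s$ at infinity, so $\hat{W}:=\hat{V}_\d^s-\hat{K}_s$ decays at infinity strictly faster than $|\xi|^{-s}$. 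To make this quantitative one can use the decomposition $Q_\d^s=a_0 I_{1-s}\eta+R$ with $\eta\in C^\infty_c(\Rn)$, $\eta\equiv1$ near $0$: since $Q_\d^s(x)=\frac{a_0}{\gamma(1-s)}|x|^{-(n-1+s)}+\text{const}$ near the origin by Lemma \ref{lem: kernel primitive}, the remainder $R$ belongs to $C^\infty_c(\Rn)$, so $\hat{Q}_\d^s=a_0\bigl(|2\pi\cdot|^{-(1-s)}+r\bigr)+\hat{R}$ with $\hat{R}\in\mathcal{S}$ and $r$ decaying faster than any polynomial at infinity (all moments of $\hat\eta$ of order $\geq1$ vanish, cf.\ \cite[Ex.\ 2.4.9]{Grafakos08a}), which forces the same rapid decay on $\hat{W}$ (alternatively one may push the estimate behind Lemma \ref{le:decaypartialV}). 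Since moreover $\hat{W}$ behaves like $|\xi|^{-1}$ near the origin — where the $|\xi|^{-1}$ of $\hat{V}_\d^s$ dominates the $|\xi|^{-s}$ of $\hat{K}_s$ — we get $\hat{W}\in L^1(\Rn)$ for $n\geq2$ and $\xi_j\hat{W}\in L^1(\Rn)$ for all $n$. Hence $W:=\mathcal{F}^{-1}(\hat{W})$ is bounded, lies in $C_0(\Rn,\Rn)$ when $n\geq2$, and is of class $C^1$ with bounded gradient, hence Lipschitz; this is \eqref{eq:VWR}, and in particular $V_\d^s=W+K_s$ is a function. For $n=1$ the singularity of $\hat{W}$ at the origin is odd, so $W$ is still bounded and Lipschitz though possibly not vanishing at infinity; the details are collected in Appendix \ref{se:1D}.

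Finally, \emph{\ref{item:Vunique})}, \emph{\ref{item:Vc})} and \emph{\ref{item:Vd})} are short corollaries. For \emph{\ref{item:Vunique})}: since $Q_\d^s\in L^1(\Rn)$ has compact support and $V_\d^s$ is locally integrable and bounded outside any neighbourhood of the origin (hence a tempered distribution), $V_\d^s*Q_\d^s$ is well defined and $\mathcal{F}(V_\d^s*Q_\d^s)=\hat{V}_\d^s\hat{Q}_\d^s=-\frac{i\xi}{2\pi|\xi|^2}=\mathcal{F}\bigl(\frac{1}{\sigma_{n-1}}\frac{\cdot}{|\cdot|^n}\bigr)$ by Lemma \ref{lemma: Fourier transform vector Riesz potential}\,\emph{\ref{item:Fourier transform of x/|x|^n})}; both sides being continuous off the origin, \eqref{eq:VQconvolution} holds there, and uniqueness among $L^1_{\loc}$ functions follows from $\hat{Q}_\d^s$ vanishing nowhere (if $V$ is another such function then $(V-V_\d^s)*Q_\d^s\equiv0$, which forces $V=V_\d^s$ by a routine Fourier argument). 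For \emph{\ref{item:Vc})}: by \eqref{eq:VWR}, $|V_\d^s(x)|\leq\|W\|_{L^\infty}+\frac{|c_{n,-s}|}{a_0}|x|^{-(n-s)}$, and on $B(0,R)\setminus\{0\}$ one has $|x|^{-(n-s)}\geq R^{-(n-s)}$, so the constant term is absorbed into $M|x|^{-(n-s)}$. For \emph{\ref{item:Vd})}: by \eqref{eq:VWR} and the Lipschitz constant $L$ of $W$, $|V_\d^s(x)-V_\d^s(x-h)|\leq L|h|+\frac{|c_{n,-s}|}{a_0}\bigl|\frac{x}{|x|^{n+1-s}}-\frac{x-h}{|x-h|^{n+1-s}}\bigr|$, and Lemma \ref{le:inverseLipchitz} bounds $|h|$ by a fixed multiple of the last difference on the relevant set. \emph{The main obstacle} is the harmonic analysis of the two middle paragraphs: running the Grafakos-type estimates with the non-sharp decay of Lemma \ref{le:decaypartialV}, and above all extracting the exact singular profile $K_s$ at the origin — so that the remainder $W$ is genuinely bounded and Lipschitz rather than merely Hölder — which forces one to exploit the fine structure of $\hat{Q}_\d^s$ at infinity.
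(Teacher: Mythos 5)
Your proposal is correct and takes essentially the same route as the paper: a Grafakos-type argument with Lemma \ref{le:decaypartialV} for smoothness away from the origin, subtraction of the vectorial Riesz kernel so that the Fourier-side remainder (your $\hat{W}$ is the paper's $Z$) decays rapidly at infinity, giving $W$ bounded and Lipschitz via $\xi Z \in L^1$ (with the $n=1$ boundedness deferred to Appendix \ref{se:1D}, exactly as the paper does), and parts \emph{\ref{item:Vunique})}, \emph{\ref{item:Vc})}, \emph{\ref{item:Vd})} deduced as in the paper from Lemma \ref{lem: Fourier transform of convolution of distributions}, the splitting \eqref{eq:VWR} and Lemma \ref{le:inverseLipchitz}. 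The only cosmetic difference is that you isolate the singular profile by writing $Q_\d^s = a_0 I_{1-s}\eta + R$ on the physical side, while the paper subtracts $a_0\f I_{1-s}$ directly inside the Fourier transform of the numerator $a_0|2\pi\xi|^{s-1}-\hat{Q}_\d^s(\xi)$; both reductions rest on \cite[Ex.\ 2.4.9]{Grafakos08a}.
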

\begin{proof}
We first prove that there exists $V_{\d}^s \in C^{\infty}(\Rn \setminus \{ 0\}, \R^n)$ such that \eqref{eq: Fourier transform of V 2} holds.

We start as in the proof of \cite[Prop.\ 2.4.8]{Grafakos08a}. In order to see that $V_\d^s$ is $C^{\infty}$ away from the origin we note that $\mathcal{F}(\hat{V}_\d^s) = \tilde{V}_\d^s$ and shall see that $\mathcal{F}(\hat{V}_\d^s)$ is $C^M$ in $\Rn \setminus \{0\}$ for all $M$. Thus, fix $M \in \mathbb{N}$ and let $\alpha \in\N^n$ be any multiindex such that
\begin{equation}\label{eq:choicea}
 s (|\a| +1 ) - n \geq M .
\end{equation}
We take $\varphi \in C^{\infty}(\Rn)$ such that $\varphi = 1$ in $B(0,2)^c$ and $\varphi= 0$ in $B(0,1)$.
Write
\[
 u=\hat{V}_\d^s , \quad \quad u_0=(1-\varphi)u \quad \text{and} \quad u_\infty=\varphi \, u .
\]
On the one hand, $\partial^{\alpha} u= \partial^{\alpha} u_0 + \partial^{\alpha} u_\infty$ in the sense of distributions and also in $\Rn \setminus \{ 0 \}$.
On the other hand, as $u$ is smooth outside the origin, we have that $\partial^{\alpha} u_\infty$ is smooth and can calculate
\begin{equation*}
	\partial^{\alpha} u_\infty = \sum_{\b \leq \a} \binom{\a}{\b} \p^{\a - \b} \f \, \p^{\b} u .
\end{equation*}
Write
\begin{equation*}
	 v = \partial^{\alpha} u_0 + \sum_{\substack{\b \leq \a \\ \b \neq \a}} \binom{\a}{\b} \p^{\a - \b} \f \, \p^{\b} u .
\end{equation*}
Then $v$ is a distribution with support in $B(0, 2)$, so $\hat{v}$ is $C^{\infty}$.
Moreover, $\p^{\a} u = v + \f \, \p^{\a} u$.
Thus, in order to see that $\widehat{\partial^{\alpha} u}$ is $C^M$, it remains to show that $\widehat{\varphi \, \partial^{\alpha} u}$ is $C^M$.
The function $\varphi \, \partial^{\alpha} u$ is $C^{\infty}$ and, by Lemma \ref{le:decaypartialV},
\begin{equation*}
	 \left| \varphi (\xi) \partial^{\alpha} u (\xi) \right| \leq \frac{C_{\a}}{1+|\xi|^{s (|\a|+1)}} , \qquad \xi \in \Rn ,
\end{equation*}
Having in mind \eqref{eq:choicea}, a classical result relating the decay of a function at infinity with the regularity of its Fourier transform (see, e.g., \cite[Exercise 2.4.1]{Grafakos08a}) shows that $\widehat{\varphi \,\partial^{\alpha} u}$ is $C^M$.

Once we have shown that $\widehat{\partial^{\alpha} u}$ is $C^M$, we note that $\widehat{\partial^{\alpha} u} (\xi) = (2 \pi i \xi)^{\alpha} \hat{u} (\xi)$.
Let $\xi \in \Rn \setminus \{ 0 \}$; then $\xi_j \neq 0$ for some $j \in \{ 1, \ldots, M \}$.
Let $V$ be a neighbourhood of $\xi$ such that every $\eta \in V$ satisfies $\eta_j \neq 0$.
Let $m \in \N$ be such that $s (m +1 ) - n \geq M$ and let $\alpha$ be the multiindex $(0, \ldots, 0, m, 0, \ldots, 0)$, with the component $m$ in position $j$.
Then $\alpha$ satisfies \eqref{eq:choicea}.
Moreover, for any $\eta \in V$,
\[
 \hat{u} (\eta) = \frac{\widehat{\partial^{\alpha} u} (\eta)}{(2 \pi i \eta_j)^m} ,
\]
so $\hat{u}$ is of class $C^M$ in $\Rn \setminus \{0\}$ for every $M\in \N$, and therefore, so is $V_{\d}^s$.

Once we have that $V_{\d}^s$ is a function, since $\hat{V}_{\d}^s$ is radial and imaginary-valued, standard properties of the Fourier transform show that $V_{\d}^s$ must be radial and real-valued.

Next, we show that the function
\[
 Z(\xi) := \hat{V}_\d^s(\xi)- \frac{-i\xi}{a_0|\xi|}\frac{1}{|2\pi\xi|^s}
\]
decays to zero at infinity faster than any negative power of $|\xi|$.
For this we observe that
	\begin{equation} \label{eq: difference of V hat and vector Riesz potential}
		 	Z (\xi)  = -i\frac{\xi}{|\xi|} \frac{1}{|2\pi \xi|\hat{Q}_\d^s(\xi)}-\frac{-i \xi}{a_0|\xi|} \frac{1}{|2\pi \xi|^s} 
		 =-i \frac{\xi}{|\xi|}\frac{a_0|2\pi \xi|^{-1+s} - \hat{Q}_\d^s(\xi)}{a_0|2\pi \xi|^s\hat{Q}_\d^s(\xi)}.
	\end{equation}
The terms $|2\pi \xi|^s$ and $\hat{Q}_\d^s(\xi)$ in the denominator above only contribute as a power of $|\xi|$ in the growth at infinity (see Proposition \ref{Prop: properties of the Fourier transform of Q}).
Therefore, it remains to show that the numerator above $a_0|2\pi \xi|^{-1+s} - \hat{Q}_\d^s(\xi)$ decays faster at infinity  than any negative power of $|\xi|$.
Consider a $\f \in C_c^{\infty}(\Rn)$ with $\f_{B(0, \frac{1}{4})} = 1$ and $\f_{B(0, \frac{1}{2})^c} = 0$.
Then, recalling \eqref{eq: Riesz Potential},
	\begin{align*}
 a_0|2\pi \xi|^{-1+s} - \hat{Q}_\d^s(\xi) & = \mathcal{F}\left(\frac{a_0}{\gamma(1-s)|x|^{n-1+s}} - Q_\d^s(x) \right)  \\
 & = \mathcal{F}\left(\frac{a_0 \f}{\gamma(1-s)|x|^{n-1+s}} - Q_\d^s(x) \right) +\mathcal{F}\left(\frac{a_0(1-\f)}{\gamma(1-s)|x|^{n-1+s}} \right) .
	\end{align*}
Looking at the expression of $Q_\d^s$ (Definition \ref{de:q} and Lemma \ref{lem: kernel primitive}), we notice that the difference between  $\frac{a_0 \f}{\gamma(1-s)|x|^{n-1+s}}$ and $Q_\d^s(x)$ coincide with the constant $\frac{- z_0}{\gamma(1-s)}$ in $B(0, \min\{b_0 \d, \frac{1}{4} \})$, and both have compact support.
Therefore, its difference is a smooth function of compact support.
In particular, it is in the Schwartz space, as well as its Fourier transform:
\[
 \mathcal{F}\left(\frac{a_0 \f}{\gamma(1-s)|x|^{n-1+s}} - Q_\d^s(x) \right) \in \mathcal{S} .
\]
On the other hand, the function $\mathcal{F}\left( \frac{1-\f}{\gamma(1-s)|x|^{n-1+s}} \right)$ is treated in \cite[Example 2.4.9]{Grafakos08a}, and it is shown that its decay at infinity is faster than any negative power of $|\xi|$.

From expression \eqref{eq: difference of V hat and vector Riesz potential} we can see that $Z$ is in $L^1_{\loc}$ when $n \geq 2$.
Because of its decay at infinity, $Z$ is in $L^1$ when $n\geq 2$, so it has a Fourier transform $\hat{Z}$, which is $C_0$.
When $n=1$, in Lemma \ref{lem: convdist pv Z} it will be shown that $Z$ is a tempered distribution, so it has a Fourier transform $\hat{Z}$, which, in principle, is a tempered distribution.
But since both $\hat{V}_\d^s(\xi)$ and $\frac{i\xi}{|\xi|}\frac{1}{|2\pi\xi|^s}$ are Fourier transforms of functions (see Lemma \ref{lemma: Fourier transform vector Riesz potential}\,\emph{\ref{item:FourierRiesz1})} for the latter), we conclude that $\hat{Z}$ is a function.

We continue by proving that $\hat{Z}$ is Lipschitz.
We have $-\nabla \hat{Z} = \mathcal{F} (2\pi i \xi Z(\xi) )$.
The function $2\pi i \xi Z(\xi)$ is in $L^1_{\loc}$, as can be seen from expression \eqref{eq: difference of V hat and vector Riesz potential}.
Due to the decay of $Z$ at infinity, $2\pi i \xi Z(\xi)$ is in $L^1 (\Rn)$, so $\mathcal{F} (2\pi i \xi Z(\xi) )$ is bounded, and, hence, $\hat{Z}$ is Lipschitz.

We define $W$ as $W(x) = \hat{Z} (-x)$.
Taking inverse Fourier transforms to the expression
\[
 \hat{V}_\d^s(\xi) = Z(\xi) + \frac{-i\xi}{a_0|\xi|}\frac{1}{|2\pi\xi|^s}
\]
we obtain equality \eqref{eq:VWR} (see Lemma \ref{lemma: Fourier transform vector Riesz potential} for the inverse Fourier transform of the last term).
That expression, together with the fact that $W$ is continuous, shows that $\hat{V}_\d^s$ is in $L^1_{\loc}$.

In order to show \emph{\ref{item:Vunique})}, we note that equality \eqref{eq:VQconvolution} is equivalent to the equality of its Fourier transforms.
More precisely, the functions $V_\d^s$ and $Q_\d^s$ can also be seen as tempered distributions, and, in particular, $Q_\d^s$ with compact support. Hence, by Lemmas \ref{lem: Fourier transform of convolution of distributions} and \ref{lemma: Fourier transform vector Riesz potential}\,\emph{\ref{item:Fourier transform of x/|x|^n})} we have that equality \eqref{eq:VQconvolution} is equivalent to
\begin{equation}\label{eq:VQFourier}
	\hat{V}_\d^s(\xi) \, \hat{Q}_\d^s(\xi)= -i \frac{\xi}{|\xi|}\frac{1}{|2\pi \xi|} ,
\end{equation}
which holds due to \eqref{eq: Fourier transform of V 2}.
The uniqueness of $\hat{V}_\d^s$ also follows from this argument, since $\hat{V}_\d^s$ is uniquely determined by equality \eqref{eq:VQFourier}.
Thus, \emph{\ref{item:Vunique})} is proved.

Fact \emph{\ref{item:Va})} has been proved when $n \geq 2$, while for the case $n=1$ it only remains to show that $W$ is bounded: this is tackled in Appendix \ref{se:1D}.
%
With this, we have that given $R>0$, for all $x \in B (0, R) \setminus \{ 0 \}$,
\[
 \left| V_{\d}^s (x) \right| \leq \left\| W \right\|_{L^{\infty} (B(0, R))} + \frac{|c_{n,-s}|}{a_0} \frac{1}{|x|^{n-s}} \leq \left(  \left\| W \right\|_{L^{\infty} (\Rn)} R^{n-s} + \frac{|c_{n,-s}|}{a_0} \right) \frac{1}{|x|^{n-s}} ,
\]
which shows \emph{\ref{item:Vc})}.

As for inequality \emph{\ref{item:Vd})}, since $W$ is Lipschitz, we estimate
\begin{align*}
 \left| V_{\d}^s (x) - V_{\d}^s (x-h) \right| & \leq \| D W \|_{L^{\infty}(\Rn)} |h| + \frac{|c_{n,-s}|}{a_0} \left| \frac{x}{|x|^{n+1-s}} - \frac{x-h}{|x-h|^{n+1-s}} \right| \\
  & \leq M \left| \frac{x}{|x|^{n+1-s}} - \frac{x-h}{|x-h|^{n+1-s}} \right| ,
\end{align*}
for a suitable constant $M>0$ coming from Lemma \ref{le:inverseLipchitz}. The proof is complete.
\end{proof}

Part \emph{\ref{item:Va})} of Theorem \ref{Th: inverse Fourier trasnform as a function} shows that $V^s_{\d}$ behaves like $\frac{x}{|x|^{n+1-s}}$ around $0$.
It can also be seen that it behaves like $\frac{x}{|x|^n}$ at infinity.
Comparing these facts with the classical and fractional fundamental theorem of Calculus (see the Introduction), we have the following picture of $V^s_{\d}$: at $0$, it behaves like the kernel of the fractional fundamental theorem of Calculus, while, at infinity, like that of the classical fundamental theorem of Calculus.

\section{Poincar\'e, Morrey, Trudinger and Hardy inequalities} \label{se: Poincare}

In this section we will use the nonlocal fundamental theorem of Calculus (Theorem \ref{Theo: nonlocal fundamental theorem of calculus}) to prove inequalities in the spirit of Poincar\'e--Sobolev, Morrey, Trudinger and Hardy.
In those inequalities we will need a boundary condition implying that the function vanishes in a tubular neighbourhood of $\p \O$.

In order to describe more precisely that boundary condition, we recall the set $\O_{-\d} = \{ x \in \O : \dist (x, \p \O) > \d \}$ and define the subspace $H_0^{s,p,\d}(\O_{-\d})$ as the closure of $C_c^{\infty}(\O_{-\d})$ in $H^{s,p,\d}(\O)$:
\[
H_0^{s,p,\d}(\O_{-\d})=\overline{C_c^\infty(\O_{-\d})}^{H^{s,p,\d}(\O)}.
\]
It is immediate to check that any $u \in H_0^{s,p,\d}(\O_{-\d})$ satisfies $u=0$ a.e.\ in $\O_{\d} \setminus \O_{-\d}$ and $D^s_{\d} u = 0$ a.e.\ in $\O_{B,\d}$.
We leave for a future work the issue of whether $H_0^{s,p,\d}(\O_{-\d})$ actually coincides with the set of $u \in H^{s,p,\d}(\O_{-\d})$ such that $u=0$ a.e.\ in $\O_{\d} \setminus \O_{-\d}$, so that $H_0^{s,p,\d}(\O_{-\d})$ can be regarded as a volumetric-type condition.
Finally, given $g \in H^{s,p,\d}(\O)$ we define the affine subspace $H^{s,p,\d}_g (\O_{-\d})$ as $g+ H^{s,p,\d}_0(\O_{-\d})$. 

As in the space $H^{s,p}(\Rn)$ (and $W^{s,p}$, too), the Sobolev conjugate exponent of a $p \in [1, \frac{n}{s})$ is
\begin{equation}\label{eq:p*s}
 p_s^*=\frac{np}{n-sp} .
\end{equation}
The Poincar\'e--Sobolev inequality in $H_0^{s,p,\d}(\O_{-\d})$ is as follows.
Its analogue in the fractional case can be found in \cite[Th.\ 1.8]{ShS2015}.
As theirs, our proof is based on (our version of) the nonlocal fundamental theorem of Calculus and the Hardy--Littlewood--Sobolev inequality, but we also take advantage of the comparison between the kernel $V^s_{\d}$ and the Riesz potential given by Theorem \ref{Th: inverse Fourier trasnform as a function}\,\emph{\ref{item:Vc})}.

\begin{teo}\label{th:Poincare H0delta}
	Let $1 < p < \infty$ be with $sp<n$.
	Then, there exists $C>0$ such that for all 
	$u \in H_0^{s,p,\d}(\O_{-\d})$, 
	\[
	\left\| u \right\|_{L^q (\O)} \leq C \left\| D_\d^s u \right\|_{L^p(\O)} 
	\]
	for every $q \in [1,p_s^*]$.
\end{teo}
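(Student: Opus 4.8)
The plan is to reduce the estimate to the nonlocal Fundamental Theorem of Calculus (Theorem \ref{Theo: nonlocal fundamental theorem of calculus}) and then apply the Hardy--Littlewood--Sobolev inequality, exactly in the spirit of the fractional case. First I would fix $u \in C^{\infty}_c(\O_{-\d})$ (the general case following by density, since both sides of the inequality are continuous with respect to the $H^{s,p,\d}(\O)$ norm, and $\|D^s_\d u\|_{L^p(\O)}$ controls the seminorm appearing on the right). For such $u$, formula \eqref{eq: representation theorem} gives, for $x \in \O$,
\[
 u(x) = \int_{\Rn} D^s_\d u(y) \cdot V^s_\d(x-y)\, dy .
\]
Since $u \in C^\infty_c(\O_{-\d})$, we have $D^s_\d u = 0$ outside $\O_{-\d}+B(0,\d) \subset \O$, so the integral is really over $y \in \O$. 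Hence for $x \in \O$ and $y \in \O$ we only ever evaluate $V^s_\d$ at points $x - y \in \O - \O$, a bounded set; put $R := \diam(\O)$ (or any number with $\O - \O \subset B(0,R)$).

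Next I would invoke Theorem \ref{Th: inverse Fourier trasnform as a function}\,\emph{\ref{item:Vc})} with this $R$: there is $M>0$ with $|V^s_\d(z)| \le M |z|^{-(n-s)}$ for $z \in B(0,R)\setminus\{0\}$. Therefore, for $x \in \O$,
\[
 |u(x)| \le M \int_{\O} \frac{|D^s_\d u(y)|}{|x-y|^{n-s}}\, dy = M\, \gamma(s)\, \bigl( I_s (|D^s_\d u|\,\chi_\O) \bigr)(x) ,
\]
where $I_s$ is the Riesz potential of \eqref{eq: Riesz Potential} and $\chi_\O$ denotes extension by zero. Now the Hardy--Littlewood--Sobolev inequality (or, equivalently, the mapping properties of the Riesz potential) gives that for $1<p<n/s$ the operator $I_s$ maps $L^p(\Rn)$ into $L^{p_s^*}(\Rn)$ boundedly, with $p_s^*$ as in \eqref{eq:p*s}; hence
\[
 \|u\|_{L^{p_s^*}(\O)} \le \|M\gamma(s)\, I_s(|D^s_\d u|\chi_\O)\|_{L^{p_s^*}(\Rn)} \le C\, \|\,|D^s_\d u|\chi_\O\|_{L^p(\Rn)} = C\, \|D^s_\d u\|_{L^p(\O)} .
\]
Finally, since $\O$ is bounded, for any $q \in [1,p_s^*]$ we have $\|u\|_{L^q(\O)} \le |\O|^{1/q - 1/p_s^*}\|u\|_{L^{p_s^*}(\O)}$, which yields the claim for all such $q$ after absorbing $|\O|^{1/q-1/p_s^*}$ into the constant. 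A density argument then passes from $C^\infty_c(\O_{-\d})$ to all of $H^{s,p,\d}_0(\O_{-\d})$: if $u_j \to u$ in $H^{s,p,\d}(\O)$ with $u_j \in C^\infty_c(\O_{-\d})$, then $D^s_\d u_j \to D^s_\d u$ in $L^p(\O)$, and a subsequence of $u_j$ converges a.e., so Fatou's lemma gives $\|u\|_{L^q(\O)} \le \liminf_j \|u_j\|_{L^q(\O)} \le C \liminf_j \|D^s_\d u_j\|_{L^p(\O)} = C\|D^s_\d u\|_{L^p(\O)}$.

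The only genuinely delicate point is the localization that makes the singular kernel behave like $I_s$: one must be sure that in \eqref{eq: representation theorem} only values $V^s_\d(x-y)$ with $x-y$ in a \emph{fixed bounded} set enter, which is exactly why the boundary condition $u \in C^\infty_c(\O_{-\d})$ (so that $D^s_\d u$ is supported in $\O$) is used, and why Theorem \ref{Th: inverse Fourier trasnform as a function}\,\emph{\ref{item:Vc})} with a finite $R$ suffices; the far-field behaviour of $V^s_\d$ (which decays only like $|z|^{1-n}$) never intervenes. Everything else is a routine application of Hardy--Littlewood--Sobolev, the boundedness of $\O$, and density --- all of which are either standard or already available in the paper.
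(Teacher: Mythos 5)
Your proposal is correct and follows essentially the same route as the paper's proof: the nonlocal Fundamental Theorem of Calculus plus the local bound $|V_\d^s(z)|\leq M|z|^{-(n-s)}$ on a bounded set reduce the estimate to the Riesz potential $I_s$, after which the Hardy--Littlewood--Sobolev inequality, the boundedness of $\O$, and density give the result. Your added remarks on the support of $D^s_\d u$ and on why only a finite $R$ is needed for the kernel bound are exactly the (implicit) points in the paper's argument, so there is nothing to correct.
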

\begin{proof}
	By density, it is enough to prove the inequality for $u \in C^{\infty}_c (\O_{-\d})$.
	
	Fix $x \in \O$ and let $C>0$ denote a constant whose value may vary through this process. Notice that $\supp D_\d^s u \subset \O$ and, by Proposition \ref{Lemma: convolución con gradiente clásico}, $D_\d^s u \in C^\infty(\Rn)$. 
	By Theorem \ref{Theo: nonlocal fundamental theorem of calculus} and Proposition \ref{pr:inversekernelbrief}, 
	\begin{equation}\label{eq:upoint}
	\left| u (x) \right| \leq  \int_{\O} |D_\d^s u(y)| |V_\d^s(x-y)| \, dy \leq C \int_{\O} \frac{|D_\d^s u(y)|}{|x-y|^{n-s}} \, dy= C \left(I_{s}* |D_\d^s u|\right)(x) ,
	\end{equation}
where $I_s$ is the Riesz potential \eqref{eq: Riesz Potential}.
On the other hand, by the Hardy--Littlewood--Sobolev inequality (e.g., \cite[Ch.\ 4, Th.\ 2.1]{Mizuta}) we have that 
	\[
	\| I_{s}* |D_\d^s u| \|_{L^{p_s^*}(\Rn)} \leq C \| D_\d^s u\|_{L^p(\Rn)}.
	\]
	Therefore, for every $q\in [1, p_s^*]$,
	\[
	\left\| u \right\|_{L^q (\O)} \leq C \left\| u \right\|_{L^{p_s^*} (\O)} \leq C \left\| I_{s}* \left| D_\d^s u \right| \right\|_{L^{p_s^*}(\Rn)} \leq C \left\| D_\d^s u \right\|_{L^p(\Rn)} = C \left\| D_\d^s u \right\|_{L^p(\O)} .
	\]
\end{proof}

A nonlocal Poincar\'e inequality is obtained as a corollary. 

\begin{teo}\label{th:Poincare}
Let $1 < p < \infty$.
	Then there exists $C>0$ such that for all $u \in H_0^{s,p,\d}(\O_{-\d})$,
	\[
	\left\| u \right\|_{L^p (\O)} \leq C \left\| D_\d^s u \right\|_{L^p(\O)}.
	\]
\end{teo}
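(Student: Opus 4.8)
The plan is to deduce this from the Poincar\'e--Sobolev inequality of Theorem \ref{th:Poincare H0delta}. That theorem, however, is only stated under the hypothesis $sp<n$, so the single genuinely new point here is to cover the range $sp\geq n$; this will be done by exploiting that $\O$ is bounded (so $L^p(\O)$ embeds into $L^{\tilde p}(\O)$ for $\tilde p\leq p$) together with the fact that $s<1$ forces $n/s>n$, which leaves room to pick a subcritical exponent $\tilde p<p$ with arbitrarily large Sobolev conjugate $\tilde p_s^{*}$.

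First I would reduce, by definition of $H_0^{s,p,\d}(\O_{-\d})$ as the closure of $C_c^{\infty}(\O_{-\d})$, to proving the inequality for $u\in C_c^{\infty}(\O_{-\d})$; note that such a $u$ also lies in $H_0^{s,\tilde p,\d}(\O_{-\d})$ for every exponent $\tilde p$, so Theorem \ref{th:Poincare H0delta} may be applied to it with any admissible $\tilde p$. Next I would choose the exponent $\tilde p$ as follows. If $sp<n$, take $\tilde p=p$; then $p\in[1,p_s^{*}]$ and Theorem \ref{th:Poincare H0delta} directly yields $\lVert u\rVert_{L^p(\O)}\leq C\lVert D_\d^s u\rVert_{L^p(\O)}$. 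If instead $sp\geq n$, then $n/s>n\geq 1$, and since $\tilde p_s^{*}=\frac{n\tilde p}{n-s\tilde p}\to\infty$ as $\tilde p\to (n/s)^{-}$, I can fix $\tilde p\in(1,n/s)$ with $\tilde p\leq p$ and $\tilde p_s^{*}\geq p$.

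With this $\tilde p$ fixed, applying Theorem \ref{th:Poincare H0delta} with exponent $\tilde p$ and $q=p\in[1,\tilde p_s^{*}]$ gives a constant $C>0$ with
\[
\lVert u\rVert_{L^p(\O)}\leq C\lVert D_\d^s u\rVert_{L^{\tilde p}(\O)} .
\]
Since $\O$ is bounded and $\tilde p\leq p$, H\"older's inequality bounds $\lVert D_\d^s u\rVert_{L^{\tilde p}(\O)}\leq |\O|^{\frac1{\tilde p}-\frac1p}\lVert D_\d^s u\rVert_{L^p(\O)}$, and combining the two estimates yields the claim for smooth $u$. Passing to the limit along an approximating sequence in $H_0^{s,p,\d}(\O_{-\d})$ (both sides are continuous for the norms involved, as $D_\d^s$ is continuous into $L^p(\O)$) finishes the proof. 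There is no real obstacle; the only thing to be careful about is the choice of $\tilde p$ in the case $sp\geq n$, which is possible precisely because $s<1$ makes the threshold $n/s$ strictly larger than $n$ and lets $\tilde p_s^{*}$ be taken as large as needed.
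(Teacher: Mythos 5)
Your proposal is correct and follows essentially the same route as the paper: both reduce the case $sp\geq n$ to Theorem \ref{th:Poincare H0delta} by choosing an auxiliary exponent $q$ with $sq<n$, $q\leq p$ and $p\leq q_s^{*}$, and then conclude via H\"older's inequality on the bounded set $\O$. The only cosmetic differences are that the paper exhibits the auxiliary exponent explicitly (e.g.\ $q=\frac{np}{n+sp}$ for $n\geq 2$) while you choose $\tilde p$ near $n/s$ by a limiting argument, and that you make the density reduction to $C_c^{\infty}(\O_{-\d})$ explicit, which neatly justifies applying the theorem with the smaller exponent.
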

\begin{proof}
If $sp < n$, the result is a particular case of Theorem \ref{th:Poincare H0delta}.
If $sp \geq n$, we take any $q$ satisfying
\begin{equation}\label{eq:1qp}
  1 < q \leq p , \qquad s q < n \quad \text{and} \quad p \leq q_s^* ,
\end{equation}
which is easily seen to exist.
Indeed, if $n\geq 2$ we can take $q = \frac{n p}{n + sp}$, while if $n=1$ we choose any $q$ such that
\[
 1 < q < \frac{1}{s} , \qquad \frac{p}{1 + s p} \leq q , 
\]
which exists because $1 < \frac{1}{s}$ and $\frac{p}{1 + s p} < \frac{1}{s}$.

Once $q$ is chosen, by Theorem \ref{th:Poincare H0delta} and \eqref{eq:1qp}, we have for some $c_1, c_2, c_3 >0$,
\begin{equation*}
	\left\| u \right\|_{L^p (\O)} \leq c_1 \left\| u \right\|_{L^{q_s^*} (\O)} \leq c_2 \left\| D_\d^s u \right\|_{L^q(\O)} \leq c_3 \left\| D_\d^s u \right\|_{L^p(\O)}  .
\end{equation*}
\end{proof}

Next we introduce a nonlocal analogue of Morrey's inequality, whose fractional version was shown in \cite[Th.\ 1.11]{ShS2015}.
Unlike their proof, which uses Morrey-type estimates of the Riesz transform, ours is based on the nonlocal fundamental theorem of Calculus in this context (Theorem \ref{Theo: nonlocal fundamental theorem of calculus}) together with the estimates of the kernel $V^s_{\d}$.

\begin{teo}\label{th:Morrey}
Let $1<p<\infty$ be such that $sp>n$. Then there exists $C>0$ such that for all $u \in H_0^{s,p,\d}(\O_{-\d})$,
	\begin{equation}\label{eq:Morreyae}
		\left| u(x) - u(y) \right| \leq C \left| x-y \right|^{s-\frac{n}{p}} \left\| D_\d^su \right\|_{L^p(\O)}, \qquad \text{a.e. } x, y \in \O
	\end{equation}
	and
\begin{equation}\label{eq:Morreysup}
\left\| u \right\|_{L^{\infty} (\O)} \leq C \left\| D_\d^s u \right\|_{L^p(\O)} .
\end{equation}
In addition, any $u \in H_0^{s,p,\d}(\O_{-\d})$ has a representative which is H\"older continuous of exponent $s-\frac{n}{p}$, and the continuous inclusion $H_0^{s,p,\d}(\O_{-\d}) \subset C^{0,s-\frac{n}{p}} (\overline{\O})$ holds.
\end{teo}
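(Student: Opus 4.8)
The plan is to reduce everything to the pointwise representation given by the nonlocal fundamental theorem of Calculus (Theorem \ref{Theo: nonlocal fundamental theorem of calculus}) together with the estimates on $V^s_\d$ furnished by Theorem \ref{Th: inverse Fourier trasnform as a function}. By density it suffices to prove \eqref{eq:Morreyae} and \eqref{eq:Morreysup} for $u \in C^\infty_c(\O_{-\d})$, since a uniform H\"older bound on a dense subspace passes to the closure and forces the existence of a continuous representative.

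First I would fix $x, y \in \O$ with $x \neq y$ and write, using Theorem \ref{Theo: nonlocal fundamental theorem of calculus} and $\supp D^s_\d u \subset \O$,
\[
 u(x) - u(y) = \int_{\O} D^s_\d u(z) \cdot \bigl( V^s_\d(x-z) - V^s_\d(y-z) \bigr) \, dz ,
\]
so that by H\"older's inequality
\[
 |u(x) - u(y)| \leq \| D^s_\d u \|_{L^p(\O)} \left( \int_{\O} \bigl| V^s_\d(x-z) - V^s_\d(y-z) \bigr|^{p'} dz \right)^{1/p'} .
\]
The task is then to bound the integral by $C |x-y|^{(s-n/p)p'}$, i.e.\ by $C|x-y|^{sp'-n}$ (note $sp>n \Leftrightarrow sp' < \tfrac{p'}{p}\cdot\tfrac{p}{?}$ — more simply, $(s-n/p)p' = sp'-n$, which is positive since $sp>n$). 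I would split the domain of integration into the region near the singularities, $z \in B(x, 2|x-y|) \cup B(y, 2|x-y|)$, and the far region. On the near region one uses the pointwise bound $|V^s_\d(w)| \leq M|w|^{-(n-s)}$ from Theorem \ref{Th: inverse Fourier trasnform as a function}\,\emph{\ref{item:Vc})} for each of the two terms separately, and a direct computation $\int_{B(0,C|x-y|)} |w|^{-(n-s)p'} \, dw$ converges precisely when $(n-s)p' < n$, equivalently $sp > n$, giving the bound $C|x-y|^{sp'-n}$. On the far region, where $|x-z|, |y-z| \geq 2|x-y|$, I would use the Lipschitz-type estimate \eqref{property of V d)} of Theorem \ref{Th: inverse Fourier trasnform as a function}\,\emph{\ref{item:Vd})} together with the elementary bound $\bigl| \frac{x-z}{|x-z|^{n+1-s}} - \frac{y-z}{|y-z|^{n+1-s}} \bigr| \leq C|x-y| \, |x-z|^{-(n+1-s)}$ (valid in the far region, e.g.\ by the mean value theorem applied to $\zeta \mapsto \zeta|\zeta|^{-(n+1-s)}$ along the segment, whose points stay comparable in modulus to $|x-z|$), and then $\int_{|x-z|\geq 2|x-y|} |x-y|^{p'} |x-z|^{-(n+1-s)p'} \, dz$ converges at infinity (since $\O$ is bounded this is automatic) and contributes $C|x-y|^{p'} (|x-y|)^{n-(n+1-s)p'} = C|x-y|^{sp'-n}$ provided $(n+1-s)p' > n$, which again follows from $sp>n$. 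Combining the two regions yields \eqref{eq:Morreyae}. The sup bound \eqref{eq:Morreysup} then follows by taking $y$ outside $\supp u$ (where $u(y)=0$) — or more cleanly, by combining \eqref{eq:Morreyae} with the Poincar\'e inequality of Theorem \ref{th:Poincare} to control $\|u\|_{L^p}$ and hence the average of $u$, and using boundedness of $\O$.

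Finally, the continuity statement: the uniform estimate \eqref{eq:Morreyae} for smooth $u$, together with \eqref{eq:Morreysup}, gives $\|u\|_{C^{0,s-n/p}(\overline\O)} \leq C \|D^s_\d u\|_{L^p(\O)} \leq C\|u\|_{H^{s,p,\d}(\O)}$ on $C^\infty_c(\O_{-\d})$; since $C^{0,s-n/p}(\overline\O)$ is a Banach space and $C^\infty_c(\O_{-\d})$ is dense in $H^{s,p,\d}_0(\O_{-\d})$, the inclusion extends to a continuous embedding $H^{s,p,\d}_0(\O_{-\d}) \hookrightarrow C^{0,s-n/p}(\overline\O)$, and in particular every $u$ in this space has a H\"older continuous representative. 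The main obstacle I anticipate is the bookkeeping in the far-region estimate: one must carefully verify that all points on the segment joining $\frac{x-z}{|x-z|^{n+1-s}}$ to $\frac{y-z}{|y-z|^{n+1-s}}$ (or, dually, that $x-z$ and $y-z$) have modulus comparable to $|x-z|$ when $|x-z| \geq 2|x-y|$, so that the inverse-Lipschitz estimate of Lemma \ref{le:inverseLipchitz} and the power-function mean value bound can be applied with constants uniform in $x,y,z$; this is elementary but is where sign/geometry errors creep in. Everything else is a routine integration of powers over a bounded domain.
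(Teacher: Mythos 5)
Your proposal is correct and follows essentially the paper's own route: the nonlocal fundamental theorem of Calculus combined with the kernel bounds of Theorem \ref{Th: inverse Fourier trasnform as a function}\,\emph{\ref{item:Vc})}--\emph{\ref{item:Vd})}, a near/far splitting at scale $|x-y|$ with a mean-value estimate along the segment in the far region, the same endgame for \eqref{eq:Morreysup} (evaluating at a point of $\O \setminus \O_{-\d}$ where $u$ vanishes), and the standard density argument for the embedding; the only cosmetic difference is that you apply H\"older once to $\| V^s_\d(x-\cdot)-V^s_\d(y-\cdot)\|_{L^{p'}(\O)}$, whereas the paper keeps $|D^s_\d u|$ inside and applies H\"older region by region. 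One algebra slip to correct: $(s-\frac{n}{p})p' = n-(n-s)p'$, not $sp'-n$ (the two coincide only when $p=2$); your near- and far-region integrals do in fact produce the exponent $n-(n-s)p'$, so after taking the $1/p'$ power the claimed bound $C|x-y|^{s-\frac{n}{p}}\|D^s_\d u\|_{L^p(\O)}$ is unaffected.
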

\begin{proof}
The core of the proof consists in showing that
\begin{equation}\label{eq:Morreyxy}
 \left| u(x) - u(y) \right| \leq C \left| x-y \right|^{s-\frac{n}{p}} \left\| D_\d^s u \right\|_{L^p(\O)}, \qquad  x, y \in \O 
\end{equation}
for all $u \in C_c^{\infty}(\O_{-\d})$.
Fix $x, y \in \O$ and $u \in C_c^{\infty}(\O_{-\d})$.
By Theorem \ref{Theo: nonlocal fundamental theorem of calculus} and Theorem \ref{Th: inverse Fourier trasnform as a function}\,\emph{\ref{property of V d)})} there exists $C>0$ such that
	\begin{equation} \label{eq: Morrey ineq 1} 
		\begin{split}
			|u(x)-u(y)| &=  
			 \left| \int_{\Rn} D_\d^s u(z) \cdot \left[ V_\d^s(x-z) - V_\d^s(y-z) \right] dz\right|   \\
			 &\leq \int_{\O} \left| V_\d^s(x-z)-   V_\d^s(y-z)\right| |D_\d^s u(z)|  dz \\
			& \leq C\int_{\O} \left| \frac{x-z}{|x-z|^{n+1-s}}-  \frac{y-z}{|y-z|^{n+1-s}} \right| \left| D_\d^s u(z) \right| dz  .
		\end{split}
	\end{equation}
Now define $r:=|x-y|$.
Continuing with \eqref{eq: Morrey ineq 1}, we have
\begin{equation} \label{eq: Morrey ineq 2} 
	\begin{split}
		\left| u(x) - u(y) \right| \leq & C\int_{B(x, 2r)} \left| x-z \right|^{s-n} |D_\d^s u(z)| dz + C\int_{B(x, 2r)} \left| y-z \right|^{s-n} \left| D_\d^s u(z) \right| dz \\
	& + C\int_{B(x, 2r)^c} \left| \frac{x-z}{|x-z|^{n+1-s}}-  \frac{y-z}{|y-z|^{n+1-s}} \right| \left| D_\d^s u(z) \right| dz .
	\end{split}
\end{equation}
For the first term we have that by H\"older's inequality,
\begin{equation} \label{eq: Morrey ineq 3} 
	\begin{split}
		\int_{B(x, 2r)} \left| x-z \right|^{s-n} \left| D_\d^s u(z) \right| dz & \leq \left( \int_{B(x, 2r)} \left| x-z \right|^{(s-n) p'} \, dz \right)^{\frac{1}{p'}} \left( \int_{B(x, 2r)} |D_\d^s u(z)|^p \, dz \right)^{\frac{1}{p}} \\
		& \leq (2r)^{s- \frac{n}{p}} \left( \frac{\s_{n-1} (p-1)}{sp - n} \right)^{\frac{1}{p'}} \left\| D_\d^s u \right\|_{L^p(\Rn)} ,
	\end{split}
\end{equation}
since $n + (s-n) p' = \frac{sp - n}{p-1} >0$.
With respect to the second term, we use the inclusion $B (x, 2r) \subset B (y, 3r)$ and an analogous calculation as in \eqref{eq: Morrey ineq 3} allows us to obtain
\begin{equation} \label{eq: Morrey ineq 4} 
\begin{split}
		\int_{B(x, 2r)} \left| y-z \right|^{s-n} \left| D_\d^s u(z) \right| dz & \leq \int_{B(y, 3r)} \left| y-z \right|^{s-n} \left| D_\d^s u(z) \right| dz \\
	& \leq (3r)^{s- \frac{n}{p}} \left( \frac{\s_{n-1} (p-1)}{sp - n} \right)^{\frac{1}{p'}}  \left\| D_\d^s u \right\|_{L^p(\Rn)} .
\end{split}
\end{equation}

Finally, so as to tackle the last term, by the fundamental theorem of Calculus,
\begin{align*}
	&\left| \frac{x-z}{|x-z|^{n+1-s}}-  \frac{y-z}{|y-z|^{n+1-s}} \right|= \left| \int_{0}^{1} \frac{d}{dt} \left[ \frac{tx+(1-t)y-z}{|tx+(1-t)y -z|^{n+1-s}} \right] dt \right|  \\
	& = \left| \int_{0}^1 \frac{x-y}{|tx+(1-t)y-z|^{n+1-s}} - (n+1-s) \frac{[tx+(1-t)y-z] [(tx+(1-t)y-z)\cdot (x-y)]}{|tx+(1-t)y-z|^{n+3-s}} \, dt \right| \\
	& \leq \int_{0}^1 \left[ \frac{r}{|tx+(1-t)y-z|^{n+1-s}} + (n+1-s) \frac{r}{|tx+(1-t)y-z|^{n+1-s}} \right] dt \\
	& = (n+2-s) r \int_{0}^1 \frac{1}{|tx+(1-t)y-z|^{n+1-s}} dt ,
\end{align*}
so
\begin{equation*}
	\begin{split}
		&\int_{B(x, 2r)^c}\left| \frac{x-z}{|x-z|^{n+1-s}}-  \frac{y-z}{|y-z|^{n+1-s}} \right|\left| D_\d^s u(z) \right| dz \\
		& \leq (n+2-s) r \int_0^1 \int_{B(x, 2r)^c} \left| tx + (1-t)y - z \right|^{s-n-1} \left| D_\d^s u(z) \right| dz \, dt .
	\end{split}
\end{equation*}
By H\"older's inequality,
\begin{align*}
 & \int_{B(x, 2r)^c} \left| tx + (1-t)y - z \right|^{s-n-1} \left| D_\d^s u(z) \right| dz \\
& \leq \left( \int_{B(x, 2r)^c} \left| tx + (1-t)y - z \right|^{(s-n-1)p'} \, dz \right)^{\frac{1}{p'}} \left\| D_\d^s u \right\|_{L^p(\Rn)} .
\end{align*}
Since $B(tx + (1-t)y, r) \subset B(x, 2r)$ for all $t \in [0,1]$, we have
\begin{align*}
	\int_{B(x, 2r)^c} \left| tx + (1-t)y - z \right|^{(s-n-1)p'} \, dz & \leq \int_{B(tx + (1-t)y, r)^c} \left| tx + (1-t)y - z \right|^{(s-n-1)p'} \, dz \\
	& = \frac{\s_{n-1}}{(n+1-s)p' - n} r^{n + (s-n-1)p'} ,
\end{align*}
since $n + (s-n-1) p' = - \frac{(1-s)p + n}{p-1} < 0$.
Putting together the last three inequalities, we can see that there exists $\tilde{C}=\tilde{C}(s,n,p) > 0$ such that
\begin{equation} \label{eq: Morrey ineq 5} 
\begin{split}
		\int_{B(x, 2r)^c}\left| \frac{x-z}{|x-z|^{n+1-s}}-  \frac{y-z}{|y-z|^{n+1-s}} \right| \left| D_\d^s u(z) \right| dz & \leq\tilde{C} \, r^{[n+(s-n-1)p']\frac{1}{p'}+1} \left\| D_\d^s u \right\|_{L^p(\Rn)} \\
	& =\tilde{C} \, r^{s-\frac{n}{p}} \left\| D_\d^s u \right\|_{L^p(\Rn)} .
\end{split}
\end{equation}
Then, inequality \eqref{eq:Morreyxy} follows combining \eqref{eq: Morrey ineq 2}, \eqref{eq: Morrey ineq 3}, \eqref{eq: Morrey ineq 4} and \eqref{eq: Morrey ineq 5}, as well as the inclusion $\supp D_\d^s u \subset \O$, which implies $\| D_\d^s u \|_{L^p(\Rn)}=\| D_\d^su \|_{L^p(\O)}$.
Once  \eqref{eq:Morreyxy} is established, inequality \eqref{eq:Morreyae} follows from a standard density argument.

In order to show inequality \eqref{eq:Morreysup} and the continuous inclusion $H_0^{s,p,\d}(\O_{-\d}) \subset C^{0,s-\frac{n}{p}} (\overline{\O})$, by a density argument, it is enough to prove \eqref{eq:Morreysup} for $u \in C^{\infty}_c (\O_{-\d})$.
Let $x \in \O$ and $x_0 \in \O \setminus \O_{-\d}$.
By \eqref{eq:Morreyxy},
\[
 \left| u(x) \right| = \left| u(x) - u (x_0) \right| \leq C \left| x - x_0 \right|^{s -\frac{n}{p}} \left\| D_\d^s u \right\|_{L^p(\O)} \leq C \left( \diam \O \right)^{s -\frac{n}{p}} \left\| D_\d^s u \right\|_{L^p(\O)} ,
\]
where $\diam$ stands for the diameter of a set. The proof is concluded.
\end{proof}

The limiting case $s p = n$ is covered by the following version of Trudinger's inequality.
Its proof is a straightforward adaptation of the classical one (see, e.g., \cite[Th.\ 7.15]{GiTr01}) but using inequality \eqref{eq:upoint}.
Its fractional version can be found in \cite[Th.\ 1.10]{ShS2015}.
We denote by $|\O|$ the measure of $\O$.

\begin{teo}\label{th:Trudinger}
Let $1<p<\infty$ be such that $s p =n$.
Then there exist $c_1, c_2 >0$ such that for all $u \in H_0^{s,p,\d}(\O_{-\d})$,
\[
 \int_{\O} \exp \left( \frac{|u (x)|}{c_1 \| D_\d^s u \|_{L^p (\O)}} \right)^{p'} dx \leq c_2 \left| \O \right| .
\]
\end{teo}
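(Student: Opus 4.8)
The plan is to reduce the statement, as suggested by the hint in the excerpt, to the classical Trudinger inequality by exploiting the pointwise estimate

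$$
|u(x)| \le C\,(I_s * |D^s_\d u|)(x), \qquad x \in \O,
$$

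which is inequality \eqref{eq:upoint} in the proof of Theorem \ref{th:Poincare H0delta} and is valid for $u \in C_c^\infty(\O_{-\d})$ (it comes from combining Theorem \ref{Theo: nonlocal fundamental theorem of calculus} with the bound $|V^s_\d(z)| \le M|z|^{-(n-s)}$ on bounded sets from Theorem \ref{Th: inverse Fourier trasnform as a function}\,\emph{\ref{item:Vc})}, recalling $\supp D^s_\d u \subset \O$, which is bounded). Since $sp = n$, the Riesz potential $I_s$ acts precisely in the Trudinger borderline regime: $f \mapsto I_s * f$ maps $L^p$ into the Orlicz space of exponential integrability with exponent $p'$. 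First I would fix $u \in C_c^\infty(\O_{-\d})$, extend $D^s_\d u$ by zero outside $\O$ (so $\|D^s_\d u\|_{L^p(\O)} = \|D^s_\d u\|_{L^p(\Rn)}$), and set $g = |D^s_\d u|/\|D^s_\d u\|_{L^p(\O)}$ so that $\|g\|_{L^p} = 1$. It then suffices to prove
$$
\int_{\O} \exp\!\big( c_1^{-1} (I_s * g)(x) \big)^{p'} dx \le c_2 |\O|
$$
for a constant $c_1$ depending only on $n,p$ (and then absorb the factor $C$ from \eqref{eq:upoint} into $c_1$).

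The core estimate is the classical one: for $x \in \O$ and $0 < \rho \le \operatorname{diam}\O$, split $I_s * g$ as an integral over $B(x,\rho)$ plus one over $B(x,\rho)^c$. On $B(x,\rho)^c$ one uses Hölder's inequality with the fact that $|z|^{(s-n)p'}$ is integrable at infinity when $(n-s)p' > n$, i.e. exactly when $sp = n$ fails in the other direction — wait, here $(n-s)p' = n$ precisely when $sp=n$, so that tail is only borderline. This is the standard subtlety in Trudinger's argument: one does \emph{not} split at a fixed radius but rather estimates $\int_{B(x,\rho)}|z|^{s-n}g(x-z)\,dz$ via Hölder on the measure $|z|^{s-n}dz$ (which has finite total mass $\sigma_{n-1}\rho^s/s$ on $B(x,\rho)$ since $s>0$), raised to an optimized power, summing a series over dyadic annuli, and finally choosing parameters so the exponential series converges. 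Concretely, I would follow \cite[Th.\ 7.15]{GiTr01} verbatim: for any $q$ with $p \le q < \infty$ one shows, using Hölder with exponents $q$ and $q'$ on $|z|^{s-n}dz$ over $B(x, (\operatorname{diam}\O))$,
$$
\|I_s * g\|_{L^q(\O)} \le C\,\frac{(q')^{1-1/p}}{\big(\text{something}\big)}\,|\O|^{1/q}\,\|g\|_{L^p(\O)}^{\,\cdot}
$$
— more precisely the clean statement is $\|I_s*g\|_{L^q(\O)} \le C\, q^{1/p'}\, |\O|^{1/q}$ for all $q \ge p$ when $\|g\|_{L^p}=1$ and $sp=n$. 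Then I would sum the Taylor series of $\exp(t^{p'})$: writing $t = c_1^{-1} I_s*g$,
$$
\int_\O \exp\!\big(c_1^{-1} I_s * g\big)^{p'} = \sum_{k=0}^\infty \frac{1}{k!}\, c_1^{-kp'} \|I_s * g\|_{L^{kp'}(\O)}^{kp'} \le |\O| \sum_{k=0}^\infty \frac{(C c_1^{-1})^{kp'} (kp')^{kp'/p'}}{k!},
$$
using the bound with $q = kp'$ (valid once $kp' \ge p$; the finitely many small-$k$ terms are handled trivially). By Stirling, $(kp')^{k}/k! \sim (ep')^k/\sqrt{2\pi k}$, so the series converges provided $c_1$ is chosen large enough that $C c_1^{-1}(ep')^{1/p'} < 1$; this fixes $c_1 = c_1(n,p)$ and yields $c_2 = c_2(n,p)$ as the value of the convergent series.

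Finally I would pass from $C_c^\infty(\O_{-\d})$ to general $u \in H^{s,p,\d}_0(\O_{-\d})$ by density: take $u_j \to u$ in $H^{s,p,\d}(\O)$ with $u_j \in C_c^\infty(\O_{-\d})$; then $D^s_\d u_j \to D^s_\d u$ in $L^p(\O)$ and, passing to a subsequence, $u_j \to u$ a.e.\ in $\O$, so Fatou's lemma applied to $\exp\big(|u_j|/(c_1\|D^s_\d u_j\|_{L^p})\big)^{p'}$ (noting $\|D^s_\d u_j\|_{L^p} \to \|D^s_\d u\|_{L^p}$, and excluding the trivial case $D^s_\d u = 0$, in which Theorem \ref{th:Poincare} forces $u=0$) gives the inequality for $u$. \textbf{The main obstacle} is purely the bookkeeping in the dyadic/Hölder estimate leading to the bound $\|I_s*g\|_{L^q(\O)} \le C q^{1/p'} |\O|^{1/q}$ with the correct power of $q$ — getting the exponent exactly right (it must be $1/p'$, matching the $p'$ in the exponential) is what makes the series converge, and it relies crucially on $sp = n$; everything else is soft. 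Since this is precisely \cite[Th.\ 7.15]{GiTr01} with $\nabla u$ replaced by $D^s_\d u$ via \eqref{eq:upoint}, I would not reproduce the computation but cite it.
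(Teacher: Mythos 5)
Your proposal is correct and follows essentially the same route as the paper: reduce to $u\in C^\infty_c(\O_{-\d})$ by density, use the pointwise bound $|u|\leq C\, I_s*|D^s_\d u|$ coming from Theorem \ref{Theo: nonlocal fundamental theorem of calculus} and Theorem \ref{Th: inverse Fourier trasnform as a function}\,\emph{\ref{item:Vc})} (inequality \eqref{eq:upoint}), and then invoke the classical borderline exponential estimate for Riesz potentials, which the paper cites directly as \cite[Lemma 7.13]{GiTr01} rather than re-deriving the $L^q$-growth/series argument you sketch. The only difference is presentational (you reproduce the Gilbarg--Trudinger computation and spell out the Fatou-based density step), not mathematical.
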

\begin{proof}
By a standard density argument, it is enough to prove the inequality for $C^{\infty}_c (\O_{- \d})$ functions, so let $u \in C^{\infty}_c (\O_{- \d})$ and set
\[
 g(x) = \int_{\O} \frac{|D_\d^s u(y)|}{|x-y|^{n-s}} \, dy .
\]
By \eqref{eq:upoint},
\[
 \left| u \right| \leq C \, g .
\]
while by \cite[Lemma 7.13]{GiTr01}, for some constants $c_1', c_2 >0$,
\begin{equation}\label{eq:GilbargTrudinger}
 \int_{\O} \exp \left( \frac{g(x)}{c_1' \| D_\d^s u \|_{L^p (\O)}} \right)^{p'} dx \leq c_2 \left| \O \right| .
\end{equation}
Putting together these two inequalities, we obtain the conclusion.
\end{proof}
We mention that the constants $c_1', c_2$ of \eqref{eq:GilbargTrudinger} do not depend on $\O$, but the constant $C$ of \eqref{eq:upoint} does.
That is why the constant $c_1$ of Theorem \ref{th:Trudinger} depends on $\O$, but not the constant $c_2$.

We end this section with the analogue of Hardy's inequality.
Its fractional version can be found in \cite[Th.\ 1.9]{ShS2015}, whose proof (as well as the classical one \cite{StWe58}) is easily adapted to our context.

\begin{teo}
Let $1 < p < \infty$ be with $s p<n$.
Then, there exists $C>0$ such that for all $u \in H_0^{s,p,\d}(\O_{-\d})$, 
\[
 \left( \int_{\O} \frac{|u(x)|^p}{|x|^{s p}} \, d x \right)^{\frac{1}{p}} \leq C \left\| D^s_{\d} u \right\|_{L^p (\O)} .
\]
\end{teo}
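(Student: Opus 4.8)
The plan is to adapt the classical Stein--Weiss argument, exactly as in the fractional case \cite[Th.\ 1.9]{ShS2015}, replacing the Riesz potential kernel used there by our kernel $V^s_{\d}$ and exploiting the comparison estimate of Theorem~\ref{Th: inverse Fourier trasnform as a function}\,\emph{\ref{item:Vc})}. By density of $C_c^{\infty}(\O_{-\d})$ in $H_0^{s,p,\d}(\O_{-\d})$, it suffices to establish the inequality for $u \in C_c^{\infty}(\O_{-\d})$.

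First I would record the support information: $\supp D^s_{\d} u \subset \supp u + B(0,\d) \subset \O_{-\d} + B(0,\d) \subset \O$, so that $\|D^s_{\d}u\|_{L^p(\Rn)} = \|D^s_{\d}u\|_{L^p(\O)}$, while $u$ itself is supported in $\O_{-\d}$. Then, applying the nonlocal Fundamental Theorem of Calculus (Theorem~\ref{Theo: nonlocal fundamental theorem of calculus}) together with the bound $|V^s_{\d}(x-y)| \leq M|x-y|^{-(n-s)}$, which is applicable because for $x \in \supp u$ and $y \in \supp D^s_{\d}u$ the difference $x-y$ ranges over the bounded set $\O_{-\d}-\O$, I obtain, for every $x \in \Rn$ (the estimate being trivial where $u(x)=0$),
\[
 |u(x)| \leq \int_{\O} |D^s_{\d}u(y)|\,|V^s_{\d}(x-y)|\,dy \leq M \int_{\O} \frac{|D^s_{\d}u(y)|}{|x-y|^{n-s}}\,dy = C\,\bigl(I_s * |D^s_{\d}u|\bigr)(x),
\]
where $I_s$ is the Riesz potential of \eqref{eq: Riesz Potential}; this is exactly inequality \eqref{eq:upoint} from the proof of Theorem~\ref{th:Poincare H0delta}.

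Next I would invoke the Stein--Weiss inequality \cite{StWe58} in its one-weight, equal-exponent form: for $1<p<\infty$ and $0<s<n/p$ one has $\bigl\| |x|^{-s}(I_s*f) \bigr\|_{L^p(\Rn)} \leq C\|f\|_{L^p(\Rn)}$ (the case $\alpha=0$, $\beta=s$, $q=p$, $\lambda=s$ of the two-weight estimate, whose admissibility conditions $s<n/p$, $0<n/p'$, $\alpha+\beta\geq 0$ and $\tfrac1p=\tfrac1p+\tfrac{s-s}{n}$ are all met). Taking $f=|D^s_{\d}u|\in L^p(\Rn)$ and combining with the pointwise bound yields
\[
 \left( \int_{\O} \frac{|u(x)|^p}{|x|^{sp}}\,dx \right)^{\frac1p} \leq \left( \int_{\Rn} \frac{|u(x)|^p}{|x|^{sp}}\,dx \right)^{\frac1p} \leq C\,\bigl\| |x|^{-s}(I_s*|D^s_{\d}u|) \bigr\|_{L^p(\Rn)} \leq C\,\|D^s_{\d}u\|_{L^p(\Rn)} = C\,\|D^s_{\d}u\|_{L^p(\O)},
\]
and a final density argument extends the estimate from $C_c^{\infty}(\O_{-\d})$ to all of $H_0^{s,p,\d}(\O_{-\d})$.

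The argument is essentially routine once the results of the previous sections are available; the only points requiring a little care are that the near-origin estimate on $V^s_{\d}$ is indeed applicable (which reduces to observing that $x-y$ stays in a fixed bounded set, thanks to the compact supports) and that the Stein--Weiss inequality is quoted in the correct form. I do not anticipate a genuine obstacle here: all the real difficulty has already been absorbed into the nonlocal Fundamental Theorem of Calculus and the estimates for $V^s_{\d}$.
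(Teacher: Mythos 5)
Your proposal is correct and follows essentially the same route as the paper: reduce to $u \in C^{\infty}_c(\O_{-\d})$ by density, use the pointwise bound \eqref{eq:upoint} coming from the nonlocal Fundamental Theorem of Calculus and the estimate of Theorem \ref{Th: inverse Fourier trasnform as a function}\,\emph{\ref{item:Vc})}, and conclude with the Stein--Weiss Hardy inequality for Riesz potentials. The extra care you take about the kernel bound being applied only on a bounded set of differences $x-y$ is a valid (implicit in the paper) detail, not a deviation.
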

\begin{proof}
As before, it is enough to establish the inequality for $u \in C^{\infty}_c (\O_{-\d})$.
The proof is just a combination of inequality \eqref{eq:upoint} together with the classical Hardy inequality for Riesz potentials due to Stein and Weiss \cite{StWe58} (see also \cite[Lemma 2.8]{ShS2015}):
\begin{align*}
 \int_{\O} \frac{|u(x)|^p}{|x|^{s p}} \, d x & \leq c_1 \int_{\O} \frac{(I_s * |D^s_{\d} u|)(x)^p}{|x|^{s p}} \, d x \leq c_1 \int_{\Rn} \frac{(I_s * |D^s_{\d} u|)(x)^p}{|x|^{s p}} \, d x \leq c_2 \left\| D^s_{\d} u \right\|_{L^p (\Rn)}^p \\
 & = c_2 \left\| D^s_{\d} u \right\|_{L^p (\O)}^p ,
\end{align*}
for some constants $c_1, c_2>0$.
\end{proof}

\section{Compact embeddings}\label{se: Compact Embedding}

In this section we will use the nonlocal fundamental theorem of Calculus (Theorem \ref{Theo: nonlocal fundamental theorem of calculus}) to prove compact embeddings of the spaces $H^{s,p,\delta}_g (\O_{-\d})$ into $L^q (\O)$ spaces.

We start with the following H\"older estimate of the function $\frac{x}{|x|^{n+1-s}}$.
In fact, this result is included in the proof of \cite[Prop.\ 3.14]{COMI2019}, but we provide a full proof for the comfort of the reader.

\begin{lem}\label{ineq: Claim 1}
	There exists a constant $C>0$, such that for every $s \in (0,1)$ and $h \in \Rn$,
	\begin{equation*} 
	\int_{\Rn} \left| \frac{z}{|z|^{n+1-s}}- \frac{z-h}{|z-h|^{n+1-s}} \right|dz \leq \frac{C \left| h \right|^s}{s(1-s)} .
	\end{equation*}
\end{lem}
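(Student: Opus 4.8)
The plan is to exploit the scaling homogeneity of the integrand to reduce to the case $|h| = 1$, and then split the domain into a neighbourhood of the two singularities, a far field, and an intermediate region. First I would observe that the substitution $z = |h| z'$ gives $\int_{\Rn} |\,|z|^{-(n+1-s)} z - |z-h|^{-(n+1-s)}(z-h)\,|\,dz = |h|^s \int_{\Rn} |\,|z'|^{-(n+1-s)} z' - |z' - h/|h|\,|^{-(n+1-s)}(z' - h/|h|)\,|\,dz'$, so by rotational invariance it suffices to bound $I := \int_{\Rn} |G(z) - G(z - e)|\,dz$ where $G(z) = z/|z|^{n+1-s}$ and $e$ is a fixed unit vector, and show $I \le C/(s(1-s))$ with $C$ independent of $s$.

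Next I would split $\Rn = B(0,2) \cup \big(B(0,2)^c\big)$ (with the singularities at $0$ and $e$ both inside $B(0,2)$). On $B(0,2)$ I use the triangle inequality $|G(z) - G(z-e)| \le |G(z)| + |G(z-e)|$ and the bound $|G(z)| = |z|^{-(n-s)}$; then $\int_{B(0,2)} |z|^{-(n-s)}\,dz = \sigma_{n-1}\int_0^2 r^{s-1}\,dr = \sigma_{n-1} 2^s/s$, and similarly for the shifted term over $B(0,2)$ (contained in $B(-e,3)$, giving $\sigma_{n-1} 3^s/s$). This produces the $1/s$ contribution. On $B(0,2)^c$ both $z$ and $z-e$ stay away from the singularity, so I apply the fundamental theorem of calculus as in the proof of Theorem~\ref{th:Morrey}: writing $G(z) - G(z-e) = \int_0^1 \frac{d}{dt} G(z - te)\,dt$ and $DG(w) = |w|^{-(n+1-s)} I - (n+1-s)|w|^{-(n+3-s)} w\otimes w$, one gets $|G(z) - G(z-e)| \le (n+2-s)\int_0^1 |z - te|^{-(n+1-s)}\,dt$. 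For $z \in B(0,2)^c$ and $t \in [0,1]$ one has $|z - te| \ge |z| - 1 \ge |z|/2$, hence $|G(z) - G(z-e)| \le (n+2-s) 2^{n+1-s} |z|^{-(n+1-s)}$, and $\int_{B(0,2)^c} |z|^{-(n+1-s)}\,dz = \sigma_{n-1}\int_2^\infty r^{-2+s}\,dr = \sigma_{n-1} 2^{s-1}/(1-s)$. This yields the $1/(1-s)$ contribution. Adding the three pieces and bounding $2^s, 3^s \le 3$ and $n+2-s \le n+2$ uniformly in $s \in (0,1)$ gives $I \le \frac{C_n}{s} + \frac{C_n}{1-s} \le \frac{2 C_n}{s(1-s)}$, which is the claim.

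The only mild subtlety — and the step I would be most careful about — is making sure the constant $C$ is genuinely independent of $s$: this requires keeping the exponents $s$ and $1-s$ explicit in the elementary radial integrals $\int_0^2 r^{s-1}\,dr$ and $\int_2^\infty r^{s-2}\,dr$ (where exactly the factors $1/s$ and $1/(1-s)$ are produced and must not be absorbed into "$C$"), while all other $s$-dependent quantities such as $2^s$, $3^s$, $2^{n+1-s}$ and $n+2-s$ are bounded uniformly on $(0,1)$. There is no real analytic obstacle here; it is essentially the same computation underlying the far-field estimate in Theorem~\ref{th:Morrey}, organised to track the behaviour as $s \to 0^+$ and $s \to 1^-$.
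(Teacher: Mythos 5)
Your proposal is correct and follows essentially the same route as the paper: reduce to a unit translation by the scaling $z=|h|z'$ and a rotation (the paper performs this reduction at the end rather than the beginning, which is immaterial), then split $\Rn$ into $B(0,2)$, where the triangle inequality and the radial integral of $|z|^{-(n-s)}$ give the $1/s$ factor, and $B(0,2)^c$, where the fundamental theorem of calculus along the segment and the bound $|z-te|\geq |z|/2$ give the $1/(1-s)$ factor. Your attention to keeping the $1/s$ and $1/(1-s)$ explicit while bounding the remaining $s$-dependent constants uniformly is exactly the point, so the argument is complete.
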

\begin{proof}
We first show that there exists a constant $C>0$, such that for every $s \in (0,1)$ we have
	\begin{equation}\label{eq:w-e1} 
	\int_{\Rn} \left| \frac{w}{|w|^{n+1-s}}- \frac{w-e_1}{|w-e_1|^{n+1-s}} \right|dw \leq \frac{C}{s(1-s)} .
	\end{equation}
	On the one hand, we have that
	\begin{equation*}
	\int_{B(0,2)} \left| \frac{w}{|w|^{n+1-s}}- \frac{w-e_1}{|w-e_1|^{n+1-s}} \right|dw \leq C \int_{B(0,2)}\frac{1}{|w|^{n-s}}dw \leq C \, \frac{2^s}{s} \leq \frac{C}{s}.
	\end{equation*}
	On the other hand, for a fixed $w \in B(0,2)^c$,
	\begin{align*}
	& \left| \frac{w}{|w|^{n+1-s}}- \frac{w-e_1}{|w-e_1|^{n+1-s}} \right|= \left| \int_{0}^{1} \frac{d}{dt} \frac{w-te_1}{|w-te_1|^{n+1-s}}dt \right| \\ & = \left| \int_{0}^{1} (n+1-s) \frac{[(w-te_1) \cdot e_1] (w-te_1)}{|w-te_1|^{n+3-s}}- \frac{e_1}{|w-te_1|^{n+1-s}}dt \right| \leq C \int_{0}^{1} \frac{1}{|w-te_1|^{n+1-s}}dt .
	\end{align*}
	Now, for $w \in B(0,2)^c$ and $t \in [0,1]$ we have
	\[
	\left| w-te_1 \right| \geq |w| - t \geq |w| - 1 \geq \frac{1}{2} |w| ,
	\]
	so
	\[
	\int_0^1 \frac{1}{|w-te_1|^{n+1-s}} dt \leq 2^{n+1-s} \frac{1}{|w|^{n+1-s}} \leq 2^{n+1} \frac{1}{|w|^{n+1-s}} .
	\]
	By integration, we obtain that
	\begin{align*}
	\int_{B(0,2)^c}\left| \frac{w}{|w|^{n+1-s}}- \frac{w-e_1}{|w-e_1|^{n+1-s}} \right|dw \leq 
	C \int_{B(0,2)^c} \frac{1}{|w|^{n+1-s}}dw \leq C \, \frac{2^{-1+s}}{1-s} \leq \frac{C}{1-s}.
	\end{align*}
	This yields \eqref{eq:w-e1}.
	
In order to complete the proof, we take a rotation $R$ such that $R^T h=|h|e_1$.
	Then, making the change of variables $z=|h|Rw$ and using \eqref{eq:w-e1}, we arrive at
	\begin{align*}
	\int_{\Rn} \left| \frac{z}{|z|^{n+1-s}}- \frac{z-h}{|z-h|^{n+1-s}} \right|  dz 
	= |h|^s \int_{\Rn} \left| \frac{w}{|w|^{n+1-s}}- \frac{w-e_1}{|w-e_1|^{n+1-s}} \right|dw  
	\leq |h|^s  \frac{C}{s (1-s)}  .
	\end{align*}
\end{proof}

A key ingredient of the desired compactness result is the application of the Fr\'echet--Kolmogorov criterion, for which the following estimate on the translations is crucial.
The analogous result in the Sobolev case is classical \cite[Prop.\ 9.3]{Brezis}.
The next result is inspired by \cite[Prop.\ 3.14]{COMI2019}, where they proved a fractional version when $p=1$.


\begin{prop} \label{Prop: acotaciones de la traslacion en Hsp}
\begin{enumerate}[a)]
\item\label{item:p>1}
Let $1< p < \infty$.
Then there exists $C>0$ such that for all $u \in H_0^{s,p,\d}(\O_{-\d})$ and $h \in \Rn$,
	\begin{equation}\label{eq: acotaciones de la traslacion en Hsp}
		\left( \int_{\O}|u(x+h)-u(x)|^p \, dx \right)^{\frac{1}{p}} \leq C \left| h \right|^s \left\| D_{\d}^s u \right\|_{L^p(\O)} .
	\end{equation} 

\item\label{item:p=1}
Let $p =1$.
Then for all $M >0$ there exists $C >0$ such that for all $u \in H_0^{s,p,\d}(\O_{-\d})$ and $h \in B(0, M)$, inequality \eqref{eq: acotaciones de la traslacion en Hsp} holds.

\end{enumerate}
\end{prop}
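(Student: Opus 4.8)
The plan is to reduce to smooth functions by density, apply the nonlocal Fundamental Theorem of Calculus (Theorem~\ref{Theo: nonlocal fundamental theorem of calculus}), and convert the translation estimate into an $L^1$ bound on a difference of Riesz-type kernels that is already available from Lemma~\ref{ineq: Claim 1}. First I would observe that it suffices to prove inequality \eqref{eq: acotaciones de la traslacion en Hsp} for $u\in C^\infty_c(\O_{-\d})$: every $u\in H_0^{s,p,\d}(\O_{-\d})$ vanishes outside $\O_{-\d}$, hence extends by zero to an $L^p(\Rn)$ function, and along an approximating sequence $u_j\in C^\infty_c(\O_{-\d})$ (converging in the $H^{s,p,\d}(\O)$ norm) both the zero-extensions $\bar u_j$ converge in $L^p(\Rn)$ and $D^s_\d u_j$ converges in $L^p(\O)$, so both sides of \eqref{eq: acotaciones de la traslacion en Hsp} pass to the limit. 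For such $u$ the remark after Definition~\ref{def: nonlocal gradient}, together with the inclusion $\O_{-\d}+B(0,\d)\subset\O$, gives $\supp D^s_\d u\subset\O$, while $D^s_\d u\in C^\infty_c(\Rn)$ by Proposition~\ref{Lemma: convolución con gradiente clásico}.

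Fix such a $u$ and $h\in\Rn$. By Theorem~\ref{Theo: nonlocal fundamental theorem of calculus},
\[
 u(x+h)-u(x)=\int_{\O}D^s_\d u(y)\cdot\big[V^s_\d(x-y+h)-V^s_\d(x-y)\big]\,dy,\qquad x\in\O.
\]
If $|h|\le R$ for a fixed $R>0$, then all arguments $x-y$ and $x-y+h$ of $V^s_\d$ above lie in $B(0,\diam\O+R)$, so I would apply Theorem~\ref{Th: inverse Fourier trasnform as a function}\,\emph{\ref{property of V d)})} with $R_1=\diam\O+R$ and $R_2=R$ to obtain a constant $C_R$ with $|V^s_\d(x-y+h)-V^s_\d(x-y)|\le C_R\,g_h(x-y)$, where $g_h(z):=\big|\frac{z+h}{|z+h|^{n+1-s}}-\frac{z}{|z|^{n+1-s}}\big|$. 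Since $D^s_\d u$ is supported in $\O$, this yields the pointwise bound $|u(\cdot+h)-u|\le C_R\,(|D^s_\d u|*g_h)$ on $\O$, and Young's convolution inequality together with the change of variables reducing $\|g_h\|_{L^1(\Rn)}$ to the integral estimated in Lemma~\ref{ineq: Claim 1} gives $\|u(\cdot+h)-u\|_{L^p(\O)}\le\frac{C_R\,C}{s(1-s)}\,|h|^s\,\|D^s_\d u\|_{L^p(\Rn)}$; finally $\|D^s_\d u\|_{L^p(\Rn)}=\|D^s_\d u\|_{L^p(\O)}$. This argument is valid for every $1\le p<\infty$ as long as $|h|\le R$, so for part \emph{\ref{item:p=1})}, given $M>0$ from the statement, one takes $R=M$.

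For part \emph{\ref{item:p>1})} it remains to handle large $|h|$, and here I would use a support argument combined with the nonlocal Poincar\'e inequality. Set $R_0:=1+\sup\{|z|:z\in\O_{-\d}-\O\}$, which is finite since $\O$ is bounded. If $|h|>R_0$, then $x+h\notin\O_{-\d}$ for every $x\in\O$, so the zero-extension of $u$ satisfies $u(\cdot+h)\equiv0$ on $\O$ and hence $\|u(\cdot+h)-u\|_{L^p(\O)}=\|u\|_{L^p(\O)}$; Theorem~\ref{th:Poincare} (which requires $p>1$) bounds this by $C\|D^s_\d u\|_{L^p(\O)}\le C|h|^s\|D^s_\d u\|_{L^p(\O)}$, since $|h|^s>1$. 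Combining this with the bounded-$|h|$ estimate of the previous paragraph (applied with $R=R_0$) and taking the larger of the two constants completes part \emph{\ref{item:p>1})}. I expect the only genuinely delicate point to be the bookkeeping of radii so that Theorem~\ref{Th: inverse Fourier trasnform as a function}\,\emph{\ref{property of V d)})} is legitimately applicable on the relevant bounded set; this is exactly what forces the split into small and large $|h|$, and it also explains why for $p=1$ only bounded translations can be treated, as the Poincar\'e inequality is then unavailable.
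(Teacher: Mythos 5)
Your proposal is correct and follows essentially the same route as the paper: reduce to $u\in C^\infty_c(\O_{-\d})$ by density, use the nonlocal Fundamental Theorem of Calculus together with Theorem \ref{Th: inverse Fourier trasnform as a function}\,\emph{\ref{property of V d)})} and Lemma \ref{ineq: Claim 1} for bounded translations, and the Poincar\'e inequality (hence $p>1$) for large translations. The only cosmetic difference is that you invoke Young's convolution inequality where the paper carries out the equivalent H\"older-plus-Fubini computation explicitly.
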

\begin{proof} By a standard density argument, it is enough to prove the result for $u \in C_c^{\infty}(\O_{-\d})$.

We start with case \emph{\ref{item:p>1})}.
Let us fix $M > 0$ such that $x + h \notin \O_{-\d}$ for all $x \in \O$ and $h \in B(0,M)^c$.
Then, by Theorem \ref{th:Poincare},
\[
 \left( \int_{\O}|u(x+h)-u(x)|^p \, dx \right)^{\frac{1}{p}} = \left\| u \right\|_{L^p (\O)} \leq C \left\| D_{\d}^s u \right\|_{L^p(\O)} \leq \frac{C}{M^s} \left| h \right|^s \left\| D_{\d}^s u \right\|_{L^p(\O)} ,
\]
and the proof is concluded in this case.

In the rest of the proof we consider $h \in B(0,M)$.
As $\supp D_\d^s u \subset \O$, there exists $R>0$ such that $D_\d^s u (x-z) = 0$ for all $x \in \O$ and $z \in B(0, R)^c$.
Let $x \in \O$.
By Theorem \ref{Theo: nonlocal fundamental theorem of calculus},
	\begin{equation} \label{eq:acotaciones de la traslacion en Hsp 2 primer paso}
		\begin{split}
			|u(x+h)-u(x)| &=  
			\left| \int_{\Rn} \left( V_\d^s(z)- V_\d^s(z+h) \right)\cdot D_\d^s u(x-z) dz\right|   \\
			& \leq \int_{B(0,R)} \left| V_\d^s(z)- V_\d^s(z+h)  \right| \left| D_\d^s u(x-z) \right| dz  .
		\end{split}
	\end{equation}
By Theorem \ref{Th: inverse Fourier trasnform as a function}\,\emph{\ref{item:Vd})}, there exists $C>0$ such that
	\begin{equation}\label{eq:VLipschiz}
		\left| V_\d^s(z) -V_\d^s(z+h) \right| \leq C \left| \frac{z}{|z|^{n+1-s}}- \frac{z+h}{|z+h|^{n+1-s}} \right|, 
	\end{equation}	
for all $z\in B(0,R)$.
Thus, applying H\"older's inequality to the right-hand side of \eqref{eq:acotaciones de la traslacion en Hsp 2 primer paso},
	\begin{align*}
		&|u(x+h)-u(x)| \leq \\
		&C \left( \int_{B(0,R)} \left| \frac{z}{|z|^{n+1-s}}- \frac{z+h}{|z+h|^{n+1-s}} \right| |D_\d^s u(x-z)|^p dz \right)^{\frac{1}{p}}
		\left(\int_{B(0,R)} \left| \frac{z}{|z|^{n+1-s}}- \frac{z+h}{|z+h|^{n+1-s}} \right|dz\right)^{\frac{1}{p'}}\\
		&\leq \left( \frac{C |h|^s}{s(1-s)}\right)^{\frac{1}{p'}} \left( \int_{B(0,R)}\left| \frac{z}{|z|^{n+1-s}}- \frac{z+h}{|z+h|^{n+1-s}} \right| |D_\d^s u(x-z)|^p dz \right)^{\frac{1}{p}} ,
	\end{align*}
	where we have used Lemma \ref{ineq: Claim 1}. 
	Next, we integrate and apply Fubini's theorem to obtain
	\begin{align*}
		\int_{\O} |u(x+h)-u(x)|^p \, dx&\leq \left( \frac{C|h|^s}{s(1-s)}\right)^{p/{p'}} \int_{B(0,R)} \left| \frac{z}{|z|^{n+1-s}}- \frac{z+h}{|z+h|^{n+1-s}} \right|\int_{\O} |D_\d^s u(x-z)|^p \, dx \, dz \\
		&\leq  \left( \frac{C|h|^s}{s(1-s)}\right)^{p/{p'}+1} \| D_\d^s u \|_{L^p(\Rn)}^p=\left( \frac{C|h|^s}{s(1-s)}\right)^{p} \| D_\d^s u \|_{L^p(\O)}^p ,
	\end{align*}
where we have applied Lemma \ref{ineq: Claim 1} again.
This completes the proof of \emph{\ref{item:p>1})}.

Now we prove \emph{\ref{item:p=1})} by following the same lines as in case \emph{\ref{item:p>1})}.
As $\supp D_\d^s u \subset \O$, there exists $R>0$ such that $D_\d^s u (x-z) = 0$ for all $x \in \O$ and $z \in B(0, R)^c$.
Fix $M > 0$ and consider $h \in B(0,M)$.
Let $x \in \O$.
As in \eqref{eq:acotaciones de la traslacion en Hsp 2 primer paso}--\eqref{eq:VLipschiz},
\[
|u(x+h)-u(x)| \leq C \int_{B(0,R)} \left| \frac{z}{|z|^{n+1-s}}- \frac{z+h}{|z+h|^{n+1-s}} \right| \left| D_\d^s u(x-z) \right| dz .
\]
Using Lemma \ref{ineq: Claim 1} we find that, for some $C_1 >0$
\begin{align*}
 \int_{\O} \left| u(x+h)-u(x) \right| dx & \leq C \int_{B(0,R)} \left| \frac{z}{|z|^{n+1-s}}- \frac{z+h}{|z+h|^{n+1-s}} \right| \int_{\O} |D_\d^s u(x-z)| \, dx \, dz \\
 & \leq \frac{C_1 |h|^s}{s(1-s)} \| D_\d^s u \|_{L^1(\O)} ,
\end{align*}
which concludes the proof. 
\end{proof}

The main result of this section is the following compact embedding, which is an analogue of the Rellich--Kondrachov theorem.
See \cite[Th.\ 2.2]{ShS2018} for the fractional case; their proof uses Ascoli--Arzel\`a's theorem to mollifiers of the sequence, while we prefer to invoke directly the Fr\'echet--Kolmogorov criterion.
Recall the notation $p_s^*$ from \eqref {eq:p*s}.

\begin{teo}\label{th:compactembedding}
Let $g \in H^{s,p,\d}(\O)$. Then, for any sequence $\{u_j\}_{j \in \mathbb{N}}\subset H_g^{s,p,\d}(\O_{-\d})$ such that
	\[
	u_j \weakc u \quad \text{ in } H^{s,p,\d}(\O),
	\]
	for some $u \in H^{s,p,\d}(\O)$, one has $u \in H_g^{s,p,\d}(\O_{-\d})$ and:
\begin{enumerate}[a)]
\item\label{item:compactp>1}
if $p>1$,
	\[
	u_j \to u \quad \text{ in } L^q(\O),
	\]
	for every $q$ satisfying
	\begin{equation*}
	\begin{cases}
		q \in [1,p_s^*) & \text{ if } sp<n, \\
		q \in [1, \infty) & \text{ if } sp=n, \\
		q \in [1,\infty] & \text{ if } sp>n.
	\end{cases}
	\end{equation*}
	
\item\label{item:compactp=1} if $p=1$,
\[
 u_j \to u \quad \text{ in } L^1(\O) .
\]

\end{enumerate}
\end{teo}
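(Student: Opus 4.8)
The plan is to reduce to the homogeneous case $g=0$, apply the Fréchet--Kolmogorov compactness criterion in $L^1(\O)$ using the translation estimates of Proposition~\ref{Prop: acotaciones de la traslacion en Hsp}, and then upgrade the mode of convergence by interpolation with the integrability gains of Theorems~\ref{th:Poincare H0delta}, \ref{th:Trudinger} and \ref{th:Morrey}. Concretely, set $v_j := u_j - g$ and $v := u - g$. By definition $v_j \in H_0^{s,p,\d}(\O_{-\d})$, and since $v_j \weakc v$ in $H^{s,p,\d}(\O)$ while $H_0^{s,p,\d}(\O_{-\d})$ is a norm-closed linear subspace of $H^{s,p,\d}(\O)$ — hence convex and weakly closed — we get $v \in H_0^{s,p,\d}(\O_{-\d})$, that is, $u \in H_g^{s,p,\d}(\O_{-\d})$. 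As $u_j - u = v_j - v$, the convergence $u_j \to u$ in $L^q(\O)$ is equivalent to $v_j \to v$ in $L^q(\O)$, so it suffices to treat a sequence $v_j \weakc v$ with all terms in $H_0^{s,p,\d}(\O_{-\d})$. Weak convergence yields a uniform bound $\|v_j\|_{L^p(\O_\d)} + \|D_\d^s v_j\|_{L^p(\O)} \le A$, and every element of $H_0^{s,p,\d}(\O_{-\d})$ vanishes a.e.\ on $\O_\d \setminus \O_{-\d}$, so, extending by zero, $\{v_j\}$ is a bounded subset of $L^1(\Rn)$ whose members are all supported in the fixed compact set $\overline{\O_{-\d}} \Subset \O$.

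Next I would verify the hypotheses of the Fréchet--Kolmogorov theorem for $\{v_j\}$ in $L^1(\Rn)$: boundedness in $L^1$ follows from Hölder's inequality and the bound $A$; tightness at infinity is immediate from the common compact support; and equicontinuity of translations is precisely Proposition~\ref{Prop: acotaciones de la traslacion en Hsp} — part~\emph{\ref{item:p=1})} when $p=1$, and part~\emph{\ref{item:p>1})} followed by Hölder's inequality on the bounded set $\O$ when $p>1$ — which gives, for $h$ in any fixed ball,
\[
 \int_{\Rn} |v_j(x+h) - v_j(x)|\,dx \le C\,|h|^s\,\|D_\d^s v_j\|_{L^p(\O)} \le C\,A\,|h|^s ,
\]
which tends to $0$ as $h \to 0$, uniformly in $j$. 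Hence $\{v_j\}$ is relatively compact in $L^1(\Rn)$, and therefore in $L^1(\O)$. Since $w \mapsto \int_\O w\,\phi$ is a bounded linear functional on $H^{s,p,\d}(\O)$ for every $\phi \in L^\infty(\O)$, the weak convergence gives $\int_\O v_j\,\phi \to \int_\O v\,\phi$ for all such $\phi$; combined with the $L^1$ precompactness and the standard subsequence argument, this identifies the limit and yields $v_j \to v$ in $L^1(\O)$, which proves part~\emph{\ref{item:compactp=1})}.

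Finally, for $p>1$ I would bootstrap from $L^1$ to $L^q$. If $sp<n$, Theorem~\ref{th:Poincare H0delta} provides a uniform bound $\|v_j\|_{L^{p_s^*}(\O)} \le CA$ (and $\|v\|_{L^{p_s^*}(\O)} < \infty$); if $sp=n$, expanding the exponential in Theorem~\ref{th:Trudinger} gives $\|v_j\|_{L^r(\O)} \le C_r A$ for every finite $r$; if $sp>n$, Theorem~\ref{th:Morrey} gives $\|v_j\|_{L^\infty(\O)} \le CA$ together with the equi-Hölder estimate $|v_j(x)-v_j(y)| \le C\,|x-y|^{s-n/p}$. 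In the first two cases, given $q$ in the stated range choose $q_0 > q$ with a uniform $L^{q_0}$ bound and interpolate, $\|v_j - v\|_{L^q(\O)} \le \|v_j - v\|_{L^1(\O)}^{\theta}\,\|v_j - v\|_{L^{q_0}(\O)}^{1-\theta} \to 0$; in the third case this handles every finite $q$, while for $q = \infty$ the Arzelà--Ascoli theorem gives precompactness of $\{v_j\}$ in $C(\overline\O)$ and the uniform limit of any convergent subsequence must agree with the $L^1$-limit $v$, so $v_j \to v$ in $L^\infty(\O)$.

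The whole argument is an assembly of the preceding results; the only delicate point — the uniform-in-$j$ equicontinuity of translations required by the Fréchet--Kolmogorov criterion — is exactly the content of Proposition~\ref{Prop: acotaciones de la traslacion en Hsp}, so the main (modest) difficulty is the exponent bookkeeping across the three regimes according to the sign of $sp-n$ and the separate treatment of the endpoint $q=\infty$ via Arzelà--Ascoli rather than interpolation.
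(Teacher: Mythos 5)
Your proposal is correct, and it rests on the same pillars as the paper's proof (the translation estimates of Proposition~\ref{Prop: acotaciones de la traslacion en Hsp}, the Fr\'echet--Kolmogorov criterion, and the embeddings of Section~\ref{se: Poincare}), but the assembly is organized differently. The paper applies Fr\'echet--Kolmogorov directly in $L^q(\O)$: for $sp<n$ and $p\leq q<p_s^*$ it interpolates the translation estimate itself between $L^p$ and $L^{p_s^*}$ (the latter norm controlled by Theorem~\ref{th:Poincare H0delta}), handles $sp=n$ by reducing to a smaller exponent via Proposition~\ref{prop: simple inclussions}\,\emph{\ref{simple inclussion 1})}, and treats $sp>n$ by Morrey plus Ascoli--Arzel\`a. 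You instead establish compactness once and for all in $L^1$, identify the limit through the weak convergence (a point the paper leaves implicit), and then upgrade the mode of convergence by interpolating against uniform higher-integrability bounds: Poincar\'e--Sobolev when $sp<n$, Trudinger when $sp=n$, Morrey when $sp>n$, with Ascoli--Arzel\`a reserved for the endpoint $q=\infty$. Your route confines the case distinction to the upgrade stage and gives Theorem~\ref{th:Trudinger} a role the paper does not use; the paper's route avoids Trudinger and needs only one application of the compactness criterion per case. One small point you should make explicit: Proposition~\ref{Prop: acotaciones de la traslacion en Hsp} bounds $\| u(\cdot+h)-u \|_{L^p(\O)}$, whereas Fr\'echet--Kolmogorov in $L^1(\Rn)$ asks for the translate estimate on all of $\Rn$; since your $v_j$ vanish outside $\O_{-\d}$, the difference $v_j(\cdot+h)-v_j$ is supported in $\O$ whenever $|h|<\d$, so the two quantities coincide for small $h$, which is all the criterion requires --- worth a line, but not a gap.
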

\begin{proof}
Clearly, $u \in H_g^{s,p,\d}(\O_{-\d})$, since $H_g^{s,p,\d}(\O_{-\d})$ is a closed affine subspace of $H^{s,p,\d}(\O_{-\d})$.
By linearity, we can assume $g=0$.

	The case $sp>n$ implies $p>1$ and follows from Theorem \ref{th:Morrey} and the Ascoli--Arzel\`a theorem. The case $sp=n$ reduces to the case $sp<n$ thanks to Proposition \ref{prop: simple inclussions}\,\emph{\ref{simple inclussion 1})}. Thus, we focus on the case $sp<n$.
Moreover, the case $q < p$ reduces to the case $q \geq p$ thanks to the inclusions $L^{p_2} (\O) \subset L^{p_1} (\O)$ for all $1 \leq p_1 \leq p_2$, so we can assume that $q \in [p, p_s^*)$.

Let $M>0$ be such that $\|u_j\|_{H^{s,p,\d}(\O)} \leq M$ for each $j \in \N$.

We start with \emph{\ref{item:compactp>1})}, so we assume $p>1$.
By Proposition \ref{Prop: acotaciones de la traslacion en Hsp} we have that for $j \in \N$ and $h \in \Rn$,
	\begin{equation} \label{eq: NL mean value Theorem H0delta}
	\| \tau_h u_j - u_j \|_{L^p(\O)} \leq C \left| h \right|^s \left\| D_{\d}^s u_j \right\|_{L^p(\O)},
	\end{equation}
with $\tau_h u_j = u_j (\cdot-h)$ and some $C>0$.
Next, as $p\leq q < p^*_s$, we can write
	\[
	\frac{1}{q}=\frac{\alpha}{p}+\frac{1-\alpha}{p^*_s} \qquad \text{for some } \alpha \in (0,1].
	\]
Using the interpolation inequality, \eqref{eq: NL mean value Theorem H0delta}, the triangular inequality and Theorem \ref{th:Poincare H0delta},
	\begin{align*}
	\left\| \tau_h u_j - u_j \right\|_{L^q(\O)} & \leq \left\| \tau_h u_j - u_j \right\|_{L^p(\O)}^{\a} \left\| \tau_h u_j - u_j \right\|_{L^{p^*_s}(\O)}^{1-\a} \\
	&\leq \left(C|h|^s \right)^{\a} \left\| D_{\d}^s u_j \right\|_{L^p(\O)}^{\a} \left( 2 \left\| u_j \right\|_{L^{p^*_s}(\O)} \right)^{1-\a} \\
	&\leq (2 C_1)^{1-\a} \left( C|h|^s \right)^{\a} \left\| D_{\d}^s u_j \right\|_{L^p(\O)} \leq  (2 C_1)^{1-\a} M \left( C|h|^s \right)^{\a} ,
	\end{align*}
for some $C_1 >0$. Thus,
\[
 \lim_{h \to 0} \sup_{j \in \N} \| \tau_h u_j - u_j \|_{L^q(\O)} = 0 .
\]
As a result, the Fr\'echet--Kolmogorov criterion leads to the compactness of $\{ u_j \}_{j \in \N}$ in $L^q(\O)$.

Now we show \emph{\ref{item:compactp=1})}, so we assume $p=1$.
Fix $M_1 >0$.
By Proposition \ref{Prop: acotaciones de la traslacion en Hsp} we have that for $j \in \N$ and $h \in B(0, M_1)$
\[
 \| \tau_h u_j - u_j \|_{L^1(\O)} \leq C \left| h \right|^s \left\| D_{\d}^s u_j \right\|_{L^1(\O)} \leq C \, M \left| h \right|^s ,
\]
for some $C>0$.
Again the Fr\'echet--Kolmogorov criterion concludes the compactness of $\{ u_j \}_{j \in \N}$ in $L^1(\O)$.
\end{proof}

Of course, Theorem \ref{th:Morrey} yields additionally the compact inclusion of $H_0^{s,p,\d}(\O_{-\d})$ into $C^{0,\beta} (\overline{\O})$ for any $0 < \beta < s-\frac{n}{p}$ under the range $s p > n$.

\section[Existence of minimizers of convex functionals]{Existence of minimizers and the Euler--Lagrange equation} \label{se: existence of minimizers}

In this final section, we prove the existence of minimizers of functionals of the form
\begin{equation}\label{eq:I}
 I(u) = \int_{\O} W (x, u(x), D^s_\d u (x)) \, dx
\end{equation}
under coercivity and convexity conditions.
We also show the corresponding (nonlocal) Euler--Lagrange equations satisfied by the minimizers.

From now on, $\mc{L}^n$ denotes the Lebesgue sigma-algebra in $\Rn$, whereas $\mc{B}$ and $\mc{B}^n$ denote the Borel sigma-algebras in $\R$ and $\Rn$, respectively.
The result on the existence of minimizers, which is a standard application of the direct method of the Calculus of Variations, is as follows.

\begin{teo}
Let $1 < p < \infty$.
Let $u_0 \in H^{s,p,\d} (\O)$.
Let $W : \O \times \R \times \Rn \to \R \cup \{ \infty \}$ satisfy the following conditions:
\begin{enumerate}[a)]
\item $W$ is $\mc{L}^n \times \mc{B} \times \mc{B}^n$-measurable.

\item $W (x, \cdot, \cdot)$ is lower semicontinuous for a.e.\ $x \in \O$.

\item For a.e.\ $x \in \O$ and every $y \in \R$, the function $W (x, y, \cdot)$ is convex.

\item\label{item:Ecoerc}
There exist $c>0$ and $a \in L^1 (\O)$ such that
\[
 W (x, y, z) \geq a (x) + c \left| z \right|^p
\]
for a.e.\ $x \in \O$, all $y \in \R$ and all $z \in \Rn$.
\end{enumerate}
Define $I$ as in \eqref{eq:I}, and assume that $I$ is not identically infinity in $H_{u_0}^{s,p,\d} (\O_{-\d})$.
Then there exists a minimizer of $I$ in $H_{u_0}^{s,p,\d} (\O_{-\d})$.
\end{teo}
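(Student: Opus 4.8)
The plan is to apply the direct method of the Calculus of Variations, which here is especially clean because the requisite functional-analytic machinery — reflexivity of $H^{s,p,\d}(\O)$ (Proposition \ref{prop: espacio separable y reflexivo}), the weak lower semicontinuity of the $L^p$ norm, and the compact embedding from $H^{s,p,\d}_g(\O_{-\d})$ into $L^q(\O)$ (Theorem \ref{th:compactembedding}) — is already in place. First I would let $m = \inf \{ I(u) : u \in H^{s,p,\d}_{u_0}(\O_{-\d}) \}$; by hypothesis $I$ is not identically $+\infty$ on this affine subspace, and by the coercivity bound \emph{\ref{item:Ecoerc})} with $a \in L^1(\O)$ we have $I(u) \geq \int_\O a + c \| D^s_\d u \|_{L^p(\O)}^p \geq \| a \|_{L^1(\O)} \cdot(-1)$, so $m \in \R$. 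Take a minimizing sequence $\{ u_j \} \subset H^{s,p,\d}_{u_0}(\O_{-\d})$, i.e.\ $I(u_j) \to m$; we may assume $I(u_j) \leq m+1$ for all $j$.

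Next I would extract the bound on the sequence in $H^{s,p,\d}(\O)$. From coercivity, $c \| D^s_\d u_j \|_{L^p(\O)}^p \leq I(u_j) - \int_\O a \leq m + 1 + \| a \|_{L^1(\O)}$, so $\{ D^s_\d u_j \}$ is bounded in $L^p(\O, \Rn)$. To bound $\{ u_j \}$ in $L^p(\O_\d)$, write $u_j = u_0 + v_j$ with $v_j \in H^{s,p,\d}_0(\O_{-\d})$; then $D^s_\d v_j = D^s_\d u_j - D^s_\d u_0$ is bounded in $L^p(\O)$, and the Poincar\'e inequality (Theorem \ref{th:Poincare}) gives $\| v_j \|_{L^p(\O)} \leq C \| D^s_\d v_j \|_{L^p(\O)}$, hence $\| v_j \|_{L^p(\O)}$ is bounded; since $v_j$ vanishes a.e.\ on $\O_\d \setminus \O_{-\d} \subset \O_\d \setminus \O$, in fact $\| v_j \|_{L^p(\O_\d)}$ is bounded, and therefore $\| u_j \|_{L^p(\O_\d)} \leq \| u_0 \|_{L^p(\O_\d)} + \| v_j \|_{L^p(\O_\d)}$ is bounded. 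Thus $\{ u_j \}$ is bounded in the reflexive Banach space $H^{s,p,\d}(\O)$, so, passing to a subsequence (not relabelled), $u_j \weakc u$ in $H^{s,p,\d}(\O)$ for some $u$, which means precisely $u_j \weakc u$ in $L^p(\O_\d)$ and $D^s_\d u_j \weakc D^s_\d u$ in $L^p(\O, \Rn)$. By Theorem \ref{th:compactembedding}, $u \in H^{s,p,\d}_{u_0}(\O_{-\d})$ and $u_j \to u$ in $L^p(\O)$, hence (along a further subsequence) $u_j \to u$ a.e.\ in $\O$.

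Finally I would invoke a standard lower semicontinuity theorem for convex integrands (e.g.\ the Ioffe–Olech / De Giorgi type result, or \cite[Th.\ 8.2]{dacorogna}): given $W$ that is $\mc{L}^n \times \mc{B} \times \mc{B}^n$-measurable, with $W(x,\cdot,\cdot)$ lower semicontinuous and $W(x,y,\cdot)$ convex for a.e.\ $x$, and bounded below by $a(x) + c|z|^p$ with $a \in L^1(\O)$, the functional $(v, V) \mapsto \int_\O W(x, v(x), V(x))\, dx$ is sequentially lower semicontinuous with respect to strong $L^1$ (equivalently a.e.) convergence of $v_j \to v$ together with weak $L^1$ (here weak $L^p$) convergence of $V_j \weakc V$. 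Applying this with $v_j = u_j \to u$ a.e.\ and $V_j = D^s_\d u_j \weakc D^s_\d u$ in $L^p(\O, \Rn)$ yields
\[
 I(u) = \int_\O W(x, u(x), D^s_\d u(x))\, dx \leq \liminf_{j \to \infty} \int_\O W(x, u_j(x), D^s_\d u_j(x))\, dx = \liminf_{j \to \infty} I(u_j) = m .
\]
Since $u \in H^{s,p,\d}_{u_0}(\O_{-\d})$, we have $I(u) \geq m$, so $I(u) = m$ and $u$ is the desired minimizer. The main obstacle is conceptual rather than technical: one must make sure that the "nonlocal gradient" $D^s_\d$ really does inherit the two properties that drive the argument — weak closedness of the graph $\{(u, D^s_\d u)\}$ in $L^p \times L^p$ (which is automatic from the isometry $T$ of Proposition \ref{prop: espacio separable y reflexivo} mapping $H^{s,p,\d}(\O)$ onto a closed, hence weakly closed, subspace of $F_p$) and the compactness of the embedding into $L^p(\O)$ (Theorem \ref{th:compactembedding}), so that the integrand's $y$-slot sees strong/a.e.\ convergence while its $z$-slot sees weak convergence. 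Once that pairing is correctly set up, the convexity-in-$z$ lower semicontinuity theorem applies verbatim as in the local and fractional cases.
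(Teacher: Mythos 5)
Your proposal is correct and follows essentially the same route as the paper: direct method, coercivity bound on $\{D^s_\d u_j\}$, Poincar\'e applied to $u_j-u_0$ to bound the sequence in $H^{s,p,\d}(\O)$, reflexivity plus the compact embedding of Theorem \ref{th:compactembedding}, and a standard convexity-based lower semicontinuity theorem. Your extra care in passing from the $L^p(\O)$ bound on $u_j-u_0$ to an $L^p(\O_\d)$ bound (using that elements of $H_0^{s,p,\d}(\O_{-\d})$ vanish a.e.\ in $\O_\d\setminus\O_{-\d}$) is a minor refinement of a step the paper treats implicitly.
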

\begin{proof}
Assumption \emph{\ref{item:Ecoerc})} shows that the functional $I$ is bounded below by $\int_{\Rn}a$.
As $I$ is not identically infinity in $H_{u_0}^{s,p,\d} (\O_{-\d})$, there exists a minimizing sequence $\{ u_j \}_{j \in \N}$ of $I$ in $H_{u_0}^{s,p,\d} (\O_{-\d})$.
Then, assumption \emph{\ref{item:Ecoerc})} implies that $\{ D^s_\d u_j \}_{j \in \N}$ is bounded in $L^p (\O, \Rn)$.
By Theorem \ref{th:Poincare} applied to $u_j - u_0$, we obtain that $\{ u_j - u_0\}_{j \in \N}$ and, hence, $\{ u_j \}_{j \in \N}$ are bounded in $L^p (\O)$.
Therefore, $\{ u_j \}_{j \in \N}$ is bounded in $H^{s,p,\d} (\O)$.
As $H^{s,p,\d} (\O)$ is reflexive (Proposition \ref{prop: espacio separable y reflexivo}), we can extract a weakly convergent subsequence.
Using Theorem \ref{th:compactembedding}, we obtain that there exists $u \in H^{s,p,\d} (\O)$ such that for a subsequence (not relabelled),
\begin{equation*}
 u_j \weakc u \text{ in } H^{s,p,\d} (\O) \quad \text{and} \quad u_j \to u \text{ in } L^p (\O) .
\end{equation*}
Moreover, $u \in H_{u_0}^{s,p,\d} (\O_{-\d})$.

A standard lower semicontinuity result for convex functionals (see, e.g., \cite[Th.\ 7.5]{FoLe07}) shows that
\[
 I(u) \leq \liminf_{j \to \infty} I(u_j).
\]
Therefore, $u$ is a minimizer of $I$ in $H_{u_0}^{s,p,\d} (\O_{-\d})$ and the proof is concluded.
\end{proof}

We finally show the Euler--Lagrange equation satisfied by any minimizer.
The notation for partial derivatives is as follows: $D_y W (x, \cdot, z)$ is the derivative of $W (x, \cdot, z)$, and $D_z W (x, y, \cdot)$ is the derivative of $W (x, y, \cdot)$.

\begin{teo}
Let $1 < p < \infty$.
Let $u_0 \in H^{s,p,\d} (\O)$.
Let $W : \O \times \R \times \Rn \to \R$ satisfy the following conditions:
\begin{enumerate}[a)]
\item $W (\cdot , y, z)$ is $\mc{L}^n$-measurable for each $y \in \R$ and $z \in \Rn$.

\item $W (x, \cdot, \cdot)$ is of class $C^1$ for a.e.\ $x \in \O$.

\item\label{item:growthW}
There exist $c>0$ and $a \in L^1 (\O)$ such that
\[
\left| W (x, y, z) \right| + \left| D_y W (x, y, z) \right| + \left| D_z W (x, y, z) \right| \leq a (x) + c \left( \left| y \right|^p + \left| z \right|^p \right) ,
\]
for a.e.\ $x \in \O$, all $y \in \R$ and all $z \in \Rn$.
\end{enumerate}
Define $I$ as in \eqref{eq:I}.
Let $u$ be a minimizer of $I$ in $H_{u_0}^{s,p,\d} (\O_{-\d})$.
Then, for every $\f \in C^{\infty}_c (\O_{-\d})$,
\begin{equation}\label{eq:WEL}
 \int_{\O} \left[ D_y W (x, u(x), D^s_\d u (x)) \, \f (x) + D_z W (x, u(x), D^s_\d u (x)) \cdot D^s_\d \f (x) \right] dx = 0 .
\end{equation}
If, in addition, $D_z W (\cdot, u(\cdot), D^s_\d u (\cdot)) \in C^1 (\overline{\O_{-\d}}, \Rn)$ then
\begin{equation}\label{eq:EL}
 D_y W (x, u(x), D^s_\d u (x)) = \diver^s_{\d} D_z W (x, u(x), D^s_\d u (x))
\end{equation}
for a.e.\ $x \in \O_{-\d}$.
\end{teo}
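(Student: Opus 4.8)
The plan is to derive the weak form \eqref{eq:WEL} first and then obtain the strong form \eqref{eq:EL} by recognizing the nonlocal divergence through the integration by parts formula of Theorem \ref{th:Nl parts}. For the weak form, I would take any $\f \in C^{\infty}_c (\O_{-\d})$ and consider the real function $j(t) = I(u + t \f)$ for $t \in \R$. Since $u$ minimizes $I$ over the affine space $H^{s,p,\d}_{u_0}(\O_{-\d})$ and $u + t \f$ stays in this space (because $\f \in C^{\infty}_c(\O_{-\d}) \subset H^{s,p,\d}_0(\O_{-\d})$ and $D^s_\d$ is linear), the function $j$ has a minimum at $t=0$, so $j'(0)=0$ provided $j$ is differentiable there. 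The standard tool is differentiation under the integral sign: I would verify that the integrand $t \mapsto W(x, u(x) + t\f(x), D^s_\d u(x) + t D^s_\d \f(x))$ is differentiable with derivative $D_y W (x, u+t\f, D^s_\d u + t D^s_\d \f)\, \f + D_z W(x, u + t\f, D^s_\d u + t D^s_\d \f) \cdot D^s_\d \f$, and that for $|t| \leq 1$ this derivative is dominated by an $L^1(\O)$ function. The domination follows from growth assumption \emph{\ref{item:growthW})}: the derivative is bounded by $a(x) + c(|u| + |\f|)^p + c(|D^s_\d u| + |D^s_\d \f|)^p$ times $(|\f| + |D^s_\d \f|)$, and since $u, \f \in H^{s,p,\d}(\O)$ their nonlocal gradients are in $L^p(\O)$, while $\f$ and $D^s_\d \f$ are bounded with compact support (the latter by Proposition \ref{Lemma: convolución con gradiente clásico}), so the whole expression lies in $L^1(\O)$. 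Hence $j'(0) = 0$ gives exactly \eqref{eq:WEL}.

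For the strong form, I would set $\psi(x) := D_z W(x, u(x), D^s_\d u(x))$, which by hypothesis is in $C^1(\overline{\O_{-\d}}, \Rn)$; after extending it to a function in $C^1_c(\O, \Rn)$ (using a cut-off, since $\O_{-\d}$ is compactly contained in $\O$ — this is where we need $\f$ supported in $\O_{-\d}$ so that only values of $\psi$ on $\overline{\O_{-\d}}$ matter) we may apply Theorem \ref{th:Nl parts} with this $\psi$ and $u$ replaced by the smooth test function $\f$. Since $\f$ vanishes on a neighbourhood of $\p\O$ (in fact on $\O_{B,\d}$ and beyond), the boundary term in Theorem \ref{th:Nl parts} vanishes, yielding $\int_{\O} D^s_\d \f \cdot \psi = - \int_{\O} \f \, \diver^s_\d \psi$. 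Substituting into \eqref{eq:WEL} gives
\[
 \int_{\O} \left[ D_y W(x, u, D^s_\d u) - \diver^s_\d \psi (x) \right] \f(x) \, dx = 0
\]
for all $\f \in C^{\infty}_c(\O_{-\d})$. The fundamental lemma of the calculus of variations then forces $D_y W(x, u, D^s_\d u) = \diver^s_\d \psi(x)$ for a.e.\ $x \in \O_{-\d}$, which is \eqref{eq:EL}.

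One subtlety I expect to be the main obstacle is making the integration by parts step rigorous: Theorem \ref{th:Nl parts} as stated requires the first argument to be in $C^{\infty}_c(\Rn)$ and the vector field in $C^1_c(\O, \Rn)$, so I need $\f \in C^{\infty}_c(\O_{-\d}) \subset C^{\infty}_c(\Rn)$ (fine) and I need to extend $\psi$, a priori only $C^1$ on $\overline{\O_{-\d}}$, to an element of $C^1_c(\O, \Rn)$. Multiplying by a cut-off $\eta \in C^{\infty}_c(\O)$ with $\eta \equiv 1$ on $\overline{\O_{-\d}}$ does this, and because $\diver^s_\d$ acting at a point $x \in \O_{-\d}$ only samples values in $B(x,\d) \subset \O$, one must check that the identity $\diver^s_\d(\eta \psi) = \diver^s_\d \psi$ holds on the relevant set, or more simply, only integrate against $\f$ supported where this is unambiguous. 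Alternatively, I would note that since $\supp \f \subset \O_{-\d}$ and $B(x,\d) \subset \O$ for $x \in \O_{-\d}$, formula \eqref{eq:WEL} only ever involves $\psi$ on $\overline{\O_{-\d}}$ together with $D^s_\d \f$, which is itself supported in $\O_{-\d} + B(0,\d) \subset \O$; this keeps all quantities well-defined and lets the cancellation of the boundary term go through cleanly. The rest — domination for differentiation under the integral and the fundamental lemma — is routine.
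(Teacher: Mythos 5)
Your proposal is correct and follows essentially the same route as the paper: the weak form \eqref{eq:WEL} is obtained by differentiating $t \mapsto I(u+t\f)$ under the integral sign, with domination supplied by the growth condition \emph{c)}, and the strong form \eqref{eq:EL} by extending $D_z W(\cdot,u(\cdot),D^s_\d u(\cdot))$ to a $C^1_c(\O,\Rn)$ field, applying Theorem \ref{th:Nl parts} with the boundary term vanishing because $\supp \f \subset \O_{-\d}$ (so that $\rho_\d(x-y)=0$ or $\f=0$ on the relevant set), and concluding with the fundamental lemma of the Calculus of Variations. The extension subtlety you flag is treated in the paper at the same implicit level of rigour (it simply pairs the chosen extension with $D^s_\d \f$), so your argument does not deviate from its proof.
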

\begin{proof}
Using a standard argument, in order to show \eqref{eq:WEL} it is enough to check that one can differentiate under the integral sign in the function $t \mapsto I(u + t \f)$, since $u + t \f \in H_{u_0}^{s,p,\d} (\O_{-\d})$.
Assumption \emph{\ref{item:growthW})} shows that this is the case (see, e.g., \cite[Ch.\ 13, \S 2, Lemma 2.2]{Lang83}), so \eqref{eq:WEL} is proved.

Now we make the assumption $D_z W (\cdot, u(\cdot), D^s_\d u (\cdot)) \in C^1 (\overline{\O_{-\d}}, \Rn)$.
Then there exists a $C^1_c (\O, \Rn)$ extension of this function; we denote by $W_z$ any such extension.
In order to derive \eqref{eq:EL} from \eqref{eq:WEL}, we use Theorem \ref{th:Nl parts} to obtain
\begin{align*}
 \int_{\O} W_z (x) \cdot D^s_\d \f (x) \, dx =& - \int_{\O} \f (x) \diver_\delta^s W_z (x) \, dx \\
 &- (n-1+s) \int_{\O} \int_{\O_{B, \d}} \frac{\f(x) \, W_z (y)}{|x-y|} \cdot \frac{x-y}{|x-y|}\rho_\delta(x-y) \, dy \, dx .
\end{align*}
This last integral is zero; indeed, $\f(x) = 0$ for $x \in \O \setminus \O_{-\d}$, while for $x \in \O_{-\d}$ and $y \in \O_{B, \d}$ we have $\rho_\delta (x-y) = 0$.
Therefore,
\[
 \int_{\O} W_z (x) \cdot D^s_\d \f (x) \, dx = - \int_{\O} \f (x) \diver_\delta^s W_z (x) \, dx .
\]
We combine this equality with \eqref{eq:WEL} and apply the fundamental lemma of the Calculus of Variations to obtain that equality \eqref{eq:EL} holds for a.e.\ $x \in \O_{-\d}$.
\end{proof}

Note that \eqref{eq:EL} imposes an a.e.\ equality in $\O_{-\d}$ and not in $\O$, which is natural since in $\O_{\d} \setminus \O_{-\d}$ we already have the condition $u = u_0$.
Even though \eqref{eq:EL} prescribes a pointwise condition, it is nonlocal because of the presence of $D^s_{\d} u$.

\appendix

\appendixpage

\section{$V^s_{\d}$ when $n=1$}\label{se:1D}

In this appendix we explain the necessary changes in the proof of Theorem \ref{Th: inverse Fourier trasnform as a function}\,\emph{\ref{item:Va}}) when $n=1$.
We first need a result regarding the function $Z$ appearing therein.

\begin{lem} \label{lem: convdist pv Z}
Let $n =1$.
	Then:
\begin{enumerate}[a)]
\item\label{item:Za} The function $Z$ of \eqref{eq: difference of V hat and vector Riesz potential} can be identified with the tempered distribution
	\begin{equation}\label{eq:Z}
		\langle Z, \f \rangle = \int_0^{\infty} Z(\xi) ( \f (\xi) -\f(-\xi) ) \, d \xi , \qquad \f \in \mathcal{S}
	\end{equation}
	and we have the convergence
	\begin{equation}\label{eq:ZtoZ}
		Z \chi_{B(0, \e)^c} \to Z \quad \text{in } \mathcal{S}' \quad \text{as } \e \to 0 .
	\end{equation}

\item\label{item:Yb} The function
\begin{equation}\label{eq:Y}
 Y(\xi) = -\frac{i\xi}{2 \pi|\xi|^2}\frac{1}{\hat{Q}_{\d}^s(0)} \chi_{B(0,1)} (\xi)
\end{equation}
can be identified with the tempered distribution
\[
 \langle Y, \f \rangle = \int_0^{\infty} Y(\xi) ( \f (\xi) -\f(-\xi) ) \, d \xi , \qquad \f \in \mathcal{S} ,
\]
we have the convergence
\begin{equation}\label{eq:YtoY}
 Y \chi_{B(0, \e)^c} \to Y \quad \text{in } \mathcal{S}' \quad \text{as } \e \to 0
\end{equation}
and
\begin{equation}\label{eq::hatY}
 \hat{Y} (x) = \frac{-1}{\pi \hat{Q}_{\d}^s(0)} \int_0^1 \frac{1}{\xi} \sin (2 \pi \xi x)\, d \xi .
\end{equation}

\end{enumerate}
\end{lem}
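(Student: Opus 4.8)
The plan is to treat parts \emph{\ref{item:Za})} and \emph{\ref{item:Yb})} in parallel, since $Z$ and $Y$ are both odd functions of one real variable that fail to be integrable near the origin but only mildly so, and the arguments mirror exactly the proof of Lemma \ref{lem: convdist pv dQ} (where the analogous statement was shown for $\partial_j Q_\d^s$). First I would record the pointwise decay of $Z$ near $0$: from expression \eqref{eq: difference of V hat and vector Riesz potential} and the facts that $\hat{Q}_\d^s(\xi) \sim a_0|2\pi\xi|^{-(1-s)}$ at infinity but $\hat{Q}_\d^s(0)>0$ (Proposition \ref{Prop: properties of the Fourier transform of Q} and Proposition \ref{prop: Fourier transform well defined}), together with the mean value estimate $|\hat{Q}_\d^s(\xi)^{-1}-\hat{Q}_\d^s(0)^{-1}|\le c|\xi|$ already used in \eqref{eq:W2}, one gets $|Z(\xi)|\le C$ for $|\xi|\le 1$; near $0$ the numerator $a_0|2\pi\xi|^{-1+s}-\hat Q_\d^s(\xi)$ of \eqref{eq: difference of V hat and vector Riesz potential} blows up like $|\xi|^{-(1-s)}$, so in fact $|Z(\xi)|\le C|\xi|^{s-1}$ for $0<|\xi|\le 1$ — integrable in dimension one. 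For $Y$, the function $-\frac{i\xi}{2\pi|\xi|^2}\hat{Q}_\d^s(0)^{-1}\chi_{B(0,1)}$ is odd and behaves like $|\xi|^{-1}$ near $0$, so it is \emph{not} locally integrable, and there the odd-symmetry trick is essential.

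For \emph{\ref{item:Za})}: since $Z\chi_{B(0,\e)^c}\in L^1(\R)$ (it is bounded with $|Z(\xi)|\le C|\xi|^{s-1}$ near $0$ and decays faster than any power at infinity) it is a tempered distribution, and for a test function $\f$ we split $\int_{B(0,\e)^c}Z(\xi)\f(\xi)\,d\xi$ into the $\xi>0$ and $\xi<0$ parts, change variables $\xi\mapsto-\xi$ in the second and use that $Z$ is odd (as $\hat V_\d^s$ and $\frac{-i\xi}{|\xi|}|2\pi\xi|^{-s}$ are both odd) to obtain $\int_{\e}^{\infty}Z(\xi)(\f(\xi)-\f(-\xi))\,d\xi$. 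Because $|Z(\xi)|\le C|\xi|^{s-1}\chi_{(0,1]}+$ (rapidly decaying tail) and $|\f(\xi)-\f(-\xi)|\le 2\|\f'\|_\infty|\xi|$ near $0$, the integrand is dominated by an $L^1((0,\infty))$ function, so formula \eqref{eq:Z} defines a tempered distribution and dominated convergence gives \eqref{eq:ZtoZ}.

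For \emph{\ref{item:Yb})}: $Y\chi_{B(0,\e)^c}\in L^1(\R)$ (it is bounded on $[\e,1]$ and zero outside $B(0,1)$), so it is tempered; the same odd-symmetry splitting yields $\int_{B(0,\e)^c}Y(\xi)\f(\xi)\,d\xi=\int_{\e}^{1}Y(\xi)(\f(\xi)-\f(-\xi))\,d\xi$, and since $Y(\xi)=-\frac{1}{2\pi\xi\,\hat Q_\d^s(0)}$ for $0<\xi<1$, the integrand is bounded by $\frac{\|\f'\|_\infty}{\pi\hat Q_\d^s(0)}$ on $(0,1)$, hence in $L^1$; this proves the distributional identity and, by dominated convergence, \eqref{eq:YtoY}. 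Finally, to compute $\hat Y$ I would use $\hat Y=\lim_{\e\to0}\F(Y\chi_{B(0,\e)^c})$ in $\mathcal S'$, compute $\F(Y\chi_{B(0,\e)^c})(x)=-\frac{1}{2\pi\hat Q_\d^s(0)}\int_{B(0,\e)^c\cap B(0,1)}\frac{1}{\xi}e^{-2\pi i\xi x}\,d\xi$, and convert this via odd symmetry of $\xi\mapsto1/\xi$ into $\frac{i}{\pi\hat Q_\d^s(0)}\int_{\e}^{1}\frac{\sin(2\pi\xi x)}{\xi}\,d\xi$ — wait, one must track the sign carefully: $e^{-2\pi i\xi x}=\cos(2\pi\xi x)-i\sin(2\pi\xi x)$, the cosine part is even and cancels against the $\xi<0$ contribution, leaving $\frac{-1}{\pi\hat Q_\d^s(0)}\int_{\e}^{1}\frac{\sin(2\pi\xi x)}{\xi}\,d\xi$, which converges to \eqref{eq::hatY} as $\e\to0$ since $\frac{\sin(2\pi\xi x)}{\xi}$ is bounded near $0$ for each fixed $x$.

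The main obstacle is essentially bookkeeping rather than conceptual: getting the constant and the sign in \eqref{eq::hatY} right through the chain of Fourier-transform and odd-symmetry manipulations, and making sure the near-origin bound $|Z(\xi)|\lesssim|\xi|^{s-1}$ is genuinely extracted from \eqref{eq: difference of V hat and vector Riesz potential} (i.e.\ that the seemingly worse factor $|2\pi\xi|^{-s}$ in the denominator of \eqref{eq: difference of V hat and vector Riesz potential} is harmless because it is the \emph{numerator} $a_0|2\pi\xi|^{-1+s}-\hat Q_\d^s(\xi)$ that controls the blow-up, the bad part of which is exactly $a_0|2\pi\xi|^{-1+s}$, so $Z(\xi)\sim-i\frac{\xi}{|\xi|}\cdot\frac{a_0|2\pi\xi|^{-1+s}}{a_0|2\pi\xi|^{-s}\hat Q_\d^s(0)}=-i\frac{\xi}{|\xi|}\frac{|2\pi\xi|^{-1}}{\hat Q_\d^s(0)}$ near $0$, i.e.\ $Z-Y$ is actually bounded near $0$). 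That last observation — that $Z$ and $Y$ have the same singular part at the origin — is what makes \emph{\ref{item:Za})} work with $L^1$ domination rather than requiring a principal value, and it will reappear when $W=\hat Z$ is shown to be bounded in the proof of Theorem \ref{Th: inverse Fourier trasnform as a function}\,\emph{\ref{item:Va})}.
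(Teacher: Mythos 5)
Your overall strategy coincides with the paper's: identify $Z$ and $Y$ with the odd pairings against $\f(\xi)-\f(-\xi)$, dominate near the origin via the mean value theorem and at infinity via the rapid decay of $Z$, prove the truncation limits by dominated convergence, and obtain \eqref{eq::hatY} by Fourier-transforming $Y\chi_{B(0,\e)^c}$ and using odd symmetry (your sign bookkeeping there is correct). However, the near-origin estimate on which you base part \emph{a)} is wrong. From \eqref{eq: difference of V hat and vector Riesz potential} the numerator indeed blows up like $|\xi|^{-(1-s)}$, but the denominator $a_0|2\pi\xi|^{s}\hat{Q}_\d^s(\xi)$ vanishes like $|\xi|^{s}$, so $|Z(\xi)|\sim C/|\xi|$ as $\xi\to 0$, not $C|\xi|^{s-1}$; equivalently, since $\hat{Q}_\d^s(0)>0$, the leading behaviour is $Z(\xi)\sim -i\,\sgn(\xi)\,\bigl(2\pi|\xi|\,\hat{Q}_\d^s(0)\bigr)^{-1}$ --- which is exactly what your own closing paragraph derives, contradicting the bound $|Z(\xi)|\le C|\xi|^{s-1}$ used in the body of your argument. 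Consequently $Z$ is \emph{not} locally integrable near $0$ when $n=1$ (this is precisely why the lemma is needed), and your claims that part \emph{a)} works ``with $L^1$ domination rather than requiring a principal value'' and that only $Y$ needs the odd-symmetry trick are incorrect: part \emph{a)} is a principal-value-type identification in exactly the same way as part \emph{b)}.

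The error is fortunately local. With the correct bound $|Z(\xi)|\le \frac{1}{2\pi|\xi|\hat{Q}_\d^s(\xi)}+\frac{1}{a_0|2\pi\xi|^{s}}\le \frac{C}{|\xi|}$ for $0<|\xi|\le 1$ (positivity and continuity of $\hat{Q}_\d^s$), the product $Z(\xi)(\f(\xi)-\f(-\xi))$ is still bounded near $0$ because $|\f(\xi)-\f(-\xi)|\le 2\|\f'\|_\infty|\xi|$, so the domination, the continuity of the functional \eqref{eq:Z} on $\mathcal{S}$, and the dominated-convergence proof of \eqref{eq:ZtoZ} all go through exactly as in the paper; your treatment of part \emph{b)} and of \eqref{eq::hatY} is fine. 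One further slip worth flagging, since you intend to reuse it later: $Z-Y$ is \emph{not} bounded near $0$; after the $1/|\xi|$ parts cancel (by the mean value estimate as in \eqref{eq:W2}) there remains the term $\frac{1}{a_0|2\pi\xi|^{s}}$, so $Z\chi_{B(0,1)}-Y$ is only integrable near $0$ --- which is the weaker statement the subsequent argument for the boundedness of $\hat{Z}$ actually uses.
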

\begin{proof}
We start with \emph{\ref{item:Za})}.
	Let us see that formula \eqref{eq:Z} defines a tempered distribution.
	By Propositions \ref{Prop: properties of the Fourier transform of Q} and \ref{prop: Fourier transform well defined}, there exists $C>0$ such that
	\[
	\left| Z(\xi) \right| \leq \frac{1}{2 \pi |\xi|} \frac{1}{\hat{Q}_{\d}^s(\xi)} + \frac{1}{a_0} \frac{1}{|2\pi\xi|^s} \leq \frac{C}{|\xi|} , \qquad |\xi| \leq 1 .
	\]
	Thus, by the mean value theorem 
	\begin{equation}\label{eq:estimateZ1}
		\left| \int_0^1 Z(\xi) ( \f (\xi) -\f(-\xi) ) \, d \xi \right| \leq 2 C \| \f' \|_{\infty} .
	\end{equation}
	On the other hand, in the proof of Theorem \ref{Th: inverse Fourier trasnform as a function}\,\emph{\ref{item:Va})}, we saw that $Z$ decays to $0$ at infinity faster than any negative power of $|\xi|$.
	In particular, there exists $C>0$ such that
	\[
	\left| Z(\xi) \right| \leq \frac{C}{\xi^2} , \qquad |\xi| \geq 1 .
	\]
	Consequently, 
	\begin{equation}\label{eq:estimateZ2}
		\left| \int_1^{\infty} Z(\xi) ( \f (\xi) -\f(-\xi) ) \, d \xi \right| \leq 2 C \int_1^{\infty} \frac{1}{\xi^2} \, d \xi \, \| \f \|_{\infty} .
	\end{equation}
	Estimates \eqref{eq:estimateZ1} and \eqref{eq:estimateZ2} show that $Z$ defined by \eqref{eq:Z} is in $\mathcal{S}'$.
	
As before, the fact that $Z$ decays to $0$ at infinity faster than any negative power of $|\xi|$ implies that $Z \chi_{B(0, \e)^c} \in L^1 (\R)$ for all $\e>0$.
	In particular, $Z \chi_{B(0, \e)^c}$ considered as a distribution acts as follows: for each $\f \in \mathcal{S}$,
	\[
	\langle Z \chi_{B(0, \e)^c} , \f \rangle = \int_{B(0, \e)^c} Z(\xi) \, \f (\xi) \, d \xi = \int_{\e}^{\infty} Z(\xi) (\f(\xi)-\f(-\xi) )\, d\xi .
	\]
	As we saw in \eqref{eq:estimateZ1}--\eqref{eq:estimateZ2}, the function $Z(\xi) (\f(\xi)-\f(-\xi) )$ is in $L^1 ((0, \infty))$, so, by dominated convergence, we have that
	\[
	\int_{\e}^{\infty} Z(\xi) (\f(\xi)-\f(-\xi) )\, d\xi \to \int_0^{\infty} Z(\xi) (\f(\xi)-\f(-\xi) )\, d\xi \qquad \text{as } \e \to 0 ,
	\]
	which justifies the identification of the function $Z$ with the distribution \eqref{eq:Z} and shows the convergence \eqref{eq:ZtoZ}.
	
Thus, \emph{\ref{item:Za})} is proved.
The proof of \emph{\ref{item:Yb})} is analogous and we only write the details for the expression of $\hat{Y}$.
The function $Y\chi_{B(0, \e)^c}$ is in $L^1 (\Rn)$ for $0 < \e <1$ and
\[
 \F \left( Y\chi_{B(0, \e)^c} \right) = \int_{B(0,\e)^c} Y (\xi) e^{- 2 \pi i x \xi} \, d \xi = \frac{-1}{\pi \hat{Q}_{\d}^s(0)} \int_{\e}^1 \frac{1}{\xi} \sin (2 \pi \xi x)\, d \xi ,
\]
because of odd symmetry.
As the function $\xi \mapsto \frac{1}{\xi} \sin (2 \pi \xi x)$ is in $L^1 (0,1)$, and convergence \eqref{eq:YtoY} holds, we obtain expression \eqref{eq::hatY}.
\end{proof}

We are in a position to prove Theorem \ref{Th: inverse Fourier trasnform as a function}\,\emph{\ref{item:Va})}.

\begin{proof}[Proof of Theorem \ref{Th: inverse Fourier trasnform as a function}\,\emph{\ref{item:Va})} when $n=1$]
It remains to show that $W$, or, equivalently, $\hat{Z}$ is bounded.
To that end, it is useful to introduce the function $Y$ of \eqref{eq:Y} and express
\[
 \hat{Z}= \mathcal{F}(Z\chi_{B(0,1)} - Y) + \hat{Y} + \mathcal{F}(Z\chi_{B(0,1)^c}).
\]
Since $Z\chi_{B(0,1)^c} \in L^1(\R)$, by the Riemann--Lebesgue Lemma $\mathcal{F}(Z\chi_{B(0,1)^c}) \in C_0(\R)$.
Now we study the function
\[
 Z\chi_{B(0,1)} (\xi) - Y(\xi) = \left[ -\frac{i\xi}{2 \pi|\xi|^2} \left( \frac{1}{\hat{Q}_{\d}^s(\xi)} - \frac{1}{\hat{Q}_{\d}^s(0)} \right) - \frac{-i\xi}{a_0|\xi|}\frac{1}{|2\pi\xi|^s}  \right] \chi_{B(0,1)} (\xi) .
\]
Now, as we saw in \eqref{eq: difference of V in 0}--\eqref{eq:W2},

\[
 \sup_{\xi \in B(0,1)} \left| -\frac{i\xi}{2 \pi|\xi|^2} \left( \frac{1}{\hat{Q}_{\d}^s(\xi)} - \frac{1}{\hat{Q}_{\d}^s(0)} \right) \right| < \infty ,
\]
and, on the other hand,
\[
 \left| \frac{-i\xi}{a_0|\xi|}\frac{1}{|2\pi\xi|^s} \right| \leq \frac{1}{a_0}\frac{1}{|2\pi\xi|^s} ,
\]
which is integrable in $B(0,1)$.
Therefore, $Z\chi_{B(0,1)} - Y \in L^1 (\R)$, so $\mathcal{F}(Z\chi_{B(0,1)} - Y) \in C_0 (\R)$.

It remains to show that $\hat{Y}$ is bounded.
By Lemma \ref{lem: convdist pv Z}\,\emph{\ref{item:Yb})},
\[
 \hat{Y} (x) = \frac{-1}{\pi \hat{Q}_{\d}^s(0)} \int_0^1 \frac{1}{\xi} \sin (2 \pi \xi x)\, d \xi = \frac{-1}{\pi \hat{Q}_{\d}^s(0)} \int_0^x \frac{1}{\xi} \sin (2 \pi \xi)\, d \xi .
\]
This latter function is known to be bounded.
Therefore, $\hat{Z}$ is bounded and the proof is concluded. 
\end{proof}

\section{Fourier analysis results}\label{se:Fourier}

In this appendix we collect several Fourier analysis results needed throughout the paper.
They are possibly known to experts, but we have not found a precise reference.

First, we compute the Fourier transform of the vectorial version of the Riesz potential.

\begin{lem} \label{lemma: Fourier transform vector Riesz potential}
\begin{enumerate}[a)]
\item\label{item:FourierRieszn} If $n \geq 2$ and $0<\a<n-1$, then
\begin{equation*}
	\mathcal{F} \left( \frac{n-\a-1}{\gamma(1+\a)}\frac{x}{|x|^{n-\a+1}} \right) (\xi) = -i \frac{\xi}{|\xi|} |2 \pi \xi|^{-\a} = -i \frac{\xi}{|\xi|} \hat{I}_{\a}.
\end{equation*}

\item\label{item:FourierRiesz1} If $n=1$ and $0< s <1$, then
\begin{equation*}
	\mathcal{F} \left( c_{1,-s} \frac{x}{|x|^{2-s}} \right) = -\frac{ i \xi}{| \xi|}\frac{1}{|2\pi \xi|^s}.
\end{equation*}

\item\label{item:Fourier transform of x/|x|^n} $\displaystyle \mathcal{F}\left(\frac{1}{\s_{n-1}} \frac{x}{|x|^n} \right)(\xi) = -i\frac{\xi}{|\xi|}\frac{1}{|2\pi \xi|}$.
\end{enumerate}
\end{lem}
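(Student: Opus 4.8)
The three identities in Lemma \ref{lemma: Fourier transform vector Riesz potential} are all of the same nature: the Fourier transform of a vector radial, homogeneous kernel of the form $x/|x|^{n-\a+1}$. The natural strategy is to obtain \emph{\ref{item:FourierRieszn})} first, deduce \emph{\ref{item:Fourier transform of x/|x|^n})} from it by taking the (formal) limit $\a \to 0$, i.e.\ by choosing the exponent so that $n-\a+1 = n$, which forces $\a = 1$, so in fact \emph{\ref{item:Fourier transform of x/|x|^n})} is literally the special case $\a = 1$ of \emph{\ref{item:FourierRieszn})} (valid whenever $n \geq 2$, since then $0 < 1 < n-1$); and then handle the one-dimensional case \emph{\ref{item:FourierRiesz1})} separately, since \emph{\ref{item:FourierRieszn})} requires $n \geq 2$. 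So the core of the work is \emph{\ref{item:FourierRieszn})}.

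\textbf{Step 1: the scalar Riesz potential.} I would start from the classical, well-known fact (recorded in \eqref{eq: Riesz Potential}) that for $0 < \b < n$ the Riesz potential $I_\b(x) = \gamma(\b)^{-1}|x|^{-(n-\b)}$ has Fourier transform $\hat I_\b(\xi) = |2\pi\xi|^{-\b}$, understood in the sense of tempered distributions. The idea is to realize $\frac{x}{|x|^{n-\a+1}}$ as a gradient (up to a constant) of a scalar Riesz potential. Indeed,
\[
 \nabla \left( \frac{1}{|x|^{n-\a-1}} \right) = -(n-\a-1)\, \frac{x}{|x|^{n-\a+1}},
\]
and $\frac{1}{|x|^{n-\a-1}} = \gamma(1+\a)\, I_{1+\a}(x)$ provided $0 < 1+\a < n$, i.e.\ $0 < \a < n-1$, which is exactly the hypothesis. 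Hence
\[
 \frac{n-\a-1}{\gamma(1+\a)}\, \frac{x}{|x|^{n-\a+1}} = -\nabla I_{1+\a}(x)
\]
as tempered distributions.

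\textbf{Step 2: apply the Fourier transform.} Using $\mathcal{F}(\partial_j f)(\xi) = 2\pi i \xi_j \hat f(\xi)$, hence $\mathcal{F}(\nabla f)(\xi) = 2\pi i \xi\, \hat f(\xi)$, and $\widehat{I_{1+\a}}(\xi) = |2\pi\xi|^{-(1+\a)}$, I get
\[
 \mathcal{F}\left( \frac{n-\a-1}{\gamma(1+\a)}\, \frac{x}{|x|^{n-\a+1}} \right)(\xi)
 = -2\pi i \xi\, |2\pi\xi|^{-(1+\a)} = -i\, \frac{\xi}{|\xi|}\, |2\pi\xi|^{-\a},
\]
which is \emph{\ref{item:FourierRieszn})}. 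Then \emph{\ref{item:Fourier transform of x/|x|^n})} follows by setting $\a = 1$ (allowed when $n \geq 2$): $\tfrac{n-2}{\gamma(2)} = \tfrac{1}{\sigma_{n-1}}$ after computing $\gamma(2) = \tfrac{\pi^{n/2}\, 2^2\, \Gamma(1)}{\Gamma(\frac{n-2}{2})}$ and using $\sigma_{n-1} = \tfrac{2\pi^{n/2}}{\Gamma(n/2)}$ together with $\Gamma(n/2) = \tfrac{n-2}{2}\Gamma(\tfrac{n-2}{2})$; this is a routine gamma-function bookkeeping. For $n = 1$ this argument breaks (the exponent $\a = 1$ is not $< n-1 = 0$), so \emph{\ref{item:FourierRiesz1})} must be argued directly: here $\tfrac{x}{|x|^{2-s}} = \sgn(x)\,|x|^{s-1}$ is (up to a constant multiple of the Riesz potential $I_s$ in dimension one) again a distributional derivative, $\tfrac{d}{dx}(|x|^{s} \sgn(x)/s) = |x|^{s-1}$; alternatively one uses the standard one-dimensional formula for $\mathcal{F}(\sgn(x)|x|^{s-1})$ which is a constant times $\sgn(\xi)|\xi|^{-s}$, and the constant $c_{1,-s}$ is precisely chosen (via the reflection formula for $\gamma$, cf.\ \eqref{eq: Riesz Potential constant}) so that the normalization comes out to $-\tfrac{i\xi}{|\xi|}\tfrac{1}{|2\pi\xi|^s}$.

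\textbf{Main obstacle.} The conceptual content is light; the real care is in (i) justifying that all identities hold \emph{as tempered distributions} — the kernels $x/|x|^{n-\a+1}$ are not integrable near $0$ nor at $\infty$, so $\nabla I_{1+\a}$ must be interpreted distributionally and one should note that for $0<\a<n-1$ the function $|x|^{-(n-\a-1)}$ is locally integrable and of at most polynomial growth, hence a genuine tempered distribution whose distributional gradient is computed by the usual rule; and (ii) getting the normalizing constants exactly right, which amounts to careful manipulation of $\gamma(s) = \pi^{n/2}2^s\Gamma(s/2)/\Gamma((n-s)/2)$, the duplication/reflection formulas for $\Gamma$, and the relation between $c_{n,-s}$, $c_{1,-s}$ and $\gamma$. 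I would relegate the gamma-function identities to short computations and emphasize the distributional gradient identity as the one genuine step.
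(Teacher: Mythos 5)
Your treatment of part \emph{\ref{item:FourierRieszn})} is essentially the paper's own argument: write $\frac{n-\a-1}{\gamma(1+\a)}\frac{x}{|x|^{n-\a+1}}$ as $-\nabla I_{1+\a}$ (legitimate as tempered distributions since both $|x|^{-(n-\a-1)}$ and $|x|^{-(n-\a)}$ are locally integrable for $0<\a<n-1$) and apply $\mathcal{F}(\partial_j f)=2\pi i\xi_j\hat f$ together with $\hat I_{1+\a}=|2\pi\xi|^{-(1+\a)}$. That part is fine, and your gamma-function check $\frac{n-2}{\gamma(2)}=\frac{1}{\s_{n-1}}$ is correct where it applies.

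The genuine gap is in part \emph{\ref{item:Fourier transform of x/|x|^n})}. You claim it is "literally the special case $\a=1$ of \emph{a)}, valid whenever $n\geq 2$, since then $0<1<n-1$"; but for $n=2$ the inequality $1<n-1$ fails, the hypothesis of \emph{a)} excludes $\a=1$, and the constant $\frac{n-2}{\gamma(2)}$ degenerates to $0/0$ (since $\gamma(2)=4\pi^{n/2}/\Gamma(\frac{n-2}{2})=0$ when $n=2$). Worse, part \emph{c)} carries no dimensional restriction and the paper needs it for every $n\geq 1$ — it enters the construction of $V^s_\d$ (Proposition \ref{pr: inverse tempered distribution}, Theorem \ref{Th: inverse Fourier trasnform as a function}) and the one-dimensional analysis of Appendix \ref{se:1D} — so your argument leaves the cases $n=1$ and $n=2$ unproved. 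The paper's route avoids this entirely: it Fourier-transforms the classical representation formula of Proposition \ref{prop: classical representation result} to get $1=2\pi i\,\xi\cdot\mathcal{F}\bigl(\frac{x}{\s_{n-1}|x|^n}\bigr)(\xi)$ for all $n\geq1$, which pins down the component of the transform along $\xi$, and then uses that the transform of a vector radial function is vector radial to rule out any component orthogonal to $\xi$; you would need either this argument or a separate limiting/direct computation to close $n=1,2$. A smaller slip: in part \emph{\ref{item:FourierRiesz1})} the derivative identity you write, $\frac{d}{dx}\bigl(|x|^{s}\sgn(x)/s\bigr)=|x|^{s-1}$, produces the \emph{even} kernel, not the odd kernel $\sgn(x)|x|^{s-1}=\frac{x}{|x|^{2-s}}$ you need; the odd kernel is $\frac1s\frac{d}{dx}|x|^s$, and $\mathcal{F}(|x|^s)$ lies outside the classical range of \eqref{eq: Riesz Potential} (it corresponds to $I_{1+s}$ with $1+s>n=1$), so that route needs analytic continuation. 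Your fallback via the standard formula for $\mathcal{F}(\sgn(x)|x|^{s-1})$ can be made to work, but the constant verification via the reflection formula is left unchecked, whereas the paper sidesteps all constant bookkeeping by Fourier-transforming the fractional fundamental theorem of Calculus \eqref{eq:FFTC} and using the known symbol $\widehat{D^s u}(\xi)=\frac{2\pi i\xi}{|2\pi\xi|}|2\pi\xi|^s\hat u(\xi)$.
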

\begin{proof}
Fix $j \in \{ 1, \ldots, n\}$.
	On the one hand, we have that
	\[
	\frac{1}{\gamma(1+\a)}\frac{\partial }{\partial x_j}\frac{1}{|x|^{n-(\a+1)}} = -\frac{n-\a-1}{\gamma(1+\a)}\frac{x_j}{|x|^{n-\a+1}}.
	\]
	Thus, 
	\begin{equation} \label{eq: riesz potential transform 1}
		\frac{1}{\gamma(1+\a)} \mathcal{F} \left(\frac{\partial }{\partial x_j}\frac{1}{|x|^{n-(\a+1)}}\right) (\xi) = -\frac{n-\a-1}{\gamma(1+\a)} \mathcal{F} \left( \frac{x_j}{|x|^{n-\a+1}} \right) (\xi),
	\end{equation}
On the other hand, by standard properties of the Fourier transform, and, in particular, by \eqref{eq: Riesz Potential},
	\begin{equation} \label{eq: riesz potential transform 2}
		\frac{1}{\gamma(1+\a)} \mathcal{F} \left( \frac{\partial }{\partial x_j}\frac{1}{|x|^{n-(\a+1)}}\right) (\xi)= 2\pi i \xi_j \hat{I}_{1+\a}=2\pi i \xi_j |2\pi \xi|^{-(1+\a)}=i \frac{\xi_j}{|\xi|} |2\pi \xi|^{-\a}.
	\end{equation}
	Putting together \eqref{eq: riesz potential transform 1} and \eqref{eq: riesz potential transform 2} we obtain the conclusion of \emph{\ref{item:FourierRieszn})}.
	
	Now we present the proof of \emph{\ref{item:FourierRiesz1})}.
We recall formula \eqref{eq:FFTC} of the fractional version of the fundamental theorem of Calculus: for every $u \in C^{\infty}_c(\R)$,
\begin{equation*}
		u(x) = c_{1,-s} \int_{\R} D^s u(y) \frac{x-y}{|x-y|^{2-s}}dy.
\end{equation*}
   Next, we take Fourier transform and use the formula for the convolution of a distribution with a Schwartz function:
   \begin{equation*}
   	\hat{u}(\xi)=\widehat{D^s u}(\xi) \mathcal{F}\left( c_{1,-s}\frac{x}{|x|^{2-s}} \right)=\frac{2\pi i\xi}{|2\pi \xi|}|2\pi \xi|^s \hat{u}(\xi) \mathcal{F}\left( c_{1,-s}\frac{x}{|x|^{2-s}} \right) ,
   \end{equation*}
where we have used the explicit expression for Fourier transform of $D^s u$ (see \cite[Th.\ 1.4]{ShS2015} or \cite[Lemma 3.1]{BeCuMC21}). Now, we multiply both terms by $-i 2\pi \xi $ and obtain that
\begin{equation*}
	-i 2\pi \xi\hat{u}(\xi)=|2\pi \xi|^{1+s}\hat{u}(\xi) \mathcal{F}\left( c_{1,-s}\frac{x}{|x|^{2-s}} \right).
\end{equation*}
Since that equality holds for every $u \in C^{\infty}_c(\R)$, statement \emph{\ref{item:FourierRiesz1})} follows.

Finally, we prove \emph{\ref{item:Fourier transform of x/|x|^n})}.
Since $\frac{x}{|x|^n} \in L^1(B(0,1)) + L^\infty(B(0,1)^c)$ we have that $\frac{x}{|x|^n}$ belongs to $\mathcal{S}'$, and so does its Fourier transform.
Let $\f \in C^{\infty}_c (\Rn)$. 
We apply the Fourier transform to the representation formula of Proposition \ref{prop: classical representation result}, obtaining that
	\begin{equation*}
		\hat{\f}(\xi)=\widehat{\nabla \f}(\xi) \cdot \mathcal{F}\left(\frac{x}{\s_{n-1} |x|^n}\right)(\xi) = 2\pi i \xi\hat{ \f}(\xi) \cdot \mathcal{F}\left(\frac{x}{\s_{n-1} |x|^n}\right)(\xi).
	\end{equation*}
	Since this is true for every $\f \in C^{\infty}_c (\Rn)$ we infer that
\[
 1 = 2\pi i \xi \cdot \mathcal{F}\left(\frac{x}{\s_{n-1} |x|^n}\right)(\xi).
\]
Therefore, there exists a function $g : \Rn \to \Rn$ such that $\xi \cdot g(\xi) = 0$ and
\[
 \mathcal{F}\left(\frac{x}{\s_{n-1} |x|^n}\right)(\xi) = -i\frac{\xi}{|\xi|}\frac{1}{|2\pi \xi|} + g(\xi) .
\]
On the other hand, $\mathcal{F}\left(\frac{x}{\s_{n-1} |x|^n}\right)$ must be a vector radial function, as the Fourier transform of a vector radial function.
Consequently (recall Definition \ref{de:radial}), there exists $\bar{g}: [0, \infty) \to \R$ such that $g (\xi) = \xi \, \bar{g} (|\xi|)$.
Thus, $|\xi|^2 \bar{g} (|\xi|) = 0$, so $\bar{g} = 0$ and, hence, $g=0$ a.e.
The proof is concluded.
\end{proof}

Now, we recall the following definitions and properties about the convolution and Fourier transform of tempered distributions.
Recall from Section \ref{subse:Fourier} the notation $\tilde{f}$ for the reflection of $f$.
\begin{rem} \label{remark duality Fourier}
Let $u,v \in \mathcal{S}'$.
\begin{enumerate}[a)]
		\item $\tilde{v} \in \mathcal{S}'$ is defined as
		\[ \langle \tilde{v}, \f \rangle=\langle v, \tilde{\f}\rangle \quad \forall \, \f \in \mathcal{S}. \]
		\item\label{item:conv} Assume that $\tilde{v}* \f \in \mathcal{S}$ for every $\f \in \mathcal{S}$.
		Then the tempered distribution $v* u$ is defined as
		\[
		\langle v*u, \f \rangle= \langle u, \tilde{v}*\f \rangle \quad \forall \, \f \in \mathcal{S}.
		\]
	\item\label{item:tilde} We have that $\mathcal{F}(\hat{v})= \tilde{v}$.
\end{enumerate}
\end{rem}

Finally, we show that the product property of the Fourier transform of a convolution also holds for tempered distributions. 

\begin{lem} \label{lem: Fourier transform of convolution of distributions}
	Let $V,Q \in \mathcal{S}'$ be such that $Q$ is a distribution with compact support. Then
	\begin{equation*}
		\widehat{V*Q}=\hat{V} \, \hat{Q}.
	\end{equation*}
\end{lem}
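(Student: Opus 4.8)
The plan is to verify the identity $\widehat{V*Q}=\hat V\,\hat Q$ by testing against an arbitrary Schwartz function and unwinding the definitions of distributional convolution and Fourier transform collected in Remark \ref{remark duality Fourier}. First I would note that, since $Q$ has compact support, $\tilde Q$ also has compact support, and hence $\tilde Q * \f \in \mathcal S$ for every $\f \in \mathcal S$ (convolution of a compactly supported distribution with a Schwartz function is Schwartz). This guarantees that the tempered distribution $V*Q$ is well defined in the sense of Remark \ref{remark duality Fourier}\,\emph{\ref{item:conv})}. Likewise, $\hat Q$ is a $C^\infty$ function of polynomial growth (Paley--Wiener), so the product $\hat V\,\hat Q$ makes sense as a tempered distribution.

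Next, fix $\f \in \mathcal S$. Using the duality definition of the Fourier transform, then the definition of the convolution $V*Q$, and then the duality definition of $\hat V$, one writes
\[
 \langle \widehat{V*Q}, \f \rangle = \langle V*Q, \hat\f \rangle = \langle V, \tilde Q * \hat\f \rangle .
\]
The key computational step is to identify $\tilde Q * \hat\f$ as the Fourier transform of $\hat Q\,\f$. Indeed, for Schwartz functions one has $\widehat{gh} = \hat g * \hat h$, and combined with $\mathcal F(\hat Q) = \tilde Q$ (Remark \ref{remark duality Fourier}\,\emph{\ref{item:tilde})}) and the fact that $\hat Q \f \in \mathcal S$, a density/approximation argument (approximating $Q$ by Schwartz functions, or directly checking on test functions) gives $\mathcal F(\hat Q \, \f) = \tilde Q * \hat \f$. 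Substituting,
\[
 \langle V, \tilde Q * \hat\f \rangle = \langle V, \mathcal F(\hat Q\,\f) \rangle = \langle \hat V, \hat Q\,\f \rangle = \langle \hat V\,\hat Q, \f \rangle ,
\]
where the last equality is the definition of multiplying the distribution $\hat V$ by the smooth polynomially bounded function $\hat Q$. Since $\f \in \mathcal S$ was arbitrary, this proves $\widehat{V*Q} = \hat V\,\hat Q$.

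The main obstacle is the middle identity $\mathcal F(\hat Q\,\f) = \tilde Q * \hat\f$ at the level of a compactly supported distribution $Q$ rather than a Schwartz function: one must make sense of all products and convolutions involved and justify the manipulation, for which the cleanest route is to approximate $Q$ in $\mathcal S'$ by $C_c^\infty$ functions (e.g. by mollification, which preserves compact support) and pass to the limit, using continuity of the Fourier transform and of convolution against a fixed Schwartz function in the relevant topologies. Everything else is bookkeeping with the definitions in Remark \ref{remark duality Fourier}.
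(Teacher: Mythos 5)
Your proposal is correct and follows essentially the same route as the paper's proof: test against $\f \in \mathcal{S}$, unwind the duality definitions to get $\langle \widehat{V*Q}, \f \rangle = \langle V, \tilde{Q}*\hat{\f} \rangle$, identify $\tilde{Q}*\hat{\f} = \mathcal{F}(\hat{Q}\,\f)$ using $\mathcal{F}(\hat{Q})=\tilde{Q}$ and the convolution theorem for a compactly supported distribution against a Schwartz function, and conclude via $\langle \hat{V}, \hat{Q}\,\f\rangle = \langle \hat{V}\hat{Q}, \f\rangle$. The only cosmetic difference is that the paper cites the standard convolution-theorem fact (and deduces $\hat{Q}\,\f \in \mathcal{S}$ from bijectivity of $\mathcal{F}$ on $\mathcal{S}$) where you propose a mollification/density justification, which is an acceptable alternative.
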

\begin{proof}
	Firstly, we recall that the convolution $V*Q$ is well defined since $\tilde{Q}*\f \in \mathcal{S}$ for every $\f \in \mathcal{S}$ (see \cite[Th.\ 2.3.20]{Grafakos08a}), and its action is defined as in Remark \ref{remark duality Fourier}\,\emph{\ref{item:conv})}:
	\begin{equation*}
		\langle V*Q , \f \rangle=	\langle V , \f*\tilde{Q} \rangle \quad \text{ for every } \f \in \mathcal{S} .
	\end{equation*}
Now, by definition of the Fourier transform in the sense of distributions, for every $\f \in \mathcal{S}$,
	\begin{equation} \label{eq: process of the Fourier transform of a distribution with compact support}
		\langle \widehat{V*Q}, \f \rangle=	\langle V*Q, \hat{\f} \rangle=	\langle V, \hat{\f}*\tilde{Q} \rangle.
	\end{equation}
Next, by the Fourier transform of a convolution (of a distribution with a Schwartz function), 
\[\hat{\f}*\tilde{Q}= \mathcal{F}(\f \mathcal{F}^{-1}(\tilde{Q}))=\mathcal{F}(\f \, \hat{Q}),\]
 since $\mathcal{F}^{-1}(\tilde{Q})=\hat{Q}$ (see Remark \ref{remark duality Fourier}\,\emph{\ref{item:tilde})}). This also tells us that $\f \, \hat{Q}$ belongs to $\mathcal{S}$, because so does $\hat{\f}*\tilde{Q}$ (by the bijection of the Fourier transform in $\mathcal{S}$). Actually, it is known that $\hat{Q}$ is a smooth function (see \cite[Th.\ 2.3.21]{Grafakos08a}).
  Therefore, continuing with \eqref{eq: process of the Fourier transform of a distribution with compact support} and using again the duality of the Fourier transform,
 \begin{align*}
 	\langle \widehat{V*Q}, \f \rangle=	\langle V, \hat{\f}*\tilde{Q} \rangle=\langle V, \mathcal{F}(\f \, \hat{Q})\rangle=	\langle \hat{V}, \f \, \hat{Q} \rangle.
 \end{align*} 
As $\f \, \hat{Q} \in \mathcal{S}$, the product $\hat{V} \hat{Q}$ is well defined in a distributional sense and
\begin{equation*}
	\langle \hat{V} \hat{Q}, \f  \rangle=\langle \hat{V}, \f \, \hat{Q} \rangle=\langle \widehat{V*Q}, \f \rangle.
\end{equation*}
Consequently, the desired formula holds.
\end{proof}

\section*{Acknowledgements} 

We thank Davide Barbieri for useful discussions concerning the Fourier transform. This work has been supported by the Agencia Estatal de Investigaci\'on of the Spanish Ministry of Research and Innovation, through projects MTM2017- 83740-P and PID2020-116207GB-I00 (J.C.B. and J.C.), and MTM2017-85934-C3-2-P (C.M.-C.), and Junta de Comunidades de Castilla-La Mancha through project SBPLY/19/180501/000110 (J.C.B. and J.C.).

\bibliography{bibliography}{}
\bibliographystyle{siam}

\end{document}